\documentclass[11pt, leqno]{article}

\usepackage[margin=25mm]{geometry}
\usepackage{amsmath, amssymb, amsthm, amscd, mathrsfs}
\usepackage[all]{xy}
\usepackage{enumerate}

\makeatletter
\@addtoreset{equation}{section}
\makeatother

\theoremstyle{theorem}
\newtheorem{theorem}{Theorem}[subsection]
\newtheorem{definition}[theorem]{Definition}
\newtheorem{proposition}[theorem]{Proposition}
\newtheorem{lemma}[theorem]{Lemma}
\newtheorem{corollary}[theorem]{Corollary}

\newtheorem{hypothesis}[theorem]{Hypothesis}
\newtheorem{apptheorem}{Theorem}[section]
\newtheorem{appdefinition}[apptheorem]{Definition}
\newtheorem{appproposition}[apptheorem]{Proposition}
\newtheorem{applemma}[apptheorem]{Lemma}

\theoremstyle{definition}
\newtheorem{remark}[theorem]{Remark}{\rm}
\newtheorem{example}[theorem]{Example}{\rm}
{\rm}
{\rm}
\newtheorem{_empty}[theorem]{}{\rm}
{\rm}

\newcommand{\ad}{{\rm ad}}
\newcommand{\an}{{\rm an}}
\newcommand{\Coh}{{\rm Coh}}
\newcommand{\Bf}{\mathfrak{B}}
\newcommand{\D}{\mathcal{D}}
\newcommand{\Df}{\mathfrak{D}}
\newcommand{\dlog}{{\rm dlog} \,}
\newcommand{\E}{\mathcal{E}}
\newcommand{\Ed}{\mathcal{E}^\dag}

\newcommand{\ex}{\mathrm{ex}}
\newcommand{\Ext}{{\rm Ext}}
\newcommand{\F}{\mathcal{F}}
\newcommand{\Frac}{{\rm Frac} \,}
\newcommand{\Homs}{\mathcal{H}\kern -.5pt om}
\newcommand{\id}{{\rm id}}
\newcommand{\Isoc}{{\rm Isoc}^\dag}

\newcommand{\LFS}{\mathrm{LFS}}
\newcommand{\LS}{\mathrm{LS}}
\newcommand{\LNM}{{\rm LNM}}
\newcommand{\M}{\mathcal{M}}

\newcommand{\Modfg}{{\rm Mod}^{\rm fg}}
\newcommand{\MIC}{{\rm MIC}}

\renewcommand{\O}{\mathcal{O}}
\newcommand{\oc}{\mathrm{oc}}
\newcommand{\Pf}{\mathfrak{P}}
\newcommand{\pLFS}{p\mathrm{LFS}}

\newcommand{\Qf}{\mathfrak{Q}}
\newcommand{\R}{\mathcal{R}}
\newcommand{\rig}{\mathrm{rig}}
\newcommand{\red}{{\rm red}}
\newcommand{\Sf}{\mathfrak{S}}
\newcommand{\Sh}{{\rm Sh}}
\newcommand{\Strat}{{\rm Strat}}
\newcommand{\Tf}{\mathfrak{T}}
\newcommand{\Uc}{\mathcal{U}}
\newcommand{\Us}{\mathscr{U}}
\newcommand{\Uf}{\mathfrak{U}}
\newcommand{\ULNM}{{\rm ULNM}}
\newcommand{\V}{\mathcal{V}}
\newcommand{\Vf}{\mathfrak{V}}
\newcommand{\Vs}{\mathscr{V}}
\newcommand{\Ws}{\mathscr{W}}
\newcommand{\Wf}{\mathfrak{W}}
\newcommand{\Xs}{\mathscr{X}}
\newcommand{\Xf}{\mathfrak{X}}
\newcommand{\Ys}{\mathscr{Y}}
\newcommand{\Yf}{\mathfrak{Y}}

\newcommand{\Zf}{\mathfrak{Z}}
\renewcommand{\sp}{{\rm sp}}
\newcommand{\Spa}{{\rm Spa}\,}
\newcommand{\Spd}{{\rm Sp}^\dag\,}
\newcommand{\Spec}{{\rm Spec}\,}
\newcommand{\Spf}{{\rm Spf}\,}

\newcommand{\coker}{{\rm coker}\,}

\newcommand{\ul}{\underline}
\newcommand{\ol}{\overline}
\newcommand{\wh}{\widehat}
\newcommand{\wt}{\widetilde}
\newcommand{\unu}{{\underline{\nu}}}

\title{Semistable Reduction Theorem for Overconvergent $F$-isocrystals over Laurent Series Fields}
\author{Yuanmin Liu}
\date{}

\begin{document}
	\maketitle
	\begin{abstract}
		We prove the semistable reduction theorem for $\mathcal{E}^{\dag}_K$-valued and $K$-valued overconvergent $F$-isocrystals over $k((t))$-varieties which were introduced by Lazda and P\'{a}l. As an application, we prove the  finite dimensionality of $\mathcal{E}^{\dag}_K$-valued rigid cohomology with compact support.
	\end{abstract}
	\tableofcontents
	
	\section{Introduction}
	Fix a field $k$ of characteristic $p > 0$ and a complete discrete valuation field $K$ of mixed characteristic whose residue field is $k$. Suppose for simplicity that $k$ is perfect. Let us recall the ``classical'' semistable reduction theorem proved by Kedlaya in the series of papers \cite{kedlaya2007semistable}\cite{kedlaya2008semistable}\cite{kedlaya2009semistable}\cite{kedlaya2011semistable}: for a closed immersion $Z \hookrightarrow X$ of algebraic varieties over $k$ with $X-Z$ smooth and an $F$-isocrystal $E$ over $X-Z$ overconvergent along $Z$, there is an alteration $f : X' \to X$ such that for $Z' := f^{-1}(Z)_\red$, $(X', Z')$ is a smooth pair and $f^*E$ extends to a locally free convergent log $F$-isocrystal over $(X', Z')$. This theorem has many important applications, especially in rigid cohomology and arithmetic $\D$-modules. Shiho \cite{shiho2002crystallineII} has shown that it implies finite dimensionality of rigid cohomology with coefficients, although it was proved in a more straighforward way in \cite{kedlaya2006finiteness}. Caro and Tsuzuki \cite{caro2012overholonomicity} proved that overconvergent $F$-isocrystals are overholonomic, which critically depends on the theorem. Furthermore Lazda \cite{lazda2023rigidification} proved that there is a canonical equivalence between the derived category of constructible isocrystals of Frobenius type and that of overholonomic complexes of arithmetic $\D$-modules of Frobenius type, which also uses the semistable reduction theorem in an essential way.
	
	Let us move to the case of varieties over $k((t))$. In the ``classical'' setting, one has to find a complete discrete valuation field whose residue field is $k((t))$, for which the Amice ring $\E_K$ stands. The main feature in this setting is that one can consider derivations by $t$ and hence ``absolute'' overconvergent isocrystals. For a $k((t))$-variety $X$ and an absolute overconvergent isocrystal $E$ on $X$, $H^i_\rig(X/\E_K; \, E)$ has the structure of $\nabla$-module over $\E_K$ induced by Gauss-Manin connection. Also, when $E$ has a Frobenius structure, these rigid cohomology groups are $(\sigma, \nabla)$-modules over $\E_K$. However, from the viewpoint of $p$-adic representations, those objects are not really ``geometric'' and they should be defined over the Robba ring $\R_K$ instead of $\E_K$. Indeed, $(\sigma, \nabla)$-modules over $\R_K$ are quasi-unipotent by $p$-adic local monodromy theorem.
	
	Lazda and P\'{a}l \cite{lazda2016rigid} defined rigid cohomology and overconvergent isocrystals valued over the bounded Robba ring $\Ed_K = \E_K \cap \R_K$ to resolve this problem. The idea is to embed a $k((t))$-variety into proper $k[[t]]$-schemes, not into proper $k((t))$-schemes. This slight modification seems subtle, but technically it causes some problems in proving the same results as the ``classical'' case; for example, it seems impossible to show finiteness of compactly supported $\Ed_K$-valued rigid cohomology with coefficients as in \cite{kedlaya2006finiteness}. Of course, one of the problems in $\Ed_K$-valued case is the semistable reduction theorem.
	
	The main idea of proving the semistable reduction theorem for $\Ed_K$-valued overconvergent $F$-isocrystals is to reduce the problem by invoking ``descent of unipotence'' to the classical case proved by Kedlaya. This fact is well-known in the context; it roughly states that for some sort of a smooth weakly complete algebra $A$ and for a $\nabla$-module $M$ over the Robba ring $\R_{A}$, $M$ is unipotent if and only if the base change $M \otimes_{\R_A} \R_{\wh{A}}$ to the Robba ring over the completion $\wh{A}$ is unipotent. This type of statement appears in \cite[Proposition 5.4.1]{kedlaya2006finiteness} and \cite[Proposition 3.42]{lazda2016rigid} for example. We generalize this statement in a more geometric way in \S\ref{subsec:descent}.
	
	In this paper, following \cite{kedlaya2007semistable} we prove that for a strictly semistable pair $(Y, Z)$ and an isocrystal $E$ on $Y-Z$ overconvergent along $Z$, $E$ extends to a log isocrystal along a component $Z_i$ of $Z$ if and only if $E$ has unipotent monodromy along $Z_i$.
	Locally choosing a lift $\Yf$ of $Y$, $]Z_i[_{\Yf}$ is isomorphic to the relative annulus $]Z_i[_{\Zf_i} \times A^1[0, 1)$ and $E$ induces some $\nabla$-module $\E_i$ over $]Z_i[_{\Zf_i} \times A^1(\eta, 1)$. If $E$ has unipotent monodromy along $Z_i$, i.e., $\E_i$ is unipotent, then it extends uniquely to a log $\nabla$-module over $]Z_i[_{\Zf_i} \times A^1[0, 1)$. We now apply descent of unipotence here to conclude that an $\Ed_K$-valued overconvergent isocrystal is log-extendable if and only if so is its completion $\wh{E}$, which is an $\E_K$-valued overconvergent isocrystal. The latter is already known to hold after alteration by Kedlaya, and the proof is almost done.
	
	The $K$-valued case needs additional consideration on components of the closed fiber. As explained in \cite[\S7]{kedlaya2007semistable}, it does not work to take an alteration so that after pullback the isocrystal has unipotent monodromy by using $p$-adic local monodromy theorem because the alteration may produce exceptional divisors on the source which do not dominate any components of the boundary on the target. In order to control the monodromy, we use some recent results of Kedlaya \cite{kedlaya2022monodromy}, in which he gives a new simpler proof of the semistable reduction theorem.
	
	\subsection{Structure of this paper}
	As we are going to prove propositions similar to those of \cite{kedlaya2007semistable}, numbering and naming of sections and subsections largely follow those of {\it loc. cit.} so that readers can easily find skipped proofs and arguments.
	
	In Section \ref{sec:rigid} we prepare some notations of rigid analytic geometry and define overconvergent isocrystals.
	
	In Section \ref{sec:log-nabla-mod} we develop the theory of log $\nabla$-modules over partial dagger spaces relative to $\Ed_K$ and check the properties of constant and unipotent objects. Also descent of unipotence is proved in this section.
	
	In Section \ref{sec:monodromy} we define the monodromy along the boundary, following \cite{kedlaya2007semistable}. The main consequence of descent of unipotence is that the constance/unipotence of monodromy in this setting can be checked after completing the isocrystal. 
	
	In Section \ref{sec:monodromy-and-extension} we prove the log extension theorem for $\Ed_K$-valued overconvergent isocrystals. The extension results for $\Ed_K$-valued overconvergent isocrystals are not necessary for main theorems, but the results are of interest of its own and we decided to include them following {\it loc. cit.}
	
	In Section \ref{sec:log-ext}, we define overconvergent log isocrystals and prove that they are equivalent to log $\nabla$-modules in good situations. Since for $\Ed_K$-valued isocrystals, it does not have derivations along components of the closed fiber, the extension terminates at the generic fiber $Y_\eta$ and hence gives overconvergent log isocrystals over the pair $(Y_\eta^\sharp, Y^\sharp).$ Then we finish the proof of semistable reduction theorem for $\Ed_K$-valued overconvergent $F$-isocrystals.
	
	In Section \ref{sec:ssr-rel-K}, we prove the semistable reduction theorem for $K$-valued overconvergent $F$-isocrystals, following the previous arguments to reduce the problem to monodromy along components of closed fibers. We invoke some results in \cite{kedlaya2022monodromy} to finish the proof.
	
	In Section \ref{sec:app}, as an application of the semistable reduction theorem proved in this paper, we prove finiteness of compactly supported rigid cohomology with coefficients in $\Ed_K$-valued overconvergent $F$-isocrystals, complementing \cite{lazda2016rigid}. 
	
	Appedices are dedicated to prove some auxiliary results.
	
	\subsection{Notations and conventions}
	\begin{itemize}
		\item For $\underline{\nu} = (\nu_1, \dots, \nu_n),$ we often use the multi-index notations
		$\underline{x}^{\underline{\nu}} = x_1^{\nu_1} \dots x_n^{\nu_n}$ and $\underline{\nu}! = {\nu_1}! \cdots {\nu_n}!. $ 
		\item $K$ is a complete discrete valuation field of characteristic $0$ whose residue field is $k$, $\V$ is the ring of integers of $K$ and $\varpi$ denotes its uniformizer. The norm $|\cdot|$ on $K$ or on its finite extensions are normalized so that $|p| = p^{-1}.$ We suppose that there is a isometric homomorphism $\sigma_K : K \to K$ which induces a $q = p^a$-th power Frobenius on $k$.
		\item Let $\Gamma^* = \{ p^r : r \in \mathbb{Q}\}$ and $\Gamma = \Gamma^* \cup \{0\}.$
		\item For a $\V$-module $M$, we will write $M_K = M \otimes_\V K.$
		\item For a Banach $K$-algebra $A$ and a product $I = I_1 \times \cdots \times I_n$ of $n$ subintervals of $[0, \infty),$ we write $A\langle x_1, \dots, x_n\rangle_I = \{ \sum_{\nu} a_\nu x_1^{\nu_1}\cdots x_n^{\nu_n} \in A[[x_1^{\pm 1}, \dots, x_n^{\pm 1}]] : \forall \eta \in I(\lim_{\nu} |a_\nu| \eta_1^{\nu_1} \cdots \eta_n^{\nu_n} = 0) \}.$ Here $\lim_{\nu} b_\nu = 0$ means that for any $\epsilon > 0,$ $-\epsilon < b_\nu < \epsilon$ except finitely many $\nu$. Instead of using the set $I$, we often write the conditions of $x_i$ directly in the subscript; for example, if $I = [0, 1] \times (p^{-1}, p], $ we write $A\langle x, y \rangle_{|x| \leq 1, \, p^{-1} < |y| \leq p}.$ When $I = [0, 1]^n$, we often omit the subscript $I$.
		\item $\displaystyle \E_K = \V[[t]]\langle t^{-1} \rangle_K = \{\sum_{n \in \mathbb{Z}} a_n t^n \in K[[t, t^{-1}]] : \lim_{n \to -\infty} |a_n| = 0, \, \sup_{n>0} |a_n| < \infty\}$ is the Amice ring.
		\item $\displaystyle \R_K = \{ \sum_{n \in \mathbb{Z}} a_n t^n \in K[[t, t^{-1}]] : \exists \eta \in (0, 1) (\lim_{n \to -\infty} |a_n|\eta^n = 0), \, \forall \rho \in (0, 1) (\lim_{n \to \infty} |a_n|\rho^n = 0) \}$ is the Robba ring.
		\item $\displaystyle \Ed_K = \E_K \cap \R_K = \displaystyle \{\sum_{n \in \mathbb{Z}} a_n t^n \in K[[t, t^{-1}]] :  \exists \eta \in (0, 1) (\lim_{n \to -\infty} |a_n|\eta^n = 0), \, \sup_{n>0} |a_n| < \infty\}$ is the bounded Robba ring.
		\item We extend $\sigma_K$ compatibly and continuously to $\E_K, \, \R_K$ and $\Ed_K$ so that $\sigma(t) = u t^q$ for some unit $u \in \V[[t]]_K$ congruent to $1$ modulo $\varpi$.
		\item For a scheme $S$, an $S$-variety means an integral, separated $S$-scheme of finite type.
		\item Formal schemes are locally noethrian throughout this paper. Until \S\ref{sec:log-ext}, they are moreover supposed to be $p$-adic and topologically of finite type over $\Spf \V[[t]]$ unless otherwise stated.
		\item Rigid analytic spaces or rigid analytic varieties are adic spaces adic over $\Spa K$ and locally strongly noetherian. They are moreover supposed to be taut\footnote{quasi-separated and closure of every quasi-compact open subset is again quasi-compact.} and locally of finite type over $\Spa \V[[t]]_K$ unless otherwise stated.
		\item For a rigid analytic space $\Xs$, the valuation at a point $x$ is denoted multiplicatively by $|\cdot|_x$. If the height of $x$ is $1$, i.e., the valuation group $|\cdot|_x$ can be embedded into $\mathbb{R}_{>0}$, we normalize it so that $|p|_x = p^{-1}.$ For $\eta = p^r \in \Gamma^*$ with $r = n/m \in \mathbb{Q}, \ n,m \in \mathbb{Z}, \ m > 0$ and $f \in \O_{\Xs, x}$, we often write inequalities such as $|f|_x < \eta$ instead of writing $|f|^m_x < |p|^{-n}$. When $x$ is of height $1$, these two conventions are compatible. Furthermore for a subinterval $I \subseteq [0, \infty)$, we often write $|f|_x \in I$ when it makes sense.
		\item $s$ and $\eta$ will respectively denote the closed point and generic point of $\Spec k[[t]]$. When we have finite extensions $k'[[t']]$ or $k_1[[t_1]]$ of $k[[t]]$, we often write without mentioning $s', \, \eta'$ or $s_1, \, \eta_1$ for their closed points and generic points.
	\end{itemize}
	
    \subsection*{Acknowledgement}
	The author is thankful for his supervisor Atsushi Shiho for helpful discussions, who proposed the idea of using descent of unipotence.
	This work was supported by the World-leading INnovative Graduate Study Program for Frontiers of Mathematical Sciences and Physics, The University of Tokyo.
	
	\section{Rigid analytic set up} \label{sec:rigid}
	\subsection{Germs, frames and tubes}
	Overconvergent isocrystals are locally coherent modules over dagger algebras equipped with an integrable connection satisfying overconvergent condition. Geometrically, for a frame $(X, Y, \Pf)$, it is a coherent $j^\dag_X \O_{]Y[_{\Pf}}$ with an integrable connections plus extra conditions. As in \cite{abe2022comparison} and \cite{abe2025proper}, it is convenient to introduce the concept of germs.
	
	\begin{definition}[cf. {\cite[Definition 2.2]{abe2025proper}}]
		A pregerm is a pair $(X, \Xs)$ of an adic space $\Xs$ and a closed subset $X \subseteq \Xs$. A morphism $f : (X, \Xs) \to (Y, \Ys)$ of pregerms is a morphism of adic spaces $f : \Xs \to \Ys$ such that $f(X) \subseteq Y.$ 
		
		A morphism of pregerms $j : (X, \Xs) \to (Y, \Ys)$ is called a strict neighborhood if $j$ is an open immersion and $j(X) = Y$. We localize the category of pregerms at strict neighborhoods; the resulting category ${\rm Germ}$ is called the category of germs.
		
		For a germ $(X, \Xs)$, we define the sheaf of rings $\O_X$ on $X$ to be the pullback of $\O_{\Xs}$. We often drop $\Xs$ and simply write $X$ for the germ $(X, \Xs)$.
	\end{definition}
	
	In order to calculate the ring of sections of germs, we state the following proposition:
	
	\begin{proposition} \label{prop:germ-section}
		Let $(X, \Xs)$ be a germ with $\Xs$ an affinoid adic space adic over $K$. If $X$ is the intersection of its open neighborhoods in $\Xs$, then
		$$\varinjlim_{\Us} H^i(\Us, \O_\Xs) \to H^i(X, \O_X)$$
		is an isomorphism for any $i$, where $\Us$ runs over all open neighborhoods of X in $\Xs$.
	\end{proposition}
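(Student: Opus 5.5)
Since $\Xs$ is an affinoid adic space over $K$, it is spectral, and its underlying topological space is a spectral space with a basis of quasi-compact opens (rational subsets). The closed subset $X$ is then pro-constructible in the patch topology, hence quasi-compact in the induced topology. The plan is to use the standard result that, for a subset $X$ of a spectral space which is the intersection of its open neighbourhoods and is quasi-compact, sheaf cohomology commutes with the filtered colimit over neighbourhoods:
$$H^i(X, \iota^{-1}\F) \cong \varinjlim_{\Us \supseteq X} H^i(\Us, \F).$$
This holds for any abelian sheaf $\F$ on $\Xs$. We apply it with $\F = \O_\Xs$. Then $\iota^{-1}\O_\Xs = \O_X$ by definition of the germ structure sheaf, and this gives the claim.

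To prove that result I would proceed in three steps.

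\textbf{Degree $0$.} A section $s \in \Gamma(X, \iota^{-1}\F)$ is given locally by germs. Finitely many quasi-compact opens $V_j$ of $\Xs$ cover $X$, with sections $s_j \in \F(V_j)$ representing $s$. On each overlap, $s_j$ and $s_k$ agree on an open neighbourhood of $X \cap V_j \cap V_k$. The hypothesis that $X$ equals the intersection of its open neighbourhoods lets us shrink: for spectral spaces, the neighbourhoods of $X$ are cofinal among opens of the form $\bigcup_j W_j$ with $W_j \subseteq V_j$ chosen so that the $s_j$ glue. This uses quasi-compactness of $X$ together with the fact that a closed pro-constructible set contained in an open set has a quasi-compact open neighbourhood inside it. Injectivity goes the same way.

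\textbf{Passage to all degrees.} Take an injective (or flasque) resolution $\F \to I^\bullet$ on $\Xs$. The restrictions $I^\bullet|_\Us$ remain injective. Filtered colimits are exact, so $\varinjlim_\Us \Gamma(\Us, I^\bullet)$ computes $\varinjlim H^i(\Us,\F)$. By the degree-$0$ case this complex equals $\Gamma(X, \iota^{-1}I^\bullet)$. It remains to show that $\iota^{-1}I$ is $\Gamma(X,-)$-acyclic. Here I would show that $\iota^{-1}$ of a flasque sheaf is \emph{soft} in the appropriate sense: sections over $X$ extend from quasi-compact closed pieces. By the degree-$0$ statement applied to $X \cap V$ for quasi-compact opens $V$, every section of $\iota^{-1}I$ over such an intersection extends to some neighbourhood, hence to $\Xs$, hence to $X$. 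On the spectral space $X$ this implies $\iota^{-1}I$ is flasque with respect to quasi-compact opens. That is enough for Čech-type acyclicity, since cohomology on a spectral space can be computed on the basis of quasi-compact opens.

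\textbf{Alternative route.} One can instead use that $X$ is the cofiltered limit of its quasi-compact open neighbourhoods $\Us$ in the category of spectral spaces. Apply the known result on cohomology of limits of spectral spaces with quasi-compact quasi-separated transition maps (SGA4 VI, or Stacks Project Tag 0A37/09YP) to the sheaf pulled back along the system. This is the cleanest: the only thing to check is that quasi-compact open neighbourhoods are cofinal, and that their intersection, as a spectral space with the induced topology, is $X$. The second uses the hypothesis.

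The main obstacle I expect is exactly this cofinality and identification step. The point is that $X$ is closed and hence pro-constructible, and every open neighbourhood contains a quasi-compact one by quasi-compactness of $X$. The hypothesis guarantees that the inverse limit recovers $X$ rather than a larger generalization-closed set. Once that is settled, the limit theorem gives the isomorphism for all $i$.
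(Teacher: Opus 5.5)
Your proposal is correct and is essentially what the paper does: the paper only cites \cite[Lemma 1.4.3]{abe2025proper}, which amounts to the standard fact that, for a quasi-compact subset of a spectral space that is stable under generalization, cohomology is the colimit over its quasi-compact open neighbourhoods; here $X$ is closed in the spectral space $\Xs$, and the hypothesis on $X$ is exactly generalization-stability, so your limit-of-spectral-spaces route is the usual proof of that fact. If you write out the direct three-step version instead, the degree-$0$ gluing step needs patch-compactness explicitly: the loci in $U\cap V_j\cap V_k$ where $s_j$ and $s_k$ have different germs are closed in the constructible topology, and their intersection over all quasi-compact neighbourhoods $U$ lies in $X$, where the germs agree, so that intersection is empty and hence one of these loci is already empty.
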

	\begin{proof}
		This is the content of \cite[Lemma 1.4.3]{abe2025proper}.
	\end{proof}
	
	\begin{definition}[cf. {\cite[Definition 2.22, Definition 5.10]{lazda2016rigid}}]
		A frame over $\V[[t]]$ (resp. $\E_K$) or a $\V[[t]]$-frame (resp. $\E_K$) is a quintuple $(X, Y, \Pf, j, i)$ where $X$ is a variety over $k((t))$, $Y$ is a separated scheme over $k[[t]]$ of finite type (resp. a variety over $k((t))$), $\Pf$ is a formal scheme over $\V[[t]]$ (resp. over $\O_{\E_K}$), $j : X \hookrightarrow Y$ is an open immersion and $i: Y \hookrightarrow \Pf$ is a closed immersion. There is an obvious notion of morphisms of frames. We often drop $j$ and $i$ from the notation and simply say that $(X, Y, \Pf)$ is a frame.
		
		A morphism $(f, g, u) : (X', Y', \Pf') \to (X, Y, \Pf)$ of $\V[[t]]$-frames or $\E_K$-frames is said to be proper (resp. smooth, etale) if $g$ is proper (resp. $u$ is smooth in an open neighborhood of $X'$, $u$ is etale in an open neighborhood of $X'$). A $\V[[t]]$-frame (resp. $\E_K$-frame) $(X, Y, \Pf)$ is said to be proper (resp. smooth, etale) if the structure morphism $(X, Y, \Pf) \to (\Spec k((t)), \Spec k[[t]], \Spec \V[[t]])$ (resp. $(X, Y, \Pf) \to (\Spec k((t)), \Spec k((t)), \Spf \O_{\E_K})$) is.
	\end{definition}
	
	\begin{remark}
		The category of $\E_K$-frames is a full subcategory of $\V[[t]]$-frames.
	\end{remark}
	
	\begin{definition}
		For a formal scheme $\Pf$ over $\V[[t]]$, we denote by $\Pf^\ad$ the associated rigid analytic space, let $\sp : \Pf^\ad \to \Pf$ be the specialization map and let $[\cdot] : \Pf^\ad \to \Pf^\ad$ be the map that takes a point to its maximal generalization. We denote by $[\sp] : \Pf^\ad \to \Pf$ to be the composite of $[\cdot]$ and $\sp$. Noting that the special fiber $P$ of $\Pf$ is homeomorphic to $\Pf$, for a subset $X \subseteq P$ define the tube $]X[_\Pf$ of $X$ to be $[\sp]^{-1}(X)$, the set theoretic inverse image of $X$ by $[\sp]$. When $\Pf$ is clear from the context, we often omit it from the notation of tubes.
	\end{definition}
	
	\begin{proposition}[{\cite[Proposition II.4.2.11]{fujiwara2018foundations}}, {\cite[Lemma 2.17, Proposition 2.18]{lazda2016rigid}}, {\cite[Proposition 2.4.7]{caro2021arithmetic}}]
		Let $\Pf = \Spf A$ be an affine formal $\V$-scheme. Then for any closed subscheme $Z = V(\bar{f_1}, \dots, \bar{f_r})$ of $X$ with $f_i \in A$, we have
		$$ ]Z[_{\Xf} = \sp^{-1}(Z)^\circ = \{ x \in \Xf^\ad : |f_1|_{[x]}, \dots, |f_r|_{[x]} < 1 \} = \bigcup_{\eta \in (0, 1) \cap \Gamma^*} \{ x \in \Xf^\ad : |f_1|_{x}, \dots, |f_r|_{x} \leq \eta \}.$$
	\end{proposition}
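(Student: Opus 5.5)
We argue directly from the definitions of $\sp$, $[\cdot]$ and the adic topology on $\Pf^\ad$, establishing the three equalities in turn. (In the statement $X$ and $\Xf$ should be read as $P$ and $\Pf$.) Write $A$ for the ring, with ideal of definition $I$ containing $p$. The first step is to describe $\sp$ concretely. For $x \in \Pf^\ad = \Spa(A_K, A^\circ)$-type space (more precisely the generic fibre of $\Spf A$), the point $\sp(x)$ is the open prime $\{ a \in A : |a|_x < 1\}$, and it lies in the special fibre. Hence $\sp(x) \in Z$ iff $\sp(x) \supseteq (\bar f_1, \dots, \bar f_r)$, i.e. iff $|f_i|_x < 1$ for all $i$, so
\[
\sp^{-1}(Z) = \{x : |f_i|_x < 1 \ \forall i\}.
\]
Applying this to the maximal generalization $[x]$ gives the middle description of $]Z[_\Pf = [\sp]^{-1}(Z)$ as $\{x : |f_i|_{[x]} < 1\}$.

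For the last equality, set $U_\eta = \{x : |f_i|_x \le \eta\}$, a rational (hence quasi-compact open) subset. If $x \in U_\eta$ then $|f_i|_{[x]} \le \eta < 1$, since the rank-one valuation $[x]$ is dominated by $x$ (more precisely $|f|_x \le \eta$ implies $|f|_{[x]} \le \eta$ for the specialization relation, using the conventions for $\eta \in \Gamma^*$ with $p$-powers). Conversely, if $|f_i|_{[x]} < 1$, the values are real numbers $<1$, so some $\eta \in (0,1)\cap\Gamma^*$ strictly exceeds all of them. Then $|f_i^m|_{[x]} < |p^{-n}|$, and because $x$ and $[x]$ agree on strict inequalities against units up to infinitesimals (the convex subgroup), we get $|f_i|_x \le \eta$. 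Thus $]Z[ = \bigcup_\eta U_\eta$.

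Finally, $]Z[$ is therefore open, and it is contained in $\sp^{-1}(Z)$, since $|f|_x \le |f|_{[x]}$ in the relevant sense. It remains to show that every open subset of $\sp^{-1}(Z)$ lies in $]Z[$. An open set is stable under generalization, so if $x$ lies in an open $V \subseteq \sp^{-1}(Z)$, then $[x] \in V \subseteq \sp^{-1}(Z)$, giving $|f_i|_{[x]} < 1$ and hence $x \in ]Z[$. This yields $]Z[ = \sp^{-1}(Z)^\circ$.

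The main obstacle is the passage between $|\cdot|_x$ and $|\cdot|_{[x]}$ for higher-rank points. Here I would invoke the cited results of Fujiwara--Kato, Lazda--P\'al and Caro--Vauclair rather than redo the valuation theory; the rest of the argument is formal.
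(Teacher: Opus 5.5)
Your argument is correct, but it takes a different route from the paper. The paper gives no argument of its own and simply cites Fujiwara--Kato, Lazda--P\'al and Caro--Vauclair.

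You instead derive all three equalities from two concrete inputs:
\begin{enumerate}[(i)]
\item the formula $\sp(x)=\{a\in A : |a|_x<1\}$, which makes $\sp^{-1}(Z)=\{x : |f_i|_x<1\}$ and hence the middle description of $]Z[_\Pf=[\sp]^{-1}(Z)$ purely definitional;
\item the fact that $|\cdot|_{[x]}$ is $|\cdot|_x$ composed with the quotient $\Gamma_x\to\Gamma_x/H$, where $H$ is the largest convex subgroup not containing $|p|_x$.
\end{enumerate}

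The step you call the main obstacle follows at once from (ii), so you do not need to outsource it. The quotient by a convex subgroup is order-preserving. Hence non-strict inequalities such as $|f^m|_x\le|p^k|_x$ pass from $x$ to $[x]$. Strict inequalities pass back from $[x]$ to $x$: if $|f^m|_x\ge|p^k|_x$, then $|f^m|_{[x]}\ge|p^k|_{[x]}$.

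This is the precise content of your phrases ``$[x]$ is dominated by $x$'' and ``$|f|_x\le|f|_{[x]}$ in the relevant sense''. You should rephrase those, since the two values live in different ordered groups and cannot be compared directly.

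The interior statement then follows, as you say, from $]Z[_\Pf$ being a union of rational subsets together with the elementary fact that open sets are stable under generalization.

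What your approach buys is a short, self-contained proof. It shows that the only inputs are the explicit $\sp$ and the convex-subgroup description of $[x]$, so it works for any analytic $\Spa(A_K,A^+)$, including over $\V[[t]]$ without finite-typeness over $\V$. What the paper's citations buy is the statement in the generality and with the conventions for $\sp$ and $[\cdot]$ fixed in those references, at no cost in length.
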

	
	\begin{remark}
		The functor $\Box^\ad$ is locally defined as follows: for $\Pf = \Spf A$ and $A^+$ the integral closure of $A$ in $A_K$, we have $\Pf^{\ad} = \Spa (A_K, A^+)$.
	\end{remark}
	
	\begin{definition}
		For a frame $(X, Y, \Pf)$ over $\V[[t]]$, the pair $(]X[_\Pf, ]Y[_\Pf)$ is a germ. Let $i : ]X[_\Pf \hookrightarrow ]Y[_\Pf$ be the closed immersion. Define the functor $j^\dag_X : \Sh(]Y[_\Pf) \to \Sh(]Y[_\Pf)$ to be $i_*i^{-1}.$
	\end{definition}
	
	With the above definitions, the theorems that hold in the classical setting are available:
	
	\begin{proposition} \label{prop:tube-sheaf}
		Let $(X, Y, \Pf)$ be a frame over $\V[[t]]$.
		\begin{enumerate}[(i)]
			\item For any open subset $U$ of $]Y[$, we have $\displaystyle\Gamma(U, j^\dag_X\O_{]Y[}) = \varinjlim_{V} \Gamma(U \cap V, \O_{]Y[})$ where $V$ runs over open neighborhoods of $]X[$ in $]Y[$.
			\item The sheaves of rings $\O_{]X[}$ and $j^\dag_X\O_{]Y[}$ are coherent.
			\item The canonical functor $\displaystyle\varinjlim_{V} \Coh(\O_V) \to \Coh(j^\dag_X\O_{]Y[})$ is an equivalence of categories, where $V$ varies as in (i).
			\item The functors $i_* : \Coh(\O_{]X[}) \leftrightarrows \Coh(j^\dag_X\O_{]Y[}) : i^{-1}$ are quasi-inverse to each other.
			\item (Theorem A) If $\Pf$ is affine and the closed immersion $Y \to \Pf$ induces an isomorphism of $Y$ and the special fiber $\Pf_0$, then $A = \Gamma(]X[, \O_{]X[})$ is noetherian and $\Gamma(]X[, \Box) : \Coh(\O_{]X[}) \to \Modfg(A)$ is an equivalence between the category of coherent $\O_{]X[}$-modules and the category of finitely generated $A$-modules.
			\item (Theorem B) In the situation of (v), for a coherent $\O_{]X[}$-module $\E$, we have $H^i(]X[, \E) = 0$ for all $i > 0.$
		\end{enumerate}
	\end{proposition}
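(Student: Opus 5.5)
The plan is to reduce everything to the local affine case and then import the classical arguments of Berthelot for tubes (as in Le Stum's book), now in the adic-space language. I will use two inputs: Proposition \ref{prop:germ-section}, and the explicit description of tubes as increasing unions
$$]X[ \;=\; \bigcup_{\eta \in (0,1)\cap\Gamma^*} \{\, |f_1|,\dots,|f_r| \le \eta \,\}.$$
All statements are local on $\Pf$, so I would assume $\Pf=\Spf A$ is affine and $X = Y \setminus V(\bar g_1,\dots,\bar g_s)$.

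\textbf{Step (i).} Since $i_*i^{-1}$ of a sheaf is computed by sections near $]X[$, I need $]X[$ to be closed in $]Y[$ and to be the intersection of its open neighborhoods. I would check this directly: $]X[ = \{x \in ]Y[ : \max_j |g_j|_{[x]} = 1\}$, and the standard neighborhoods are
$$V_\lambda = \{\max_j |g_j| \ge \lambda\}, \qquad \lambda \to 1^-.$$
Their intersection is $]X[$, using that points of $]Y[$ have $[\cdot]$ controlled. The formula for $\Gamma(U, j^\dag_X\O)$ then follows from the description of $i^{-1}$ on a closed subset that is an intersection of opens, together with quasi-compactness of affinoid pieces. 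Tautness is what lets me pass from quasi-compact $U$ to general $U$ by gluing.

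\textbf{Steps (ii)--(iv).} For coherence, $\O_{]X[}$ and $j^\dag_X\O$ are filtered colimits of the coherent sheaves $\O_{V\cap W}$ on affinoids $W$ covering $]Y[$. The transition maps are flat, since they are open immersions of rigid spaces. Coherence of such a colimit follows by the usual argument for overconvergent structure sheaves, again using (i) and Proposition \ref{prop:germ-section} on affinoids. Statement (iii) is the standard descent of finitely presented modules and morphisms along a filtered colimit, combined with the fact that coherent sheaves on $\O_V$ spread out over quasi-compact pieces; gluing is possible because $]Y[$ is covered by quasi-compact opens compatible with the $V_\lambda$. Statement (iv) is then formal: $i^{-1}$ and $i_*$ are inverse on sheaves supported in $]X[$, and coherence is preserved by (ii).

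\textbf{Steps (v) and (vi), the main obstacle.} When $Y = \Pf_0$, we have $]Y[ = \Pf^\ad$, which is affinoid over $\Spa \V[[t]]_K$. In that case $]X[$ is the intersection of the opens $V_\lambda$, and by Proposition \ref{prop:germ-section} its ring of sections is $\varinjlim_\lambda \Gamma(V_\lambda, \O)$.

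The key obstacle is that the $V_\lambda$ are not affinoid but only unions of rational subsets. I would instead use the cofinal system of affinoid neighborhoods $\{|g_j|\ge\lambda \text{ for some } j\}$. Covering by the rational opens $\{|g_j|\ge \lambda\}$ reduces to $s=1$. Then
$$A = \varinjlim_\lambda A_K\langle g^{-1}\rangle_\lambda,$$
which is a dagger-type algebra and hence noetherian by a Washnitzer-type argument: it is a localization of an overconvergent completion, and faithfully flat over $A_K[1/g]$-type rings.

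Theorem A then follows by spreading a coherent module to some affinoid $V_\lambda$, applying Kiehl there, and passing to the limit. Theorem B follows by combining Proposition \ref{prop:germ-section} in the module version, the vanishing of higher cohomology on each affinoid by Tate acyclicity, and the exactness of filtered colimits. The delicate points are the noetherianity of $A$ and the comparison $H^i(]X[,\E) = \varinjlim_\lambda H^i(V_\lambda,\E)$ for $\E$ rather than $\O$; for the latter I would cite the sheaf version of \cite[Lemma 1.4.3]{abe2025proper}.
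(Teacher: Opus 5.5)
Your treatment of (i), (iii), (iv) and the skeleton of (v)--(vi) is what the paper does in Proposition \ref{prop:theoremAB}(ii)--(iii), once $]X[$ has been identified with a partial dagger spectrum in Proposition \ref{prop:tube-pds}. By skeleton I mean: spread a coherent module to an affinoid neighbourhood, apply Kiehl, pass to $\varinjlim_\lambda\Modfg(A_\lambda)\simeq\Modfg(A)$, and get Theorem B from \cite[Lemma 1.4.3]{abe2025proper} plus Tate acyclicity.

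The first genuine problem is the step ``covering by the rational opens $\{|g_j|\ge\lambda\}$ reduces to $s=1$''. Theorems A and B do not glue along such a finite cover. In fact (v)--(vi) are false when $Y\setminus X$ is not a principal divisor. Take $\Pf=\Spf\V[[t]]\langle x,y\rangle$, $X=Y_\eta\setminus\{x=y=0\}$, $U_1=\{|t|\ge\lambda,\ |x|\ge\lambda\}$ and $U_2=\{|t|\ge\lambda,\ |y|\ge\lambda\}$. The class of $x^{-1}y^{-1}$ in $\O(U_1\cap U_2)/(\O(U_1)+\O(U_2))$ survives in $\varinjlim_\lambda H^1(U_1\cup U_2,\O)=H^1(]X[,\O)$ (Proposition \ref{prop:germ-section}). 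Moreover $\Gamma(]X[,\O)=\Ed_K\langle x,y\rangle$ has the nonzero quotient $\Ed_K\langle x,y\rangle/(x,y)$, whose associated sheaf on $]X[$ vanishes. So you must assume, as the paper tacitly does through Proposition \ref{prop:tube-pds}, that $X=Y\cap D(\bar g)$. Since $X\subseteq Y_\eta$ you may then take $g=th$, and the neighbourhoods $\{|g|\ge\lambda\}$ are already affinoid.

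The second problem is noetherianity of $A=\Gamma(]X[,\O)=\varinjlim_\lambda B_K\langle\lambda/g\rangle$, where $B=\Gamma(\Pf,\O)=\V[[t]]\langle x_1,\dots,x_n\rangle/(f_1,\dots,f_k)$. This is not a Monsky--Washnitzer algebra over $K$. Because $B$ is topologically of finite type over $\V[[t]]$, the ring is only Tate in the $x_i$ and overconvergent in $g^{-1}$ (hence in $t^{-1}$). It is the partial dagger algebra $W^\dag_{n,1}/(f_1,\dots,f_k,hy-1)$ of Proposition \ref{prop:tube-pds}.

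The input the paper isolates for this is Fulton's theorem \cite{fulton1969note}. The $p$-adic weak completion of a finitely generated algebra over the noetherian $p$-adically complete ring $\V[[t]]\langle x_1,\dots,x_n\rangle$ is noetherian and faithfully flat into its completion. Hence $W^\dag_{d,e}$ is noetherian and $W^\dag_{d,e}\to T_{d+e}$ is faithfully flat (Corollary \ref{cor:pda}). Your justification via faithful flatness over ``$A_K[1/g]$-type rings'' cannot work, since faithful flatness only transfers noetherianity from the target down to the source. This matters: noetherianity is what makes finitely generated $A$-modules finitely presented, which you need for $\varinjlim_\lambda\Modfg(A_\lambda)\simeq\Modfg(A)$.

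Finally, for (ii) your colimit argument is imprecise, since the pushforwards of $\O_{V\cap W}$ to $W$ are not coherent. It is also unnecessary. $\O_{]X[}=i^{-1}\O_{]Y[}$ is coherent because $\O_{]Y[}$ is, $i^{-1}$ is exact, and a map $\O_{]X[}^n\to\O_{]X[}$ is locally the restriction of a map $\O_W^n\to\O_W$ on an open $W\subseteq\,]Y[$. Then $j^\dag_X\O_{]Y[}=i_*\O_{]X[}$ is coherent because $i$ is a closed embedding.
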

	\begin{proof}
		(i) by the same argument as in \cite[1.4.1]{abe2025proper}. 
		(iii) by \cite[2.55]{lazda2016rigid}.
		(iv) by abstract nonsense.
		
		(ii), (v) and (vi) are verified by Proposition \ref{prop:theoremAB} and Proposition \ref{prop:tube-pds}.
	\end{proof}
	
	\begin{theorem}[Strong fibration theorem] \label{thm:strong-fibration}
		Let
		$$
		\xymatrix{
			X \ar@{^{(}->}[r] \ar@{=}[d] & Y' \ar@{^{(}->}[r] \ar[d]^{g} & \Pf' \ar[d]^{u}\\
			X \ar@{^{(}->}[r] & Y \ar@{^{(}->}[r] & \Pf
		}
		$$
		be a proper etale morphism of frames over $\V[[t]]$.
		Then $u^\ad$ induces an isomorphism between a cofinal system of neightborhoods of $]X[_{\Pf'}$ and that of $]X[_\Pf$.
		In particular, the germs $]X[_{\Pf'}$ and $]X[_\Pf$ are isomorphic.
	\end{theorem}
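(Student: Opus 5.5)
We follow the classical argument of Berthelot's strong fibration theorem, as adapted to adic spaces in \cite{lazda2016rigid} and \cite{abe2025proper}. The argument has three steps: localize on $\Pf$, show that $u^\ad$ is an isomorphism between the tubes, and spread this out to strict neighborhoods. The statement is local on $\Pf$. If $\{\Pf_\alpha\}$ is an affine open cover of $\Pf$, the tubes $]X\cap\Pf_\alpha[_{\Pf_\alpha}$ cover $]X[_\Pf$. A strict neighborhood can be glued from local ones after shrinking, using that the germs are taut and that the overlaps are again tubes. So we may assume $\Pf=\Spf A$ is affine, and $\Pf'$ is covered by finitely many affines. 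Since $u$ is \'etale in a neighborhood of $X$, we may also shrink $\Pf'$ so that $u$ is \'etale everywhere. This shrinking changes nothing on tubes, because $]X[_{\Pf'}$ only sees an open neighborhood of $X$ in $\Pf'$.

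The first step is to show that $u^\ad$ restricts to an isomorphism $]X[_{\Pf'}\to]X[_\Pf$ of germs. Let $Y''\subseteq\Pf'$ be the closed subscheme $u^{-1}(Y)$. Because $g$ is proper and $g^{-1}(X)=X$, the closed immersion $Y'\hookrightarrow Y''$ is a union of connected components near $X$. Therefore $]Y'[_{\Pf'}$ is open and closed in $]Y''[_{\Pf'}=(u^\ad)^{-1}(]Y[_\Pf)$ over a neighborhood of $]X[$. Next, $u^\ad:]X[_{\Pf'}\to]X[_\Pf$ is \'etale, since $u$ is \'etale. It is bijective on points, since over each point of $X$ the fibre of $u$ restricted to $Y'$ is a single reduced point. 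For an \'etale formal morphism $u$ that is an isomorphism over $X$ on the reduced special fibre, the map on tubes is an isomorphism. Concretely, we lift $X$-sections: by formal smoothness and the infinitesimal lifting property, $u$ is an isomorphism of the formal completions of $\Pf'$ and $\Pf$ along $X$ (more precisely along $Y'$ and $Y$ near $X$), and tubes are computed from such completions (cf.\ \cite[Prop.~2.18]{lazda2016rigid}). This gives the isomorphism of $]X[$'s, which extends to $]Y'\cap V'[\cong]Y\cap V[$ for small Zariski neighborhoods $V$ of $X$.

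The second step, which we expect to be the main obstacle, is to spread this out to a cofinal system of strict neighborhoods of $]X[$ inside $]Y'[_{\Pf'}$ and $]Y[_\Pf$. We use properness of $g$ in the following way. The map $u^\ad:]Y'[_{\Pf'}\to]Y[_\Pf$ is partially proper, because $g$ is proper and $u$ is \'etale, hence separated and locally quasi-finite near $Y'$. It is \'etale, and it is an isomorphism over the closed subset $]X[_\Pf$. The locus of points of $]Y[_\Pf$ over which the fibre of $u^\ad$ has cardinality exactly one is open: for a proper \'etale map the fibre cardinality is locally constant. We would prove this by covering $Y$ with finitely many $V(f_1,\dots,f_r)$-type descriptions and working on the quasi-compact pieces $\{|f_i|\le\eta\}$, where partial properness becomes properness. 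Over that open locus, $u^\ad$ is a bijective \'etale proper map, hence an isomorphism. The complement of $]X[$ in $]Y[$ lies over $Y\setminus X$, so this open locus contains $]X[_\Pf$. Taking preimages gives a strict neighborhood $W'$ of $]X[_{\Pf'}$ mapping isomorphically onto a strict neighborhood $W$. To get cofinality, we intersect $W$ with any given strict neighborhood $V$ and observe that $(u^\ad)^{-1}(V)\cap W'$ is again a strict neighborhood. Conversely, given a neighborhood $V'$ upstairs, its image $u^\ad(V'\cap W')$ is open (since $u^\ad$ is an isomorphism on $W'$) and contains $]X[_\Pf$. The delicate point is tautness and the use of the valuative description of partial properness for adic spaces, so that the fibre-cardinality argument is valid on non-quasi-compact tubes. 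We would handle this via the exhaustion by quasi-compact pieces $\{|f_i|\le\eta\}$ from the description of tubes given earlier.
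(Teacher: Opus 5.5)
The paper states this theorem without proof; it is imported from the classical theory (Berthelot, Lazda--P\'al), so your argument has to stand on its own. Its overall shape is sensible: localize, identify the tubes, then spread out to neighbourhoods. However, the step you single out as the main obstacle fails as written, and step one also has a hole.

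\textbf{Step two.} Your fibre-counting claim needs genuine properness, and $u^\ad\colon]Y'[_{\Pf'}\to]Y[_\Pf$ is at best partially proper. Partially proper \'etale maps need not have an open one-point-fibre locus. Take $D=\{|z|_{[x]}<1\}$ and $D_r=\{|z|_{[x]}<r\}$. The map $D\sqcup D_r\to D$ is \'etale and partially proper, since $D_r$ is open and stable under specialization. Yet its one-point-fibre locus $D\setminus D_r$ is not open, because $D$ is connected.

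Your repair, restricting to quasi-compact pieces $\{|f_i|\le\eta\}$ ``where partial properness becomes properness'', does not work, because the source over such a piece need not be quasi-compact. Already take $\Pf'=\Pf$, $u=\id$, and $Y'=\overline X\subsetneq Y$ meeting another component of $Y$; here $g$ is a closed immersion, hence proper. Then $u^\ad$ is the open inclusion $]\overline X[_\Pf\hookrightarrow]Y[_\Pf$, and the preimage of $\{|f_i|\le\eta\}$ is an open tube, not quasi-compact.

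Moreover, your preliminary shrinking of $\Pf'$ to make $u$ \'etale everywhere destroys the properness of $g$ that step two relies on. Without that shrinking, \'etaleness of $u$ near $X$ only gives \'etaleness of $u^\ad$ on $\sp^{-1}$ of a Zariski neighbourhood of $X$. That set contains neither all of $]X[_{\Pf'}$ nor the full preimage of any neighbourhood of $]X[_\Pf$.

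\textbf{Step one.} The completion argument identifies $\sp^{-1}(\Uf')\cap]Y'[_{\Pf'}$ with $\sp^{-1}(\Uf)\cap]Y[_\Pf$ for Zariski neighbourhoods $\Uf',\Uf$ of $X$. So it only handles the naive tube. But $]X[=[\sp]^{-1}(X)$ also contains higher-rank points $y$ with $\sp(y)\in Y\setminus\Uf$; this is the distinction between $]X[^{\ad,\circ}$ and $]X[^{\ad}$ that the paper stresses. You have not shown that these points are hit bijectively, that $u^\ad$ is \'etale there, or that residue fields agree. Your remark that fibres of $u$ over points of $X$ are single reduced points concerns the special fibre, not adic points.

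\textbf{Where properness belongs, and how to finish.}
\begin{enumerate}
\item $X$ is a section of the separated map $g^{-1}(X)\to X$, so it is open and closed in $g^{-1}(X)$. You may therefore replace $Y'$ by the closure $\overline X$. This justifies your unproved assumption $g^{-1}(X)=X$ and gives $(u^\ad)^{-1}(]X[_\Pf)\cap]Y'[_{\Pf'}=]X[_{\Pf'}$.
\item Every $y\in]X[_\Pf$ is a vertical specialization of the naive-tube point $[y]$.
\item The valuative criterion for $\overline X\to Y$ lifts $y$ to a point of $]X[_{\Pf'}$. The lift is unique, since the lift of $[y]$ is unique and completed residue fields depend only on maximal generalizations.
\item Hence $]X[_{\Pf'}\to]X[_\Pf$ is bijective with isomorphic completed residue fields.
\item Conclude as the paper does in its proof of the log weak fibration theorem: \'etaleness on a neighbourhood plus Huber's result on \'etale topoi of germs. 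This gives a restriction of $u^\ad$ to an isomorphism $V'\xrightarrow{\sim}V$ of neighbourhoods.
\end{enumerate}
Cofinality is then formal, by taking images and preimages of neighbourhoods inside $V'$ and $V$. No fibre counting is needed.
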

	
	\subsection{Partial dagger algebra and spectra} \label{subsec:pda}
	For $\eta \in (0, 1)$, define $\displaystyle\E_\eta = \{ \sum_{n \in \mathbb{Z}} a_nt^n \in \Ed_K : \lim_{n \to -\infty} |a_n|\eta = 0 \}.$ It is a Banach $K$-algebra equipped with the norm
	$$ \displaystyle \| \sum a_n t^n \|_\eta = \max\left(\sup_{n \leq 0} |a_n|\eta^n, \ \sup_{n \geq 0} |a_n|\right). $$
	For a positive integer $m$, define $\E_m = \E_{p^{1/m}}$ and $\| \cdot \|_m = \| \cdot \|_{p^{1/m}}$.
	
	\begin{definition}
		For nonnegative integers $d, e$ and $\eta > 1$, define
		$$\displaystyle W^\dag_{d, e, \eta} = \E_{\eta^{-1}}\langle x_1, \dots, x_d, y_1, \dots, y_e \rangle_{|x_i| \leq 1, \, |y_j| \leq \eta}$$
		and
		$$W^\dag_{d, e} = \E^\dag_K\langle x_1, \dots, x_d \rangle \langle y_1, \dots, y_e \rangle ^\dag = \displaystyle\lim_{m \to \infty} W^\dag_{d, e, p^{1/m}}.$$
		When $d = 0$, we simply denote it by $W^\dag_e = \E^\dag_K \langle y_1, \dots, y_e \rangle ^\dag$ and call it a Lazda-P\'{a}l algebra.
	\end{definition}
	
	$W^\dag_{d, e, \eta}$ is a Banach $\E_\eta$-algebra and we again let $\| \cdot \|_\eta$ denote the canonical norm on $W^\dag_{d, e, \eta}$. The inductive limit topology induced by norms $\| \cdot \|_\eta$ is called the fringe topology. On the other hand, there is an obvious inclusion $W^\dag_{d, e} \subseteq T_{d+e} = \E_K \langle x_1, \dots, x_d, y_1, \dots, y_e\rangle$ and let $\| \cdot \|$ be the Gauss norm on $T_{d+e}$; the subspace topology on $W^\dag_{d, e}$ is called affinoid topology.
	
	This definition is a slight generalization of \cite{lazda2016rigid}, in which they only treat the case $d = 0$. The propositions regarding $W^\dag_e$ have natural generalizations.
	
	\begin{proposition}[cf. {\cite[3.2]{lazda2016rigid}}]
		Fix $f \in W^\dag_{d, e}$ and assume $\|f\| \leq c.$ Then for any $\epsilon > 1$ there exists some $\eta > 1$ such that $f \in W^\dag_{d, e, \eta}$ and $\|f\|_\eta \leq c\epsilon.$
	\end{proposition}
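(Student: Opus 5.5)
Write $f = \sum_{\nu,\mu} a_{\nu\mu}\, \underline{x}^{\nu}\underline{y}^{\mu}$ with $a_{\nu\mu} \in \Ed_K$. Since $f \in W^\dag_{d,e}$, we have $f \in W^\dag_{d,e,\eta_0}$ for some $\eta_0 > 1$, so $\|f\|_{\eta_0} = C < \infty$. The plan is to interpolate between the Gauss norm $\|f\| \le c$ (which is the "$\eta = 1$" limit) and the finite norm at $\eta_0$, using that each norm is a supremum of terms of the form $|a|\eta^{\text{power}}$, and that the two norms agree on the bounded part. Throughout, each coefficient is expanded as $a_{\nu\mu} = \sum_n a_{\nu\mu n} t^n$ with $a_{\nu\mu n} \in K$. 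Then $\|f\| = \sup |a_{\nu\mu n}|$, and
$$\|f\|_\eta = \sup_{\nu,\mu}\max\Big(\sup_{n\le 0}|a_{\nu\mu n}|\,\eta^{-n}\eta^{|\mu|},\ \sup_{n\ge 0}|a_{\nu\mu n}|\,\eta^{|\mu|}\Big).$$
In every case the term is $|a_{\nu\mu n}|\,\eta^{N}$ with exponent $N = N(\mu,n) = |\mu| + \max(-n,0) \ge 0$.

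The first key step is to split the index set into finitely many "small exponent" terms and the rest. Put $\eta = \eta_0^{s}$ with $s \in (0,1]$. For an individual term, use the bound
$$|a_{\nu\mu n}|\,\eta^{N} = |a_{\nu\mu n}|^{1-s}\big(|a_{\nu\mu n}|\,\eta_0^{N}\big)^{s} \le c^{1-s}C^{s}.$$
Since $c^{1-s}C^{s} \to c$ as $s \to 0$, choosing $s$ small makes this at most $c\epsilon$. This is just log-convexity of $\eta \mapsto \|f\|_\eta$, and it suffices on its own: every term is bounded uniformly in $(\nu,\mu,n)$, so there are no finitely many exceptional terms to handle. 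If $c < \|f\|$ is not allowed, and the case $c = 0$ is trivial, the estimate gives $\|f\|_\eta \le c^{1-s}C^{s} \le c\epsilon$ whenever $(C/c)^{s} \le \epsilon$.

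The second step is to check that $f \in W^\dag_{d,e,\eta}$ for the chosen $\eta \le \eta_0$, i.e. that the convergence condition holds. The terms tend to zero at $\eta_0$, and since $\eta^N \le \eta_0^N$ for $\eta \le \eta_0$ and $N \ge 0$, they tend to zero at $\eta$ as well. For the same reason the coefficients lie in $\E_{\eta^{-1}}$.

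The only point needing care is to verify that the norm on $W^\dag_{d,e,\eta}$ really is the sup over $(\nu,\mu,n)$ described above. This amounts to identifying the canonical norm on $\E_{\eta^{-1}}\langle x,y\rangle$ as $\sup_{\nu,\mu} \|a_{\nu\mu}\|_{\eta^{-1}}\,\eta^{|\mu|}$, and we expect it to be immediate from the definitions; it is the step I would check first. Once that identification is in place, the interpolation inequality of the first step finishes the proof, with $\eta = \eta_0^{s}$ for $s = \min\big(1, \log\epsilon/\log(C/c)\big)$ (taking $s = 1$ if $C \le c$).
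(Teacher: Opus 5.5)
Your proof is correct. The paper gives no argument of its own here, only the reference to \cite[3.2]{lazda2016rigid}, so the natural point of comparison is the usual dagger-algebra proof. That proof fixes $\eta_0$ with $f \in W^\dag_{d,e,\eta_0}$, splits off the finitely many monomials whose weighted coefficient norm $\|a_{\nu\mu}\|_{\eta_0^{-1}}\eta_0^{|\mu|}$ exceeds $c$, and shrinks $\eta$ until those are at most $c\epsilon$. Over $\Ed_K$ it needs a second splitting in the $t$-expansion of those coefficients. Your termwise interpolation $|a_{\nu\mu n}|\eta_0^{sN} = |a_{\nu\mu n}|^{1-s}\bigl(|a_{\nu\mu n}|\eta_0^{N}\bigr)^{s}$ avoids both splittings. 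It works because $\|f\|$ and $\|f\|_\eta$ are suprema of the same family $|a_{\nu\mu n}|\eta^{N}$, taken at $\eta = 1$ and at $\eta$ respectively, even though $N = |\mu| + \max(-n,0)$ is only piecewise linear in $n$. It also proves more: $\|f\|_{\eta_0^{s}} \le \|f\|^{1-s}\|f\|_{\eta_0}^{s}$ for all $f \in W^\dag_{d,e,\eta_0}$ and $s \in (0,1]$. This is exactly Lemma \ref{lem:hadamar} for $A = W^\dag_{d,e}$, with the explicit choice $\rho' = \rho^{1-\delta}$. It also removes the need for the decomposition $f = f^+ + f^-$ that the paper uses to prove that lemma.

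One phrase in your second step should be tightened. The individual terms $|a_{\nu\mu n}|\eta_0^{N}$ need not tend to zero: for fixed $(\nu,\mu)$ and $n \to +\infty$ they are only bounded, since elements of $\Ed_K$ have bounded but not necessarily null nonnegative-index coefficients (for example $\sum_{n \geq 0} t^n$). What membership in $W^\dag_{d,e,\eta}$ requires has two parts, and your monotonicity remark supplies both:
\begin{itemize}
\item $\E_{\eta_0^{-1}} \subseteq \E_{\eta^{-1}}$;
\item $\|a_{\nu\mu}\|_{\eta^{-1}}\eta^{|\mu|} \le \|a_{\nu\mu}\|_{\eta_0^{-1}}\eta_0^{|\mu|} \to 0$ as $(\nu,\mu)$ varies.
\end{itemize}
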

	
	\begin{proposition}
		$W^\dag_{d, e}$ is a noetherian ring and $W^\dag_{d, e} \to T_{d+e}$ is faithfully flat.
	\end{proposition}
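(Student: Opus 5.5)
We prove the two assertions, noetherianity of $W^\dag_{d,e}$ and faithful flatness of $W^\dag_{d,e}\to T_{d+e}$, by adapting Lazda–P\'al's argument for $d=0$, which is itself modelled on the classical treatment of Monsky–Washnitzer/dagger algebras.

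\textbf{Noetherianity.} We run a Weierstrass-division argument.

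First we establish Weierstrass division and preparation in $W^\dag_{d,e}$. Take $f\in W^\dag_{d,e}$ with $\|f\|=1$ whose reduction $\bar f\in k((t))[x_1,\dots,x_d,y_1,\dots,y_e]$ is "distinguished" in one variable. By the preceding Proposition, $f\in W^\dag_{d,e,\eta}$ with $\|f\|_\eta\le\epsilon$ for $\eta$ close to $1$. Division by $f$ in the Banach algebra $W^\dag_{d,e,\eta}$ is then an approximation scheme. It converges because $\|f-(\text{distinguished polynomial part})\|_\eta<1$ once $\eta$ is close enough to $1$. This is exactly how the bound from the preceding Proposition is used.

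Second, we make generic reductions distinguished. An automorphism $x_i\mapsto x_i+x_n^{c_i}$, $y_j\mapsto y_j+y_e^{c_j}$ (or a mixed version) preserves $W^\dag_{d,e}$ and makes a given nonzero element distinguished in the last variable. We must choose it so that both the $|x|\le1$ and the $|y|\le\eta$ conditions are preserved. The safe choice is to perform the substitution only in the $y$'s when the last variable is a $y$, or to use $x$-substitutions of the form $x_i\mapsto x_i+x_d^{c}$.

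Third, we induct on $d+e$. Weierstrass preparation gives that $W^\dag_{d,e}/(f)$ is finite over $W^\dag_{d,e-1}$ or $W^\dag_{d-1,e}$. The usual induction then gives the noetherian property. The base case is $W^\dag_{0,0}=\Ed_K$, which is a field (a Henselian discrete valuation field), hence noetherian.

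\textbf{Faithful flatness.} Here we follow the Monsky–Washnitzer/Lazda–P\'al route.

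First, we show that $T_{d+e}$ is the completion of $W^\dag_{d,e}$ with respect to the affinoid (Gauss-norm) topology. Polynomials over $\E_K$ are dense in $T_{d+e}$. In addition, $\Ed_K$ is dense in $\E_K$, and $\Ed_K[x,y]\subseteq W^\dag_{d,e}$.

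Second, we check that every ideal $I\subseteq W^\dag_{d,e}$ is closed in the affinoid topology and that $IT_{d+e}\cap W^\dag_{d,e}=I$. To do this we use Weierstrass division compatibly in both rings. Weierstrass division is defined in both $W^\dag_{d,e}$ and $T_{d+e}$, and for a common divisor the dagger quotient and remainder coincide with the affinoid ones. This reduces, by induction, to the finite-module case.

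Third, flatness follows from the standard criterion that the completion of a noetherian ring along a topology for which all ideals are closed, with Artin–Rees-type control coming from the division theorem, is flat. Equivalently, we verify that $\mathrm{Tor}_1(W^\dag_{d,e}/I,T_{d+e})=0$ via $I\otimes T_{d+e}\cong IT_{d+e}$.

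Fourth, faithfulness follows because every maximal ideal $\mathfrak m$ of $W^\dag_{d,e}$ satisfies $\mathfrak mT_{d+e}\ne T_{d+e}$. This holds since $W^\dag_{d,e}/\mathfrak m$ is a finite extension of $\Ed_K$ (by the Noether normalization obtained above), and $\mathfrak mT_{d+e}\cap W^\dag_{d,e}=\mathfrak m$.

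\textbf{Main obstacle.} The hardest step is the division estimate in the mixed setting. The approximation scheme must converge in some $W^\dag_{d,e,\eta}$ uniformly, even though the $t^{-1}$-growth rate $\eta^{-1}$ and the radius $\eta$ in the $y$-variables shrink simultaneously. I would first try to handle this by choosing $\eta$ depending on $\|f\|$ via the preceding Proposition. Following Lazda–P\'al [3.2–3.5], I would then track norms of quotients explicitly.
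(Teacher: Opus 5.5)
You have a genuine gap. The claim that Weierstrass preparation makes $W^\dag_{d,e}/(f)$ finite over $W^\dag_{d-1,e}$ is false as soon as $d\ge 1$, and both halves of your proof depend on it.

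Take $f=x_1-t^{-1}\in W^\dag_{1,0}=\Ed_K\langle x_1\rangle$. Then $\|f\|=1$ and $\bar f$ is monic in $x_1$, so $f$ is distinguished in your sense. Now substitute $x_1\mapsto t^{-1}$.

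\begin{itemize}
\item This gives a well-defined map $W^\dag_{1,0}\to\E_K$: for $g=\sum_i g_ix_1^i$ with $g_i\in\E_{\eta^{-1}}$, we have $\|g_it^{-i}\|\le\|g_i\|_{\eta^{-1}}\to 0$.
\item The map is surjective: $\sum_n a_nt^n\in\E_K$ is the image of $\sum_{n\ge0}a_nt^n+\sum_{i\ge1}a_{-i}x_1^i\in\V[[t]]_K\langle x_1\rangle$.
\end{itemize}

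So $W^\dag_{1,0}/(f)$ surjects onto $\E_K$. But $\E_K$ is not finite over $\Ed_K=W^\dag_{0,0}$: a henselian field of characteristic $0$ is algebraically closed in its completion, and $\E_K\neq\Ed_K$.

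Choosing $\eta$ near $1$ does not fix your ``main obstacle''. The lower coefficient $t^{-1}$ of the distinguished polynomial has Gauss norm $1$, but it has norm $\eta>1$ in every fringe algebra $W^\dag_{1,0,\eta}$. Since the $x$-variables are not overconvergent, you cannot enlarge the $x_1$-radius to absorb this, as you could for a $y$-variable.

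The same example breaks your faithfulness step. The kernel of $W^\dag_{1,0}\to\E_K$ is a maximal ideal whose residue field $\E_K$ is not a finite extension of $\Ed_K$.

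Your flatness criterion is also not a standard statement. The affinoid topology on a ring where $p$ is invertible is not an ideal-adic topology of that ring. Flatness of completions is a fact about $I$-adic completions of noetherian rings.

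The missing idea, which is how the paper argues, is to absorb the $x$-variables into a complete base ring rather than divide in them.

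\begin{itemize}
\item Let $R=\V[[t]]\langle x_1,\dots,x_d\rangle$; this is a $p$-adically complete noetherian ring.
\item Let $W$ be the $p$-adic weak completion of the finitely generated $R$-algebra $R[t^{-1},y_1,\dots,y_e]$, and let $T$ be its $p$-adic completion.
\item Fulton's theorem on weakly complete algebras gives that $W$ is noetherian and that $W\to T$ is faithfully flat. Flatness holds because $T$ is the $p$-adic completion of a noetherian ring. Faithfulness holds because $1+pW\subseteq W^\times$, so $p$ lies in the Jacobson radical of $W$.
\item Since $W_K=W^\dag_{d,e}$ and $T_K=T_{d+e}$, inverting $p$ gives both assertions.
\end{itemize}

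This route never requires division in the $x$-variables.
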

	\begin{proof}
		Let $W$ be the $p$-adic weak completion of the ring $\V[[t]]\langle x_1, \dots, x_d \rangle[t^{-1}, y_1, \dots, y_e]$ and $T$ the $p$-adic completion of $W$. By \cite{fulton1969note}, $W$ is noetherian and $W \to T$ is faithfully flat. Since $W^\dag_{d, e} = W_K$ and $T_{d+e} = T_K$, we win. 
	\end{proof}
	
	\begin{corollary}
		Any ideal $I$ of $W^\dag_{d, e}$ is a closed subset for affinoid topology.
	\end{corollary}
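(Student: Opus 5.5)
The plan is the standard argument that, in a noetherian ring which injects faithfully flatly into a noetherian Banach algebra, ideals are closed in the induced topology. Set $W = W^\dag_{d,e}$ and $T = T_{d+e}$, and write $\iota : W \to T$ for the inclusion, which is faithfully flat by the preceding Proposition. By definition, the affinoid topology on $W$ is the subspace topology from $T$ with its Gauss norm $\|\cdot\|$. It therefore suffices to show
$$ I = W \cap IT $$
and that $IT$ is closed in $T$.

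For closedness of $IT$, I use that $T = \E_K\langle x_1,\dots,x_d,y_1,\dots,y_e\rangle$ is a Tate algebra over the complete discretely valued field $\E_K$. Hence $T$ is noetherian, and every ideal of $T$ is closed by the classical result on affinoid algebras (e.g.\ BGR 5.2.7/2). Concretely, $IT$ is finitely generated, say by $f_1,\dots,f_r \in I$. The surjection $T^r \to IT$ is a surjective continuous map of Banach modules over a noetherian Banach algebra, so its image is closed. Alternatively, I can cite the fact that all ideals in an affinoid algebra are closed.

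For the equality $W \cap IT = I$, I apply faithful flatness. The inclusion $I \subseteq W \cap IT$ is clear. For the reverse, tensor the injection $0 \to W/I$ with $T$. Faithful flatness of $W \to T$ implies that $M \to M \otimes_W T$ is injective for every $W$-module $M$. Taking $M = W/I$, the map $W/I \to T/IT$ is injective, i.e.\ $W \cap IT = I$.

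Combining the two steps, $I = \iota^{-1}(IT)$ is the preimage of a closed set under a continuous map, so $I$ is closed in the affinoid topology. The only point needing care is the closedness of ideals in $T$ over the non-algebraically-closed, non-discretely-normed base $\E_K$. Since $\E_K$ is a complete discretely valued field, the standard Tate-algebra theory applies verbatim, so I do not expect a real obstacle there.
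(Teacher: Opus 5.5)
Your proposal is correct and is essentially the paper's argument: faithful flatness of $W^\dag_{d,e} \to T_{d+e}$ gives $I = IT_{d+e} \cap W^\dag_{d,e}$. You also spell out the step the paper leaves implicit, namely that the ideal $IT_{d+e}$ is closed in the Tate algebra $T_{d+e}$ over the complete discretely valued field $\E_K$; your phrase ``non-discretely-normed'' is just a slip and plays no role.
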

	\begin{proof}
		By faithful flatness, $I = IT_{d+e} \cap W^\dag_{d, e}$.
	\end{proof}
	
	\begin{definition} \label{def:pda}
		A partial dagger algebra over $\Ed_K$ is a $\Ed_K$-algebra $A$ which is isomorphic to some $W^\dag_{d, e} / I$.
		
		For a partial dagger algebra $A$ and a surjection $W^\dag_{d, e} \to A$, the quotient topology on $A$ induced by affinoid topology of $W^\dag_{d, e}$ is again called an affinoid topology. Given an affinoid topology on $A$, we denote by $\wh{A}$ the completion with respect to the topology.
	\end{definition}
	
	\begin{remark}
		Differing from the case of Tate algebras, Monsky-Washnitzer algebras and Lazda-P\'{a}l algebras, the affinoid topology is not unique for partial dagger algebras. We will often treat the completion of partial dagger algebras and readers should be careful that we implicitly consider surjective homomorphisms from $W_{d, e}^\dag$,
	\end{remark}
	
	\begin{corollary} \label{cor:pda}
		Any partial dagger algebra $A$ is a noetherian ring and $A \to \wh{A}$ is faithfully flat.
	\end{corollary}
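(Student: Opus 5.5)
Fix a surjection $\pi : W^\dag_{d, e} \to A$ with kernel $I$, defining the affinoid topology on $A$. Noetherianity is immediate from the previous proposition: $W^\dag_{d, e}$ is noetherian, and so is its quotient $A \cong W^\dag_{d, e}/I$.

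For faithful flatness, the plan is to identify $\wh{A}$ with $T_{d+e}/IT_{d+e}$ and then use base change. First, the affinoid topology on $A$ is the quotient topology induced by the Gauss norm $\|\cdot\|$ restricted to $W^\dag_{d, e} \subseteq T_{d+e}$. I will show that the completion of $W^\dag_{d, e}$ for this topology is $T_{d+e}$. The inclusion is continuous by definition. The image is dense, since polynomials in $x_i, y_j$ with coefficients in $\Ed_K$ already lie in $W^\dag_{d,e}$. Moreover $\Ed_K$ is dense in $\E_K$ for the Gauss norm, because truncations of Laurent series converge. Hence $W^\dag_{d,e}$ is dense in the Banach algebra $T_{d+e}$.

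Next, the completion of the quotient seminormed module $W^\dag_{d, e}/I$ is the quotient of $T_{d+e}$ by the closure $\overline{I}$ of $I$ in $T_{d+e}$. This is the standard fact that completion commutes with quotients by a subspace, the quotient being taken by its closure. Then I need $\overline{I} = IT_{d+e}$. Since $T_{d+e}$ is noetherian (a Tate algebra over the complete discretely valued field $\E_K$), every ideal of $T_{d+e}$ is closed; in particular $IT_{d+e}$ is closed. It contains $I$, so $\overline{I} \subseteq IT_{d+e}$. Conversely, $\overline{I}$ is an ideal containing $I$, and it is closed under multiplication by $T_{d+e}$ by density and continuity, so $IT_{d+e} \subseteq \overline{I}$. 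Therefore $\wh{A} \cong T_{d+e}/IT_{d+e} = A \otimes_{W^\dag_{d,e}} T_{d+e}$. The main point to be careful about is that this identification is compatible with the maps from $A$.

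Faithful flatness is preserved by base change. Applying it to the faithfully flat map $W^\dag_{d, e} \to T_{d+e}$ from the previous proposition, base changed along $W^\dag_{d,e} \to A$, shows that $A \to A \otimes_{W^\dag_{d,e}} T_{d+e} = \wh{A}$ is faithfully flat. The step I expect to need the most care is the identification of the completion with $T_{d+e}/IT_{d+e}$, namely the closedness of $IT_{d+e}$ and the description of the quotient completion. The Hausdorffness of the quotient topology on $A$ itself is already guaranteed by the preceding corollary that $I$ is closed in $W^\dag_{d,e}$.
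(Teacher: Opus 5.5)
Your proof is correct and is essentially the argument the paper intends: the paper states the corollary without proof, as an immediate consequence of the preceding results that $W^\dag_{d,e}$ is noetherian, that $W^\dag_{d,e} \to T_{d+e}$ is faithfully flat, and that ideals of $W^\dag_{d,e}$ are closed. Your details supply what the paper leaves implicit, namely the density of $W^\dag_{d,e}$ in $T_{d+e}$, the description of the completed quotient as $T_{d+e}/\overline{I}$, and the equality $\overline{I} = IT_{d+e}$, from which $\wh{A} \cong A \otimes_{W^\dag_{d,e}} T_{d+e}$ and faithful flatness follows by base change.
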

	
	\begin{definition}
		Let $A$ be a parital algebra, $f : W^\dag_{d, e} = \E^\dag_K\langle x_1, \dots, x_d \rangle \langle y_1, \dots, y_e \rangle ^\dag \to A$ a surjective homomorphism and $I = \ker f$. Let $I_0$ be an ideal of $W^\dag_{d, e, \eta_0}$ for some $\eta_0 \in (1, \infty] \cap \Gamma^*$ such that $I_0 W^\dag_{d, e} = I$. We call $A_\eta = W^\dag_{d, e, \eta} / I_0 W^\dag_{d, e, \eta}$ a fringe algebra of $A$ for $\eta \in (1, \eta_0] \cap \Gamma^*$ and call $\{A_\eta\}_\eta$ a fringe inductive system of $A$.
		The partial dagger spectrum $\Spd A$ of $A$ is defined to be the germ $(\{ x \in \Spa A_{\eta_0} : |t|_{[x]} \geq 1, |y_1|_{[x]}, \dots, |y_e|_{[x]} \leq 1 \}, \Spa A_{\eta_0})$. Note that $\{ \Spa A_\eta \}_\eta$ is a cofinal system of open neighborhood $\Spd A$ in $\Spa A_{\eta_0}.$
	\end{definition}
	
	\begin{remark}
		This definition is dependent on the choice of a surjection $W^\dag_{d, e} \to A$ and $\eta_0, \, I_0$. When $d = 0$, it is independent of the choice since any fringe inductive systems are equivalent by the proof of \cite[3.12]{lazda2016rigid}. Although the notation of partial dagger spectrum $\Spa A$ should contain the information of the presentation, we abbreviate it in order to avoid the notation being intricate. 
	\end{remark}
	
	\begin{definition}
		An affinoid partial dagger space is a germ isomorphic to $\Spd A$ for some partial dagger algebra $A$.
		A parital dagger space is a germ locally isomorphic to affinoid partial dagger spaces.
		
		Given an affinoid partial dagger space $(X, \Xs) = \Spd A$, we define the completion $\wh{X}$ to be $\Spa \wh{A},$ which is an open subset of $X$. Given a partial dagger space $(X, \Xs)$, and an affinoid open covering $\{ U_i \}$ of $X$, we define the completion $\wh{X}$ to be the open subset $\bigcup_i \wh{U}_i$ of $X$.
	\end{definition}
	
	\begin{remark}
		The completion of partial dagger space is dependent on the choice of open coverings, but we again omit it from the notation.
	\end{remark}
	
	\begin{remark} \label{rem:open-pds}
		Given a partial dagger space $(X, \Xs)$ and an open subset $\Us$ of $\Xs$, the germ $(X \cap \Us, \Us)$ is again a partial dagger space. In order to check this, we may assume that $(X, \Xs)$ is an affinoid and $\Us$ is a rational open subset of $\Xs$; it is straightforward (see the proof of following proposition). 
	\end{remark}
	
	\begin{proposition} \label{prop:theoremAB}
		Let $A$ be a partial dagger algebra and $X = \Spd A.$ 
		\begin{enumerate}[(i)]
			\item $\O_X$ is a coherent sheaf.
			\item (Theorem A) The functor $\Gamma(X, \Box) : \Coh(\O_X) \to \Modfg(A)$ is an equivalence of categories.
			\item (Theorem B) For a coherent $\O_X$-module $\E$, we have $H^i(X, \E) = 0$ for all $i > 0$.
		\end{enumerate}
	\end{proposition}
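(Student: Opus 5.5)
The plan is to reduce everything to the affinoid spaces $\Spa A_\eta$ forming the fringe system, and then pass to the limit using Proposition \ref{prop:germ-section}. Fix a fringe inductive system $\{A_\eta\}_{\eta\in(1,\eta_0]\cap\Gamma^*}$. Each $A_\eta = W^\dag_{d,e,\eta}/I_0W^\dag_{d,e,\eta}$ is a quotient of a Tate-type algebra over the Banach algebra $\E_\eta$. The ring $\E_\eta$ is strongly noetherian: it is, up to rescaling, a quotient of a Tate algebra over $K$ by $(1-ts)$ (with $s=t^{-1}$), namely $K\langle t, s\rangle$ with a rescaled radius, modulo $ts-1$. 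Hence each $A_\eta$ is strongly noetherian Tate, and $\Spa A_\eta$ is a noetherian affinoid adic space. Over such a space Theorems A and B for coherent sheaves are standard (Huber, Kiehl). The subset $X=\Spd A$ is an intersection of the open sets $\Spa A_\eta$, and these form a cofinal system of open neighborhoods of $X$.

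For (iii) and the sheaf-level part of (ii), I would first observe that a coherent $\O_X$-module $\E$ comes, by the analogue of Proposition \ref{prop:tube-sheaf}(iii) (a direct limit argument, since $X$ is quasi-compact and all the $\Spa A_\eta$ are quasi-compact and quasi-separated), from a coherent sheaf $\E_\eta$ on some $\Spa A_\eta$. The Proposition \ref{prop:germ-section} argument, applied to $\E_\eta$ instead of $\O$, gives
\[
H^i(X,\E)=\varinjlim_{\eta'\le\eta}H^i(\Spa A_{\eta'},\E_\eta|)=0 \qquad (i>0),
\]
where the vanishing holds by Kiehl's Theorem B on each affinoid. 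Abe's Lemma 1.4.3 is stated for $\O$, but its proof is a compactness argument on the spectral space combined with the commutation of cohomology with filtered colimits of neighborhoods, so it works verbatim for any sheaf.

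For (ii), the global sections satisfy $\Gamma(X,\E)=\varinjlim_{\eta}M_\eta$, where $M_\eta$ is a finite $A_\eta$-module with $M_{\eta'}=M_\eta\otimes_{A_\eta}A_{\eta'}$. The key input is that $\varinjlim A_\eta = A$. This holds because $W^\dag_{d,e}=\varinjlim W^\dag_{d,e,\eta}$ and $I=I_0W^\dag_{d,e}$, so colimits commute with the quotient. Consequently $\Gamma(X,\E)=M_\eta\otimes_{A_\eta}A$ is finitely generated. Conversely, a finitely generated $A$-module is finitely presented because $A$ is noetherian (Corollary \ref{cor:pda}), so it descends to some $A_\eta$ and yields a coherent sheaf. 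Full faithfulness follows by the same limit argument applied to Hom, using finite presentation.

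Part (i) follows formally from (ii) and (iii) once we know that the restriction of $\O_X$ to rational subdomains behaves correctly. By Remark \ref{rem:open-pds}, a rational open $U\subseteq X$ is again $\Spd$ of a partial dagger algebra $A_U$, and $A\to A_U$ is flat. Flatness comes from taking the limit of the flat maps $A_\eta\to (A_\eta)_U$. Coherence then follows by the usual argument that kernels of maps $\O_U^n\to\O_U$ are finitely generated.

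The main obstacle I expect is the claim that $\varinjlim_\eta$ of sections over $\Spa A_\eta$ equals $A$ on the nose. This needs both that $\Gamma(\Spa A_\eta,\O)=A_\eta$, which holds because $A_\eta$ is strongly noetherian and hence sheafy, and that the neighborhoods $\Spa A_\eta$ are cofinal among all open neighborhoods of $X$. The latter is asserted in the definition of $\Spd A$, but I would verify it carefully via quasi-compactness of $X$ in the constructible topology.
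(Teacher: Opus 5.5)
Your proposal is correct. For (ii) and (iii) it is the paper's argument: pass to the cofinal affinoid neighborhoods $\Spa A_\eta$, and use $\Coh(\O_X)\simeq\varinjlim_\eta\Coh(\O_{\Spa A_\eta})\simeq\varinjlim_\eta\Modfg(A_\eta)\simeq\Modfg(A)$. For (iii), combine Abe's Lemma 1.4.3 (for a coherent sheaf rather than just $\O$) with vanishing on the affinoids.

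The one real divergence is the flatness step in (i). You get flatness of $A\to A_U$ at the fringe level: $A_U=\varinjlim_\eta A_\eta\langle f/g\rangle$, each rational localization $A_\eta\to A_\eta\langle f/g\rangle$ is flat, and a filtered colimit of flat modules over a filtered colimit of rings is flat. The paper instead completes. The map $\wh{A}\to\wh{A_U}$ is a rational localization of an affinoid algebra over $\E_K$, hence flat. Flatness then descends along the faithfully flat maps $A\to\wh{A}$ and $A_U\to\wh{A_U}$ of Corollary \ref{cor:pda}. Your route avoids faithful flatness of completion but uses Huber's flatness of rational localizations for the strongly noetherian rings $A_\eta$. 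The paper's route for this step needs only classical affinoid theory over the field $\E_K$. Both rely on the same identification of $A_U$ and the same noetherianity input.

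One correction to your setup: $\E_\eta$ is not a rescaled $K\langle t,s\rangle/(ts-1)$. Its positive part is only required to be bounded; for example $\sum_{n\ge 0}t^n\in\E_\eta$, which does not converge on a closed $K$-annulus. So $\Spa A_\eta$ is not a classical affinoid over $K$, and Kiehl's classical theorems do not apply as stated. Rather, $\E_\eta$ is the rational localization $\{|t|\ge\eta\}$ of the Tate ring $\V[[t]]_K$. That ring is strongly noetherian because $\V[[t]]\langle x_1,\dots,x_n\rangle$ is noetherian, so $\E_\eta$ and $A_\eta$ are strongly noetherian. Your conclusions therefore survive, provided you cite the Huber/Kiehl-type Theorems A and B for affinoids of strongly noetherian Tate rings.
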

	\begin{proof}
		(i) Let $\{ A_\eta \}_{1 < \eta \leq \eta_0}$ be a fringe inductive system of $A$ which gives rise to $X$. Let $\Xs = \Spa A_{\eta_0}$. For a rational open subset $\Us = \Xs(\frac{f_1, \dots, f_r}{g})$, we have $\Gamma(\Us \cap X, \O_X) = A \langle x_1, \dots, x_r \rangle/(gx_1 - f_1, \dots, gx_r - f_r).$ In particular, $\Gamma(\Us \cap X, \O_X)$ is noetherian by Corollary \ref{cor:pda}. The induced map
		$\Gamma(X, \O_X)^{\wh{\ }} \to \Gamma(\Us \cap X, \O_X)^{\wh{\ }}$
		is the restriction map of rigid analytic varieties over $\E_K$ and therefore it is flat. By faithful flatness of Corollary \ref{cor:pda}, the restriction map
		$\Gamma(X, \O_X) \to \Gamma(\Us \cap X, \O_X)$
		is also flat.
		
		(ii) Let $\Us_\eta = \Spa A_\eta$. These form a cofinal system of open neighborhoods of $X$ in $\Xs$, and we have $\Coh(\O_X) \simeq \varinjlim_{\eta} \Coh(\O_{\Us_\eta}) \simeq \varinjlim_{\eta} \Modfg(A_\eta) \simeq \Modfg(A).$
		
		(iii) Use \cite[1.4.3]{abe2025proper} and the vanishing of higher cohomology of coherent modules on affinoid spaces.
	\end{proof}
	
	\begin{proposition} \label{prop:tube-pds}
		Let $(X, Y, \Pf)$ be a frame over $\V[[t]]$. Assume that $Y-X$ is a divisor of $Y$. Then the tube $]X[$ is a partial dagger space.
	\end{proposition}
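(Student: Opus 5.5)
The plan is to reduce to an affine, explicit situation and write down a presentation of a partial dagger algebra whose partial dagger spectrum is, as a germ, a piece of $]X[$. Being a partial dagger space is local on the germ, and by Remark \ref{rem:open-pds} we may freely pass to open subsets of the ambient space. So we may assume $\Pf = \Spf A$ is affine, with $A$ topologically of finite type over $\V[[t]]$. We fix a surjection $\V[[t]]\langle x_1, \dots, x_d\rangle \to A$. We choose $f_1, \dots, f_r \in A$ whose reductions cut out $Y$ in the special fibre $P$, and $g \in A$ whose reduction cuts out the divisor $Y - X$ in $Y$; shrinking $\Pf$ makes this possible. By the proposition describing tubes, $]Y[_\Pf = \bigcup_m \Vs_m$, where
$$\Vs_m = \{ x \in \Pf^\ad : |f_i|_x^m \leq |p|_x \ \forall i \}$$
is a rational subset. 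It therefore suffices to show that each germ $(]X[ \cap \Vs_m, \Vs_m)$ is an affinoid partial dagger space, or is covered by such.

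The first key step is a pointwise description of $]X[$ inside $\Vs_m$. Since $t$ is not part of the ideal of definition but $|t|_x \leq 1$ on $\Pf^\ad$, every point satisfies $|t|_{[x]} \leq 1$. If $|t|_{[x]} < 1$, then $[\sp](x)$ lies in $Y_s$, which is disjoint from $X$ because $X$ is a $k((t))$-variety. Hence on $]X[$ we have $|t|_{[x]} = 1$. Similarly $x \in ]X[$ if and only if $x \in ]Y[$ and $g$ does not vanish at $[\sp](x)$, that is, $|g|_{[x]} \geq 1$. Combining these,
$$]X[ \cap \Vs_m = \{ x \in \Vs_m : |t|_{[x]} \geq 1, \ |g|_{[x]} \geq 1 \}.$$
This has exactly the shape of the closed set in the definition of $\Spd$, once we introduce one variable $y$ with $gy = 1$ and $|y| \leq 1$.

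The second step is the construction. We set
$$B = W^\dag_{d+r, 1} / I, \qquad I = \bigl( I_A,\ f_i^m - p z_i \ (1 \le i \le r),\ g y - 1 \bigr),$$
where the Tate variables are $x_1, \dots, x_d, z_1, \dots, z_r$, the dagger variable is $y$, and $I_A$ is generated by (lifts of) generators of the kernel of $\V[[t]]\langle \underline{x}\rangle \to A$. All of these generators already live in $W^\dag_{d+r,1,\eta_0}$ for some $\eta_0$, so we take $I_0$ to be the ideal they generate there. The fringe algebra $B_\eta$ then has adic spectrum equal to the rational subset
$$\{ x \in \Vs_m : |t|_x \geq \eta^{-1},\ |g^{-1}|_x \leq \eta \}$$
of $\Vs_m$, since adjoining $t^{-1}$ with $|t^{-1}| \leq \eta$ and $y$ with $|y| \leq \eta$ to the affinoid algebra of $\Vs_m$ is precisely a rational localisation. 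These sets are open neighbourhoods of $]X[ \cap \Vs_m$ in $\Vs_m$. The closed set $\{ |t|_{[x]} \geq 1, |y|_{[x]} \leq 1 \}$ defining $\Spd B$ coincides with $]X[ \cap \Vs_m$ by the first step. This identifies the germ $(]X[ \cap \Vs_m, \Vs_m)$ with $\Spd B$, up to strict neighbourhoods.

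The main obstacle is this last identification. One must check that $\Spa B_\eta$ really is the stated rational subset, which amounts to comparing the completed tensor product with $\E_{\eta^{-1}}$ with rational localisation by $t^{-1}$. One must also check that the passage between conditions on $x$ and on the maximal generalisation $[x]$ is compatible, which uses that the sets involved are intersections of their open neighbourhoods, as in Proposition \ref{prop:germ-section}. Finally, the local pieces glue, since on overlaps they differ by rational localisations, which by Remark \ref{rem:open-pds} again give partial dagger spaces.
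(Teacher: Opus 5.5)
Your proposal is correct and follows essentially the paper's argument: reduce to $\Pf = \Spf A$ affine, describe $]X[$ inside $]Y[$ as $\{|t|_{[x]} \geq 1,\ |g|_{[x]} \geq 1\}$, and present it by $W^\dag_{n,1}$ modulo the relations of $A$ together with $gy - 1$, where $y$ is the single dagger variable. The only difference is that you build the rational pieces $\Vs_m$ of $]Y[$ into the presentation through the extra Tate variables $z_i$ with $f_i^m = p z_i$, whereas the paper takes $\Spd$ over all of $\Pf^\ad$ and then cuts down to the open subset $]Y[$ using Remark \ref{rem:open-pds}; your final gluing step is unnecessary, since being a partial dagger space is a local condition.
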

	It suffices to prove this for $\Pf = \Spf A$ affine. Take a presentation $A \simeq \V[[t]]\langle x_1, \cdots, x_n \rangle / (f_1, \dots, f_k)$ and choose $g_1, \dots, g_l, h \in A$ such that $Y = V(g_1, \dots, g_l)$ and $X = Y \cap D(h).$
	Then
	$$]Y[ = \{ x \in \Pf^\ad : |g_1|_{[x]}, \dots, |g_l|_{[x]} < 1\}$$
	is an open subset of $\Pf$ and
	$$]X[ = \{ x \in ]Y[ \, : |h|_{[x]} \geq 1, \, |t|_{[x]} \geq 1\}.$$
	Let $Z = \{ x \in \Pf^\ad : |h|_{[x]} \geq 1, \, |t|_{[x]} \geq 1 \}.$ Then by definition $(Z, \Pf^\ad) = \Spd W^\dag_{n, 1}/(f_1, \dots, f_k, gy - 1).$ Since $]X[ = Z \cap ]Y[_\Pf$, it follows from Remark \ref{rem:open-pds}.
	
	\subsection{Overconvergent isocrystals}
	Definitions of $\Ed$ and $K$-valued overconvergent isocrystals are given in \cite[2.54, 5.11]{lazda2016rigid}. For reader's sake, we collect the definitions and propositions in this and the following sections.
	
	\begin{definition}[cf. {\cite[Definition 5.9]{lazda2016rigid}}]
		\begin{enumerate}
			\item We say that a $k$-scheme $X$ is of psuedo finite type if there is a $k$-morphism $X \to \Spec k[[t]] \times_k \cdots \times_k \Spec k[[t]]$.
			\item We say that a $p$-adic formal scheme $\Xf$ over $\V$ is of psuedo finite type if there is a $\V$-morphism $\Xf \to \Spf(\V[[t]]) \times_\V \cdots \times_\V \Spf(\V[[t]]).$
			\item We say that a rigid analytic space $\Xs$ over $K$ is called locally of pseudo finite type if there is a $K$-morphism $\Xs \to \Spa(\V[[t]]_K) \times_K \cdots \times_K \Spa(\V[[t]]_K).$
		\end{enumerate}
	\end{definition}
	
	\begin{definition}[cf. {\cite[Definition 5.10]{lazda2016rigid}}]
		A frame over $k((t))$ or a $k((t))$-frame is a 5-uple $(X, Y, \Pf, j, i)$ of a $k((t))$-variety $X$, a  $k[[t]]$-scheme $Y$, a formal $\V$-scheme $\Pf$ of pseudo finite type, an open immersion $j : X \to Y$ and a closed immersion $i : Y \to \Pf$. There is an obvious notion of morphisms of $k((t))$-frames.
		
		A morphism $(f, g, u) : (X', Y', \Pf') \to (X, Y, \Pf)$ of $k((t))$-frames is said to be proper (resp. smooth, etale) if $g$ is proper (resp. $u$ is smooth in an open neighborhood of $X'$, $u$ is etale in an open neighborhood of $X'$). A $k((t))$-frame $(X, Y, \Pf)$ is said to be proper (resp. smooth, elate) if the structure morphism $(X, Y, \Pf) \to (\Spec k((t)), \Spec k[[t]], \Spf \V[[t]])$ is.
	\end{definition}
	
	For a $k((t))$-frame $(X, Y, \Pf)$, we can define the tubes $]X[_\Pf$ and $]Y[_\Pf$ exactly as before, together with the functor $j^\dag_X$. Proposition \ref{prop:tube-sheaf} again holds for $k((t))$-frames.
	
	\begin{definition}
		A pair over $\V[[t]]$ (resp. $k((t))$, resp. $\E_K$) or a $\V[[t]]$-pair (resp. $k((t))$-pair, $\E_K$-pair) is a triple $(X, Y, j)$ with $X$ a $k((t))$-variety, $Y$ a separated $k[[t]]$-scheme of finite type (resp. a $k[[t]]$-scheme of psuedo finite type, a variety over $k((t))$) and $j : X \to Y$ an open immersion. We often drop $j$ from the notation and simply write $(X, Y)$ for a pair. There is an obvious notion of morphisms of pairs. A morphism $(f, g) : (X', Y') \to (X, Y)$ is said to be proper if $g$ is proper. A $\V[[t]]$-pair (resp. $k((t))$-pair, $\E_K$-pair) is said to be proper if the structure morphism $(X, Y) \to (\Spec k((t)), \Spec k[[t]])$ (resp. $(X, Y) \to (\Spec k((t)), \Spec k[[t]])$, $(X, Y) \to (\Spec k((t)), \Spec k((t)))$ is.
	\end{definition}
	
	\begin{definition}[cf. {\cite[Definition 2.54, Definition 5.10]{lazda2016rigid}}]
		\begin{enumerate}
			\item An $\Ed_K$-valued (resp. $K$-valued, $\E_K$-valued) overconvergent isocrystal $\E$ on a $\V[[t]]$-frame (resp. $k((t))$-frame, $\E_K$-frame) $(X, Y, \Pf)$ is a collection of coherent $\O_{]X'[_{\Pf'}}$-modules $\E_{\Pf'}$, one for each $\V[[t]]$-frame (resp. $k((t))$-frame, $\E_K$-frame) $(X', Y', \Pf')$ over $(X, Y, \Pf)$, together with $\O$-linear isomorphisms $\epsilon_u : u^* \E_{\Pf'} \to \E_{\Pf''}$ for every morphism of $\V[[t]]$-frames (resp. $k((t))$-frames, $\E_K$-frames) $u : (X'', Y'', \Pf'') \rightarrow (X', Y', \Pf')$ such that the diagram
			\[
			\xymatrix{
				(X'', Y'') \ar[rr] \ar[rd] & & (X', Y') \ar[ld] \\
				& (X, Y) &
			}
			\]
			commutes, which satisfies the cocycle condition: for any commutative diagram
			\[
			\xymatrix{
				(X''', Y''', \Pf''') \ar[rr]^{u''} \ar[rd]_{u} & & (X'', Y'', \Pf'') \ar[ld]^{u'} \\
				& (X', Y', \Pf') &
			}
			\]
			of frames over $(X, Y, \Pf)$, we have $\epsilon_u = \epsilon_{u''} (u''^*\epsilon_{u'})$. There is an obvious notion of morphism of $\Ed_K$-valued (resp. $K$-valued, $\E_K$-valued) overconvergent isocrystal $\E$ on $(X, Y, \Pf)$. The category of $\Ed_K$-valued (resp. $K$-valued, $\E_K$-valued) overconvergent isocrystal is denoted by $\Isoc(X, Y, \Pf/\Ed_K)$ (resp. $\Isoc(X, Y, \Pf/K), \Isoc(X, Y, \Pf/\E_K)$).
			
			\item A $\V[[t]]$-frame (resp. $k((t))$-frame, $\E_K$-frame) over a $\V[[t]]$-pair (resp. $k((t))$-pair, $\E_K$-pair) $(X, Y)$ is a frame $(X', Y', \Pf')$ together with a morphism of pairs $(X', Y') \to (X, Y)$. An $\Ed_K$-valued (resp. $K$-valued, $\E_K$-valued) overconvergent isocrystal $\E$ on $(X, Y)$ is a collection of coherent $\O_{]X'[_{\Pf'}}$-modules $\E_{\Pf'}$, one for each frame $(X', Y', \Pf')$ over $(X, Y, \Pf)$, together with $\O$-linear isomorphisms $\epsilon_u : u^* \E_{\Pf'} \to \E_{\Pf''}$ for every morphism of frames $u : (X'', Y'', \Pf'') \rightarrow (X', Y', \Pf')$, which satisfies the cocycle condition as above. There is an obvious notion of morphism of $\Ed_K$-valued (resp. $K$-valued, $\E_K$-valued) overconvergent isocrystal $\E$ on $(X, Y)$. The category of $\Ed_K$-valued overconvergent isocrystals is denoted by $\Isoc(X, Y/\Ed_K)$ (resp. $\Isoc(X, Y/K)$, $\Isoc(X, Y/\E_K)$).
			
			\item A $\V[[t]]$-frame (resp. $k((t))$-frame, $\E_K$-frame) over a $k((t))$-variety $X$ is a frame $(X', Y', \Pf')$ together with a morphism $X' \to X$. An $\Ed_K$-valued overconvergent isocrystal $\E$ on $X$ is a collection of coherent $\O_{]X'[_{\Pf'}}$-modules $\E_{\Pf'}$, one for each frame $(X', Y', \Pf')$ over $(X, Y, \Pf)$, together with $\O$-linear isomorphisms $\epsilon_u : u^* \E_{\Pf'} \to \E_{\Pf''}$ for every morphism of frames $u : (X'', Y'', \Pf'') \rightarrow (X', Y', \Pf')$ over $X$, which satisfies the cocycle condition as above. There is an obvious notion of morphism of $\Ed_K$-valued (resp. $K$-valued, $\E_K$-valued) overconvergent isocrystal $\E$ on $X$. The category of $\Ed_K$-valued overconvergent isocrystal is denoted by $\Isoc(X/\Ed_K)$ (resp. $\Isoc(X/K)$, $\Isoc(X/\E_K)$).
		\end{enumerate} 
	\end{definition}
	
	The expected independence of frames also holds in our situation.
	
	\begin{theorem}
		\begin{enumerate}[(i)]
			\item For any proper smooth morphism of $\V[[t]]$-frames (resp. $k((t))$-frames, $\E_K$-frames) $(id, g, u) : (X, Y', \Pf') \to (X, Y, \Pf)$, the canonical functor $u^* : \Isoc(X, Y, \Pf/\Ed_K) \to \Isoc(X, Y', \Pf'/\Ed_K)$ (resp. $u^* : \Isoc(X, Y, \Pf/K) \to \Isoc(X, Y', \Pf'/K)$, $u^* : \Isoc(X, Y, \Pf/\E_K) \to \Isoc(X, Y', \Pf'/\E_K)$) is an equivalence of categories.
			\item For any proper morphism of $\V[[t]]$-pairs (resp. $k((t))$-pairs, $\E_K$-pairs) $(id, g): (X, Y') \to (X, Y)$, the canonical functor $g^* : \Isoc(X, Y/\Ed_K) \to \Isoc(X, Y'/\Ed_K)$ (resp. $g^* : \Isoc(X, Y/K) \to \Isoc(X, Y'/K)$, $g^* : \Isoc(X, Y/\E_K) \to \Isoc(X, Y'/\E_K)$)  is an equivalence of categories.
			\item For any smooth $\V[[t]]$-frame (resp. $k((t))$-frame, $\E_K$-frame) $(X, Y, \Pf)$, the forgetful functor $\Isoc(X, Y/\Ed_K) \to \Isoc(X, Y, \Pf/\Ed_K)$ (resp. $\Isoc(X, Y/K) \to \Isoc(X, Y, \Pf/K)$, $\Isoc(X, Y/\E_K) \to \Isoc(X, Y, \Pf/\E_K)$) is an equivalence of categories.
			\item For any proper $\V[[t]]$-pair (resp. $k((t))$-pair, $\E_K$-pair) $(X, Y)$, the forgetful functor $\Isoc(X/\Ed_K) \to \Isoc(X, Y/\Ed_K)$ (resp. $\Isoc(X/K) \to \Isoc(X, Y/K)$, $\Isoc(X/\E_K) \to \Isoc(X, Y/\E_K)$) is an equivalence of categories.
		\end{enumerate}
	\end{theorem}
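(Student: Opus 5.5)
The plan is to follow the classical arguments of Berthelot and Le Stum, as adapted by Lazda and P\'al, with the strong fibration theorem (Theorem \ref{thm:strong-fibration}) and Proposition \ref{prop:tube-sheaf} replacing their classical counterparts. The key technical input is the strong fibration theorem for proper \emph{smooth} morphisms, not just \'etale ones. I would reduce the smooth case to the \'etale case in the standard way. Locally on $\Pf'$ near $X$, a smooth morphism $u$ which induces the identity on $X$ admits a section $X \to \Pf'$ lifting the given embedding. Using local coordinates, we factor $u$ \'etale-locally as $\Pf' \to \widehat{\mathbb{A}}^n_{\Pf} \to \Pf$, where the first map is \'etale near $X$ and $X$ maps into the zero section. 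By Theorem \ref{thm:strong-fibration}, the germ $]X[_{\Pf'}$ is then isomorphic to $]X[_{\Pf} \times D(0,1^-)^n$, the product with an open unit polydisc. This is the statement I would establish first.

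For (i), given this product description, I would argue as in \cite[Theorem 7.2.5]{lestum2007rigid} (cf. \cite[2.54]{lazda2016rigid}). An isocrystal on $(X,Y,\Pf)$ is the same as a coherent $j^\dag_X\O_{]Y[}$-module with an overconvergent stratification, that is, descent data along the two projections $]X[_{\Pf\times\Pf} \rightrightarrows ]X[_{\Pf}$. Full faithfulness of $u^*$ is then reduced to checking that horizontal sections on the polydisc fibres are constant. Essential surjectivity is reduced to showing that a stratified module $\E'$ on $]X[_{\Pf'}$ descends. To do this, we pull $\E'$ back along a section $s$ of the polydisc fibration, obtaining $\E = s^*\E'$. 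The stratification isomorphism on $]X[_{\Pf'\times_\Pf\Pf'}$, which is again a product with a polydisc, then identifies $\E'$ with $u^*\E$ compatibly with the descent data. Coherence and the gluing over a covering of $\Pf$ come from Proposition \ref{prop:tube-sheaf}(iii),(iv), which lets us work on a single neighbourhood $V$ of $]X[$.

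For (ii), I would take frames $(X,Y,\Pf)$ over $(X,Y)$ and form $(X,Y',\Pf\times\Pf'')$, where $Y' \hookrightarrow \Pf''$ is a closed immersion, locally if necessary. Then $g$ lifts to the proper smooth morphism given by the projection, and (i) yields the equivalence on each local frame. Gluing the local equivalences uses (i) applied to fibre products of frames, together with the cocycle condition. For (iii), the forgetful functor is essentially surjective and fully faithful once we show that for any frame $(X',Y',\Pf')$ over $(X,Y)$ the product frame $(X',Y',\Pf'\times\Pf)$ maps smoothly to $(X',Y',\Pf')$, and then apply (i). For (iv), given a proper pair and any frame $(X',Y',\Pf')$ over $X$, I would compactify $X' \to X$ by taking the closure of $X'$ in $Y'\times Y$. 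This is proper over $Y'$, so the claim follows from (ii) and (iii). In the $K$-valued case, products must be taken over $\V$ rather than over $\V[[t]]$, which is why pseudo-finite type is needed, but the argument is otherwise identical.

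The main obstacle is the smooth strong fibration theorem in the $\Ed_K$/partial dagger setting. Here the tube is not an honest rigid space, and the neighbourhoods involve the fringe condition $|t|$ near $1$. I would try to adapt the proof of Theorem \ref{thm:strong-fibration} directly, working with the cofinal neighbourhoods $\{ |t|_{[x]} \geq \eta^{-1} \}$ and applying Berthelot's \'etale argument uniformly in $\eta$.
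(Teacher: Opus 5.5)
Your plan follows essentially the same route as the paper. The paper's proof consists only of the remark that the classical argument of \cite[Proposition 2.56, Corollary 2.57]{lazda2016rigid} goes through, with the strong fibration theorem (Theorem \ref{thm:strong-fibration}, itself stated without proof) as the key input; your outline (smooth-to-\'etale reduction via local coordinates, descent along $u$ using local sections of the polydisc fibration, then (ii)--(iv) via product frames and graph closures) is an expansion of exactly that argument.

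One detail needs to be stated correctly. After localizing on $\Pf'$ to obtain coordinates, you must also shrink the base around each point of $X$ so that the properness hypothesis of Theorem \ref{thm:strong-fibration} survives. This is possible because $g$ is closed and $g^{-1}(X)\cap\overline{X}^{Y'}=X$. The resulting product decomposition holds locally on $X$, not globally.
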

	\begin{proof}
		The proof goes as in the classical case; see \cite[Proposition 2.56, Corollary 2.57]{lazda2016rigid} for example.
	\end{proof}
	
	As in the classical case, we can define stratified modules and modules with integrable connections on a frame and give functors.
	
	Let $(X, Y, \Pf)$ be a smooth $\V[[t]]$-frame (resp. $k((t))$-frame, $\E_K$-frame), let $\Pf(j)$ be the $(j+1)$-fold fiber product of $\Pf$ over $\V[[t]]$ (resp. $\V$, $\O_{\E_K}$), let $p_0, p_1 : \Pf(1) \rightarrow \Pf, \ p_{01}, p_{02}, p_{12} : \Pf(2) \rightarrow \Pf(1)$ be the projections and $\Delta : \Pf \rightarrow \Pf(1)$ the diagonal morphism. We also regard these morphisms as frames; for example, $p_0$ is a morphism $(X, Y, \Pf(1)) \to (X, Y, \Pf)$.
	For any adic $\Xs$ space over $\V[[t]]_K$ (resp. $K$, $\E_K$), we define $\Xs^{(n)}$ to be the $n$-th infinitesimal neighborhood of $\Xs$ in $\Xs \times_{\V[[t]]_K} \Xs$ (resp. $\Xs \times_K \Xs$, $\Xs \times_{\E_K} \Xs$) and let $p_0^{(n)}, p_1^{(n)} : \Xs^{(n)} \to \Xs$ to be the projections.
	
	\begin{definition}[{cf. \cite[Definition 2.58, Definition 5.12]{lazda2016rigid}}]
		Let $(X, Y, \Pf)$ be a smooth $\V[[t]]$-frame.
		\begin{enumerate}
			\item An $\Ed_K$-stratification (resp. $K$-stratification, $\E_K$-stratification) on a $j_X^\dag\O_{]Y[_\Pf}$-module $\E$ is a collection of compatible $\O$-linear isomorphisms $\epsilon_n : p_1^{(n)*} \E \to p_0^{(n)*} \E$ with $\epsilon_0 = \id$ satisfying cocycle conditions.
			\item An overconvergent $\Ed_K$-stratification (resp. $K$-stratification, $\E_K$-stratification) on a $j_X^\dag\O_{]Y[_\Pf}$-module $\E$ is an $j^\dag_X \O_{]Y[_{\Pf(1)}}$-linear isomorphism $\epsilon_n : p_1^* \E \to p_0^* \E$ with $\Delta^* \epsilon = \id$ and  satisfying cocycle condition $p_{02}^* \epsilon = (p_{01}^* \epsilon)(p_{12}^* \epsilon)$.
			\item A $\Ed_K$-connection (resp. $K$-connection, $\E_K$-connection) on a $j_X^\dag \O_{]Y[_\Pf}$-module $\E$ is a $\Ed_K$-linear morphism $\nabla : \E \to \E \otimes_{j_X^\dag \O_{]Y[_\Pf}} j_X^\dag\Omega^1_{]Y[_\Pf / \V[[t]]_K}$ (resp. $\nabla : \E \to \E \otimes_{j_X^\dag \O_{]Y[_\Pf}} j_X^\dag\Omega^1_{]Y[_\Pf / K}$, $\nabla : \E \to \E \otimes_{j_X^\dag \O_{]Y[_\Pf}} j_X^\dag\Omega^1_{]Y[_\Pf / \E_K}$) satisfying Leibnitz rule. A connection is said to be integral if $\nabla^2 = 0$.
		\end{enumerate}
	\end{definition}
	
	We can define overconvergent stratifications and connections for coherent $\O_{]X[_\Pf}$-modules, which coincides with the definition above via the categorical equivalence $\Coh(j^\dag_X\O_{]Y[_\Pf}) \simeq \Coh(\O_{]X[_\Pf})$. The categories of coherent $j^\dag \O_{]Y[}$-modules equipped with $R$-stratifications, overconvergent $R$-stratifications and integrable $R$-connections are repectively denoted by $\Strat(X, Y, \Pf/R), \, \Strat^\dag(X, Y, \Pf/R)$ and $\MIC(X, Y, \Pf/R)$ for $R = \Ed_K, K, \E_K.$
	
	\begin{proposition}[{cf. \cite[Propositoin 2.60, Proposition 5.13]{lazda2016rigid}}]
		Let $R$ be one of $\Ed_K, K, \E_K$. There exist canonical functors $\Isoc(X, Y, \Pf/R) \to \Strat^\dag(X, Y, \Pf/R)$, $\Strat(X, Y, \Pf/R) \to \MIC(X, Y, \Pf/R)$ and $\Strat^\dag(X, Y, \Pf/R) \to \Strat(X, Y, \Pf/R)$.
		The first and second functors are equivalence of categories and the last one is fully faithful.
	\end{proposition}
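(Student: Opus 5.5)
The plan is to follow the classical argument (Berthelot, Le Stum; cf. \cite[Proposition 2.60]{lazda2016rigid}) and construct the three functors explicitly, then check the equivalence and full-faithfulness claims locally.

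\emph{The functor $\Isoc(X, Y, \Pf/R) \to \Strat^\dag(X, Y, \Pf/R)$.} Since $(X, Y, \Pf)$ is smooth, the projections $p_0, p_1 : (X, Y, \Pf(1)) \to (X, Y, \Pf)$ and the projections $p_{ij}$ from $\Pf(2)$ are morphisms of frames over $(X, Y, \Pf)$. An isocrystal $\E$ therefore gives isomorphisms $\epsilon_{p_1} : p_1^*\E_\Pf \simeq \E_{\Pf(1)}$ and $\epsilon_{p_0} : p_0^*\E_\Pf \simeq \E_{\Pf(1)}$, and I set $\epsilon = \epsilon_{p_0}^{-1}\epsilon_{p_1}$. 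The cocycle condition for the isocrystal, applied to the projections $\Pf(2) \to \Pf(1) \to \Pf$ and to $\Delta$, gives $p_{02}^*\epsilon = (p_{01}^*\epsilon)(p_{12}^*\epsilon)$ and $\Delta^*\epsilon = \id$.

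For a quasi-inverse, take any frame $(X', Y', \Pf')$ over $(X, Y, \Pf)$, not necessarily factoring through $\Pf$ compatibly. I form $\Pf'' = \Pf' \times \Pf$ and use the strong fibration theorem (Theorem \ref{thm:strong-fibration}) in its smooth form. Because $\Pf$ is smooth, two morphisms into $\Pf$ that agree on $Y'$ are related through $\Pf(1)$. The overconvergent stratification then glues the pullbacks $u^*\E_\Pf$ into well-defined $\E_{\Pf'}$, and the cocycle condition gives transitivity. The same descent argument shows the two constructions are mutually inverse.

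\emph{The functor $\Strat^\dag \to \Strat$.} This is restriction along the closed immersions $\Xs^{(n)} \hookrightarrow\, ]Y[_{\Pf(1)}$, which lie in every neighborhood of the diagonal. It is fully faithful because a morphism of coherent modules compatible with all $\epsilon_n$ is the same as a morphism compatible with $\epsilon$. Compatibility with $\epsilon$ is a condition on coherent $j^\dag\O_{]Y[_{\Pf(1)}}$-modules, and such a condition can be checked on the formal completion along the diagonal. This holds by Krull's intersection theorem on the noetherian rings of sections, using Proposition \ref{prop:theoremAB} and the fact that $]X[$ is a partial dagger space (Proposition \ref{prop:tube-pds}), which make the sections over affinoids noetherian. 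It is the same argument as the rigid-analytic case, the only input being noetherianity and flatness of restriction.

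\emph{The equivalence $\Strat \to \MIC$.} Locally $\Pf$ is étale over $\widehat{\mathbb{A}}^n$ over the base ($\V[[t]]$, $\V$ or $\O_{\E_K}$), with coordinates $x_i$. Then $\O_{\Xs^{(n)}} = \O_\Xs[\tau_1,\dots,\tau_n]/(\tau)^{n+1}$ with $\tau_i = p_1^*x_i - p_0^*x_i$, and $\Omega^1$ is free on the $dx_i$ relative to the base. This is exactly why relative differentials over $\V[[t]]_K$, respectively $K$, respectively $\E_K$, appear. A stratification is then equivalent to a family of commuting differential operators $\partial^{[\nu]}$ with Taylor formula $\epsilon(e) = \sum_\nu \partial^{[\nu]}(e)\,\tau^\nu$. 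In characteristic zero these are determined by $\nabla$ via $\partial^{[\nu]} = \partial^\nu/\nu!$, and integrability corresponds to the cocycle condition. This is the standard characteristic-zero computation and globalizes by uniqueness.

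I expect the main obstacle to be the first equivalence. Specifically, it is justifying that the strong fibration theorem, which is stated here only for proper étale morphisms, extends to the smooth projections $\Pf(1) \to \Pf$. What is needed is that, near $]X[$, the tube in $\Pf(1)$ is a relative open polydisc over $]X[_\Pf$. I would first try to prove this by taking étale coordinates and reducing, via Theorem \ref{thm:strong-fibration}, to the case of a relative formal affine space. There the tube of the diagonal is given explicitly by $|\tau_i|_{[x]} < 1$.
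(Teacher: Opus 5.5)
\paragraph{Verdict.} The paper gives no argument of its own; it defers to Lazda--P\'al, whose proof is the classical one you are following. Measured against that, your definitions of the three functors and your treatment of $\Strat \to \MIC$ are fine. The full faithfulness of $\Strat^\dag \to \Strat$, however, has a genuine gap.

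\paragraph{The gap: full faithfulness of $\Strat^\dag \to \Strat$.} You need that a morphism $\psi : p_1^*\E \to p_0^*\F$ of coherent $j^\dag_X\O_{]Y[_{\Pf(1)}}$-modules which vanishes on every $\Xs^{(n)}$ is zero. You claim this follows from Krull's intersection theorem, ``the only input being noetherianity and flatness of restriction''. That is not enough. Krull only identifies $\bigcap_n I^n M$ with the elements killed by some $1+a$, $a \in I$. To conclude, you still need $1+I$ to act by non-zero-divisors, i.e.\ every associated point of $M$ must meet the diagonal. Pieces of the tube away from the diagonal are not controlled at all.

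Your argument never uses smoothness, and the conclusion fails without it. Take the non-smooth $\Pf = \Spf \V[[t]]\langle x\rangle/(x^2-p)$. The tube of the diagonal in $\Pf(1)$ contains the component $\{x^{(1)} = -x^{(0)}\}$, which is disjoint from the diagonal. Two stratifications on $j^\dag\O$ that agree near the diagonal but differ by a sign on that component have the same image in $\Strat$. Yet $\mathrm{Hom}_{\Strat^\dag}$ is strictly smaller than $\mathrm{Hom}_{\Strat}$, even though all rings involved are noetherian and all restrictions flat.

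The missing input is exactly the lemma in your last paragraph. Locally on $]X[_\Pf$, the germ $]X[_{\Pf(1)}$ is a relative open polydisc $\{|\tau_i|_{[x]} < 1\}$ over $]X[_\Pf$, with the diagonal as zero section. Sections of $p_0^*\F$ therefore expand as $\sum_\nu a_\nu \tau^\nu$, and vanishing on all $\Xs^{(n)}$ forces every $a_\nu = 0$. This is where smoothness of the frame enters.

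\paragraph{The first equivalence is formal.} The step where you expect the main obstacle needs no fibration theorem. A frame over $(X,Y,\Pf)$ comes with its structure map $w : \Pf' \to \Pf$, so the quasi-inverse is just $\E_{\Pf'} = w^*\E$. Let $u : (X'',Y'',\Pf'') \to (X',Y',\Pf')$ be a morphism of frames that commutes only over $(X,Y)$. Then $w''$ and $w' \circ u$ agree on $Y''$, so $h = (w'', w'\circ u) : (X'',Y'',\Pf'') \to (X,Y,\Pf(1))$ is a morphism of frames, and you set $\epsilon_u := h^*\epsilon$. The cocycle condition comes from $\Pf(2)$, and $\Delta^*\epsilon = \id$ handles the identity morphisms.

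The product $\Pf' \times \Pf$ and the smooth fibration theorem belong to the independence statement $\Isoc(X,Y) \simeq \Isoc(X,Y,\Pf)$, not to this proposition. So move the polydisc lemma from the first step to the full-faithfulness step.

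\paragraph{The case $R = K$.} There the lemma must be proved for $\Pf \times_\V \Pf$, with the extra coordinate $t^{(1)} - t^{(0)}$. Correspondingly, your local model ``$\Pf$ \'etale over $\widehat{\mathbb{A}}^n$ over $\V$'' must be replaced by $\Pf$ \'etale over $\widehat{\mathbb{A}}^n_{\V[[t]]}$, with $t$ included among the coordinates. This is the point of Lazda--P\'al's Proposition 5.13.
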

	
	\begin{definition}
		Let $\MIC^\dag(X, Y, \Pf/\Ed_K)$ be the essential image of the functor $\Strat^\dag(X, Y, \Pf/\Ed_K) \to \MIC(X, Y, \Pf/\Ed_K)$. This category is equivalent to $\Isoc(X, Y, \Pf/\Ed_K).$ An object $(\E, \nabla)$ of $\MIC$ is called a $\nabla$-module and is said to be overconvergent if it belongs to $\MIC^\dag.$
	\end{definition}
	
	As in the classical case, we have following propositions:
	
	\begin{proposition}[{\cite[Lemma 2.66]{lazda2016rigid}}]
		The canonical functor $\varinjlim_V \MIC(\O_V/\V[[t]]_K) \to \MIC(X, Y, \Pf/\Ed_K)$ (resp. $\varinjlim_V \MIC(\O_V/K) \to \MIC(X, Y, \Pf/K)$, $\varinjlim_V \MIC(\O_V/\E_K) \to \MIC(X, Y, \Pf/\E_K)$)is an equivalence of categories, where $V$ runs over all open neighborhoods of $]X[$ in $]Y[$.
	\end{proposition}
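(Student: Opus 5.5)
The statement says that the functor $\varinjlim_V \MIC(\O_V/\V[[t]]_K) \to \MIC(X, Y, \Pf/\Ed_K)$ is an equivalence, and likewise for the $K$- and $\E_K$-valued variants. I will deduce this from the corresponding equivalence for coherent modules, Proposition \ref{prop:tube-sheaf}(iii) and (iv), together with the description of sections of $j^\dag_X$ in Proposition \ref{prop:tube-sheaf}(i). The three cases $R = \Ed_K, K, \E_K$ are treated identically: only the base over which $\Omega^1$ is taken changes. So I fix $R$, and write $\Omega^1_V$ for the relative differentials of $V$ over $\V[[t]]_K$ (resp. $K$, $\E_K$).

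\emph{Compatibility of differentials.} Since $j^\dag_X = i_*i^{-1}$ and $i^{-1}$ is exact, for an open neighbourhood $V$ of $]X[$ in $]Y[$ we have $j^\dag_X\Omega^1_{]Y[} = j^\dag_X\Omega^1_V$. The sheaf $\Omega^1_V$ is coherent, because $\Pf$ is smooth in a neighbourhood of $X$. Hence, for a coherent $\O_V$-module $\E_V$,
\[
j^\dag_X(\E_V \otimes \Omega^1_V) = j^\dag_X\E_V \otimes_{j^\dag_X\O_{]Y[}} j^\dag_X\Omega^1_{]Y[}.
\]
It follows that applying $j^\dag_X$ to a connection $\nabla_V$ on $\E_V$ gives a connection on $j^\dag_X \E_V$. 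The Leibniz rule and integrability are preserved, since they are identities of sheaf maps and $j^\dag_X$ is a functor compatible with tensor products. This defines the functor in the statement, and shows it is compatible with the transition maps of the inductive system.

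\emph{Full faithfulness.} A morphism of modules with connection is a morphism of the underlying coherent modules that commutes with $\nabla$. By Proposition \ref{prop:tube-sheaf}(iii), morphisms $j^\dag_X\E_V \to j^\dag_X\F_V$ come from morphisms $\E_W \to \F_W$ on some smaller neighbourhood $W$. Commutation with $\nabla$ is the vanishing of a morphism of coherent sheaves $\E_W \to \F_W \otimes \Omega^1_W$, and its image under $j^\dag_X$ vanishes. Again by (iii), applied to $\Hom$ between coherent modules, this morphism already vanishes on a smaller neighbourhood.

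\emph{Essential surjectivity.} This is the main point. Let $(\E, \nabla)$ lie in $\MIC(X, Y, \Pf/R)$. By (iii), $\E \cong j^\dag_X\E_V$ for some coherent $\O_V$-module $\E_V$. The key step is to extend $\nabla$ itself to a neighbourhood. It is not an $\O$-linear map, so (iii) cannot be applied to it directly. I would handle this in two ways.

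\begin{enumerate}[(a)]
\item Reduce to affine $V$ and a finite affine cover, using quasi-compactness of the relevant tubes (after shrinking, the tube $]Y[$ is covered by finitely many rational opens). On an affinoid piece, $\nabla$ is determined by its values on finitely many generators $e_1,\dots,e_r$ of $\E_V$. By (i), each $\nabla(e_j)$ is a section of $\E_W \otimes \Omega^1_W$ on some neighbourhood $W$. Define $\nabla_W(\sum a_je_j) = \sum e_j\otimes da_j + \sum a_j\nabla(e_j)$. This is well defined because it respects the relations among the $e_j$: those relations form a finitely generated module, and the required identities hold after $j^\dag_X$, hence on a smaller neighbourhood by (iii).
\item Alternatively, phrase a connection as an $\O$-linear splitting of the first-order jet sequence, that is, an $\O$-linear map $\E \to P^1\otimes\E$. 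Then the equivalence (iii) applies directly to coherent $P^1_V$-modules.
\end{enumerate}

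With either approach, integrability $\nabla_W^2 = 0$ is again the vanishing of an $\O$-linear map $\E_W \to \E_W \otimes \Omega^2_W$ that dies after $j^\dag_X$, so it holds after shrinking $W$. Finally, the local extensions glue: by full faithfulness, on overlaps they agree after shrinking, and finitely many such shrinkings suffice. I expect the main obstacle to be exactly this finiteness/quasi-compactness bookkeeping for partial dagger spaces; there I would rely on (i), which is Proposition \ref{prop:germ-section}.
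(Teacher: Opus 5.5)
Your route (b) is correct, and it is essentially the standard argument behind the cited Lemma 2.66 of Lazda--P\'{a}l; the paper itself gives no proof beyond that citation. The argument runs as follows.
\begin{itemize}
\item Write the connection as the first-order Taylor map $\theta(e) = e\otimes 1 + \nabla(e)$ into $\E\otimes\mathcal{P}^1$. This map is $\O$-linear for the second $\O$-structure on $\mathcal{P}^1$, because $\theta(ae)=\theta(e)\cdot(1\otimes a)$.
\item Proposition~\ref{prop:tube-sheaf}(iii) then extends $\theta$ to some neighbourhood $W$.
\item The normalisation $\theta\equiv\id$ modulo the diagonal ideal, and integrability, are both vanishing conditions on $\O$-linear maps of coherent sheaves. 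They therefore hold after shrinking $W$.
\end{itemize}
Together with your full-faithfulness argument, this is a complete proof.

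You should drop route (a) and the final gluing paragraph, because the claim that after shrinking $]Y[$ is covered by finitely many rational opens is false in general. A tube is not quasi-compact, and neither need be any neighbourhood of $]X[$. For example, take $\Pf=\wh{\mathbb{A}}^1_{\V[[t]]}$ with coordinate $x$ and $Y=V(\bar x)$.
\begin{itemize}
\item $]X[$ is closed in $]Y[$.
\item $]X[$ is exhausted by the increasing opens $\{|x|\le\lambda\}$, $\lambda<1$, with no finite subcover.
\item Hence any quasi-compact neighbourhood of $]X[$ would force $]X[$ itself to be quasi-compact, which it is not.
\end{itemize}
So ``finitely many generators on an affinoid piece containing $]X[$'' is not available. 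Route (b) never needs this, because (iii) is a global statement on $]Y[$. You apply it once to $\E_V$ and once to $\E_V\otimes\mathcal{P}^1_V$, which is coherent via the second projection since $\Omega^1_V$ is coherent. There are then no local extensions to glue, and the quasi-compactness bookkeeping you expected to be the main obstacle is already absorbed into the proof of (iii).
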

	
	\begin{proposition}[{\cite[Corollary 2.67]{lazda2016rigid}}]
		Let $R$ be one of $\Ed_K, K, \E_K$. The category $\MIC(X, Y, \Pf/R)$ and the condition of being overconvergent are both local on $X$.
	\end{proposition}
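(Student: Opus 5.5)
The plan is to deduce both assertions from the previous proposition, which identifies $\MIC(X, Y, \Pf/R)$ with $\varinjlim_V \MIC(\O_V/\,\cdot\,)$, together with Proposition \ref{prop:tube-sheaf}.

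\textbf{Locality of $\MIC$.} Let $X = \bigcup_i X_i$ be a finite open covering. For each $i$, choose an open $\Pf_i \subseteq \Pf$ with $\Pf_i \cap Y \supseteq X_i$ and $Y_i := \overline{X_i} \cap \Pf_i$ closed in $\Pf_i$; this gives frames $(X_i, Y_i, \Pf_i)$. The claim is that $\MIC(X, Y, \Pf/R)$ is the 2-limit of the categories $\MIC(X_{i}, \ldots)$ and $\MIC(X_{ij}, \ldots)$.

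First I show $\O$-modules glue. By Proposition \ref{prop:tube-sheaf}(iv), $\Coh(j^\dag_X\O_{]Y[}) \simeq \Coh(\O_{]X[})$. The tubes $]X_i[$ form an open covering of $]X[$, since $[\sp]^{-1}$ preserves open coverings. Coherent $\O_{]X[}$-modules glue along open coverings because this is a sheaf-theoretic statement. A connection is a morphism of sheaves $\E \to \E \otimes \Omega^1$ satisfying the Leibniz rule and integrability, and both conditions are local. Hence connections glue too.

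\textbf{Locality of overconvergence.} This is the main obstacle. An object of $\MIC$ is overconvergent if its Taylor isomorphism $\epsilon$ converges on some strict neighborhood of $]X[_{\Pf(1)}$ inside $]Y[_{\Pf(1)}$, i.e.\ if it comes from $\Strat^\dag$. Suppose each restriction to $X_i$ is overconvergent. The isomorphisms $\epsilon_i$ are then defined over $j^\dag_{X_i}\O_{]Y_i[_{\Pf_i(1)}}$. The tubes $]X_i[_{\Pf(1)}$ cover $]X[_{\Pf(1)}$, because a point of $]X[_{\Pf(1)}$ specializes into the diagonal image of $X$, hence into some $X_i$.

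By full faithfulness of $\Strat^\dag \to \Strat$, the $\epsilon_i$ agree on overlaps. It follows that they glue to an isomorphism of coherent $\O_{]X[_{\Pf(1)}}$-modules $p_1^*\E \cong p_0^*\E$. Proposition \ref{prop:tube-sheaf}(iv) for the frame $(X, Y, \Pf(1))$ then promotes this to a $j^\dag_X$-linear isomorphism. The conditions $\Delta^*\epsilon = \id$ and the cocycle condition can be checked on the covering of $]X[_{\Pf(2)}$ by the $]X_i[$, again using (iv).

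The delicate point is the covering claim for the tubes in the fibered products over $\V[[t]]$ (resp.\ $\V$, $\O_{\E_K}$). I would handle it as in \cite[Corollary 2.67]{lazda2016rigid}, noting that the argument there only uses the tube formalism and the equivalence (iv), both of which are available here.

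Conversely, restriction evidently preserves overconvergence, since pulling back $\epsilon$ to a smaller frame keeps it overconvergent.
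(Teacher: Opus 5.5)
The paper itself only cites Lazda--P\'al here, so your sketch has to stand on its own. It has a genuine gap, and the gap is in the step you treat as routine.

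In the adic formalism of this paper the tube of an \emph{open} subset is \emph{closed}. The map $[\sp]$ pulls back closed subsets to open ones; this is the content of $]Z[\,=\{x : |f_j|_{[x]}<1\}$. Hence $]X_i[\,=\,]X[\setminus\,]X\setminus X_i[$, where $]X\setminus X_i[\,=\,]X[\,\cap\,]\overline{X\setminus X_i}[$ is open in $]X[$. So $\{]X_i[\}$ is a finite \emph{closed} covering of $]X[$, and its members are in general not open. For example, take $X=\mathbb{A}^1_{k((t))}\subseteq Y=\mathbb{A}^1_{k[[t]]}$ and $X_1=D(z)$. The specialization of the Gauss point with $|z|=1^-$ lies in $]X_1[$, but every open neighbourhood of it contains points with $|z|=r<1$, which do not.

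So ``coherent modules glue along open coverings'' does not apply. Sheaves and morphisms of sheaves still glue along finite closed coverings. However, the glued $\O_{]X[}$-module is not visibly coherent: near a point of $]X_1[\,\cap\,]X_2[$, the local presentations on the two closed pieces need not patch. What is actually required is the strict-neighbourhood gluing lemma behind the locality of $\Coh(j^\dag_X\O_{]Y[})$:
\begin{enumerate}
\item extend the pieces, via Proposition~\ref{prop:tube-sheaf}(iii), to coherent modules on open neighbourhoods $V_i$ of $]X_i[$ in $]Y[$;
\item extend the transition isomorphisms to a neighbourhood $V_{ij}$ of $]X_{ij}[$;
\item shrink the $V_i$ to neighbourhoods $V_i'$ of $]X_i[$ with $V_i'\cap V_j'\subseteq V_{ij}$, so that the modules glue on $\bigcup_i V_i'$.
\end{enumerate}
This is Berthelot's argument as adapted by Lazda--P\'al. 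It is the real content of ``$\MIC(X,Y,\Pf/R)$ is local on $X$'', and your proposal does not supply it.

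Two further remarks.

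First, the frames must be $(X_i,Y,\Pf)$ with $Y$ and $\Pf$ unchanged. For an arbitrary open $\Pf_i$ with $\Pf_i\cap Y\supseteq X_i$, the frame $(X_i,\overline{X_i}\cap\Pf_i,\Pf_i)$ has a different germ and forgets overconvergence along $Y\setminus\Pf_i$. Take $X=\mathbb{A}^1_{k((t))}\subseteq Y=\mathbb{P}^1_{k[[t]]}$, the trivial covering, and $\Pf_1=\wh{\mathbb{A}}^1_{\V[[t]]}$. You would then be comparing connections over $\Ed_K\langle z\rangle^\dag$ with connections over $\Ed_K\langle z\rangle$, and these already differ on extensions of $\O$ by $\O$.

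Second, the step you flag as the main obstacle is the easy one. The covering $]X[_{\Pf(1)}=\bigcup_i\,]X_i[_{\Pf(1)}$ is immediate from $]X[\,=[\sp]^{-1}(X)$. Your use of the full faithfulness of $\Strat^\dag\to\Strat$ to see that the $\epsilon_i$ agree on overlaps is correct. Since only a morphism has to be glued there, the overconvergence half of the statement goes through once ``open covering'' is replaced by ``finite closed covering''.
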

	
	\begin{definition}
		Let $A$ be a Banach $K$-algebra, $M$ a finitely generated $A$-module and $\| \cdot \|$ a norm on $M$. For a multi-indexed sequence $a_{\unu} \in M$ and $\eta \in \mathbb{R}_{> 0}$, we say that $a_{\unu}$ is $\eta$-convergent if $\| a_\unu \| \eta^{|\unu|}$ converges to zero.
	\end{definition}
	
	\begin{proposition}[{\cite[Proposition 2.68]{lazda2016rigid}}] \label{prop:mic-oc}
		Suppose that $(X, Y, \Pf)$ is a smooth $\V[[t]]$-frame (resp. $k((t))$-frame, $\E_K$-frame), $\Pf = \Spf A$ is affine and $Y = V(\bar f_1, \dots, \bar f_r)$, $X = Y \cap D(\overline{tg})$ for some $f_1, \dots, f_r, g \in A$. For $\lambda \in \Gamma^* \cap (0, 1)$, define
		$$[Y]_\lambda = \{ \Pf^\ad : |f_1|_x, \dots, |f_r|_x \leq \lambda\}, \ [X]_\lambda = \{ x \in \Pf^\ad : |t|_x, |g|_x \geq \lambda \}.$$
		Let $V$ be an affinoid open neighborhood of $]X[$ in $]Y[$, and let $t_1, \dots, t_n \in \Gamma(V, \O)$ be such that $dt_1, \dots, dt_n$ freely generates $\Omega_{V/\V[[t]]_K}$ (resp. $\Omega_{V/K}$, $\Omega_{V/\E_K}$). Let $(\E, \nabla)$ be a coherent $\O_V$-module with integrable connection and let $E = \E | _{]X[}$ be the induced object of $\MIC(X, Y, \Pf/\Ed_K).$ Then $E$ is overconvergent if and only if for all $\eta \in (0, 1) \cap \Gamma^*$, there exist some $\lambda \in [\eta, 1) \cap \Gamma^*$ and $\rho \in (0, 1) \cap \Gamma^*$ such that $[Y]_{\lambda} \cap [X]_{\rho} \subseteq V$ and for any section $e \in \Gamma([Y]_{\lambda} \cap [X]_{\rho}, \E)$, the multi-indexed sequence $\displaystyle \frac{ \underline{\partial}^{\underline{\nu}} e }{\underline{\nu}!}$ is $\eta$-convergent where $\displaystyle\partial_i = \frac{\partial}{\partial t_i}$.
	\end{proposition}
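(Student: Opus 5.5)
We follow the classical argument of Berthelot (cf. \cite[2.2.13]{berthelot1996cohomologie}) and its $\Ed_K$-adaptation in \cite[Proposition 2.68]{lazda2016rigid}. The strategy is to compare the overconvergent stratification with the Taylor series of $\nabla$ on polydisc-type neighborhoods of the diagonal. Since $dt_1,\dots,dt_n$ freely generate $\Omega^1_V$, the elements $\tau_i = p_1^*t_i - p_0^*t_i$ are coordinates around the diagonal in $V\times V$ (over $\V[[t]]_K$, $K$ or $\E_K$ respectively). More precisely, for $\lambda,\rho$ close to $1$ and suitable $\eta$, the strict neighborhoods of $\Delta(]X[)$ in $]Y[_{\Pf(1)}$ contain, and are cofinally described by, the relative polydiscs
\[
\{(x,y) : x \in [Y]_\lambda\cap[X]_\rho,\ |\tau_i|\le \eta\}.
\]
This is the relative version of the fibration lemma: $p_0$ identifies a neighborhood of $]X[_\Pf$ in $]Y[_{\Pf(1)}$ with $]X[_\Pf\times$ (open unit polydisc), compatibly with strict neighborhoods. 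It follows from Theorem \ref{thm:strong-fibration} applied to the étale map given by $(p_0^*, \tau)$, since $\Pf(1)\to \Pf\times\widehat{\mathbb{A}}^n$ is étale near the diagonal.

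Given a $\nabla$-module $(\E,\nabla)$ on $V$, any stratification extending $\nabla$ must be given on infinitesimal neighborhoods by the Taylor formula
\[
\epsilon(p_1^*e)=\sum_{\unu} \frac{\underline\partial^{\unu} e}{\unu!}\,\underline\tau^{\unu}.
\]
By the full faithfulness of $\Strat^\dag\to\Strat$ and the equivalence $\Strat\simeq\MIC$ stated above, $E$ is overconvergent if and only if this series converges on some strict neighborhood of the diagonal tube and defines an isomorphism there. Convergence on the polydisc of radius $\eta$ over $[Y]_\lambda\cap[X]_\rho$ is exactly $\eta$-convergence of $\underline\partial^{\unu}e/\unu!$ for $e$ running over a finite set of generators of $\Gamma([Y]_\lambda\cap[X]_\rho,\E)$. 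Coherence and Theorem A (Proposition \ref{prop:theoremAB}) let us pass from generators to all sections, using Banach norms on the finite module.

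Both directions then follow from this description. For necessity: since the overconvergent $\epsilon$ is defined on a strict neighborhood, it contains such a polydisc family for each $\eta<1$ after shrinking $\lambda,\rho$. The coefficients of $\epsilon(p_1^*e)$ in the $\tau$-expansion are forced to be $\underline\partial^{\unu}e/\unu!$, and convergence on the closed disc of radius $\eta'$ with $\eta<\eta'<1$ gives $\eta$-convergence. For sufficiency: the hypothesis gives convergence on a cofinal family of strict neighborhoods of $\Delta(]X[)$, which glue to a strict neighborhood. The resulting map satisfies $\Delta^*\epsilon=\id$ and the cocycle condition, because these identities hold formally on all infinitesimal neighborhoods and analytic continuation then applies (the relevant rings are noetherian and reduced along the dense formal completion). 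The inverse is obtained by the symmetric construction with $p_0,p_1$ swapped, so $\epsilon$ is an isomorphism.

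The main obstacle is the first step: controlling strict neighborhoods of $]X[$ in $]Y[_{\Pf(1)}$ in the $\Ed_K$-setting. Here $[X]_\rho$ includes the condition $|t|\ge\rho$ and $\Pf(1)$ is a fiber product over $\V[[t]]$, not over $\V$, so one must check that the polydisc description is uniform in both $\lambda$ and $\rho$. I would handle this by reducing to $\Pf$ étale over $\widehat{\mathbb A}^n_{\V[[t]]}$ locally, using Proposition \ref{prop:tube-pds} to view the tubes as partial dagger spaces, and then computing explicitly with the fringe norms $\|\cdot\|_\eta$ of \S\ref{subsec:pda}.
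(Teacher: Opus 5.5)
The paper gives no argument of its own here; it only cites \cite[Proposition 2.68]{lazda2016rigid}. Your outline is the classical Taylor-series argument behind that citation. However, its first step fails under the hypotheses as stated, and the failure is substantive.

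You invoke Theorem~\ref{thm:strong-fibration} for ``the \'etale map $\Pf(1)\to\Pf\times\widehat{\mathbb{A}}^n$ given by $(p_0^*,\tau)$''. But the $t_i$ are only elements of $\Gamma(V,\O)$. So $\tau_i=p_1^*t_i-p_0^*t_i$ is a function on a rigid space, not a section of $\O_{\Pf(1)}$, and there is no morphism of frames to which the strong fibration theorem applies.

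Worse, the conclusion you want is false for such $t_i$: namely, that $]X[_{\Pf(1)}$ and its strict neighbourhoods are controlled by the polydiscs $\{|\tau_i|\le\eta\}$, $\eta<1$, over $[Y]_\lambda\cap[X]_\rho$. With it the criterion itself fails. Here is a counterexample:
\begin{itemize}
\item Take $\Pf=\Spf\V[[t]]\langle s\rangle$, $Y=P$ (so $[Y]_\lambda=\Pf^\ad$), $g=1$, $(\E,\nabla)=(\O,d)$, and $t_1=ps$.
\item Then $dt_1$ still freely generates $\Omega^1_{V/\V[[t]]_K}$ and $\partial_1=p^{-1}\partial/\partial s$.
\item In the $\tau$-coordinate, $]X[_{\Pf(1)}$ is $]X[_\Pf\times\{|\tau_1|<|p|\}$.
\item Let $e=\sum_m a_ms^m$ with $a_m\in K$, $|a_m|\to0$ but $|a_m|\ge 1/(m+1)$ (e.g.\ $a_m=p^{\lfloor\log_p(m+1)\rfloor}$). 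Then $e\in\Gamma([X]_\rho,\O)$ for every $\rho$.
\item The constant term of $\partial_1^ke/k!$ is $p^{-k}a_k$, so $\|\partial_1^ke/k!\|\,\eta^k\ge(p\eta)^k/(k+1)\to\infty$ for every $\eta>p^{-1}$.
\end{itemize}
Yet $(\O,d)$ is certainly overconvergent.

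So a correct proof must use, and the statement must include, the normalisation the classical argument actually relies on. The $t_i$ must be restrictions of sections of $\O_\Pf$ such that $\Pf\to\widehat{\mathbb{A}}^n_{\V[[t]]}$ is \'etale in a neighbourhood of $X$ (over $\V$, resp.\ $\O_{\E_K}$, in the other two cases).

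Under that hypothesis your first step is right: $(p_0,\tau)$ gives a morphism $(X,Y,\Pf(1))\to(X,Y,\Pf\times\widehat{\mathbb{A}}^n)$ that is \'etale near $X$ and the identity on $Y$. Near the diagonal, the $\tau_i$ then generate the diagonal ideal with integral coefficients. This puts $p_1$ of the $\eta$-polydisc over $[Y]_\lambda\cap[X]_\rho$ back where $\E$ is defined, which is why $\lambda\ge\eta$ appears (after taking $\rho>\eta$). It is also what makes $\partial^{\unu}h/\unu!$ $\eta$-convergent for functions $h$, which your passage from generators to arbitrary sections via Leibniz needs.

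The remainder of your outline is the standard argument:
\begin{itemize}
\item the Taylor coefficients are forced;
\item closed polydiscs of radius $\eta'\in(\eta,1)$ give necessity;
\item for sufficiency, the union over $\eta$ of the polydiscs is a strict neighbourhood of $]X[_{\Pf(1)}$ (not of $\Delta(]X[)$), and the switch map plus formal identities finish.
\end{itemize}

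Your proposed repair of passing locally to an \'etale map $\Pf\to\widehat{\mathbb{A}}^n$ does not rescue the general statement. It introduces a second coordinate system, and the example shows the criterion does not transfer between the two.

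Finally, for $k((t))$-frames $\Pf(1)$ is a product over $\V$ and only of pseudo-finite type. You therefore also need a version of Theorem~\ref{thm:strong-fibration} for such frames, which the paper does not state.
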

	
	\subsection{Functors on overconvergent isocrystals}
	Let $(X, Y, \Pf)$ be a $\V[[t]]$-frame. There exist canonical forgetful functors
	$$ \Isoc(X, Y, \Pf/K) \to \Isoc(X, Y, \Pf/\E^\dag_K), $$
	$$ \Isoc(X, Y, \Pf/\E^\dag_K) \to \Isoc(X, Y_\eta, \Pf_\eta/\E_K), \ \E \mapsto \wh{\E}.$$
	Via the faithful functor $\Isoc \simeq \MIC^\dag$, the former functor is simply forgetting the derivation regarding $t$ and the latter is the extension of scalers, called the completion of overconvergent isocrystal. 
	
	Let $F/k((t))$ be a finite separable extension. By Cohen's structure theorem, $F \simeq l((u))$ for some finite separable field extension. Let $L$ be a field finite separable over $K$ with residue field $l$. Since $\Ed_K$ is a henselian discrete valuation field with residue field $k((t))$, the finite separable extension $l((u))/k((t))$ uniquely lifts to some finite field extension of $\Ed_K$; it is of the form $\Ed_L$ with parameter $u$. Let $X' = X \times_{k((t))} l((u)), \, Y' = Y \times_{k[[t]]} l[[u]], \, \Pf' = \Pf \times_{\V[[t]]} \mathcal{W}[[u]]$, where $\mathcal{W}$ is the ring of integers of $L$; then we have the base extension functors
	$$ \Isoc(X, Y, \Pf / \Ed_K) \to \Isoc(X', Y', \Pf' / \Ed_L), $$
	$$ \Isoc(X, Y_{\eta}, \Pf_{\eta} / \E_K) \to \Isoc(X', Y'_{\eta'}, \Pf'_{\eta'} / \E_L) $$
	with $\eta'$ the generic point of $\Spec l[[u]]$. These two are compatible with completion functor $\wh{\Box}$.
	
	For a $k((t))$-variety $X$ (resp. $k[[t]]$-scheme $Y$, formal $\V[[t]]$-scheme $\Pf$), we define
	\begin{align*}
		X^{(n)} & = (X \times_{k((t))} k((t^{1/p^n})))_{\red}, \\
		Y^{(n)} & = Y \times_{k[[t]]} k[[t^{1/p^n}]], \\
		\Pf^{(n)} & = \Pf \times_{\V[[t]]} \V[[t^{1/p^n}]].
	\end{align*}
	Let $\E^{\dag, (n)}_K$ be the bounded Robba ring with parameter $t^{1/p^n}.$
	
	\begin{proposition} \label{prop:purely-insep-equiv}
		Let $(X, Y)$ a smooth $\V[[t]]$-pair. Then the pullback functors
		$$\pi^* : \Isoc(X, Y/\Ed_K) \allowbreak \to \Isoc(X^{(n)}, Y^{(n)}/\E^{\dag, (n)}_K)$$
		$$\pi^* : \Isoc(X, Y/K) \allowbreak \to \Isoc(X^{(n)}, Y^{(n)}/K)$$
		are equivalences of categories.
	\end{proposition}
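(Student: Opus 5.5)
The plan is to exploit that Frobenius-type purely inseparable base change on $k[[t]]$ lifts to a \emph{finite flat} morphism of formal schemes $\Pf^{(n)} \to \Pf$, and that $\V[[t^{1/p^n}]]$ over $\V[[t]]$ is abstractly isomorphic to $\V[[t]]$ itself. Both equivalences are handled in the same way, and the statement is local. By the independence-of-frame theorem (parts (iii) and (iv)) it therefore suffices to work with smooth frames $(X, Y, \Pf)$, and to check that $\pi^*$ is fully faithful and essentially surjective on $\MIC^\dag$ for such frames, after which one glues.

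The key observation concerns the frame $(X^{(n)}, Y^{(n)}, \Pf^{(n)})$. Underlying topological spaces are unchanged by purely inseparable base change, since $X^{(n)} \to X$ and $Y^{(n)} \to Y$ are universal homeomorphisms. Hence the tubes $]X^{(n)}[_{\Pf^{(n)}}$ map onto $]X[_\Pf$ via the finite flat map $\pi : \Pf^{(n),\ad} \to \Pf^\ad$ of degree $p^n$, which is given by adjoining $t^{1/p^n}$. There is a fundamental asymmetry: $\pi$ is finite étale on the generic fiber over $K$, because $T^{p^n} - t$ is separable in characteristic $0$ away from $t = 0$. On the tube over $X$ we have $|t|$ close to $1$, so there $\pi$ is a finite étale Galois-type cover, after adjoining $p^n$-th roots of unity to $K$ (which I handle by a finite extension of scalars and Galois descent).

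With this in hand, I proceed in three steps.

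\textbf{Step 1 (full faithfulness).} For the $K$-valued case, $\Omega^1_{/K}$ includes $dt$, and horizontal sections over $\pi^{-1}V$ descend. Indeed, $\pi_*\O$ is a finite étale algebra with connection, $\pi_*\pi^*E \simeq E \otimes \pi_*\O$, and the trace splits off $E$ as a horizontal direct summand. Hence $\mathrm{Hom}(\pi^*E, \pi^*E') = \mathrm{Hom}(E, E' \otimes \pi_*\O)$, which contains $\mathrm{Hom}(E, E')$ as the invariants under the Galois group. For the $\Ed_K$-valued case, the connection is relative to $\V[[t]]_K$ on the source and to $\V[[t^{1/p^n}]]_K$ on the target, and the same argument applies with Galois descent.

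\textbf{Step 2 (essential surjectivity).} Given $F$ on $X^{(n)}$, I form $\pi_*F$. This is an overconvergent isocrystal on $X$ because the overconvergence criterion of Proposition \ref{prop:mic-oc} is preserved by finite étale pushforward. It carries an action of $\mu_{p^n}$ (after extending scalars) lifting the deck transformations $t^{1/p^n} \mapsto \zeta t^{1/p^n}$. The invariant part $E = (\pi_*F)^{\mu_{p^n}}$ then satisfies $\pi^*E \simeq F$ by étale Galois descent for coherent modules with connection. To remove the extension of $K$, I descend once more along the Galois group of $K(\zeta_{p^n})/K$.

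\textbf{Step 3 (the expected obstacle).} The $\mu_{p^n}$-action on $\pi_*F$ must be \emph{compatible with the isocrystal structure}. For this I need that $F$ is canonically equivariant, i.e.\ that there are isomorphisms $\sigma^*F \simeq F$ for $\sigma$ the deck transformations. Those $\sigma$ are automorphisms of the frame $\Pf^{(n)}$ lying over the identity on $(X^{(n)}, Y^{(n)})$, because they reduce to the identity on special fibers. Crystal-ness then gives the isomorphisms $\epsilon_\sigma$, and the cocycle condition yields an honest group action. In the $\Ed_K$-valued case, however, $\sigma$ is not $\V[[t^{1/p^n}]]$-linear. So I must check that the isocrystal formalism relative to $\E^{\dag,(n)}_K$ still furnishes $\epsilon_\sigma$, and this is where I expect the real work. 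The plan is to use that $\sigma$ is $\V[[t]]$-linear and that $\Ed_K$-valued crystals on $X^{(n)}$ can equivalently be defined via frames over $\V[[t]]$, since $t$ and $t^{1/p^n}$ generate the same overconvergent structure. That reduction is the point to verify carefully, using Theorem \ref{thm:strong-fibration} for the finite étale comparison of tubes.
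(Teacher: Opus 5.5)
The obstacle you isolate in Step 3 is not a technicality. The $\Ed_K$-valued half of the statement is false, and your Step 1 already breaks there. Relative to $\V[[t]]_K$ there is no $dt$-direction, so $u=t^{1/p^n}$ is a horizontal section of $\pi_*\O_{]X^{(n)}[}=\bigoplus_{i<p^n}\O_{]X[}\,u^i$. Hence $\pi_*\O\cong\O^{\oplus p^n}$ as $\nabla$-modules. Your own formula then gives $\mathrm{Hom}(\pi^*E,\pi^*E')\cong\mathrm{Hom}(E,E')\otimes_{\Ed_K}\E^{\dag,(n)}_K$, which is strictly larger than $\mathrm{Hom}(E,E')$ as soon as the latter is nonzero. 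Concretely, for $X=\Spec k((t))$, $Y=\Spec k[[t]]$ the functor is $V\mapsto V\otimes_{\Ed_K}\E^{\dag,(n)}_K$ from $\Ed_K$-vector spaces to $\E^{\dag,(n)}_K$-vector spaces, which is not full: multiplication by $u$ on $\pi^*\O$ is not in the image. Galois invariants only recover $\mathrm{Hom}(E,E')$ inside this larger space, and nothing forces morphisms to be invariant. No reinterpretation of the target via $\V[[t]]$-frames can supply an $\epsilon_\sigma$, because the non-invariant endomorphism $u\cdot$ genuinely lives in the category of the statement. The paper takes a different route, a trace map with $\mathrm{tr}\circ\mathrm{adj}=p^n$. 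That only exhibits $E$ as a direct summand of $\pi_*\pi^*E$, i.e.\ faithfulness, so it cannot rescue this half either.

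In the $K$-valued case your strategy is viable and genuinely differs from the paper's. After adjoining $\zeta=\zeta_{p^n}$, the map $u\mapsto\zeta u$ is a $\V$-linear automorphism of the frame reducing to the identity, since $\zeta\equiv1$ modulo the maximal ideal. The resulting $\epsilon_\sigma$ are what make every morphism $\pi^*E\to\pi^*E'$ invariant, which is the real content of fullness rather than the trace splitting, and they also supply the descent datum. But Step 2 is wrong as stated: finite \'etale pushforward does not preserve overconvergence. Here $\nabla(u^i)=\frac{i}{p^n}u^i\,\dlog t$, and for $p\nmid i$ one gets $\frac{1}{k!}\partial_t^k(u^i)=\binom{i/p^n}{k}t^{-k}u^i$ with $\bigl|\binom{i/p^n}{k}\bigr|\geq p^{nk}$. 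So the summand $\O u^i$ of $\pi_*\O$ violates the criterion of Proposition \ref{prop:mic-oc} for $\eta>p^{-n}$, and $\pi_*F$ is not an object of $\Isoc(X,Y/K)$; this is also exactly why $\mathrm{Hom}(E,E'\otimes\O u^i)=0$ for $i\neq0$. You must instead prove that the invariant part $(\pi_*F)^{\mu_{p^n}}$ is overconvergent. One way is to descend the overconvergent stratification of $F$ along the $\mu_{p^n}^2$-Galois cover $]X^{(n)}[_{\Pf^{(n)}\times_\V\Pf^{(n)}}\to\,]X[_{\Pf\times_\V\Pf}$, using the equivariance furnished by the crystal structure. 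Another is to appeal to the Christol--Dwork theory of Frobenius antecedents.
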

	\begin{proof}
		For simplicity, we assume that $Y-X$ is a divisor of $Y$ to invoke Proposition \ref{prop:tube-pds}. We will discuss the $\Ed_K$-valued case (proof are the same). Since the categories are local, we may assume that $Y$ is affine and there is a lift of $Y$ to a formal $\V[[t]]$-scheme $\Yf$ smooth in an open neighborhood of $X$. Recall that we have categorical equivalences $\Isoc(X, Y/\Ed_K) \simeq \MIC^\dag(X, Y, \Yf/\Ed_K)$ and $\Isoc(X^{(n)}, Y^{(n)}/\E^{\dag, (n)}_K) \simeq \MIC^\dag(X^{(n)}, Y^{(n)}, \Yf^{(n)}/\E^{\dag, (n)}_K)$ and the pullback functor is the base extension $\Box \otimes_{\O_{]X[}} \O_{]X^{(n)}[}.$ of $\nabla$-modules. Put $u = t^{1/p^n}$. As $1, u, \dots, u^{p^n-1}$ is a basis of $\E^{\dag, (n)}_K$ over $\Ed_K$, it is finite free and we have $\O_{]X^{(n)}[} = \O_{]X[} \otimes_{\Ed_K} \E^{\dag, (n)}_K$. We can define the trace map $\mathrm{Tr} : \O_{]X[^{(n)}} \to \O_{]X[}$. This extends to a morphism of complexes $\mathrm{Tr}^\cdot : \Omega^\cdot_{]X^{(n)}[ / \E^{\dag, (n)}_K} \to \Omega^\cdot_{]X[ / \Ed_K}$ satisfying $\mathrm{Tr}(\eta \wedge \xi) = \mathrm{Tr}(\eta) \wedge \xi$ for $\eta \in \Omega^\cdot_{]X^{(n)}[ / \E^{\dag, (n)}_K}$ and $\xi \in \Omega^\cdot_{]X[ / \Ed_K}.$ Now we can define the pushforward functor
		$$\pi_* : \MIC(X^{(n)}, Y^{(n)}, \Yf^{(n)} / \E^{\dag, (n)}_K) \to \MIC(X, Y, \Yf / \Ed_K)$$
		by sending a $\nabla$-module $\M$ over $]X^{(n)}[$ to $\M$ equipped with the connection $\M \to \M \otimes \Omega^1_{]X[/\Ed_K}, \ m \mapsto (\id \otimes \mathrm{Tr})\nabla(m).$
		It is easy to check that $\pi_*$ glue to give a global functor
		$$\pi_* : \Isoc(X, Y/\Ed_K) \to \Isoc(X^{(n)}, Y^{(n)} / \E^{\dag, (n)}_K).$$
		As usual we have adjuction map $\mathrm{adj} : \id \to \pi_* \pi^*$ and trace map $\mathrm{tr} : \pi^*\pi_* \to \id$ which gives the adjoint pair $(\pi^*, \pi_*)$. The composition $\id \xrightarrow{\mathrm{adj}} \pi_* \pi^* \xrightarrow{\mathrm{tr}} \id$ is multiplication by $p^n$ and thus $\pi^*$ is an equivalence of categories.
		
		The general case is proved in the similar way, which we leave to the reader. 
	\end{proof}
	
	Next we discuss the Frobenius structure on overconvergent isocrystals. Recall that we have Frobenius endomorphism $\sigma : \Ed_K \to \Ed_K$. For a $k((t))$-variety $X$, let $X^{(q)}$ be the $q$-th Frobenius twist of $X$ over $k((t))$; then we have the $\sigma$-linear base extension functor $\sigma^* : \Isoc(X/\Ed_K) \to \Isoc(X^{(q)}/\Ed_K)$. Composing with the pullback functor $\Isoc(X^{(q)}/\Ed_K) \to \Isoc(X/\Ed_K)$ by the relative Frobenius, we get the Frobenius pullback $F^* : \Isoc(X/\Ed_K) \to \Isoc(X/\Ed_K).$ This is compatible with the completion functor.
	
	\begin{proposition}
		If $k$ is perfect and $X$ is smooth, then $F^*$ is an equivalence of categories.
	\end{proposition}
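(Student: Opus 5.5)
The plan is to write $F^*$ as the composite of two functors and prove that each is an equivalence: the $\sigma$-linear base extension $\sigma^* : \Isoc(X/\Ed_K) \to \Isoc(X^{(q)}/\Ed_K)$, followed by pullback along the relative Frobenius $F_{X/k((t))} : X \to X^{(q)}$. Since $k$ is perfect, $\sigma_K$ is an automorphism of $K$, and it extends to $\V[[t]]$ by $t \mapsto ut^q$. The first step is to factor the relative Frobenius further. I will use the $k$-linear Frobenius of $k((t))$, i.e.\ $t\mapsto t^q$ with $k$ fixed up to the automorphism induced by $\sigma_K$. Its image is $k((t^q))$, so $k((t))$ is purely inseparable of degree $q$ over it. Consequently, up to the isomorphism induced by the automorphism $\sigma_K$ of $\V$, the absolute-Frobenius pullback on $X$ factors into two pieces. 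The first is an isomorphism of schemes covering an isomorphism of base rings ($\V[[t]] \to \V[[t]]$ given by $\sigma$ relative to the variable $t^{1/q}$). The second is the purely inseparable base change $X \mapsto X^{(n)}$ with $q = p^n$ (after renaming the variable), together with the $k((t^{1/q}))$-linear relative Frobenius of $X^{(n)}$.

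Next I handle the pieces. Pullback along an isomorphism of frames together with the compatible base-ring isomorphism is trivially an equivalence. The purely inseparable base extension is an equivalence by Proposition \ref{prop:purely-insep-equiv}; I first reduce to pairs $(X,Y)$ with $Y$ proper, using the independence-of-frames theorem, and then localize to smooth pairs. What remains is to show that pullback along the $k((t))$-linear relative Frobenius $F_{X/k((t))}$ of a smooth $X$ is an equivalence on $\Ed_K$-valued isocrystals. For this I follow the classical argument of Berthelot. The question is local on $X$ (the categories glue), so I may take a smooth frame $(X, Y, \Pf)$ with $\Pf$ affine, formally smooth over $\V[[t]]$ near $X$, with étale coordinates $t_1,\dots,t_n$. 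I lift the relative Frobenius to $\phi : \Pf \to \Pf^{(q)}$ with $t_i \mapsto t_i^q$. Working in $\MIC^\dag$, the pullback $\phi^*$ is a Frobenius pullback of $\nabla$-modules relative to $\Ed_K$. Its quasi-inverse is the Dwork-type Frobenius descent: $\phi_*\M$ decomposes into eigenspaces for the action of $\mu_q$, which acts on the coordinates $t_i$ after adjoining $q$-th roots of unity. The invariant part gives the descended module, and its overconvergence is checked via the criterion of Proposition \ref{prop:mic-oc}. Finally I glue, using full faithfulness and the independence of the choice of lift.

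I expect the main obstacle to be the Frobenius descent step in the $\Ed_K$-valued setting. The tubes are only partial dagger spaces, so one must verify three things there: that $\phi$ induces a finite flat map between cofinal systems of strict neighborhoods $[Y]_\lambda\cap[X]_\rho$, that the radii transform as $\lambda\mapsto\lambda^{1/q}$ compatibly with the fringe parameter of $\Ed_K$, and that the $\eta$-convergence condition survives. My first attempt will be to pass to completions, where the $\E_K$-valued result is classical, and then descend the conclusion using the faithful flatness of $A\to\wh A$ from Corollary \ref{cor:pda}. The overconvergence estimates themselves I will check directly with Proposition \ref{prop:mic-oc}.
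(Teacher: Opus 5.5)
Your treatment of $\sigma^*$ is the same as the paper's. The paper writes $\sigma=\sigma_1\sigma_2$, where $\sigma_1$ acts on coefficients through $\sigma_K$ (an equivalence since $k$ is perfect) and $\sigma_2$ is $t\mapsto\sigma(t)$, the purely inseparable base change handled by Proposition~\ref{prop:purely-insep-equiv}.

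The two routes diverge at the relative Frobenius $X\to X^{(q)}$. The paper does no local computation there; it appeals to effective faithfully flat descent for overconvergent isocrystals, adapted from \cite{lazda2022note}. Its sentence ``if $f$ is faithfully flat then $f^*$ is an isomorphism'' is false as literally stated (think of finite \'{e}tale covers). It has to be read together with the fact that $F_{X/k((t))}$ is finite flat and radicial. Then $X\times_{X^{(q)}}X$ is a nilpotent thickening of the diagonal, so every isocrystal on $X$ carries a unique descent datum. That route is short, covers any finite flat universal homeomorphism, and needs no coordinates, roots of unity or estimates. Its cost is that it rests on a descent theorem whose adaptation to the $\Ed_K$-valued setting the paper only asserts.

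Your Dwork-trick route stays within the paper's own tools (Proposition~\ref{prop:mic-oc}, Corollary~\ref{cor:pda}) and gives an explicit quasi-inverse. It does need three points you pass over.

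(i) The $\mu_q^n$-action has to exist on $\Pf$, not just on the coordinates. It does, because the relative Frobenius square of an \'{e}tale map is Cartesian. Uniqueness of \'{e}tale lifts then gives $\Pf\simeq\Pf^{(q)}\times_{\wh{\mathbb{A}}^n,[q]}\wh{\mathbb{A}}^n$ near $X$, which also shows that $\phi$ is finite free.

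(ii) That cover is ramified along $t_i=0$, so recovering $\M$ as $\phi^*$ of the invariants $\mathcal{N}$ needs an argument. One option: the stabilizers act trivially on fibres there, because the Taylor series defining the action is the identity modulo $t_i$. Another: taking invariants is an idempotent projector and commutes with $A\to\wh{A}$. So, provided the classical $\E_K$-valued descent identifies the antecedent with the invariants, you can check that $\phi^*\mathcal{N}\to\M$ is an isomorphism after completion, by faithful flatness. That is the legitimate use of your completion fallback; it cannot by itself construct $\mathcal{N}$.

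(iii) Adjoining $\zeta_q$ ramifies $K$, so you need a final Galois descent back to $K$.

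Finally, $\phi$ is $\V[[t]]$-linear, so the fringe parameter $|t|$ is not rescaled at all; only the radii attached to $Y-X$ change.
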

	\begin{proof}
		The $q$-th absolute Frobenius on $k((t))$ is the composition of two morphisms $F_1, F_2 : k((t)) \to k((t))$ given by $F_1(\sum_n a_n t^n) = \sum_n a_n^q t^n$ and $F_2(\sum_n a_n t^n) = \sum_n a_n t^{qn}.$ Similarly the Frobenius endomorphism $\sigma$ is the composition of two morphisms $\sigma_1, \sigma_2 : \Ed_K \to \Ed_K$ given by $\sigma_1(\sum_n a_n t^n) = \sum_n \sigma_K(a_n) t^n$ and $\sigma_2(\sum_n a_n t^n) = \sum_n a_n \sigma(t)^n.$ Then $\sigma^* = \sigma_1^* \sigma_2^*$. Since $k$ is perfect, the functor $\sigma_1^*$ is an isomorphism. $\sigma_2^*$ is an isomorphism by Proposition \ref{prop:purely-insep-equiv}. Thus $\sigma^*$ is also an isomorphism.
		
		It remains to show that pullback by the relative Frobenius $X \to X^{(q)}$ is an isomorphism. This is generalized to the statement that if $f : X \to Y$ is faithfully flat, then $f^* : \Isoc(Y/\Ed_K) \to \Isoc(X/\Ed_K)$ is an isomorphism. This can be proved in the same way as \cite{lazda2022note}.
	\end{proof}
	
	\section{Log $\nabla$-modules relative to $\Ed_K$} \label{sec:log-nabla-mod}
	In this section, we define log $\nabla$-modules without derivations by $t$ and prepare necessary theorems as in \cite{kedlaya2007semistable}.
	
	\subsection{Polyannuli}
	We start by giving the partial dagger version of the definition of polyannuli.
	\begin{definition}[{\cite[Definition 3.1.1]{kedlaya2007semistable}}] \label{def:aligned-subint}
		A subinterval $I$ of $[0, \infty)$ is said to be aligned if any endpoint at which $I$ is closed is contained in $\Gamma.$
		A subinterval $I$ of $[0, \infty)$ is said to be quasi-open if $I = (a, b)$ or $I = [0, b).$ 
	\end{definition}
	
	\begin{definition} \label{def:polyannuli}
		Let $I \neq [0, 0]$ be an aligned subinterval of $[0, \infty)$. We define the polyannlus over $\Ed_K$ to be
		$$A^d_{\Ed_K}(I) = \{ x \in \mathbb{A}^{d, \ad}_{\V[[t]]_K} : |t|_{[x]} \geq 1, \, |z_1|_{[x]}, \dots, |z_d|_{[x]} \in I\}$$
		where $z_1, \dots, z_d$ is the coordinate of $\mathbb{A}^d$. When $I = [\alpha, \beta]$ is a closed aligned subinterval, we define the polyannulus over $\E_K$ by
		$$A^d_{\E_K}(I) = \{ x \in \mathbb{A}^{d, \ad}_{\E_K} : \, \alpha \leq |z_1|_{x}, \dots, |z_d|_{x} \leq \beta \}$$
		and otherwise we define
		$$A^d_{\E_K}(I) = \bigcup_J A^d_{\E_K}(J)$$
		where $J$ runs over all closed aligned subinterval of $I$.
		$A^d_{\Ed_K}(I)$ is a partial dagger space and $A^d_{\E_K}(I)$ is an interpretation of \cite[Definition 3.1.2]{kedlaya2007semistable} in adic spaces and is a completion of $A^d_{\Ed_K}(I)$. We follow \cite[Convention 3.1.3]{kedlaya2007semistable} and simply write $A^d_{\Ed_K}[0, 1)$ for example.
	\end{definition}
	
	\begin{remark}
		For a closed aligned subinterval $I$ of $[0, \infty)$, it is easy to see that $A^d_{\E_K}(I)$ is a completion of $A^d_{\Ed_K}(I)$.
		When $I$ is not closed, say $I = [\alpha, \beta)$, then
		$$A^d_{\E_K}(I) = \{ x \in \mathbb{A}^{d, \ad}_{\E_K} : \alpha \leq |z_1|_x, \dots, |z_d|_x, \ |z_1|_{[x]}, \dots, |z_d|_{[x]} < \beta \}.$$
		In this case $A^d_{\Ed_K}(I)$ is the union of open subsets
		$$\{ x \in \mathbb{A}^{d, \ad}_{\V[[t]]_K}: |t|_{[x]} \geq 1, \, \alpha \leq |z_1|_{[x]}, \dots, |z_d|_{[x]}, \ |z_1|_x, \dots, |z_d|_x \leq \gamma \}$$
		for $\gamma \in (\alpha, \beta) \cap \Gamma^*$, whose completions are
		$$\{ x \in \mathbb{A}^{d, \ad}_{\E_K}: \alpha \leq |z_1|_{x}, \dots, |z_d|_{x} \leq \gamma \},$$
		whose union is $A^d_{\E_K}(I).$ Thus also in this case, $A^d_{\E_K}(I)$ is a completion of $A^d_{\Ed_K}(I)$. In fact, this hold for any type of aligned subintervals of $[0, \infty).$
	\end{remark}
	
	\begin{example} \label{ex:robba}
		For $\lambda_0 \in (0, 1) \cap \Gamma^*$, $A^d_{\Ed_K}[\lambda_0, 1)$ is covered by open subsets
		$$ U_\lambda = \{ x \in \mathbb{A}^{d, \ad}_{\V[[t]]_K} : |t|_{[x]} \geq 1, \, \lambda_0 \leq |z_i|_{[x]}, \, |z_i|_x \leq \lambda \} $$
		for $\lambda \in (\lambda_0, 1) \cap \Gamma^*.$
		$U_\lambda$ has a cofinal system of open neighborhoods in the affinoid space $A^{d, \ad}_{\V[[t]]_K}$ given by
		$$ U_{\lambda, \epsilon} = \{ x \in \mathbb{A}^{d, \ad}_{\V[[t]]_K} : |t|_{x} \geq 1-\epsilon, \, \lambda_0 - \epsilon \leq |z_i|_x \leq \lambda \} $$
		for $\epsilon \in (0, 1) \cap \Gamma^*$.
		Thus
		\begin{align*}
			\Gamma(A^d_{\Ed_K}[\lambda_0, 1), \O) & = \varprojlim_{\lambda} \Gamma(U_{\lambda}, \O) \\
			& = \varprojlim_{\lambda < 1} \varinjlim_{\epsilon} \Gamma(U_{\lambda, \epsilon}, \O)\\
			& = \bigcap_{\lambda < 1} \bigcup_{\epsilon} \E_{1-\epsilon} \langle z_1, \dots, z_d \rangle_{\lambda_0 - \epsilon \leq |z_i| \leq \lambda}.
		\end{align*}
		When $d = 1$, the ring $\bigcup_{\lambda_0 < 1} \Gamma(A^d_{\Ed_K}[\lambda_0, 1), \O)$ is the Robba ring $\R_{\Ed_K}$ over $\Ed_K$ defined in \cite[Definition 3.34]{lazda2016rigid}. Indeed, they define
		$$\R_{\Ed_K} = \bigcup_{\lambda_0 < 1} \bigcap_{\lambda_0 \leq \rho < 1} \bigcup_{\eta} \E_\eta \langle z \rangle_{|z| = \rho}.$$
		It is easy to see that two rings coincide.
		
		Note that in the calculation of $\Gamma(A^d_{\Ed_K}[\lambda, 0), \O)$, it is important to expand injective and projective limit in this order, which is a consequence of Proposition \ref{prop:germ-section}.
	\end{example}
	
	\subsection{Log connections}
	\begin{definition}
		Let $f: (X, \Xs) \to (Y, \Ys)$ be a morphism of partial dagger spaces. We say that $f$ is smooth if there is a representation
		\[
		\xymatrix{
			(X', \Us) \ar[d]_{j} \ar[rd]^{f'} & \\
			(X, \Xs) \ar@{..>}[r]^{f} & (Y, \Ys)
		}
		\]
		of $f$ by morphisms of pregerms with $j$ a strict neighborhood and $f' : \Us \to \Ys$ smooth as a morphism of rigid analytic spaces.
		We say that $x_1, \dots, x_n \in \Gamma(X, \O_X)$ is a coordinate of $f$ if $x_1, \dots, x_n$ comes from some $x'_1, \dots, x'_n \in \Gamma(\Us, \O_\Us)$ such that the morphism
		$$\Us \to \Ys \times_{\V[[t]]_K} \Spa \V[[t]]_K \langle z_1, \dots, z_n \rangle$$
		induced by $z_i \mapsto x'_i$ is etale.
		
		A partial dagger space $X$ is said to be smooth if the structure morphism $X \to \Spd \Ed_K$ is smooth.
	\end{definition}
	\begin{remark}
		If $X$ is a smooth partial dagger space, then so is the completion $\wh{X}$ over $\E_K.$
	\end{remark}
	\begin{example}
		Let $\Yf = \Spf \V[[t]] \langle x_1, \dots, x_m \rangle / (t - x_1 \cdots x_m)$, $\Ys = \Yf^\ad$ and $Y = \{ x \in \Ys : |t|_{[x]} \geq 1 \}.$ $(Y, \Ys)$ is a partial dagger space and $(Y, \Ys) \to \Spd \Ed_K$ is smooth with coordinate $x_2, \dots, x_m$.
	\end{example}
	
	\begin{definition}[{cf. \cite[Definition 2.3.5]{kedlaya2007semistable}}] \label{def:log-diffs}
		Let $f : V \to W$ be a morphism of partial dagger spaces and $x_1, \dots, x_n \in \Gamma(V, \O_V)$. We define the module of log differentials $\Omega^{1, \log}_{V/W}$ with respect to $x_1, \dots, x_n$ to be the quotient of $\Omega^1_{V/W} \oplus \O_V \dlog x_1 \oplus \cdots \oplus \O_V \dlog x_n$, with each $\dlog x_i$ a formal symbol, divided by the $\O_V$-submodule generated by $x_i \dlog x_i - dx_i.$
	\end{definition}
	
	\begin{definition}[{cf. \cite[Definition 2.3.7, Definition 2.3.8]{kedlaya2007semistable}}] \label{def:log-nabla-mod}
		In the situation of Definition \ref{def:log-diffs}, a log $\nabla$-module over $V$ relative to $W$ (or simply on $V/W$) is a locally free $\O_V$-module $\E$ equipped with an integrable log connection $\nabla : \E \to \E \times \Omega^{1, \log}_{V/W}.$
		A section $v \in \Gamma(V, \E)$ is said to be horizontal relative to $W$ if $\nabla v = 0.$
	\end{definition}
	
	\begin{definition}[{cf. \cite[Definition 2.3.9]{kedlaya2007semistable}}] \label{def:residue-map}
		In the situation Definition \ref{def:log-nabla-mod}, $\nabla$ induces an $\O$-linear morphism $\E \to \E \otimes \O \dlog x_i \simeq \E$ on each zero locus $V(x_i)$ as explained in \cite[Definition 2.3.9]{kedlaya2007semistable}. This map is called the residue map along $V(x_i)$.
	\end{definition}
	
	\subsection{Constant and unipotent connections}
	In this section, we will prove the statements regarding log $\nabla$-modules on partial dagger spaces similar to those in \S3.2, \S3.3 of \cite{kedlaya2007semistable}.
	
	\begin{hypothesis} \label{hypo:smc}
		Let $f : V \to W$ be an smooth morphism of partial dagger spaces and suppose that $x_1, \dots, x_n \in \Gamma(V, \O)$ is a coordinate of $V$ relative to $W$ whose zero loci are smooth and meet transversely. 
	\end{hypothesis}
	
	\begin{definition}
		Under Hypothesis \ref{hypo:smc}, for an open subset $X$ of $V \times_{\Ed_K} A^d_{\Ed_K}[0, 1)$ we define $\LNM_{X/W}$ to be the category of log $\nabla$-modules over $V$ relative to $W$ with respect to $x_1, \dots, x_n, z_1, \dots, z_d$. When $W = \Spd \Ed_K$, it is abbreviated to $\LNM_{X}$.
	\end{definition}
	
	\begin{definition}
		Under Hypothesis \ref{hypo:smc}, an object $\E$ of $\LNM_{X/W}$ with $X = V \times_{\Ed_K} A^d_{\Ed_K}[0, 1)$ is said to be constant if $\E$ is isomorphic to $\pi^* \F$ for some $\F \in \LNM_{V/W}$ where $\pi$ is the projection. $\E$ is said to be unipotent if it is an iterated extension of constant objects. The full subcategory of $\LNM$ consisting of unipotent objects is denoted by $\ULNM$.
	\end{definition}
	
	The categorical equivalence of Proposition \ref{prop:tube-sheaf}-(iii) can be upgraded to log $\nabla$-modules with nilpotent residues, since it preserves locally freeness.
	
	\begin{proposition} \label{prop:lnm-colim}
		Under Hypothesis \ref{hypo:smc}, suppose that $V \to W$ comes from a morphism of pregerms $(V, \Vs) \to (W, \Ws)$ and there exist cofinal systems of open neighborhoods $\{ \Vs_\lambda \}$ of $V$ in $\Vs$ and $\{ \Ws_\lambda \}$ of $W$ in $\Ws$ such that $\Vs_\lambda$ is mapped to $\Ws_\lambda$. Then we have canonical equivalence of categories
		$$
		\varinjlim_\lambda \LNM_{\Vs_\lambda/\Ws_\lambda} \xrightarrow{\sim} \LNM(V/W).
		$$
		Let $\Xs_\lambda$ be a cofinal system of open neighborhoods of $X = V \times A^d_{\Ed_K}(I)$ in some ambient rigid analytic space $\Xs$. Then for an object $E \in \LNM(X/W)$, it is constant/unipotent if and only if there is some constant/unipotent object $\E_\lambda \in \LNM(\Xs_\lambda/\Ws_\lambda)$ which gives rise to $E$.
	\end{proposition}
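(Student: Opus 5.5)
The plan is to reduce everything to Proposition \ref{prop:tube-sheaf}-(iii) (equivalently Proposition \ref{prop:theoremAB}-(ii) together with Proposition \ref{prop:germ-section}). We first build a functor $\varinjlim_\lambda \LNM_{\Vs_\lambda/\Ws_\lambda} \to \LNM(V/W)$. A log $\nabla$-module on $\Vs_\lambda$ relative to $\Ws_\lambda$ restricts to $V$, and the result is a log $\nabla$-module relative to $W$. For this we need the restrictions of $\Omega^1_{\Vs_\lambda/\Ws_\lambda}$, and of the modules of log differentials taken with respect to $x_1,\dots,x_n$ (lifted to $\Vs_\lambda$ after shrinking), to agree with the corresponding objects on the germ. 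This is the content of the definition $\O_V = i^{-1}\O_{\Vs}$: pullback along $i$ is exact and commutes with the formation of Kähler differentials and of the quotient in Definition \ref{def:log-diffs}.

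Next we check that the functor is fully faithful. A morphism between restrictions is a global section of $\Homs(\E,\F)$ on $V$. Since $\Homs$ of coherent modules is coherent and commutes with $i^{-1}$, Proposition \ref{prop:tube-sheaf}-(i)/(iii) shows that this section comes from some $\Vs_\lambda$. Horizontality of the morphism is an equality of sections of a coherent sheaf tensored with log differentials. Equality holding on $V$ therefore holds on a smaller neighborhood, again by (i) and the colimit description.

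For essential surjectivity we take $E\in\LNM(V/W)$. By Proposition \ref{prop:tube-sheaf}-(iii) it descends to a coherent $\O_{\Vs_\lambda}$-module $\E_\lambda$. Local freeness spreads to a neighborhood: at each point of $V$ the stalk of $\E_\lambda$ coincides with the stalk of $E$, which is free, so $\E_\lambda$ is free on an open set containing that point. Taking the union and using cofinality, we shrink $\Vs_\lambda$ so that it is contained in this union. The connection $\nabla$ is a differential operator. On a quasi-compact affinoid piece it is determined by finitely many matrices of sections with respect to the coordinates $x_i$, $z_j$ and the log differentials. These matrices lift to some $\Vs_\lambda$ mapping to $\Ws_\lambda$, which is exactly where the compatibility hypothesis on the two neighborhood systems is used. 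The Leibniz rule holds automatically for the operator defined by these matrices. Integrability, $\nabla^2=0$, is the vanishing of a section of a coherent sheaf, so it holds after further shrinking.

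The main obstacle is that $V$ need not be quasi-compact, so the finitely many choices made on each affinoid piece must be glued. We handle this as in Proposition \ref{prop:germ-section} and Example \ref{ex:robba}. First we work affinoid-locally, then we use full faithfulness to glue the lifted objects on overlaps, shrinking to a neighborhood built as a union of the local neighborhoods. The gluing isomorphisms are unique by full faithfulness, so the cocycle condition holds after one more shrinking.

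For the second statement, a constant or unipotent object on some $\Xs_\lambda$ clearly restricts to one on $X$, since pullback along $\pi$ commutes with restriction. Conversely, suppose $E\simeq\pi^*\F$ on $X$. We lift $\F$ to $\Vs_\lambda$ by the first part, and lift the isomorphism $E\simeq\pi^*\F$ by full faithfulness. For unipotent $E$ we argue by induction on the length of a filtration by constant subquotients. Each step $0\to E'\to E\to E''\to0$ is lifted by lifting the morphisms $E'\to E$ and $E\to E''$. The exactness of the lifted sequence is then checked on the kernel and cokernel, which are coherent and vanish on $X$, hence vanish on a smaller $\Xs_\lambda$.
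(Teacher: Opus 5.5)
Your proposal is correct and follows the same route as the paper, which justifies this proposition only by the remark that the coherent-module equivalence of Proposition \ref{prop:tube-sheaf}-(iii) upgrades to log $\nabla$-modules because it preserves local freeness. Your steps (spreading out local freeness, lifting the connection matrices, integrability and horizontality, and the induction on unipotent filtrations) simply fill in that remark; you should also check nilpotence of the residues, which the paper's remark includes in $\LNM$, and it spreads to a neighbourhood by the same vanishing-of-a-coherent-section argument on the zero loci $V(x_i)$.
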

	
	\begin{definition}
		$\LNM_{V \times A^d[0, 0]}$ is defined to be the category of log $\nabla$-modules over $V/W$ equipped with $d$ nilpotent $\O_V$-linear endomorphisms $\partial_i = \frac{\partial}{\partial z_i}$ commuting with each other and $\nabla$. An object $(\E, \nabla, \partial_1, \dots, \partial_d)$ is said to be constant if $\partial_1, \dots, \partial_d$ are zero maps. An object is said to be unipotent if it is an iterated extension of constant objects.
	\end{definition}
	\begin{definition}
		Under Hypothesis \ref{hypo:smc}, define the functor
		$$ \Uc_I : \LNM_{V \times A^d[0, 0]/W} \to \LNM_{V \times A^d(I)/W} $$
		by
		$$ (\E, \nabla, \partial_1, \dots, \partial_d) \mapsto (\pi^* \E, \nabla') $$
		with
		$$\nabla' : v \mapsto (\pi^* \nabla)v + \sum_i (\pi^*\partial_i)v \otimes \dlog z_i.$$
		$\nabla'$ is integrable since $\partial_i$ and $\nabla$ commute with each other and the residues are nilpotent since $\partial_i$ are nilpotent. Note that $\Uc_I$ preserves constance and unipotence.
	\end{definition}
	
	\begin{lemma} \label{lem:lnm-abel}
		Let $X$ be an smooth partial dagger space with coordinates $x_1, \dots, x_n$ whose zero loci are smooth and meet transversely. Then for any morphism $f : \E \to \F$ of $\LNM_X$, the kernel and cokernel are locally free.
	\end{lemma}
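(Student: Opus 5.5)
The plan is to follow the proof of the analogous statement in \cite{kedlaya2007semistable} (Lemma 3.2.x there, resp. \cite[Lemma 2.3.x]{kedlaya2007semistable}), which is ultimately a pointwise/local statement about modules over a regular ring with a connection, and to reduce to it by passing to the completion. Everything is local on $X$, so we may assume $X = \Spd A$ is an affinoid partial dagger space with $A$ a partial dagger algebra. By Proposition \ref{prop:theoremAB}, $\E$, $\F$ correspond to finite projective $A$-modules $M$, $N$, and $f$ to an $A$-linear map $\phi: M \to N$ compatible with the log connections. The kernel and cokernel of $f$ correspond to $\ker \phi$ and $\coker \phi$, which are finitely generated since $A$ is noetherian (Corollary \ref{cor:pda}). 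It therefore suffices to show that $\ker\phi$ and $\coker\phi$ are projective $A$-modules.

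The key reduction uses faithful flatness. By Corollary \ref{cor:pda}, $A \to \wh{A}$ is faithfully flat, so $\ker \phi \otimes_A \wh{A} = \ker(\phi \otimes \wh{A})$ and similarly for cokernels. A finitely generated module over a noetherian ring is projective if and only if its faithfully flat base change is flat (flatness descends along faithfully flat maps, and finitely presented flat modules are projective). We are thus reduced to the corresponding statement for $\wh{f}: \wh{\E} \to \wh{\F}$ on the completion $\wh{X}$. This $\wh{X}$ is a smooth rigid analytic space over $\E_K$ with coordinates $x_1,\dots,x_n$ whose zero loci are still smooth and transverse, and $\wh f$ is a morphism of log $\nabla$-modules relative to $\E_K$ (and so relative to any subfield: only the derivations along $x_i$ matter). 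We take care that the completion of the coordinates still gives an étale map to affine space over $\E_K$; this follows by base changing the étale morphism in the definition of coordinates.

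For the complete case, we invoke the classical argument of \cite{kedlaya2007semistable}. First, away from the divisor $\bigcup V(x_i)$ the connection is non-logarithmic. A coherent module with integrable connection on a smooth rigid space in characteristic $0$ is locally free, so $\ker$ and $\coker$ are locally free there. Near the divisor, the argument is the crux and the step I expect to be the main obstacle: the logarithmic connection only gives derivations $x_i\partial_i$, which do not force local freeness in general without control on the residues. Kedlaya's argument handles this by restricting to $V(x_i)$ and using the residue maps (Definition \ref{def:residue-map}), or by working formally along the divisor, to show the rank of $\coker$ is locally constant, hence the Fitting ideals are trivial. If necessary we impose the paper's implicit standing assumption (nilpotent residues, as in the remark before Proposition \ref{prop:lnm-colim}); then the residue eigenvalues differ by no nonzero integers, and the standard rank-constancy argument goes through. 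Having obtained local freeness of $\coker\wh\phi$, and hence of $\ker\wh\phi$ as the kernel of a surjection onto the image (a submodule of a projective with projective quotient), faithfully flat descent gives the claim for $\phi$.
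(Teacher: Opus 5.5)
Your proposal is correct and matches the paper's proof: reduce to an affinoid $X=\Spd A$, pass to $\wh X=\Spa \wh A$ using the faithfully flat map $A\to\wh A$ of Corollary \ref{cor:pda}, apply \cite[Lemma 3.2.14]{kedlaya2007semistable} on $\wh X$, and descend projectivity back to $A$; you spell out the flat-descent step and the deduction of the kernel from the cokernel more explicitly than the paper does. The paper simply cites Kedlaya's lemma for the complete case, so your tentative sketch of the behaviour along the divisor is not needed, though you are right that nilpotent residues are an implicit standing assumption in $\LNM$.
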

	\begin{proof}
		We may suppose that $X = \Spd A$ is affinoid and let $\mathcal{K} = \ker f, \, K = \Gamma(X, \mathcal{K})$ and $ \wh{\mathcal{K}} = \mathcal{K}|_{\wh{X}}, \, \wh{K} = \Gamma(X, \wh{\mathcal{K}}) = K \otimes_A \wh{A}.$ By \cite[Lemma 3.2.14]{kedlaya2007semistable}, $\wh{\mathcal{K}}$ is locally free on $\wh{X}$ and by the proof of \cite[Lemma 3.2.13]{kedlaya2007semistable}, this is equivalent to saying that $\mathcal{K}$ is a locally free $\wh{A}$-module. Since $A \to \wh{A}$ is faithfully flat by Corollary \ref{cor:pda}, $K$ is a locally free $A$-module. This implies that $\mathcal{K}$ is locally free on $X$ by the categorical equivalence of Proposition \ref{prop:theoremAB}.
	\end{proof}
	
	\begin{proposition} \label{prop:lnm-abel}
		For any smooth partial dagger space $V$ and a closed aligned subinterval $I$ of $[0, \infty)$, let $X = V \times_{\Ed_K} A^d_{\Ed_K}(I)$; then $\LNM_X$ is an abelian tensor category and the full subcategory $\ULNM_X$ is an abelian tensor subcategory.
	\end{proposition}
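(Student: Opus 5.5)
The plan is to follow the argument of \cite[Proposition 3.2.15, Lemma 3.2.16]{kedlaya2007semistable} and to reduce every non-formal step to Lemma \ref{lem:lnm-abel}. The space $X = V \times_{\Ed_K} A^d_{\Ed_K}(I)$ is a smooth partial dagger space. Its coordinates are those of $V$ together with $z_1, \dots, z_d$, and the zero loci of these coordinates are smooth and meet transversely. When $0 \notin I$ the $z_i$ are units, so only the coordinates of $V$ contribute to the log structure. In either case Hypothesis \ref{hypo:smc} is inherited, so Lemma \ref{lem:lnm-abel} applies to $X$.

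First I show that $\LNM_X$ is abelian. Take a morphism $f : \E \to \F$ in $\LNM_X$. Since $f$ is horizontal, the log connections restrict to $\ker f$ and descend to $\coker f$, and both remain integrable. By Lemma \ref{lem:lnm-abel} both sheaves are locally free, so they are objects of $\LNM_X$. The image and coimage are then the kernel of $\F \to \coker f$ and the cokernel of $\ker f \to \E$. These are again locally free, and the canonical map between them is an isomorphism because it is so on underlying coherent sheaves. Tensor products, duals and the unit $\O_X$ carry the usual induced log connections, which are integrable, and local freeness is preserved. So $\LNM_X$ is an abelian tensor category, and it is additive because direct sums are clearly fine.

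Next I treat $\ULNM_X$. It is closed under extensions by definition. It is closed under tensor products, since $\pi^*\F \otimes \pi^*\F' = \pi^*(\F \otimes \F')$ and tensoring two filtrations gives a filtration. It is closed under duals by dualizing the filtration. The substantive point is closure under kernels and cokernels of morphisms between unipotent objects. As in \emph{loc. cit.}, I plan to prove this using a description of unipotent objects over a closed aligned $I$. Every unipotent object should be isomorphic to $\Uc_I(\mathcal{G})$ for some unipotent $\mathcal{G} \in \LNM_{V \times A^d[0,0]}$, and $\Uc_I$ should be fully faithful on unipotent objects. Granting this, a morphism of unipotent objects comes from a morphism in $\LNM_{V \times A^d[0,0]}$. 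In that category kernels and cokernels are computed on $V$, which is abelian by Lemma \ref{lem:lnm-abel} applied to $V$. They remain unipotent there: each $\partial_i$ is nilpotent and they commute, so the common kernel filtration gives a filtration with constant graded pieces. Since $\Uc_I$ is exact on underlying sheaves because $\pi$ is flat, it carries these kernels and cokernels to those in $\LNM_X$.

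The main obstacle is establishing the description of $\ULNM_X$ via $\Uc_I$ in the partial dagger setting. The key computation is that for a unipotent object $\E$, the horizontal sections in the $z$-directions and the relevant $H^1$ are computed by the constant terms of Laurent expansions in $z$. This depends on the spectral and convergence bounds used by Kedlaya, which are delicate over non-complete rings. I would first try to bypass the bounds by descent. One computes on the completion $\wh{X}$ over $\E_K$, where \cite[Proposition 3.2.15, Lemma 3.2.16]{kedlaya2007semistable} already apply. Then one uses that $\Gamma(\wh{X}, \cdot) = \Gamma(X, \cdot) \otimes_A \wh{A}$ with $A \to \wh{A}$ faithfully flat (Corollary \ref{cor:pda}) to descend unipotence of kernels and cokernels. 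Here a filtration found over $\wh{A}$ must be shown to be defined over $A$. If this direct descent fails, I would instead argue by induction on unipotent length. In that approach one uses the explicit structure of $\Gamma(A^d_{\Ed_K}(I), \O)$ as in Example \ref{ex:robba} to show that the operator $z_i\partial_i$ is invertible on the non-constant Laurent terms of a constant object, as in the complete case.
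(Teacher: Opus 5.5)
Your treatment of $\LNM_X$ (kernels and cokernels locally free by Lemma \ref{lem:lnm-abel}, induced connections, tensor products and duals) is the paper's. For $\ULNM_X$, however, you take a different and heavier route, and it is sound in outline.

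\textbf{What the paper does.} It never invokes $\Uc_I$ here. It runs the first three paragraphs of the proof of \cite[Proposition 3.2.20]{kedlaya2007semistable} verbatim, which reduces everything to the vanishing of two maps, $\E' \to \E/\pi^*\F$ and $\pi^*\F' \to \E/\E'$. These vanish on $\wh{X}$ by Kedlaya's argument. Vanishing of a map between coherent modules then descends formally along the faithfully flat completion map of Corollary \ref{cor:pda}.

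\textbf{What your route needs, and what it buys.} You need essential surjectivity and full faithfulness of $\Uc_I$ on unipotent objects for closed $I$. In the paper this is Lemma \ref{lem:ui-cat-eq2} and Theorem \ref{thm:ui-cat-eq}. They come later but do not use this proposition: Ext of log $\mathcal{D}$-modules is computed by log de Rham hypercohomology (Lemma \ref{lem:ui-cat-eq1}), the devissage happens on the $A^d[0,0]$ side, and an explicit chain homotopy finishes. So there is no circularity. Granting it, your argument is correct: kernels and cokernels are computed in $\LNM_{V \times A^d[0,0]}$ on $V$ by Lemma \ref{lem:lnm-abel}, and transported through the exact functor $\pi^*$. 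This gives a more structural proof and yields the equivalence as a by-product. The price is that you front-load the whole Ext comparison, whereas the paper only needs a trivial descent of vanishing.

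\textbf{How to obtain the $\Uc_I$ description.} Your first plan, working on $\wh{X}$ and descending a filtration along the completion, does not go through. Faithful flatness descends vanishing of maps, exactness and local freeness. It does not descend the \emph{existence} of a filtration with constant graded pieces: subobjects $\pi^*\wh{\F}$ found over $\wh{V}$ need not be defined over $V$. That is exactly descent of unipotence (\S\ref{subsec:descent}). It requires Hypothesis \ref{hypo:pda2} and the estimates of Lemmas \ref{lem:hadamar} and \ref{lem:hor-sec}, so it is unavailable for an arbitrary smooth $V$.

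Your fallback is the right one and is precisely the proof of Lemma \ref{lem:ui-cat-eq2}.
\begin{enumerate}[(i)]
\item Reduce to $V = W$ affinoid, using Theorems A and B from Proposition \ref{prop:theoremAB}.
\item Use the homotopy that integrates $z_h^{i_h}\,\dlog z_h$ to $z_h^{i_h}/i_h$, in the first variable with $i_h \neq 0$.
\end{enumerate}
No delicate bounds are involved. The homotopy only divides by nonzero integers, and $|1/i| \leq i$. The resulting polynomial growth is absorbed by shrinking the margin $\epsilon$ around $I = [\alpha, \beta]$, and the fringe parameter is untouched. This is exactly where the closedness of $I$ enters.
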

	\begin{proof}
		$\LNM_X$ being an abelian category is a consequence of Lemma \ref{lem:lnm-abel}. In order to show the statement of $\ULNM$, we follow the proof of \cite[Proposition 3.2.20]{kedlaya2007semistable} word by word from the first to the third paragraph. This leads us to showing that the maps $\E' \to \E / (\pi^* \F)$ and $\pi^* \F' \to \E / \E'$ are zero. This is true on $\wh{X}$ by the proof of {\it loc. cit.}, and by using faithfully flatness as in Lemma \ref{lem:lnm-abel}, we conclude that the maps are zero on $X$.
	\end{proof}
	
	\begin{lemma} \label{lem:ui-cat-eq1}
		Under Hypothesis \ref{hypo:smc}, let $I$ be an aligned subinterval of $[0, \infty)$, $X = V \times A^d_{\Ed_K}$ and $\mathcal{D}_{X/W}$ the sheaf of rings of (finite order) $\O_W$-linear log differential operators on $X$. Let $\E$ and $\E'$ be left $\mathcal{D}_{X/W}$-modules with $\E$ coherent and flat over $\O_X.$ Then there exists a natural isomorphism
		$$ \Ext_{\mathcal{D}_{X/W}}(\E, \E') \simeq \mathbb{H}^i(X, \E^{\vee} \otimes \E' \otimes \Omega^{\cdot, \log}_{X/W})$$
		where $\mathbb{H}^*$ denotes hypercohomology.
	\end{lemma}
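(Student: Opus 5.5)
This is the partial dagger analogue of \cite[Lemma 3.2.16]{kedlaya2007semistable}, and I would follow the classical argument: resolve $\O_X$ by a log Spencer complex, then use Theorems A and B to pass from sheaves to global sections.

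The first step is the log Spencer resolution. Locally on $X$ we have log coordinates: $x_1, \dots, x_n$ relative to $W$ together with $z_1, \dots, z_d$. Hence $\Omega^{1,\log}_{X/W}$ is locally free, with basis the $\dlog$'s of the log coordinates and the $dx$'s of the remaining coordinates. Let $\mathcal{T}$ be the dual log tangent sheaf. Then $\mathcal{D}_{X/W}$ has the usual order filtration, with $\mathrm{gr}\,\mathcal{D}_{X/W} \simeq \mathrm{Sym}_{\O_X}\mathcal{T}$. The complex
$$\cdots \to \mathcal{D}_{X/W} \otimes_{\O_X} \wedge^2 \mathcal{T} \to \mathcal{D}_{X/W} \otimes_{\O_X} \mathcal{T} \to \mathcal{D}_{X/W} \to \O_X \to 0$$
is exact, because its associated graded is a Koszul complex over $\mathrm{Sym}\,\mathcal{T}$ for the regular sequence given by a local basis of $\mathcal{T}$. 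This is purely local algebra and needs nothing about dagger structures.

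The second step is to tensor with $\E$. Since $\E$ is $\O_X$-flat, $\E \otimes_{\O_X} (\mathcal{D}_{X/W} \otimes \wedge^\bullet \mathcal{T})$ is a resolution of $\E$ by $\mathcal{D}_{X/W}$-modules. Here the $\mathcal{D}$-structure is the usual tensor product structure, and these terms are isomorphic to $\mathcal{D}_{X/W} \otimes_{\O_X} (\E \otimes \wedge^\bullet \mathcal{T})$, which are induced modules. For an induced module we have
$$\Homs_{\mathcal{D}}(\mathcal{D} \otimes \mathcal{G}, \E') \simeq \Homs_{\O}(\mathcal{G}, \E').$$
Since $\E$ is coherent and flat it is locally free, so $\Homs_{\O}(\E \otimes \wedge^i \mathcal{T}, \E') \simeq \E^\vee \otimes \E' \otimes \Omega^{i,\log}_{X/W}$. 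A direct check identifies the differentials with those of the log de Rham complex. Hence $R\Homs_{\mathcal{D}}(\E, \E') \simeq \E^\vee \otimes \E' \otimes \Omega^{\cdot,\log}_{X/W}$.

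The last step is to take derived global sections, which gives the statement:
$$\Ext^i_{\mathcal{D}_{X/W}}(\E, \E') = H^i\bigl(R\Gamma\, R\Homs_{\mathcal{D}}(\E, \E')\bigr) \simeq \mathbb{H}^i\bigl(X, \E^\vee \otimes \E' \otimes \Omega^{\cdot,\log}_{X/W}\bigr).$$
The Grothendieck composite $R\Gamma \circ R\Homs_{\mathcal{D}} = R\Hom_{\mathcal{D}}$ holds formally, since $\Homs_{\mathcal{D}}$ sends injectives to flasque sheaves. This needs no acyclicity and no assumption on $\E'$ beyond being a $\mathcal{D}$-module.

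The main obstacle I expect is making sure that local freeness of $\Omega^{1,\log}_{X/W}$, and the identification $\mathrm{gr}\,\mathcal{D} \simeq \mathrm{Sym}\,\mathcal{T}$, hold on the germ $X$ and not just on some strict neighborhood. My plan is to establish these on a neighborhood $\Xs_\lambda$ on which the coordinates are étale, using Hypothesis \ref{hypo:smc} and Proposition \ref{prop:lnm-colim}, and then restrict. Restriction is harmless because $\O_X$ is the pullback of $\O_{\Xs_\lambda}$ and pullback is exact, so the Spencer resolution is exact on $X$ too. Everything else is formal homological algebra.
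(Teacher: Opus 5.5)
Your proposal is correct and takes essentially the paper's approach. The paper gives no argument beyond noting that the lemma is the relative (log, partial dagger) analogue of \cite[Proposition 1.1.2]{chiarellotto1999pentes}, as remarked before \cite[Lemma 3.3.1]{kedlaya2007semistable}; the standard proof of that result is exactly your route, namely the log Spencer resolution, tensoring with the locally free $\E$, and then $R\Gamma\circ R\Homs_{\mathcal{D}} = R\,\mathrm{Hom}_{\mathcal{D}}$, and your step of building the resolution on a strict neighborhood with \'etale coordinates and pulling it back exactly to the germ is the right way to make ``straightforward'' precise here.
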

	\begin{proof}
		As mentioned in the paragraph before \cite[Lemma 3.3.1]{kedlaya2007semistable}, this is a relative analogue of \cite[Proposition 1.1.2]{chiarellotto1999pentes}, whose proof is straightforward.
	\end{proof}
	
	\begin{lemma} \label{lem:ui-cat-eq2}
		Under Hypothesis \ref{hypo:smc}, let $I$ be a closed aligned subinterval of $[0, \infty)$. Then for any $\E, \E' \in \ULNM_{V \times A^d[0, 0] / W}$, the canonical homomorphism
		$$ \Ext^i(\E, \E') \to \Ext^i(\Uc_I(\E), \Uc_I(\E')) $$
		is an isomorphism.
	\end{lemma}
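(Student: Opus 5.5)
The plan is to follow the proof of \cite[Lemma 3.3.2]{kedlaya2007semistable}. First I reduce to the case where both objects are constant, and then I compute both sides explicitly via Lemma \ref{lem:ui-cat-eq1}.

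\textbf{Reduction to constant objects.} Since $\E$ and $\E'$ are iterated extensions of constant objects, the long exact sequences of $\Ext$ and the five lemma reduce the claim to the case where $\E$ and $\E'$ are constant. Here I use that $\Uc_I$ is exact and takes values in $\ULNM$, which is abelian by Proposition \ref{prop:lnm-abel}. Next, by Lemma \ref{lem:ui-cat-eq1} and its evident analogue on $V \times A^d[0,0]$, both $\Ext$ groups are hypercohomology groups of log de Rham complexes. Taking $\E = \pi^*\F$ and $\E' = \pi^*\F'$ with zero $\partial_i$, and setting $\mathcal{G} = \F^\vee \otimes \F'$, I then need to compare two things:
\begin{itemize}
\item $\mathbb{H}^i(V, \mathcal{G} \otimes \Omega^{\cdot,\log}_{V/W}) \otimes \bigwedge^\cdot(\dlog z_1,\dots,\dlog z_d)$, which is the Koszul complex for the zero endomorphisms;
\item $\mathbb{H}^i(X, \pi^*\mathcal{G} \otimes \Omega^{\cdot,\log}_{X/W})$.
\end{itemize}
By the Künneth-type decomposition of the log de Rham complex of $X = V \times A^d(I)$, and since we may work locally on $V$, it suffices to show one fact. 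Namely, the relative log de Rham complex of $A^d_{\Ed_K}(I)$ over $V$ with coefficients in $\pi^*\mathcal{G}$ has cohomology $\mathcal{G} \otimes \bigwedge^\cdot \langle \dlog z_j \rangle$, and this cohomology is represented by the constant forms.

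\textbf{The Poincaré lemma.} I localize to $V = \Spd A$ affinoid and use Theorem B (Proposition \ref{prop:theoremAB}) to reduce to global sections. By Example \ref{ex:robba} (for closed $I$ the projective limit is trivial), $\Gamma(X, \O)$ is the ring of Laurent series $\sum_{\nu} a_\nu \underline{z}^{\nu}$. The coefficients $a_\nu$ lie in the fringe algebras $A_\eta$ for a uniform $\eta$, and they satisfy the convergence condition on $I$. Then:
\begin{itemize}
\item The operator $z_j\partial_j$ acts on $a_\nu \underline{z}^\nu$ by multiplication by $\nu_j$.
\item The Koszul complex in the commuting operators $z_j \partial_j$ therefore splits into the $\nu = 0$ part and a part that is acyclic. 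Acyclicity follows from the explicit contracting homotopy given by dividing by $\nu_j$ on the terms with $\nu_j \neq 0$.
\item A coefficient $\mathcal{G}$ with its connection along $V$ commutes with all of this, because it has no $z$-dependence.
\end{itemize}

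\textbf{Main obstacle.} The main obstacle is convergence of the homotopy: the division by $\nu_j$ must preserve the partial dagger ring. On the Tate side this is fine because $|1/\nu_j| \le |\nu_j|^{-1}$ grows only polynomially in $|\nu|$, namely $|1/\nu_j| \leq p^{\log_p \nu_j}$. In the partial dagger setting I must also check that the coefficients stay in a single fringe algebra $A_\eta$, i.e.\ that no loss of overconvergence in $t$ occurs. For this I will argue on norms:
\begin{itemize}
\item For closed aligned $I$, a series converging on $A^d(I)$ over $A_\eta$ still converges after multiplying its coefficients by $\nu_j^{-1}$. The reason is that $\|a_\nu\|_\eta \rho^{\nu} \to 0$ at each endpoint, and polynomial growth can be absorbed by slightly shrinking: either shrink $\eta$ to $\eta' < \eta$ (the fringe algebras form an inductive system), or use that aligned closed intervals allow exact endpoint checking.
\item If this direct argument proves delicate, I will instead use the faithful flatness of Corollary \ref{cor:pda}. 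I would compare with the completed situation $\wh{X}$, where the statement is exactly \cite[Lemma 3.3.2]{kedlaya2007semistable}, and descend injectivity and surjectivity through $A \to \wh{A}$. Descending surjectivity on $\Ext$ requires care, and I would handle it via the explicit homotopy on coefficients.
\end{itemize}
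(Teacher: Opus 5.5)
Your overall plan matches the paper's. You reduce, as in \cite[Lemma 3.3.2]{kedlaya2007semistable}, to showing that for $V=W$ affinoid the constant forms include into $\Gamma(V,\Omega^{\cdot,\log}_{V\times A^d(I)/V})$ as a chain homotopy equivalence, using the homotopy that replaces $\underline{z}^{\nu}\dlog z_j$ by $\underline{z}^{\nu}/\nu_j$. However, the one step that is new in this setting is that this homotopy maps the ring to itself, and both mechanisms you offer for it fail.

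The factor $1/\nu_j$ is a nonzero rational scalar, so it never moves a coefficient out of a fringe algebra $A_\eta$. There is no loss of overconvergence in $t$, and shrinking the fringe parameter of $A$ gives no decay in $\nu$.

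Nor can the polynomial loss be absorbed on the closed polyannulus itself. Take $d=1$, $I=[0,1]$ and the form $\sum_{n\ge 0}p^n z^{p^n}\dlog z$. Its coefficients lie in $K$ and tend to $0$, but its log-primitives $c+\sum_n z^{p^n}$ diverge on $|z|\le 1$. So your claim that a series converging on $A^d(I)$ over $A_\eta$ still converges after dividing by $\nu_j$ is false if convergence on the closed polyannulus is meant, and ``exact endpoint checking'' does not help. Your remark that this is fine on the Tate side holds only for quasi-open intervals, which is why \cite[Lemma 3.3.2]{kedlaya2007semistable} requires them.

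The same example sinks your fallback. For closed $I$, the completion of $A^d_{\Ed_K}(I)$ is the affinoid $A^d_{\E_K}(I)$, and there the above form gives a non-constant class in $H^1$. So there is no completed statement to descend from.

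The missing idea is that for closed aligned $I=[\alpha,\beta]$ the germ $V\times A^d_{\Ed_K}(I)$ is overconvergent in the $z$-directions as well. It has the cofinal system of neighbourhoods $\Vs_\eta\times A^d_{\E_\lambda}[\alpha-\epsilon,\beta+\epsilon]$. By Proposition~\ref{prop:germ-section}, every section is therefore a series with coefficients in a fixed fringe algebra, converging on a polyannulus $\alpha-\epsilon\le|z_i|\le\beta+\epsilon$ for some radial slack $\epsilon>0$. Dividing by $\nu_j$ costs at most a factor $|\nu|$, and this is absorbed by the geometric gain from passing to any $\epsilon'<\epsilon$, with the fringe parameter unchanged.

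This is exactly the paper's argument. It is also the point of the remark after the lemma: in $\bigcup_{\eta>1}\E_{\eta^{-1}}\langle z\rangle_{|z|\le\eta}$, the single parameter being shrunk also bounds $|z|$. With this substitution the rest of your plan goes through.
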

	\begin{proof}
		As in the proof of \cite[Lemma 3.3.2]{kedlaya2007semistable}, we reduce the lemma to prove that for $V = W$ an affinoid, the canonical map $g : \Gamma(V, \, \Omega^{\cdot, \log}_{V \times A^d[0, 0]/V}) \to \Gamma(V, \, \Omega^{\cdot, \log}_{V \times A^d(I)/V})$ is a chain homotopy equivalence, where $\Omega^{\cdot, \log}_{V \times A^d[0, 0]}$ is freely generated by $\dlog z_1, \dots, \dlog z_d$ over $\O_V$. (Note that Theorem A and B hold for affinoid partial dagger spaces, by Proposition \ref{prop:theoremAB}).
		
		Let $h : \Gamma(V, \,\Omega^{\cdot, \log}_{V \times A^d(I)/V}) \to \Gamma(V, \,\Omega^{\cdot, \log}_{V \times A^d[0, 0]/V})$ be the map taking constant coefficient of each formal power series. $h \circ g$ is the identity map; we construct the chain homotopy $s$ between $g \circ h$ and the identity map. One such map, as in the proof of {\it loc. cit.}, is constructed in the following way:
		Given a $k$-form $z_1^{i_1} \cdots z_d^{i_d} \dlog z_{j_1} \wedge \cdots \wedge \dlog z_{j_k}$, let $h$ be the smallest positive integer with $i_h \neq 0$ and integrate it against $\dlog z_h = dz_h / z_h$ if $h \in \{ j_1, \dots, j_k \}$, i.e., $z_h^{i_h} \dlog z_h$ is replaced by $\frac{1}{i_h} tzh^{i_h}$; otherwise it is mapped to zero. In order to check that this construction is well-defined, let $\{ A_\eta \}_{1 < \eta < \eta_0}$ be a fringe inductive system of $A = \Gamma(X, \O_X)$, $\mathscr{V}_\eta = \Spa A_{\eta}$ and let $I = [\alpha, \beta]$. Then $V \times A^d_{\Ed_K}[\alpha, \beta]$ admits a cofinal system of open neighborhoods $\Vs_{\eta, \lambda, \epsilon} := \Vs_\eta \times A^d_{\E_\lambda}[\alpha - \epsilon, \beta + \epsilon]$ where $\eta \in (1, \eta_0] \cap \Gamma^*, \ \lambda \in (0, 1) \cap \Gamma^*, \ \epsilon \in (0, \infty] \cap \Gamma^*$ and $A^d_{\E_\lambda}[\alpha-\epsilon, \beta+\epsilon] = \{ x \in \mathbb{A}^{d, \ad}_{\E_\lambda} : \alpha - \epsilon \leq |z_i|_x \leq \beta + \epsilon \}. $
		For an element
		$$\displaystyle \sum_{j_1 < \cdots < j_k} \sum_{\underline{i}} a_{\underline{i}, j_1, \dots, j_k} \underline{z}^{\underline{i}} \, \dlog z_{j_1} \wedge \cdots \wedge z_{j_k} $$
		of $\Gamma(V, \, \Omega^{\cdot, \log}_{V \times A^d(I)/V})$ with $a_{\underline{i}, j_1, \dots, j_k} \in A$, there is some $\eta, \lambda, \epsilon$ in the range described above such that all $a_{\underline{i}, j_1, \dots, j_k}$ belong to $\Gamma(\Vs_{\eta, \lambda, \epsilon}, \O)$ and that for any fixed $j_1 < \cdots < j_k$, $\| a_{\underline{i}, j_1, \dots, j_k} \underline{z}^{\underline{i}}\|_{\eta, \lambda, \epsilon}$ converges to zero where $\| \cdot \|_{\eta, \lambda, \epsilon}$ is the canonical Banach norm on $\Gamma(\Vs_{\eta, \lambda, \epsilon}, \O)$. Since $s$ only multiplies $\frac{1}{i_h}$, the growth of norm after applying $s$ is bounded after changing $\epsilon$ to a smaller $\epsilon'$. Hence $s$ maps $\Gamma(V, \, \Omega^{\cdot, \log}_{V \times A^d(I)/V})$ to itself.
	\end{proof}
	
	\begin{remark}
		Differing from the classical case \cite[Lemma 3.3.2]{kedlaya2007semistable}, the aligned subinterval is closed in the above lemma. This is because we are dealing with partial dagger spaces. In the classical setting, the ring of series in $K[[z]]$ convergent for $|z| < 1$ is $\bigcap_{\lambda < 1} K \langle z \rangle_{|z| \leq \lambda}$;
		given a series $\sum_i a_i z^i \in K[[z]]$ that converges for $|z| \leq \lambda$, the integration $\sum_i \frac{a_i}{i+1} z^{i+1}$ converges for $|z| \leq \lambda'$ if $\lambda' < \lambda$; hence the integration defines a map from $\Gamma(A^1_K[0, 1), \O)$ to itself. On the other hand, in our setting, $\Gamma(A^1_{\Ed_K}[0, 1], \O) = \bigcup_{\eta > 1} \E_{\eta^{-1}} \langle z \rangle_{|z| \leq \eta}.$ The integration of a series of $\E_{\eta^{-1}} \langle z \rangle_{|z| \leq \eta}$ belongs to $\E_{\eta'^{-1}} \langle z \rangle_{|z| \leq \eta'}$ for $\eta > \eta' > 1$; hence again the integration defines a map from $\Gamma(A^1_{\Ed_K}[0, 1], \O)$ to itself.
	\end{remark}
	
	\begin{lemma} \label{lem:ui-eq}
		Under Hypothesis \ref{hypo:smc}, let $\eta \in [0, 1) \cap \Gamma$ and $\E \in \LNM_{V \times A^d_{\Ed_K}[\eta, 1) / W}.$ Then the following conditions are equivalent:
		\begin{enumerate}[(i)]
			\item $\E$ is unipotent;
			\item $\E_\lambda = \E|_{X \times A^d[\eta, \lambda]}$ is unipotent for all $\lambda \in (\eta, 1) \cap \Gamma^*$;
			\item $\E$ is in the essential image of $\ULNM_{V \times A^d[0, 0]}$ under $\Uc_{[\eta, 1)}.$
		\end{enumerate}
	\end{lemma}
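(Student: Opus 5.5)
The plan is to follow Kedlaya's proof of \cite[Proposition 3.3.4]{kedlaya2007semistable}, proving (iii) $\Rightarrow$ (i) $\Rightarrow$ (ii) $\Rightarrow$ (iii).

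The implication (iii) $\Rightarrow$ (i) is immediate, since $\Uc_{[\eta,1)}$ preserves unipotence. For (i) $\Rightarrow$ (ii), restriction to the open subset $V \times A^d[\eta,\lambda]$ commutes with $\pi^*$ and with extensions. Hence an iterated extension of constant objects restricts to one.

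The real content is (ii) $\Rightarrow$ (iii). For each $\lambda \in (\eta,1)\cap\Gamma^*$ the interval $[\eta,\lambda]$ is closed and aligned, so Lemma \ref{lem:ui-cat-eq2} applies.
\begin{enumerate}[(a)]
\item Combining Lemma \ref{lem:ui-cat-eq2} for $i=0,1$ with Lemma \ref{lem:ui-cat-eq1}, I first show that $\Uc_{[\eta,\lambda]}$ is fully faithful on unipotent objects and that its essential image is closed under extensions.
\item By induction on unipotence length, using that $\ULNM$ is an abelian tensor subcategory (Proposition \ref{prop:lnm-abel}), I deduce that $\Uc_{[\eta,\lambda]}$ is an equivalence $\ULNM_{V\times A^d[0,0]/W} \simeq \ULNM_{V\times A^d[\eta,\lambda]/W}$. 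The base case is that constant objects $\pi^*\F$ are exactly $\Uc(\F,\nabla,0,\dots,0)$.
\item So for each $\lambda$ there is $\F_\lambda \in \ULNM_{V\times A^d[0,0]}$ with an isomorphism $\Uc_{[\eta,\lambda]}(\F_\lambda)\simeq \E_\lambda$. For $\lambda<\lambda'$, restricting $\E_{\lambda'}$ to $[\eta,\lambda]$ and using full faithfulness gives a unique isomorphism $\F_{\lambda'}\simeq\F_\lambda$ compatible with the given ones, and these are compatible with one another.
\item Thus all $\F_\lambda$ may be identified with a single $\F$, and the isomorphisms $\Uc_{[\eta,\lambda]}(\F)\simeq\E_\lambda$ agree on overlaps. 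Gluing them over the open cover $\{V\times A^d[\eta,\lambda]\}_\lambda$ of $V\times A^d_{\Ed_K}[\eta,1)$ gives $\Uc_{[\eta,1)}(\F)\simeq\E$.
\end{enumerate}

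When $\eta=0$, I will use $[0,\lambda]$ throughout. This interval is still closed and aligned since $0\in\Gamma$, so the same argument works.

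I expect the main obstacle to be the gluing in step (d). The sheaf-theoretic gluing along the cover is formal, but the cover is by partial dagger subspaces whose sections are computed as $\varprojlim_\lambda\varinjlim_\epsilon$, as in Example \ref{ex:robba}. So I must check that a compatible family of isomorphisms of locally free modules with log connection on the $V\times A^d[\eta,\lambda]$ really defines an isomorphism on the union. Rather than working with sections, I would do this at the level of sheaves on the germ, where it is automatic, since $V\times A^d_{\Ed_K}[\eta,1)$ is literally the union of these open subsets. Uniqueness in step (c) supplies the cocycle condition.

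A secondary point is step (a) for $\Ext^1$. There I need Lemma \ref{lem:ui-cat-eq1} on the closed-interval partial dagger space, which requires Theorem B (Proposition \ref{prop:theoremAB}) to identify hypercohomology with global sections of the de Rham complex.
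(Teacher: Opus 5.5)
Your proposal is correct and follows the same route as the paper. For each closed aligned $[\eta,\lambda]$ you obtain $\F_\lambda$ with $\E_\lambda\simeq\Uc_{[\eta,\lambda]}(\F_\lambda)$ from Lemma \ref{lem:ui-cat-eq2}, and you usefully spell out the $\Ext^0$/$\Ext^1$ induction that the paper leaves implicit. You then get the transition isomorphisms $\F_\lambda\simeq\F_\rho$ from full faithfulness, so the cocycle condition is automatic, and glue to $\E\simeq\Uc_{[\eta,1)}(\F)$.

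One small correction to the gluing step you flagged: the sets $V\times A^d_{\Ed_K}[\eta,\lambda]$ are cut out by conditions on $|z_i|_{[x]}$, so they are closed rather than open. You should glue over the open subsets $\{|z_i|_x\le\lambda\}$ instead, as in Example \ref{ex:robba}. These lie inside $V\times A^d_{\Ed_K}[\eta,\lambda]$ and still exhaust $V\times A^d_{\Ed_K}[\eta,1)$ as $\lambda\to1$.
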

	\begin{proof}
		(iii) $\Rightarrow$ (i) $\Rightarrow$ (ii) is trivial. We have to prove (ii) $\Rightarrow$ (iii). By Lemma  \ref{lem:ui-cat-eq2}, there exists some $\F_\lambda \in \ULNM_{V \times A^d[0, 0] / W}$ and an isomorphism $\phi_\lambda : \E_\lambda \to \Uc_{[\eta, \lambda]}(\F_\lambda)$. For any elements $\lambda \geq \rho$ of $(\eta, 1) \cap \Gamma^*$, define the isomorphism $\psi_{\lambda, \rho} : \F_\lambda \to \F_\rho$ to be the one makes following diagram commutative:
		\[\xymatrixcolsep{12mm}
		\xymatrix{
			\E_\lambda|_{V \times A^d[\eta, \rho]} \ar[r]^(0.42){\phi_\lambda} \ar@{=}[d] & \Uc_{[\eta, \lambda]}(\F_\lambda)|_{V \times A^d[\eta, \rho]} \ar@{=}[r] & \Uc_{[\eta, \rho]}(\F_\lambda) \ar[d]^{\Uc_{[\eta, \rho]}(\psi_{\lambda, \rho})}\\
			\E_\rho \ar[rr]^{\phi_\rho} & & \Uc_{[\eta, \rho]}(\F_\rho).
		}
		\]
		It obviously follows that $\psi_{\lambda, \rho}$ satisfies the cocycle condition. Choose any $\lambda \in (\eta, 1) \cap \Gamma^*$; then the isomorphisms $\Phi_\rho : \Uc_{[\eta, \rho]}(\psi^{-1}_{\lambda, \lambda}) \circ \phi_\rho : \E_\rho \to \Uc_{[\eta, \rho]}(\F_\lambda) = \Uc_{[\eta, 1)}(\F_\lambda)|_{V \times A^d[\eta, \rho]}$ can be glued to an isomorphism $\Phi : \E \to \Uc_{[\eta, 1)}(\F_\lambda).$
	\end{proof}
	
	\begin{remark}
		This lemma only treat aligned subintervals of the form $[\eta, 1)$, but obviously the lemma holds for any aligned subintervals of $[0, \infty)$.
	\end{remark}
	
	\begin{theorem} \label{thm:ui-cat-eq}
		Let $I \neq [0, 0]$ be an aligned subinterval of $[0, \infty).$ Under Hypothesis \ref{hypo:smc}, the functor
		$$ \Uc_I : \ULNM_{V \times A^d[0, 0]} \to \ULNM_{V \times A^d(I)} $$
		is an equivalence of categories.
	\end{theorem}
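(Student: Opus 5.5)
The argument splits into full faithfulness and essential surjectivity of $\Uc_I$, following the proof of \cite[Theorem 3.3.4]{kedlaya2007semistable}.

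\emph{Full faithfulness.} First let $I$ be closed. Then Lemma \ref{lem:ui-cat-eq2} with $i=0$ shows that
$$\Hom(\E,\E') \to \Hom(\Uc_I(\E),\Uc_I(\E'))$$
is bijective for unipotent $\E,\E'$. For a general aligned $I \neq [0,0]$, write $I$ as an increasing union of closed aligned subintervals $J$. Restricting to $V \times A^d(J)$ is compatible with $\Uc$. A morphism $\Uc_I(\E)\to\Uc_I(\E')$ therefore restricts, on each $J$, to $\Uc_J(f_J)$ for a unique $f_J$. By uniqueness the $f_J$ agree for $J\subseteq J'$, so all equal a single $f$. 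The equality $\Uc_I(f)=$ the given morphism can be checked locally on the covering by the $V\times A^d(J)$, so it holds.

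\emph{Essential surjectivity, closed $I$.} Take $\E$ unipotent and induct on the length of a filtration with constant graded pieces. A constant object $\pi^*\F$ is $\Uc_I(\F)$ with all $\partial_i=0$. If $0\to\E_1\to\E\to\E_2\to0$ with $\E_1\simeq\Uc_I(\F_1)$ and $\E_2\simeq\Uc_I(\F_2)$, the class of $\E$ lies in $\Ext^1(\Uc_I(\F_2),\Uc_I(\F_1))$. By Lemma \ref{lem:ui-cat-eq1} this Ext group is hypercohomology of log de Rham complexes, and these agree with Yoneda extensions in $\ULNM$ because kernels and cokernels stay locally free by Proposition \ref{prop:lnm-abel}. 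By Lemma \ref{lem:ui-cat-eq2} this $\Ext^1$ is the image of $\Ext^1(\F_2,\F_1)$. So $\E\simeq\Uc_I(\F)$ for an extension $\F$ of $\F_2$ by $\F_1$, and $\F$ is unipotent.

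\emph{Essential surjectivity, general $I$.} This reduces to the gluing in Lemma \ref{lem:ui-eq} and the remark after it. For $I=[\eta,1)$, a unipotent $\E$ restricts to unipotent objects on each closed $[\eta,\lambda]$. The closed case gives $\F_\lambda$, and the transition isomorphisms produced by full faithfulness glue to $\E\simeq\Uc_I(\F_\lambda)$. The same argument works for other shapes of $I$, such as $(a,b)$ or $[0,b)$, by exhausting $I$ with closed aligned subintervals.

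\emph{Main obstacle.} Two steps need care. The first is the assertion that $\Ext^1$ in Lemma \ref{lem:ui-cat-eq2}, taken as $\mathcal{D}$-module Ext, matches Yoneda $\Ext^1$ inside $\ULNM$. It must also be checked that the extension $\F$ produced on $V\times A^d[0,0]$ has nilpotent $\partial_i$ and is unipotent; this holds because it is an extension of unipotent objects. The second is that the gluing for non-closed $I$ happens on the partial dagger space $V\times A^d_{\Ed_K}(I)$, which is a union of the $V\times A^d(J)$ but not a germ with simple cofinal neighborhoods. Here I would use that log $\nabla$-modules are sheaves on the underlying space $X$ and glue along the covering, exactly as in Lemma \ref{lem:ui-eq}.
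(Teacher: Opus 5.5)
Your proposal is correct and takes the same approach as the paper. The closed case comes from the Ext comparison of Lemma~\ref{lem:ui-cat-eq2}: $i=0$ gives full faithfulness, and $i=1$ together with induction on the unipotent filtration gives essential surjectivity, with Lemma~\ref{lem:ui-cat-eq1} identifying Ext with log de Rham hypercohomology. For non-closed $I$, essential surjectivity is the gluing of Lemma~\ref{lem:ui-eq} and full faithfulness reduces to the closed case; you spell out the induction and the gluing over closed subintervals in more detail than the paper's brief proof, but the argument is the same.
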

	\begin{proof}
		The case where $I$ is closed follows from Lemma \ref{lem:ui-cat-eq1}. For simplicity, we assume $I = [\eta, 1).$ Essential surjectivity follows from Lemma \ref{lem:ui-eq}. Full faithfulness can be easily proved by using the closed case.
	\end{proof}
	
	\begin{corollary} \label{cor:unip-loc}
		Let $I \neq [0, 0]$ be an aligned subinterval of $[0, \infty).$ Under Hypothesis \ref{hypo:smc} and $W = \Spd \Ed_K$, an object of $\LNM_{V \times A^d(I)}$ being unipotent can be checked locally on $V$.
	\end{corollary}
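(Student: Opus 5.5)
Suppose $V = \bigcup_j V_j$ is an open covering (which we may take to consist of affinoids, using Remark \ref{rem:open-pds}), and that $\E \in \LNM_{V \times A^d(I)}$ restricts to a unipotent object on each $V_j \times A^d(I)$. We must show that $\E$ itself is unipotent; the converse is trivial, since restriction preserves constant objects and extensions. The plan is to descend the data supplied by Theorem \ref{thm:ui-cat-eq} and glue it.

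\textbf{Step 1 (local data).} By Theorem \ref{thm:ui-cat-eq}, for each $j$ there are an object $\F_j \in \ULNM_{V_j \times A^d[0,0]}$ and an isomorphism
\[
\phi_j : \E|_{V_j \times A^d(I)} \xrightarrow{\sim} \Uc_I(\F_j).
\]

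\textbf{Step 2 (transition isomorphisms).} On the overlaps $V_{jk} = V_j \cap V_k$ we have the composite isomorphism $\phi_k \circ \phi_j^{-1}$ between $\Uc_I(\F_j|_{V_{jk}})$ and $\Uc_I(\F_k|_{V_{jk}})$. Both objects are unipotent, because $\Uc_I$ commutes with restriction to $V_{jk}$ and preserves unipotence. Full faithfulness of $\Uc_I$ over $V_{jk}$ (again Theorem \ref{thm:ui-cat-eq}, applied to $V_{jk}$, which still satisfies Hypothesis \ref{hypo:smc}) then gives a unique isomorphism $\psi_{jk} : \F_j|_{V_{jk}} \to \F_k|_{V_{jk}}$ with $\Uc_I(\psi_{jk}) = \phi_k \circ \phi_j^{-1}$. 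Uniqueness, applied over triple overlaps, yields the cocycle condition $\psi_{kl}\psi_{jk} = \psi_{jl}$.

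\textbf{Step 3 (gluing).} An object of $\LNM_{V \times A^d[0,0]}$ is a locally free $\O_V$-module with log connection and commuting nilpotent endomorphisms $\partial_i$. All of these are sheaf-theoretic data on the germ $V$, so the $\F_j$ glue along the $\psi_{jk}$ to an object $\F$, and the $\phi_j$ glue to an isomorphism $\E \simeq \Uc_I(\F)$.

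\textbf{Step 4 (main obstacle: $\F$ is unipotent).} The remaining point is to show that $\F$ is unipotent, i.e.\ an iterated extension of objects on which the $\partial_i$ vanish. Locally this holds by construction, but the filtrations on the $\F_j$ need not glue. To handle this, I would use the canonical filtration defined by successive kernels of the commuting nilpotent $\partial_i$:
\[
\F^{(1)} = \bigcap_i \ker \partial_i, \qquad \F^{(m+1)}/\F^{(m)} = \bigcap_i \ker\bigl(\partial_i \text{ on } \F/\F^{(m)}\bigr).
\]
This filtration is canonical, so it is compatible with the $\psi_{jk}$ and glues. Its graded pieces are constant: the $\partial_i$ act on them by zero, and they are stable under $\nabla$ because the $\partial_i$ commute with $\nabla$. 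Locally, each piece is a kernel or cokernel of morphisms in $\LNM_{V_j}$, since $\partial_i$ is a horizontal endomorphism, hence locally free by Lemma \ref{lem:lnm-abel}. Because local freeness is a local condition, $\F$ is unipotent, and therefore so is $\E \simeq \Uc_I(\F)$.

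Alternatively, one can avoid descending the filtration: if, as the paper does, unipotence in $\LNM_{V \times A^d[0,0]}$ is interpreted as simultaneous nilpotence of the $\partial_i$ (which is automatic), then unipotence of $\F$ holds immediately, and Step 4 reduces to checking that the graded pieces of the kernel filtration above are locally free.
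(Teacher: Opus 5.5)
Your proposal is correct and follows essentially the route the paper intends. The paper gives no written proof and presents the statement as an immediate consequence of Theorem \ref{thm:ui-cat-eq}: local descent data in $\ULNM_{V_j \times A^d[0,0]}$, made canonical on overlaps by full faithfulness, glue to an object whose image under $\Uc_I$ is isomorphic to $\E$. Your Step 4 correctly fills in why the glued object is again unipotent: the common-kernel filtration has locally free pieces by Lemma \ref{lem:lnm-abel}, and it is exhaustive because the $\partial_i$ commute and are nilpotent. The closing ``alternative'' should be dropped, since the paper defines unipotence as being an iterated extension of constant objects, not as nilpotence of the $\partial_i$.
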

	
	We finish this section by stating the generalization of \cite[Proposition 3.3.8]{kedlaya2007semistable} with extra restrictions.
	
	\begin{lemma}[{(cf. \cite[Lemma 3.2.8]{kedlaya2007semistable})}]
		Under Hypothesis \ref{hypo:smc}, suppose that $V$ is an affinoid partial dagger space, that $X = V \times A^1_{\Ed_K}[0, a]$ and that $\E \in \LNM_{X/V}$ is such that the restriction of $\E$ to $V \times \{ 0 \}$ is free. Then there is $b \in (0, a] \cap \Gamma^*$ such that restriction of $\E$ to $V \times A^1_{\Ed_K}[0, b]$ is in the essential image of $\Uc_{[0, b]}.$ In particular if the residue of $\E$ along $V \times \{ 0 \}$ vanishes, then the restriction of $\E$ to $V \times A^1_{\Ed_K}[0, b]$ is constant.
	\end{lemma}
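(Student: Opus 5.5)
We adapt the proof of \cite[Lemma 3.2.8]{kedlaya2007semistable}, which builds a horizontal-up-to-$\dlog z$ basis by a formal power series recursion and checks convergence. We then verify that this convergence survives in the partial dagger setting, i.e.\ on some fringe neighbourhood. Write $A = \Gamma(V, \O)$. Since $V$ is affinoid, Proposition \ref{prop:theoremAB} gives $\Gamma(X, \E)$ as a finitely generated $\Gamma(X,\O)$-module. Because $\E|_{V \times \{0\}}$ is free, we lift a basis $e_1, \dots, e_n$ of it to sections of $\E$. These sections generate $\E$ on some $V \times A^1[0, b_0]$ by Nakayama, applied after passing to the noetherian ring $\Gamma(V\times A^1[0,a],\O)$ and localizing near $z = 0$. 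Hence they form a basis there. Let $N$ be the matrix of the residue $\nabla_{z\partial_z}$ on $V \times\{0\}$ and $D = \nabla_{z \partial_z}$ on $\E$. In this basis $D$ acts by $N + \sum_{i \geq 1} N_i z^i$ with $N_i \in M_n(A)$.

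The first key step is the formal recursion. We look for a change of basis $U = 1 + \sum_{i \geq 1} U_i z^i$ with $U^{-1}(D\text{-matrix})U + U^{-1} z\partial_z U = N$, which is
$$ i U_i + N U_i - U_i N = -\sum_{j=1}^{i} N_j U_{i-j}. $$
The operator $X \mapsto iX + [N, X]$ on $M_n(A)$ is invertible for $i \geq 1$ as long as $N$ has no eigenvalue differences in $\mathbb{Z}_{>0}$. Here $\E$ is a log $\nabla$-module with nilpotent residue; I take this convention from the preceding discussion, or else rely on Hypothesis \ref{hypo:smc} on residues. Then $\mathrm{ad}_N$ is nilpotent, and the inverse is $\sum_{k=0}^{2n} (-1)^k i^{-k-1}\mathrm{ad}_N^k$. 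This gives a unique formal solution with $U_0 = 1$. The resulting basis $Ue$ identifies $\E$ with $\Uc_{[0,b]}(\E|_{V \times \{0\}}, N)$. In particular $N = 0$ gives constancy, which settles the last assertion.

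The main obstacle is convergence in the partial dagger sense. We must find a fringe algebra $A_\eta$ of $A$, a radius $b$, and $\epsilon>0$ such that $\sum U_i z^i$ converges on $\Spa A_\eta \times A^1_{\E_\lambda}[0, b+\epsilon]$. I would first choose $\eta, \lambda$ and $c$ with all $N_i$ in $M_n$ of the Banach algebra $B = \Gamma(\Vs_\eta \times \Spa\E_\lambda, \O)$ and $\|N_i\| \leq c\, b_0'^{-i}$ for some $b_0' > 0$. This uses the cofinal system $\Vs_{\eta,\lambda,\epsilon}$ from the proof of Lemma \ref{lem:ui-cat-eq2}. Then the recursion gives $\|U_i\| \leq C\, i^{2n+1}$-type factors times $|1/i!|$-type denominators times $(c/b_0')^i$. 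The $p$-adic growth of $|i^{-k-1}|$ is bounded by $p^{(2n+1)\log_p i}$-type factors, hence is subexponential; alternatively one uses $|i!|^{-1} \le p^{i/(p-1)}$. Either way we obtain $\|U_i\| \leq C' \rho^{-i}$ for some $\rho > 0$ depending only on $c, b_0', n, p$. Any $b \in (0, \min(\rho, b_0))\cap\Gamma^*$ then works: $U$ and $U^{-1}$ converge on a strict neighbourhood of $V \times A^1[0,b]$. The crucial point, as in the Remark following Lemma \ref{lem:ui-cat-eq2}, is that the estimate is uniform on a single fringe $\Vs_\eta$. We never need to shrink $\eta$ infinitely often, since the bound on $\mathrm{ad}_N$ and the division by integers do not depend on the fringe. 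Proposition \ref{prop:lnm-colim} then transfers the identification back to the germ. Alternatively, one could check the isomorphism on the completion $\wh X$ via \cite[Lemma 3.2.8]{kedlaya2007semistable} and use faithful flatness (Corollary \ref{cor:pda}) together with the uniqueness of $U$. This shows the coefficients $U_i$ lie in $A$, but convergence must still be verified as above.
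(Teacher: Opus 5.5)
Your proposal is correct and is essentially the paper's proof. The paper spreads $\E$ out to a free log $\nabla$-module with nilpotent residues on a fixed fringe neighbourhood $\Vs_{\eta_0}\times A^1[0,\alpha]$ and runs the argument of \cite[Lemma 3.2.8]{kedlaya2007semistable} there. That argument is exactly your recursion $iU_i+[N,U_i]=-\sum_j N_jU_{i-j}$, with convergence on a smaller disc $[0,\beta]$ over the same $\Vs_{\eta_0}$.

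One caution on the estimate: the per-step bound $|i|^{-k-1}\le i^{k+1}$ is not enough by itself, since these factors multiply along the recursion and would give $(i!)^{2n}$-type growth. It is your alternative, $\prod_{l\le i}|l|^{-1}=|i!|^{-1}\le p^{i/(p-1)}$, that actually yields $\|U_i\|\le C'\rho^{-i}$.
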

	\begin{proof}
		Choose a cofinal system of open neighborhoods $\{ \Vs_\eta \}_{\eta > 1}$ of $V$, some $\eta_0 \in (1, \infty) \cap \Gamma^*$, $\alpha \in (a, \infty) \cap \Gamma^*$ and a locally free log $\nabla$-module $\E_{\eta_0}$ over $\Xs_{\eta_0, \alpha} = \Vs_{\eta_0} \times A^1_{\V[[t]]_K}[0, \alpha]$ with nilpotent residues which gives rise to $\E$. We may also assume that $\E_{\eta_0}$ is free with basis $e_1, \dots, e_n \in \Gamma(\Xs_{\eta_0, \alpha}, \E_{\eta_0})$, by the similar argument as in \cite[Lemma 3.2.8]{kedlaya2007semistable}.
		
		We can follow the argument of \cite[Lemma 3.2.8]{kedlaya2007semistable} to construct an invertible matrix $M$ over $\Gamma(\Xs_{\eta_0, \beta})$ for some $\beta \in (0, \alpha] \cap \Gamma^*$ such that the basis $v_i = \sum_j M_{ij}e_j$ of $\E_{\eta_0}$ over $\Xs_{\eta_0, \beta}$ satisfies $\partial v_j = \sum_k f_{j,k} v_k$ with $f_{j, k} \in \Gamma(\Vs_{\eta_0}, \O).$ Therefore $\E$ is described by the image under $\Uc_I$ of the $\Gamma(\Vs_{\eta_0}, \O)$-span of $v_1, \dots, v_n$.  
	\end{proof}
	
	\begin{proposition} \label{prop:ext-log-sub}
		Let $X$ be an smooth partial dagger space with a coordinate $x_1, \dots, x_n$ whose zero loci are smooth and meet transversely and let $U$ be the complement of the zero loci. Suppose that each zero locus $Z_i = V(x_i)$ locally admits an open neighborhood of the form $Z_i \times A^1_{\Ed_K}[0, \eta)$ in $X$; then for any $\E \in \LNM_X$ and a subobject $\F$ of $\E|U$ in $\LNM_U$, $\F$ uniquely extends to a subobject of $\E$ in $\LNM_X$.
	\end{proposition}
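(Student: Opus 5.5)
The plan is to follow \cite[Proposition 3.3.8]{kedlaya2007semistable}, where $\F$ is extended one divisor at a time, and to handle the partial dagger setting by combining the local structure near each $Z_i$ with the faithfully flat completion argument of Lemma \ref{lem:lnm-abel}. Uniqueness is easiest, so I will prove it first. Suppose $\F_1,\F_2\subseteq\E$ are two subobjects in $\LNM_X$ that both restrict to $\F$ on $U$. Consider $\F_1\to\E/\F_2$. By Lemma \ref{lem:lnm-abel}, $\E/\F_2$ is locally free, so a section of it vanishing on the dense open $U$ vanishes identically. Hence $\F_1\subseteq\F_2$, and by symmetry the two are equal. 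Because of uniqueness the construction is local and glues, so we may assume $X=\Spd A$ is affinoid. Induction on the number of divisors then reduces us to extending across a single $Z=Z_1$ at a time. Concretely, let $U_j$ be the complement of $V(x_{j+1}),\dots,V(x_n)$. We extend from $U_j$ to $U_{j-1}$, applying the single-divisor step on $U_{j-1}$, where the remaining divisors play the role of the log structure on the base.

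For a single divisor $Z$, the hypothesis gives, locally, a neighbourhood $N=Z\times A^1_{\Ed_K}[0,\eta)$ of $Z$, with $N\cap U=Z\times A^1_{\Ed_K}(0,\eta)$. The other divisors meet $Z$ transversely, so Hypothesis \ref{hypo:smc} holds for $Z$. Shrinking $Z$ to an affinoid and $\eta$, I will apply the preceding lemma (the analogue of \cite[Lemma 3.2.8]{kedlaya2007semistable}). It shows that $\E|_{Z\times A^1[0,b]}$ lies in the essential image of $\Uc_{[0,b]}$, say $\E\simeq\Uc(\mathcal G)$. Here $\mathcal G$ is a log $\nabla$-module on $Z$ with a nilpotent endomorphism $\partial$, namely the residue. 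On the annulus $Z\times A^1(0,b]$, the object $\E$ is therefore unipotent. A subobject of a unipotent object is unipotent by Proposition \ref{prop:lnm-abel}, applied on closed subannuli $[\alpha,b]$. Theorem \ref{thm:ui-cat-eq} then shows that $\F|_{Z\times A^1(0,b]}=\Uc_{(0,b]}(\mathcal G')$ for a sub-object $\mathcal G'\subseteq\mathcal G$ stable under $\nabla$ and $\partial$. Here full faithfulness is what identifies the inclusion $\F\hookrightarrow\E$ with $\Uc(\mathcal G'\hookrightarrow\mathcal G)$. I then define the extension over $N$ to be $\Uc_{[0,b]}(\mathcal G')\subseteq\Uc_{[0,b]}(\mathcal G)=\E|_{Z\times A^1[0,b]}$. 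This agrees with $\F$ on the overlap, so it glues with $\F$ on $U$ to a subobject of $\E$ on $U\cup N$, which is all of $X$ near $Z$. By Lemma \ref{lem:lnm-abel}, $\mathcal G'$ is locally free, since it is the kernel of a morphism in $\LNM_Z$. Its quotient is also locally free, so the extension lies in $\LNM_X$ and has nilpotent residues.

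The main obstacle is the step in the second paragraph that uses the unipotence of $\E$ near $Z$ together with the equivalence of Theorem \ref{thm:ui-cat-eq} on the non-closed interval $(0,b]$. There are two difficulties. First, the lemma requires $\E|_{Z\times\{0\}}$ to be free, so $Z$ must be covered by affinoids on which this holds; gluing across these pieces is then handled by uniqueness. Second, subobjects of unipotent objects must be unipotent on half-open annuli, where Proposition \ref{prop:lnm-abel} is only stated for closed intervals. If the argument through Lemma \ref{lem:ui-eq}(ii) fails, I would instead pass to the completion $\wh X$. There \cite[Proposition 3.3.8]{kedlaya2007semistable} gives the extension $\wh{\F}^{\rm ext}\subseteq\wh{\E}$. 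I would then descend it to $X$ using faithful flatness of $A\to\wh A$ (Corollary \ref{cor:pda}), following the pattern of Lemma \ref{lem:lnm-abel}. The descent datum is automatic from uniqueness on $\wh X\times_X\wh X$-type overlaps, and the intersection $\E\cap\wh\F^{\rm ext}$ is taken inside $\wh{\E}$.
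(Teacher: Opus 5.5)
Your proposal is correct and follows the paper's route: the paper's proof just says to carry over the argument of \cite[Proposition 3.3.8]{kedlaya2007semistable}, and your steps are exactly that argument transplanted to partial dagger spaces. Those steps are uniqueness via local freeness of $\E/\F_2$, reduction to one divisor at a time, unipotence of $\E$ on a small disc around $Z$ from the analogue of \cite[Lemma 3.2.8]{kedlaya2007semistable}, closure of unipotent objects under subobjects, and extension through the equivalence $\Uc_I$ of Theorem \ref{thm:ui-cat-eq}. Only minor repairs are needed:
\begin{itemize}
\item With your indexing, $U_{j-1}\subseteq U_j$, so the induction should extend from $U_{j-1}$ to $U_j$, not the reverse.
\item The gluing with $\F$ on $U$ should use the open disc $Z\times A^1_{\Ed_K}[0,b)$, since the closed disc is not open in $X$.
\item The lemma you invoke is stated in $\LNM_{X/V}$, so you should check that $\mathcal G$ is stable under the derivations along $Z$. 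This holds because $\mathcal G=\ker(z\partial_z)^m$ for $m\gg 0$ and those derivations commute with $z\partial_z$ by integrability.
\end{itemize}
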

	\begin{proof}
		We can apply the argument as in the proof of \cite[Proposition 3.3.8]{kedlaya2007semistable}.
	\end{proof}
	
	\subsection{Descent of unipotence} \label{subsec:descent}
	In this subsection, we prove the relative version of ``descent of unipotence'', namely a generalization of \cite[Proposition 5.4.1]{kedlaya2006finiteness} and \cite[Proposition 3.42]{lazda2016rigid}\footnote{To be precise, the statement of \cite[Proposition 3.42]{lazda2016rigid} is not true. This is because $\nabla$-modules over the Robba ring $\R_A$ relative to $A$ are defined to be free $\R_A$-module of finite rank, but the $A$-span of horizontal sections may not be free over $A$ and thus it may be necessary to localize $A$ as in \cite[Proposition 5.3.3]{kedlaya2006finiteness}. Fortunately, this error does not affect the content of \cite{lazda2016rigid} since this proposition is used to prove finiteness of rigid cohomology as in \cite{kedlaya2006finiteness}. In our case, we avoid this error by considering log $\nabla$-modules over $\R_A$ relative to $\Ed_K$.}.
	
	In this subsection, we consider the following situation.
	
	\begin{hypothesis} \label{hypo:pda1}
		Let $A$ be an smooth partial dagger algebra with coordinate $x_1, \dots, x_n$ whose zero loci are smooth and meet transversely, let $\{ A_\rho \}_{1 < \rho \leq \rho_0}$ be a fringe inductive system, $X_\rho = \Spd A_\rho$, and $\wh{X} = \Spa \wh{A}.$ 
	\end{hypothesis}
	
	\begin{definition}
		Under Hypothesis \ref{hypo:pda1}, for $\eta_0, \eta_1 \in \Gamma^*$ with $\eta_0 \leq \eta_1$, define
		$$\R_{A, \eta_0, \eta_1} = \Gamma(X \times A^d_{\Ed_K}[\eta_0, \eta_1], \O) = \bigcup_{\epsilon > 0} \bigcup_{\rho > 1} A_\rho \langle z_1, \dots, z_d \rangle_{[\eta_0 - \epsilon, \eta_1 + \epsilon]^d},$$
		$$\R_{\wh{A}, \eta_0, \eta_1} =  \Gamma(\wh{X} \times A^d_{\Ed_K}[\eta_0, \eta_1], \O) = \bigcup_{\epsilon > 0} \wh{A} \langle z_1, \dots, z_d \rangle_{[\eta_0-\epsilon, \eta_1+\epsilon]^d}.$$
		For $\eta \in \Gamma^*$, define
		$$\R^+_{A, \eta} = \Gamma(X \times A^d_{\Ed_K}[0, \eta], \O) = \bigcup_{\epsilon > 0} \bigcup_{\rho > 1} A_\rho \langle z_1, \dots, z_d \rangle_{[0, \eta_ + \epsilon]^d},$$
		$$\R^+_{\wh{A}, \eta} = \Gamma(\wh{X} \times A^d_{\Ed_K}[0, \eta], \O) = \bigcup_{\epsilon > 0}\wh{A} \langle z_1, \dots, z_d \rangle_{[0, \eta+\epsilon]^d}.$$
	\end{definition}
	
	\begin{remark} \label{rem:R-unip}
		By Theorem \ref{thm:ui-cat-eq}, any unipotent log $\nabla$-module over any of $\R_{A, \eta_0, \eta_1}, \R_{\wh{A}, \eta_0, \eta_1}, \R^+_{A, \eta}, \R^+_{\wh{A}, \eta}$ is induced by a log $\nabla$-module over $A$ or $\wh{A}$ with commuting nilpotent endomorphisms.
	\end{remark}
	
	\begin{lemma} \label{lem:unip-f}
		Under Hypothesis \ref{hypo:pda1}, let $\wh{R} = \R_{A, \eta_0, \eta_1}$ (resp. $\wh{R} = \R^+_{\wh{A}, \eta}$) for $\eta_0, \eta_1 \in \Gamma^*$ with $\eta_0 \leq \eta_1$ (resp. $\eta \in \Gamma^*$). Let $\wh{F}$ be a free log $\nabla$-module over $\wh{A}$ with nilpotent residues equipped with $d$ nilpotent commuting $\wh{A}$-linear endomorphisms $\partial_1, \dots, \partial_d$ and let $\wh{M} = \wh{F} \otimes_{\wh{A}} \wh{R}$ be the corresponding log $\nabla$-module on $\wh{R}$. Choose $i_1, \dots, i_{e-1} \in \{ 1, \dots, d \}$ such that $\partial_{i_1} \cdots \partial_{i_{e-1}} \neq 0$ and $\partial_{j_1} \cdots \partial_{j_e} = 0$ for any $j_1, \dots, j_e \in \{ 1, \dots, d \}.$ Define $\wh{A}$-linear map $f_l : \wh{M} \to \wh{M}$ inductively by
		$$ f_0(x) = \partial_{i_1} \cdots \partial_{i_{e-1}}x,$$
		$$ f_l(x) = \left(1 - \frac{\partial_1^2}{l^2} \right)^e \cdots \left(1 - \frac{\partial_d^2}{l^2} \right)^e f_{l-1}(x). $$
		If we choose any basis $v_1, \dots, v_n$ of $\wh{F}$, then for any $x \in \wh{M},$ there exist $a_1, \dots, a_n \in \wh{A}$ such that $f_l(x)$ converges to $\sum a_i v_i$.
		Moreover, if we denote the limit of $f_l(x)$ by $f(x)$, then $f : \wh{M} \to \wh{M}$ is $\wh{A}$-linear and
		$$\partial_{i_1} \cdots \partial_{i_{e-1}}\left(\sum \wh{A}v_i\right) \subseteq f(\wh{M}) \subseteq \left( \sum \wh{A}v_i \right) \cap \ker \partial_1 \cap \cdots \cap \ker \partial_d.$$
	\end{lemma}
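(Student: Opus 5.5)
We reduce to explicit computation with monomials in the $z$-variables. Identify $\wh{M} = \wh{F}\otimes_{\wh{A}}\wh{R}$, so each $x \in \wh{M}$ is uniquely $x = \sum_{\unu} \sum_i c_{i,\unu}\, \underline{z}^{\unu} v_i$ with $c_{i,\unu}\in\wh{A}$ and the appropriate convergence (the intended ring is $\R_{\wh{A},\eta_0,\eta_1}$ in the first case, so $\unu$ ranges over $\mathbb{Z}^d$; in the second, over $\mathbb{N}^d$). The operators in the statement should be read as the log derivations $z_j\frac{\partial}{\partial z_j}$ of the connection on $\wh{M}$ (as in $\Uc_I$), which act on $\underline{z}^{\unu}\otimes v$ by $\underline{z}^{\unu}\otimes(\nu_j + \partial_j)v$. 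In particular they commute with each other, are $\wh{A}$-linear, and preserve each "weight space" $\underline{z}^{\unu}\wh{F}$. Hence $f_l$ also preserves each weight space, and on $\underline{z}^{\unu}\wh{F}$ it is given by $\prod_{m=1}^{l}\prod_j(1-(\nu_j+\partial_j)^2/m^2)^e$ composed with $\prod_k(\nu_{i_k}+\partial_{i_k})$.

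\textbf{Weight $0$.} On the weight $0$ piece, $f_l = \prod_m\prod_j(1-\partial_j^2/m^2)^e\, f_0$. Since $\partial_{j_1}\cdots\partial_{j_e}=0$ and $f_0$ already involves $e-1$ factors, every term of positive degree in $\partial$ from the product is killed. So $f_l = f_0$ there for all $l$, and its image is $\partial_{i_1}\cdots\partial_{i_{e-1}}\wh{F}$. This image is killed by every $\partial_j$, because one more $\partial$ gives a product of $e$ operators. This yields both inclusions once we know the contribution from the remaining weights vanishes in the limit.

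\textbf{Weight $\unu\neq 0$.} Pick $j$ with $\nu_j\neq 0$. In the factor $\prod_{m\le l}(1-(\nu_j+\partial_j)^2/m^2)^e$, the term $m=|\nu_j|$ equals $\bigl(-(2\nu_j\partial_j+\partial_j^2)/\nu_j^2\bigr)^e$, which is divisible by $\partial_j^e$. Combined with nilpotence of order $e$ (after expanding, every monomial has total $\partial$-degree $\ge e$), this makes $f_l(\underline{z}^{\unu}\wh{F})=0$ once $l\ge |\nu_j|$. So $f_l(x)$ stabilizes termwise, and $f(x)=f_0(\text{weight-}0\text{ part of }x)$, which is a finite sum $\sum a_i v_i$.

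\textbf{Convergence.} The main obstacle is showing that $f_l(x)\to f(x)$ in the topology of $\wh{R}$, not just coefficientwise. For $l$ fixed, the operator on weight $\unu$ is a polynomial in $\nu_j$ and the $\partial_j$ with coefficients involving $1/m^2$ for $m\le l$. We need a uniform bound on the operator norms, of the shape $\|f_l|_{\underline{z}^{\unu}\wh{F}}\|\le C\,c^{\,l}$ or at worst polynomial in $|\unu|$ times $|l!|^{-2e}$. Since $f_l$ vanishes on weights with $\max|\nu_j|\le l$, $f_l(x)-f(x)$ only involves weights with $\max|\nu_j|>l$. For these, the coefficient decay $\|c_{\unu}\|\rho^{\unu}\to0$ on a slightly larger annulus (obtained by shrinking $\epsilon$, as in Lemma \ref{lem:ui-cat-eq2}) beats the growth. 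The key check is that $|1/(l!)^{2e}|$ grows only like $p^{O(l/(p-1))}$, that is geometrically in $l$, while the decay in $|\unu|>l$ is geometric at a rate we can choose by enlarging the radius. So the tail tends to $0$ in $\R_{\wh{A},\eta_0-\epsilon',\eta_1+\epsilon'}$ for suitable $\epsilon'$. Finally, $\wh{A}$-linearity of $f$ follows from that of each $f_l$ and continuity.
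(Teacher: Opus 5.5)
Your setup, the weight-$0$ computation, the vanishing of $f_l$ on weights $\unu\neq 0$ with some $0<|\nu_j|\le l$, and the identification of the limit all match the paper's proof. Your setup reads the operators as the $\dlog z_j$-components of the connection, so that they act on $\underline{z}^{\unu}\wh{F}$ by $\nu_j+\partial_j$. The gap is in the convergence step.

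\textbf{The growth rate in $l$.} On the surviving weights you bound $f_l$ by something geometric in $l$. The factor-by-factor estimate gives something like $|l!|^{-2e}$, which really does grow like $p^{2el/(p-1)}$. You then say the decay in $|\unu|=|\nu_1|+\cdots+|\nu_d|$ is ``geometric at a rate we can choose by enlarging the radius.'' That rate cannot be chosen.
\begin{itemize}
\item $x$ is only known to lie in $\wh{A}\langle z_1,\dots,z_d\rangle_{[\eta_0-\epsilon,\eta_1+\epsilon]^d}$ for some $\epsilon>0$ that depends on $x$, and you cannot enlarge this annulus.
\item Passing to a smaller annulus $[\eta_0-\epsilon',\eta_1+\epsilon']^d$ gains only a factor $\theta^{|\unu|}$ with $\theta\ge\max\bigl(\eta_1/(\eta_1+\epsilon),(\eta_0-\epsilon)/\eta_0\bigr)$.
\item This $\theta$ is fixed by $x$ and can be arbitrarily close to $1$.
\end{itemize}
The surviving weights include those with $|\unu|=l+1$. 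So your estimate would need $\theta$ to beat the growth rate of your bound, roughly $\theta\,p^{2e/(p-1)}<1$, and that fails in general. As written, you have not shown that $f_l(x)$ converges in $\wh{R}$.

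\textbf{The missing ingredient.} On the surviving weights the operator norm is bounded polynomially in $|\unu|$ and $l$. This is what the paper extracts from \cite[Lemma 5.2.7]{kedlaya2006finiteness}.
\begin{itemize}
\item By nilpotence, only monomials of total degree $\le e-1$ in the $X_r$ survive.
\item Their coefficients are built from Taylor coefficients of order $<e$ at $x=0$ of $\prod_{m=1}^{l}\bigl(1-(\nu_r+x)^2/m^2\bigr)=\binom{l-\nu_r-x}{l}\binom{l+\nu_r+x}{l}$.
\item For $|\nu_r|>l$ the value at $x=0$ is an integer. Each further order only brings in factors $1/(j\pm\nu_r)$ with $1\le j\le l$, and each has $p$-adic size at most the archimedean $|j\pm\nu_r|\le|\nu_r|+l$.
\item For $\nu_r=0$ you get boundedly many factors $1/m^2$ with $m\le l$, each of size at most $l^2$.
\end{itemize}
This gives the paper's bound $c^{e-1}p^{(e-1)e}(|\unu|+l+1)^{de}$, with $c=\max(1,\|X_1\|,\dots,\|X_d\|)$. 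Since $l<|\unu|$ on the surviving weights, this is polynomial in $|\unu|$ and is beaten by any $\theta^{|\unu|}$ with $\theta<1$. That yields convergence in the norm of every annulus $[\eta_0-\epsilon',\eta_1+\epsilon']^d$ with $\epsilon'<\epsilon$. The same polynomial bound is what produces the geometric decay of $f_l(x)-f_{l-1}(x)$ used later in the proof of Lemma \ref{lem:hor-sec}.
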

	\begin{proof}
		This is just a rephrase of \cite[Lemma 3.4.1]{kedlaya2007semistable}, but for later use, we give a detailed proof.
		
		For $r = 1, \dots, d$, define $X_{rij} \in \wh{A}$ by $\partial_r v_i = \sum_j X_{rij}v_j$. Let $X_r = (X_{rij})$ be the $n\times n$ matrix; then we have $X_{i_1} \cdots X_{i_{e-1}} \neq 0$ and $X_{j_1} \cdots X_{j_e} = 0$ for any $j_1, \dots, j_e \in \{ 1, \dots, d \}$.
		
		For $x = \sum_i a_i v_i$ with $a_i = \sum_{\unu}a_{i\unu} \ul{z}^\unu \in \wh{R}$ and $a_{i\unu} \in \wh{A}$, write $f_l(x) = \sum_i a_i^{(l)} v_i$ with $a_i^{(l)} = \sum_{\unu}a_{i\unu}^{(l)} \ul{z}^\unu \in \wh{R}$ and $a_{i\unu}^{(l)} \in \wh{A}$; by formal series calculation we get
		
		\begin{align} 
			\begin{pmatrix}
				a_{1\unu}^{(l)} \\
				\vdots \\
				a_{n\unu}^{(l)}
			\end{pmatrix}
			=
			(\nu_{i_1} + X_{i_1}) \cdots (\nu_{i_{e-1}} + X_{i_{e-1}}) \prod_{r = 1}^{d} \prod_{j = 1}^{l} \left(1 - \frac{(\nu_r + X_r)^2}{j^2} \right)^e
			\begin{pmatrix}
				a_{1\unu} \\
				\vdots \\
				a_{n\unu}
			\end{pmatrix} \label{expr:flx}.
		\end{align}
		
		When $\unu = \ul{0} = (0, \dots, 0),$ since products of any of $e$ matrices among $X_1, \dots, X_d$ are zero, $a_{i \ul{0}}^{(l)}$ is independent of $l$. When $0 < |\nu_r| \leq l$ for some $r = 1, \dots, d$, then a factor
		$$ \left(1 - \frac{(\nu_r + X_r)^2}{\nu_r^2} \right)^e $$
		appeared in (\ref{expr:flx}) is a multiple of $X_r^e = 0$; hence $a_{i\unu}^{(l)} = 0$.
		Suppose $|\nu_r| > l$ for all $r = 1, \dots, d$. Then as in \cite[Lemma 3.4.1]{kedlaya2007semistable}, we can use \cite[Lemma 5.2.7]{kedlaya2006finiteness} to show that the matrix
		$$ (\nu_{i_1} + X_{i_1}) \cdots (\nu_{i_{e-1}} + X_{i_{e-1}}) \prod_{r = 1}^{d} \prod_{j = 1}^{l} \left(1 - \frac{(\nu_r + X_r)^2}{j^2} \right)^e$$
		appeared in (\ref{expr:flx}) is a sum of the form $k X_{j_1} \cdots X_{j_{e-1}}$ with $k$ a rational number with $\|k\| \leq p^{(e-1)e}(|\nu_1| + \cdots + |\nu_d| + l + 1)^{de}.$ Here $\| \cdot \|$ is a Banach norm on $\widehat{A}$. Thus for $c = \max(1, \| X_1 \|, \dots, \| X_d \|)$, we get
		$$ \left\| (\nu_{i_1} + X_{i_1}) \cdots (\nu_{i_{e-1}} + X_{i_{e-1}}) \prod_{r = 1}^{d} \prod_{j = 1}^{l} \left(1 - \frac{(\nu_r + X_r)^2}{j^2} \right)^e \right\| \leq c^{e-1} p^{(e-1)e}(|\nu_1| + \cdots + |\nu_d| + l + 1)^{de}.$$
		Suppose that we are in the case $\wh{R} = \R_{\wh{A}, \eta_0, \eta_1}$ (the case $\wh{R} = \R^+_{\wh{A}, \eta}$). For $0 < \alpha < \beta$, write $[\alpha, \beta]^\unu =  \max(\alpha^{\nu_1}, \beta^{\nu_1})\cdots \max(\alpha^{\nu_d}, \beta^{\nu_d})$. as $\sum_i a_{i, \unu} \ul{z}^\unu \in \wh{R},$ there is some $\epsilon > 0$ such that $\max_i \| a_{i\unu} \| \, [\eta_0-\epsilon, \eta_1+\epsilon]^\unu \to 0.$ Let $\| \cdot \|_{[\alpha, \beta]^d}$ denote the canonical Banach norm on $\widehat{A}\langle z_1, \dots, z_d \rangle_{[\alpha, \beta]^d}$. For any $0 < \epsilon' < \epsilon$, we have
		\begin{align*}
			\max_i \| a_i^{(l)} - a_{i \ul{0}}^{(l)} \|_{[\eta_0 - \epsilon', \eta_1 + \epsilon']^d} & = \max_{|\nu_1|, \dots, |\nu_d| > l} \max_i \| a_{i \unu}^{(l)}\| \, [\eta_0 - \epsilon', \eta_1 + \epsilon']^\unu \\
			& \leq \max_{|\nu_1|, \dots, |\nu_d| > l} \max_i \, C (|\unu| + l+1)^{de} \|a_{i\unu}\| \, [\eta_0 - \epsilon', \eta_1 + \epsilon']^\unu \\
			& \leq \max_{|\nu_1|, \dots, |\nu_d| > l} \max_i \, C (2|\unu| + 1)^{de} \|a_{i\unu}\| \, [\eta_0 - \epsilon', \eta_1 + \epsilon']^\unu \\
			& \leq \max_{|\nu_1|, \dots, |\nu_d| > l} \left( \left( C (2|\unu| + 1)^{de} \frac{[\eta_0 - \epsilon', \eta_1 + \epsilon']^\unu}{[\eta_0 - \epsilon, \eta_1 + \epsilon]^\unu} \right) \left( \max_i \|a_{i\unu}\|\,[\eta_0 - \epsilon, \eta_1 + \epsilon]^\unu \right) \right),
		\end{align*}
		for some constant $C$. It is easy to see that the last converges to zero as $l \to \infty$. Since $a_{i\ul{0}}^{(l)}$ is independent of $l$, we conclude that $f_l(x)$ converges to $\sum_i a_{i\ul{0}}^{(0)} v_i$.
		To summarize, if $x = \sum a_i v_i$ and $a_i = \sum_\unu a_{i\unu} \ul{z}^\unu \in \wh{\R}$ with $a_{i\unu} \in \wh{A}$, then $f(x) = \sum_i \partial_{i_1} \cdots \partial_{i_{e-1}} a_{i\unu} v_i.$ This proves the last statement.
	\end{proof}
	
	\begin{lemma} \label{lem:hadamar}
		Under Hypothesis \ref{hypo:pda1}, let $A = W^\dag_{d, e} / I$ be a presentation of a partial dagger algebra $A$. Suppose that there exist some ideal $I_0$ of $W^\dag_{d, e, \rho_0}$ which generate $I$ for $1 < \rho \leq \rho_0$, let $I_\rho = I_0 W^\dag_{d, e, \rho}$ and let $A_\rho = W^\dag_{d, e, \rho}/I_\rho.$ Let $\| \cdot \|_{\rho}$ denote the quotient norm on $A_\rho$ and let $\| \cdot \|$ be that of $T_{d+e}.$ Then for any $\delta \in (0, 1) \cap \mathbb{Q}$ and $\rho > 1$, there is some $1 < \rho' < \rho$ such that
		$$ \| f \|_{\rho'} \leq \| f \|^\delta \, \| f \|^{1 - \delta}_{\rho}$$
		for any $f \in A_{\rho}$.
	\end{lemma}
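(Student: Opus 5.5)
The plan is to first prove the inequality for the free algebra $W^\dag_{d,e,\rho}$ with its canonical norms, and then pass to the quotients $A_\rho$ by a lifting argument.

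\textbf{The free case.} An element $g=\sum_{n,\unu,\ul\mu} a_{n\unu\ul\mu}t^n\ul x^{\unu}\ul y^{\ul\mu}$ has $\rho$-norm
$$\|g\|_\rho=\sup |a_{n\unu\ul\mu}|\,\rho^{-\min(n,0)}\rho^{|\ul\mu|}.$$
The Gauss norm $\|g\|$ is the same supremum with every weight equal to $1$. The weight
$$w_\rho(n,\ul\mu)=\rho^{\max(-n,0)+|\ul\mu|}$$
is of the form $\rho^{N}$ with $N\ge0$. If we take $\rho'=\rho^{1-\delta}$, then $w_{\rho'}=w_\rho^{1-\delta}$. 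Hence each monomial term satisfies
$$|a|\,w_{\rho'}=|a|^{\delta}\,(|a|\,w_\rho)^{1-\delta}\le \|g\|^\delta\|g\|_\rho^{1-\delta}.$$
Taking the supremum gives $\|g\|_{\rho'}\le\|g\|^\delta\|g\|_\rho^{1-\delta}$. This is the Hadamard three-circle type interpolation. One small point: $\rho^{1-\delta}$ must lie in $\Gamma^*$. This holds since $\delta\in\mathbb Q$ and $\rho\in\Gamma^*$.

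\textbf{Passing to the quotient.} Take $f\in A_\rho$. The difficulty is that the quotient norms $\|f\|_{\rho}$ and $\|f\|$ are infima over \emph{different} lifts, one in $W^\dag_{d,e,\rho}$ and one in $T_{d+e}$. The interpolation needs a single lift $g\in W^\dag_{d,e,\rho}$ that is simultaneously almost optimal for both norms. I would therefore first establish a comparison: there is a constant $C$ such that every $f\in A_\rho$ has a lift $g\in W^\dag_{d,e,\rho}$ with
$$\|g\|\le C\|f\| \quad\text{and}\quad \|g\|_\rho\le C\|f\|_\rho.$$

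To build such a lift, I would use the faithful flatness from Corollary \ref{cor:pda}, together with the closedness of ideals (the corollary that $I=IT_{d+e}\cap W^\dag_{d,e}$). One route is to pick generators $h_1,\dots,h_m$ of $I_0$. Then one needs a division or strictness statement: an element of $I_\rho$ (respectively of $I T_{d+e}$) can be written as $\sum c_ih_i$ with coefficients bounded in terms of its norm. This amounts to strictness of the map $(W^\dag_{d,e,\rho})^m\to W^\dag_{d,e,\rho}$ and of the map $T^m\to T$, which follows from the open mapping theorem on Banach spaces once images are known to be closed. Closedness comes from noetherianity of these affinoid algebras.

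The delicate part is compatibility. Start from a good lift $g_1$ for $\|\cdot\|_\rho$ and a good lift $g_2\in T$ for $\|\cdot\|$. The difference $g_2-g_1$ lies in $IT_{d+e}$, and I would approximate it by an element of $I_\rho$ that is small in the Gauss norm. Subtracting that approximation from $g_1$ should give a lift with both norms controlled. Density of $I_\rho$ in $IT_{d+e}$ for the Gauss norm, together with the closed-image estimates above, should give this, possibly at the cost of shrinking $\rho$ slightly.

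\textbf{Removing the constant.} Once the comparison holds, applying the free case to $g$ gives
$$\|f\|_{\rho'}\le\|g\|_{\rho'}\le C\|f\|^\delta\|f\|_\rho^{1-\delta}.$$
To get rid of $C$, I would apply this to powers $f^N$, using the power-multiplicativity of the spectral seminorms. If that is unavailable, I would replace $\delta$ by a slightly smaller $\delta'$ and absorb $C$ by shrinking $\rho'$ further. Constructing the simultaneous good lift is the main obstacle; if it fails, I would fall back on proving the inequality up to a constant, which is likely all that later arguments need.
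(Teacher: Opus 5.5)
Your free-case computation is correct, and it is more direct than the paper's. The paper handles the quotient with the single sentence ``We may assume $A=W^\dag_{d,e}$''. It then splits $f=f^++f^-$, with coefficients in $K\langle x\rangle\langle t^{-1}\rangle^\dag$ versus $\V[[t]]_K\langle x\rangle$, and cites \cite[Lemma 2.4.2]{kedlaya2006finiteness} for each piece. Your observation is that every monomial carries $\rho$-weight $\rho^{N}$ with $N=\max(-n,0)+|\ul\mu|\ge 0$. Then $\rho'=\rho^{1-\delta}$ gives $|a|\rho'^{N}=|a|^{\delta}(|a|\rho^{N})^{1-\delta}$ termwise, which handles both pieces at once with no citation.

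The gap is in your passage to the quotient, where neither of your two mechanisms works.

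\emph{The simultaneous lift.} Gauss-density of $I_\rho$ in $IT_{d+e}$ does let you take the Gauss-good lift in the form $g_1+h$ with $h\in I_\rho$. But that only moves the difficulty: nothing bounds $\|h\|_\rho$ by a multiple of $\|f\|_\rho$. The open-mapping estimates for $(W^\dag_{d,e,\rho})^m\to I_\rho$ and for $T_{d+e}^m\to IT_{d+e}$ each control only one of the two norms. The $\rho$-norm of a Gauss-small correction depends on how far out in negative $t$-powers and $y$-degrees its non-negligible coefficients sit, and that depends on $f$. So this gives no constant $C$, and no shrunken $\rho$, that is uniform in $f$.

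\emph{Removing $C$.} The norms in the statement are quotient norms, which are only submultiplicative. So $\|f^N\|_{\rho'}\le\|f\|_{\rho'}^N$ points the wrong way for extracting $N$-th roots. Power-multiplicativity belongs to spectral seminorms, and $A$ need not even be reduced. Changing $\delta$ cannot absorb a factor $C>1$ either, because
$C\|f\|^{\delta_1}\|f\|_\rho^{1-\delta_1}=C(\|f\|/\|f\|_\rho)^{\delta_1-\delta}\,\|f\|^{\delta}\|f\|_\rho^{1-\delta}$,
while the ratio $\|f\|/\|f\|_\rho\le 1$ is not bounded away from $1$ (it equals $1$ at $f=1$).

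So what you have actually proved is the free case, which is also all that the paper's own proof establishes. The quotient statement needs a uniform mixed-norm lifting estimate that neither your proposal nor the paper supplies. Your fallback inequality with a constant would suffice for the only application, Lemma \ref{lem:hor-sec}, since a fixed constant does not disturb the geometric-decay argument there. As written, though, even that fallback rests on the unproven uniform simultaneous lift.
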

	\begin{proof}
		This is a generalization of \cite[Lemma 3.44]{lazda2016rigid} to partial dagger algebras.
		We may assume $A = W^\dag_{d, e}$ and $A_\rho = W^\dag_{d, e, \rho}.$ Let $R = \V[[t]]_K \langle x_1, \dots, x_d \rangle$. Let $f = \sum_{\unu} f_\unu \ul{y}^\unu$ with $f_{\unu} \in R \langle t^{-1} \rangle^\dag.$ By decomposing each $f_\unu$ into the sum of $f^-_\unu \in K \langle x_1, \dots, x_d \rangle \langle t^{-1} \rangle^\dag$ and $f^+_{\unu} \in R$ and $f$ into the sum of $f^\pm = \sum_\unu f^\pm_\unu \ul{y}^\unu$, we are reduced to two cases where $f \in K \langle x_1, \dots, x_d \rangle \langle y_1, \dots, y_e, t^{-1} \rangle^\dag$ and $f \in R \langle y_1, \dots, y_e \rangle ^\dag.$ Then both cases are proved in the similar way as in \cite[Lemma 2.4.2]{kedlaya2006finiteness}.
	\end{proof}
	
	\begin{lemma} \label{lem:hor-sec}
		Under Hypothesis \ref{hypo:pda1}, let $R = \R_{A, \eta_0, \eta_1}$ (resp. $R = \R^+_{A, \eta}$) and $\wh{R} = \R_{A, \eta_0, \eta_1}$ (resp. $\wh{R} = \R^+_{\wh{A}, \eta}$) for $\eta_0, \eta_1 \in \Gamma^*$ with $\eta_0 \leq \eta_1$ (resp. $\eta \in \Gamma^*$). Let $M$ be a free log $\nabla$-module over $R$, let $\wh{M} = M \otimes_R \wh{R}$ and suppose that there exists a free log $\nabla$-module $\wh{F}$ over $\wh{A}$ with nilpotent residues equipped with $d$ nilpotent commuting $\wh{A}$-linear endomorphisms $\partial_1, \dots, \partial_d$ such that $\wh{F} \otimes_{\wh{A}} \wh{R} \simeq \wh{M}$. Then the map $f : \wh{M} \to \wh{M}$ of Lemma \ref{lem:unip-f} maps $M$ into $M$; in particular, $f(M) \neq 0$ if $M \neq 0$ and hence $M$ contains nonzero sections horizontal relative to $A$.
	\end{lemma}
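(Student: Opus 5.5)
We want to show that $f$, defined on $\wh{M}$ through $\wh{F}$, maps the $R$-lattice $M$ into itself. Write $f(x)=\lim_l f_l(x)$, where $f_l(x) = \partial_{i_1}\cdots\partial_{i_{e-1}}\prod_{r}\prod_{j\le l}(1-\partial_r^2/j^2)^e x$. Each $f_l$ is a polynomial in the $\partial_r$ with rational coefficients.

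The first step is to check that each $\partial_r$ is the log derivation $z_r\frac{\partial}{\partial z_r}$ of the connection on $\wh{M}$. This holds because $\wh{M}\simeq \wh{F}\otimes\wh{R}$ is $\Uc$ of $\wh{F}$, so $\partial_r$ acts on $\wh{M}$ by $v\otimes a \mapsto \partial_r v\otimes a + v\otimes z_r\partial_{z_r}a$. That action restricts to the log connection $\nabla_r$ of $M$, since the connection on $M$ is the restriction of the one on $\wh{M}$. Consequently every $f_l$ maps $M$ into $M$, with no analytic input needed at this stage.

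The real task is to show that the limit of $f_l(x)$ lies in $M$ rather than merely in $\wh{M}$. I will fix a basis $e_1,\dots,e_n$ of $M$ over $R$, with $\nabla_r e_i=\sum_j Y_{rij}e_j$ and $Y_{rij}\in R$. Since $R=\bigcup_{\rho,\epsilon}A_\rho\langle z\rangle_{[\eta_0-\epsilon,\eta_1+\epsilon]^d}$, the $Y_r$ and the coordinates of $x$ all lie in a single Banach algebra $R_{\rho,\epsilon}=A_\rho\langle z\rangle_{\dots}$. Similarly, the change-of-basis matrix between $(e_i)$ and $(v_i)$ has entries in $\wh{R}_\epsilon=\wh{A}\langle z\rangle_{\dots}$.

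The estimate in the proof of Lemma~\ref{lem:unip-f} gives convergence of $f_l(x)$ in the affinoid norm $\|\cdot\|$ (the norm of $\wh{A}$), with an explicit rate. Taking the terms with $|\nu_r|>l$, the sequence $\|f_{l+1}(x)-f_l(x)\|$ decays geometrically, like $(\text{poly}(l))\,\theta^{l}$ for some $\theta<1$ coming from shrinking $\epsilon$ to $\epsilon'$. On the other hand, in the fringe norm $\|\cdot\|_{\rho,\epsilon}$ on $R_{\rho,\epsilon}$ the operators $f_l$ have norm growing at most like $C^{l}\cdot(\text{poly}(l))$. To see this, write $f_l$ as a polynomial in the $\nabla_r$. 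The factor $\prod_{j\le l}(1-\nabla_r^2/j^2)$ has coefficients bounded by $|1/(l!)^2|\le p^{2l/(p-1)}$, and each $\nabla_r$ has operator norm bounded by a constant depending on $\|Y_r\|_{\rho,\epsilon}$ and $\epsilon$.

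The key step, which I expect to be the main obstacle, is to interpolate these two bounds using Lemma~\ref{lem:hadamar}, extended to $R_{\rho,\epsilon}$ with $z$-variables appended. For $\delta$ close to $1$ there is some $\rho'<\rho$ (and slightly smaller $\epsilon'$) such that
$$\|g\|_{\rho',\epsilon'} \le \|g\|^{\delta}\,\|g\|_{\rho,\epsilon}^{1-\delta}.$$
Applied to $g=f_{l+1}(x)-f_l(x)$, expressed in the basis $e_i$, this gives a bound of the form $(\theta^{\delta}C^{1-\delta})^l\,\text{poly}(l)$, which tends to $0$ once $\delta$ is close enough to $1$. Hence $f_l(x)$ is Cauchy in the Banach algebra $R_{\rho',\epsilon'}$ and converges there to an element of $M$. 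Because $R_{\rho',\epsilon'}\to\wh{R}$ is continuous and injective, this limit coincides with $f(x)$.

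The delicate points are the following. The affinoid-norm estimate from Lemma~\ref{lem:unip-f} is stated in the basis $v_i$, so it must be transported to the basis $e_i$ via the bounded change-of-basis matrix. The geometric rate $\theta$ must be made explicit, and the Hadamard inequality needs its $z$-variable version, which I expect to follow by the same decomposition argument. If obtaining a geometric rate proves awkward, I would instead use the fact that the factor $(1-\nabla_r^2/j^2)^e$ for $j>l$ acts on high-$\nu$ terms.

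Finally, for the nonvanishing claim: $f(\wh{M})\supseteq \partial_{i_1}\cdots\partial_{i_{e-1}}\wh{F}\ne 0$, and $M$ spans $\wh{M}$ over $\wh{R}$, so $f(M)\neq 0$ by $\wh{R}$-linearity of the limiting formula over $\wh{A}$-coefficients. Any element of $f(M)$ lies in $\bigcap_r\ker\partial_r$, so it is horizontal relative to $A$.
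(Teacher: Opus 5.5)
Your proposal follows essentially the same route as the paper, which is modeled on Lazda--P\'al. You fix a basis $e_1,\dots,e_n$ of $M$ and bound $f_l(x)$ geometrically in the fringe norm; the paper gets $CD^l$ from $|l!|^{-1}\le p^{l/(p-1)}$. You take a geometric decay rate $E^{-l}$ in the affinoid norm from the proof of Lemma~\ref{lem:unip-f}, and interpolate with Lemma~\ref{lem:hadamar}, choosing $\delta$ with $D^{1-\delta}E^{-\delta}<1$. You are in fact more careful than the written proof on two points: transporting the affinoid estimate from the basis $v_i$ to the basis $e_i$, and explaining why the decay is geometric.

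The one slip is in the final step. The map $f$ is only $\wh{A}$-linear, not $\wh{R}$-linear, since it only sees the constant terms in $z_1,\dots,z_d$ of the coordinates with respect to the $v_i$. So $\wh{R}$-spanning alone does not give $f(M)\neq 0$. You should instead argue, as the paper does, from density of $M$ in $\wh{M}$ together with continuity of $f$ and $f(\wh{M})\neq 0$.
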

	\begin{proof}
		The proof is similar to that of \cite[Lemma 3.45]{lazda2016rigid}. As in Lemma \ref{lem:unip-f}, we treat the case where $R = R = \R_{A, \eta_0, \eta_1}$ and $\wh{R} = \R_{\wh{A}, \eta_0, \eta_1}$. Choose a basis $e_1, \dots, e_n$ of $M$ over $R$ and define $Y_{rij} \in R$ by $\partial_r e_i = \sum_j Y_{rij}e_j.$ For any $x = \sum_i a_i e_i \in M$ with $a_i \in R$, we have
		\begin{align}
			\partial_r x = \sum_i \frac{\partial a_i}{\partial z_r} e_i  \, + \, \sum_{i, j} a_i Y_{rij} e_j = \sum_i \left( \frac{\partial a_i}{\partial z_r} + \sum_j a_j Y_{rji} \right) e_i. \label{expr-flx2}
		\end{align}
		Suppose $a_i \in A_{\rho} \langle z_1, \cdots, z_d \rangle_{[\eta_0 - \epsilon, \eta_1 + \epsilon]}$ and write $f_l(x) = \sum_i a_i^{(l)} v_i$. Let $\| \cdot \|_\rho$ be a Banach norm on $A_\rho$ and let $\| \cdot \|_{\rho, [\alpha, \beta]^d}$ be the canonical Banach norm on $A_\rho \langle z_1, \dots, z_r \rangle_{[\alpha, \beta]^d}$.  Then by (\ref{expr-flx2}) we get
		$$\max_i(\| a_i^{(l)} - a_i^{(l-1)} \|_{\rho, [\eta_0 - \epsilon, \eta_1 + \epsilon]^d}) \leq c^{de} \| l \|^{-2de} \max_i ( \| a_i^{(l-1)} \|_{\rho, [\eta_0-\epsilon, \eta_1+\epsilon]} ),$$
		where $c = \max_{r, i, j}(1, \| Y_{rij} \|)$ Hence we get the upper bound
		$$
		\max_i(\| a_i^{(l)} - a_i^{(l-1)}\|_{\rho, [\eta_0 - \epsilon, \eta_1 + \epsilon]^d}) \leq c^{del} \| l! \|^{-2de} \max_i(\| a_i \|_{\rho, [\eta_0 - \epsilon, \eta_1 + \epsilon]^d}).
		$$
		By Legendre's lemma, we have the estimate $\| l! \|^{-1} \leq p^{\frac{l}{p-1}}.$ Hence there exist constants $C, D > 0$ such that 
		\begin{align}
			\max_i(\| a_i^{(l)} - a_i^{(l-1)} \|_{\rho, [\eta_0 - \epsilon, \eta_1 + \epsilon]^d}) \leq C D^l. \label{expr-flx3}
		\end{align}
		
		On the other hand, by the proof of Lemma \ref{lem:unip-f}, we have
		$$ \max_i(\| a_i^{(l)} - a_i^{(l-1)}\|_{[\eta_0 - \epsilon', \eta_1 + \epsilon']^d}) \to 0 $$
		for any $0 < \epsilon' < \epsilon$; hence we may choose any $0 < \epsilon' < \epsilon$ and some constant $E > 1$ such that
		\begin{align}
			\max_i(\| a_i^{(l)} - a_i^{(l-1)}\|_{[\eta_0 - \epsilon', \eta_1 + \epsilon']^d}) E^l \to 0. \label{expr-flx4}
		\end{align}
		Fix $\delta \in (0, 1) \cap \mathbb{Q}$; by Lemma \ref{lem:hadamar}, we may choose some $1 < \rho' \leq \rho$ such that
		$$ \| a_i^{(l)} - a_i^{(l-1)} \|_{\rho', [\eta_0 - \epsilon', \rho_1 + \epsilon']^d} \leq \| a_i^{(l)} - a_i^{(l-1)} \|_{[\eta_0 - \epsilon', \rho_1 + \epsilon']^d}^\delta \, \| a_i^{(l)} - a_i^{(l-1)} \|_{\rho, [\eta_0 - \epsilon', \rho_1 + \epsilon']^d}^{1-\delta}.$$
		Using (\ref{expr-flx3}), the right hand side is bounded by $C^{1-\delta} (D^{1-\delta} E^{-\delta})^l \cdot \| a_i^{(l)} - a_i^{(l-1)} \|_{[\eta_0 - \epsilon', \rho_1 + \epsilon']^d}^\delta E^{\delta l}.$ By choosing $\delta$ with $D^{1-\delta} E^{-\delta} < 1$, which is possible since $E > 1$, by (\ref{expr-flx4}) we get convergence
		$$ \| a_i^{(l)} - a_i^{(l-1)} \|_{\rho', [\eta_0 - \epsilon', \rho_1 + \epsilon']^d} \to 0. $$
		This proves $f(x) \in M.$ Since $f$ is $\wh{A}$-linear and $M$ is dense in $\wh{M}$, we get $f(M) \neq 0.$
	\end{proof}
	
	Henceforth we pose following hypothesis in addition to Hypothesis \ref{hypo:pda1}.
	
	\begin{hypothesis} \label{hypo:pda2}
		Under Hypothesis \ref{hypo:pda1}, assume the following:
		\begin{enumerate} [(i)]
			\item the spectral norm $|\cdot|_{\sup}$ on $\wh{A}$ is a valuation.
			\item for $Z_i$ the zero locus of $x_i$, $Z_i$ locally admits an open neighborhood of the form $Z_i \times A^1_{\Ed_K}[0, \eta)$ in $X$. 
		\end{enumerate}
		Note that (i) is equivalent to saying both $\wh{A}$ and the reduction $\wh{A}^{\red}$ are integral domains; we let $L$ be a field containing $\Frac \wh{A}$ complete for a valuation which restricts to the spectral norm on $\wh{A}$. (i) is satisfied for example if $\wh{A}$ is an affinoid algebra over $\E_K$ of MW-type.
	\end{hypothesis}
	
	\begin{lemma} \label{lem:des-unip-1}
		Under Hypothesis \ref{hypo:pda2}, let $R = \R_{A, \eta_0, \eta_1}$ (resp. $R = \R^+_{A, \eta}$) and $\wh{R} = \R_{A, \eta_0, \eta_1}$ (resp. $\wh{R} = \R_{\wh{A}, \eta}$) for $\eta_0, \eta_1 \in \Gamma^*$ with $\eta_0 \leq \eta_1$ (resp. $\eta \in \Gamma^*$). Let $M$ be a log $\nabla$-module over $R$ with nilpotent residues such that $\wh{M} = M \otimes_R \wh{R}$ is constant/unipotent. Then $M$ is constant/unipotent.
	\end{lemma}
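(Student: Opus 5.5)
Unipotence is an iterated extension of constant objects, so I argue by induction on the rank of $M$, in parallel with the proof of \cite[Proposition 3.4.3]{kedlaya2007semistable} and \cite[Proposition 3.42]{lazda2016rigid}. The key steps are: find a nonzero constant subobject of $M$ using Lemma \ref{lem:hor-sec}, check that the quotient again satisfies the hypotheses, and conclude by induction.

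First I reduce to $M$ free. Choose a cover of $\Spd A$ by affinoid partial dagger spaces on which $M$ is free. By Corollary \ref{cor:unip-loc}, constance and unipotence can be checked locally on $V$. By Theorem \ref{thm:ui-cat-eq}, the descending objects over $A[0,0]$ are unique up to unique isomorphism, so they glue. Hypothesis \ref{hypo:pda2}(i) must survive localization; if it does not, I instead avoid localizing and work with locally free $M$ and the sheaf-theoretic version of the argument.

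So assume $M$ is free and $\wh M$ is unipotent. By Remark \ref{rem:R-unip}, $\wh M\simeq \wh F\otimes_{\wh A}\wh R$ for some $\wh F$ with nilpotent commuting $\partial_i$. The module $\wh F$ is only locally free, but after passing to $L\supseteq\Frac\wh A$ it becomes free, so Lemma \ref{lem:unip-f} applies. Lemma \ref{lem:hor-sec} then gives $f(M)\subseteq M$ with $f(M)\neq 0$, where $f(M)$ consists of sections horizontal relative to $A$ in the $z$-directions. Let $N_0$ be the $A$-span of $f(M)$, let $N=N_0\otimes_A R\subseteq M$, and take its saturation.

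I must show that $N$ is a constant log $\nabla$-submodule and that $M/N$ is again a log $\nabla$-module with nilpotent residues whose completion is unipotent.
\begin{itemize}
\item $N_0$ is stable under the log connection relative to $\Ed_K$ in the $x$-directions, because $f$ commutes with $\nabla$. Hence $N_0$ is a log $\nabla$-module over $A$, and it is locally free by Lemma \ref{lem:lnm-abel}.
\item $N_0\otimes_A\wh A$ is a direct summand situation inside $\wh F^{\partial=0}$. Saturatedness of $N$ is then tested after the faithfully flat base change $A\to\wh A$ of Corollary \ref{cor:pda}, and using $L$ to compare ranks.
\item The quotient $M/N$ is locally free by Proposition \ref{prop:lnm-abel}. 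Its completion $\wh M/\wh N$ is a quotient of a unipotent object, hence unipotent because $\ULNM$ is abelian.
\end{itemize}
Induction then makes $M/N$ unipotent, so $M$ is an extension of unipotent objects and is itself unipotent.

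For the constant case, apply the unipotent case to get $M\simeq \Uc(F)$. The endomorphisms $\partial_i$ on $F$ vanish after $\otimes\wh A$, so they vanish on $F$ by faithful flatness (Corollary \ref{cor:pda}).

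The main obstacle is proving that $N$ is saturated with locally free quotient, that is, that the $A$-span of horizontal sections has the correct rank. This is where Hypothesis \ref{hypo:pda2}(i) and the field $L$ enter. My first attempt is to compare $\dim_L$ of the horizontal sections of $\wh M\otimes L$ with the rank of $N_0$. I would do this using the inclusion $\partial_{i_1}\cdots\partial_{i_{e-1}}(\sum\wh Av_i)\subseteq f(\wh M)$ from Lemma \ref{lem:unip-f}, together with density of $M$ in $\wh M$.
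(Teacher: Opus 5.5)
Your skeleton matches the paper: induction on rank, a nonzero constant subobject built from the horizontal sections that Lemma \ref{lem:hor-sec} supplies, and the faithful-flatness argument for the constant case. But the step that carries the lemma is left open, and you acknowledge this yourself.

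\textbf{What is missing.} You write $N=N_0\otimes_A R\subseteq M$ without proving either of the two facts this needs. First, that $N_0=A\cdot f(M)$ is finitely generated over $A$. Second, that $N_0\otimes_A R\to M$ is injective, i.e.\ that horizontal sections independent over $A$ stay independent over $R$.
\begin{enumerate}
\item Lemma \ref{lem:lnm-abel} cannot give local freeness of $N_0$. The map $f$ is only $A$-linear, not $R$-linear, so $N_0$ is not the kernel or cokernel of any morphism in $\LNM$.
\item A constant subobject must be a log submodule with nilpotent residues along the $V(x_i)$. The naive span, or its saturation, need not be one. The paper handles this by taking the $\nabla$-submodule spanned by horizontal sections only on the non-log locus and extending it with Proposition \ref{prop:ext-log-sub}. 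That is exactly where Hypothesis \ref{hypo:pda2}(ii) is used, and your proposal never uses (ii).
\item Your ``first attempt'' does not close the gap. The inclusion $\partial_{i_1}\cdots\partial_{i_{e-1}}(\sum\wh{A}v_i)\subseteq f(\wh{M})$ only sees the image of the top product of the nilpotent operators. That image is in general strictly smaller than $\wh{F}\cap\ker\partial_1\cap\cdots\cap\ker\partial_d$. So comparing its rank with an $L$-dimension cannot prove saturation or the ``correct rank''.
\end{enumerate}

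\textbf{What the paper does instead.}
\begin{enumerate}
\item It lets $N$ be the log extension of the span of all horizontal sections, not just $f(M)$. The induction then reduces to the case where $M$ is generated by horizontal sections on the non-log locus.
\item In that case $\wt{M}=\wh{M}\otimes_{\wh{R}}\wt{R}$ over the field $L$ is constant. By \cite[Lemma 5.2.4]{kedlaya2006finiteness}, its horizontal sections are exactly $\wt{F}$.
\item For every finitely generated $A$-submodule $H_0$ of the horizontal sections $H$ of $M$, it shows
$H_0\otimes_A R\hookrightarrow\wh{H}_0\otimes_{\wh{A}}\wh{R}\hookrightarrow\wt{F}\otimes_L\wt{R}\cong\wt{M}$.
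This uses faithful flatness of $R\to\wh{R}$ (from Corollary \ref{cor:pda}) and torsion-freeness of $\wh{H}_0$.
\item Since $M$ is a noetherian $R$-module, $H$ is finitely generated over $A$.
\item The map $f$ of Lemma \ref{lem:unip-f} is then used only to show that $H\otimes_A R\to M$ is surjective. Hence this map is an isomorphism and the subobject is constant.
\end{enumerate}
This passage to $L$, combined with the descent of injectivity by faithful flatness, is the idea your proposal is missing.
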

	\begin{proof}
		By Remark \ref{rem:R-unip}, we can choose a log $\nabla$-module $\wh{F}$ over $\wh{A}$ with nilpotent residues equipped with $d$ nilpotent commuting $\wh{A}$-linear endomorphisms $\partial_1, \dots, \partial_d$ such that $\wh{F} \otimes_{\wh{A}} \wh{R} \simeq \wh{M}$. Since unipotence of $M$ can be checked locally on $A$ by Corollary \ref{cor:unip-loc}, we may assume that $\wh{F}$ is free over $\wh{A}$.
		
		For $X = \Spd A$, let $Y$ be the non-log locus of $X \times A^d[\eta_0, \eta_1]$ (resp. $X \times A^d[0, \eta]$), $N'$ the $\nabla$-submodule of $M' = M |_Y$ spanned by horizontal sections; then by (ii) of Hypothesis \ref{hypo:pda2} and Proposition \ref{prop:ext-log-sub},  $N'$ extends to a log $\nabla$-submodule of $M$ with nilpotent residues. Suppose that $N$ is constant. For $\wh{N} := N \otimes_R \wh{R},$ $\wh{N}$ is a subobject of $\wh{M}$ and therefore $\wh{M}/\wh{N} = (M/N) \wh{\ }$ is unipotent. If $M \neq 0$, then $N' \neq 0$ by Lemma \ref{lem:hor-sec}.  By induction on rank, we deduce that $M/N$ is unipotent and so is $M$.
		
		Thus we are reduced to show that $M$ is constant under the assumption that $M'$ is generated by horizontal sections. Let $\wt{R} = \bigcup_{\epsilon > 0} L \langle z_1, \dots, z_d \rangle_{[\eta_0-\epsilon, \eta_1+\epsilon]^d}$ (resp. $\wt{R} = \bigcup_{\epsilon > 0} L \langle z_1, \dots, z_d \rangle_{[0, \eta+\epsilon]^d}$), $\wt{M} = \wh{M} \otimes_{\wh{R}} \wt{R}$. Then this extra assumption implies that $\wt{M}$, being spanned by horizontal sections, is a constant log $\nabla$-module. Let $\wt{F} = \wh{F} \otimes_{\wh{R}} \wt{R}$ and we denote the image of $\wt{F}$ in $\wt{M}$ also by $\wt{F}$. Then \cite[Lemma 5.2.4]{kedlaya2006finiteness} implies that $\wt{F}$ is the set of horizontal sections of $\wt{M}$.
		
		We imitate the proof of \cite[Proposition 3.4.3]{kedlaya2007semistable} in what follows. Let $H$ be the set of horizontal sections of $M$ and fix a finitely generated $A$-submodule $H_0$ of $H$. We have a commutative diagram
		\[
		\xymatrix{
			H_0 \otimes_A R \ar[r] \ar[d] & M \ar[d] \\
			\wt{F} \otimes_L \wt{R} \ar[r]^(0.58){\sim} & \wt{M}.
		}
		\]
		Let us prove that the left vertical map is injective. Let $I = [\eta_0, \eta_1]$ (resp. $I = [0, \eta]$); then $R \to \wh{A}\langle z_1, \dots, z_d \rangle_I$, $\wh{R} \to \wh{A}\langle z_1, \dots, z_d \rangle_I$ and $\wt{R} \to L \langle z_1, \dots, z_d \rangle_I$ are faithfully flat by Corollary \ref{cor:pda}; hence so is $R \to \wh{R}$ faithfully flat. Let $\wh{H}_0 = H_0 \otimes_A \wh{A}.$ Note that the image of $\wh{H}_0 \to \wt{M}$ is contained in $\wt{F}.$ Since $H_0$ is finitely generated over $A$ and $\wt{F}$ is over $L$, we have inclusions $\wh{H}_0 \otimes_{\wh{A}} \wh{R} \subseteq \wh{H}_0 \otimes_{\wh{A}} \wh{A}\langle z \rangle_I$ and $\wt{F} \otimes_L \wt{R} \subseteq \wt{F} \otimes_L L \langle z \rangle_I.$ Also we have the inclusion $\wh{H}_0 \otimes_{\wh{A}} \wh{A} \langle z \rangle_I \subseteq \wt{F} \otimes L \langle z \rangle_I$ since $\wh{H}_0$ has no $\wh{A}$-torsion. By these inclusions, we get $\wh{H}_0 \otimes_{\wh A} \wh{R} \subseteq \wt{F} \otimes_{L} \wt{R}.$ Also we have the inclusion $H_0 \otimes_A R \subseteq \wh{H}_0 \otimes_{\wh A} \wh{R}$ and thus we get the assertion. Consequently, $H_0 \otimes_A R \to M$ is injective. Since $H_0$ is an arbitrary finitely generated $A$-submodule of $H$ and $M$ is a noetherian $R$-module, it follows that $H$ is finitely generated over $A$. We can again imitate the proof of \cite[Proposition 3.4.3]{kedlaya2007semistable} using previously defined map $f$ to prove that $H \otimes_A R \to M$ is also surjective. Thus $H \otimes_A R \to M$ is an isomorphism and therefore $M$ is constant.
	\end{proof}
	
	\begin{lemma} \label{lem:des-unip-2}
		Let $V$ be an smooth partial dagger space covered by affinoid open subsets satisfying Hypothesis \ref{hypo:pda2}. For any closed aligned subinterval $I$ of $[0, \infty)$ and $\E \in \LNM_{V \times A^d_{\Ed_K}(I)}$, $\E$ is constant/unipotent if and only if $\E | _{\wh{V} \times A^d_{\Ed_K}(I)}$ is constant/unipotent.
	\end{lemma}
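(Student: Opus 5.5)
The forward direction is immediate: constance and unipotence are preserved by restriction along the open immersion $\wh{V} \times A^d_{\Ed_K}(I) \hookrightarrow V \times A^d_{\Ed_K}(I)$, since the pullback of $\pi^*\F$ is $\pi^*(\F|_{\wh{V}})$ and restriction is exact. For the converse, my plan is to reduce to the affinoid case and then apply Lemma \ref{lem:des-unip-1}.

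\emph{Step 1 (localize).} By Corollary \ref{cor:unip-loc}, unipotence of an object of $\LNM_{V \times A^d(I)}$ can be checked locally on $V$. So I cover $V$ by affinoid opens $\Spd A$ satisfying Hypothesis \ref{hypo:pda2} and reduce to $V = \Spd A$, with $\wh{V} = \Spa \wh{A}$. For constance, I need a gluing step. By Theorem \ref{thm:ui-cat-eq}, a constant object is determined by its image under the equivalence $\Uc_I$, with the descended object recovered as the $\partial_i$-invariant part. Hence constant descents on an affinoid cover agree canonically on overlaps, and they glue by full faithfulness.

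\emph{Step 2 (translate into modules).} For $V = \Spd A$, Theorem A (Proposition \ref{prop:theoremAB}) identifies $\E$ with a finite locally free log $\nabla$-module $M = \Gamma(V \times A^d(I), \E)$ over $R$, where:
\begin{itemize}
\item $R = \R_{A,\eta_0,\eta_1}$ if $I = [\eta_0,\eta_1]$ with $\eta_0 > 0$;
\item $R = \R^+_{A,\eta}$ if $I = [0,\eta]$.
\end{itemize}
Similarly $\E|_{\wh{V} \times A^d(I)}$ corresponds to $\wh{M} = M \otimes_R \wh{R}$. If $M$ is only locally free, I shrink $V$ further; this is permitted by Step 1, after which $M$ is free.

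\emph{Step 3 (apply descent).} The residues of $M$ are nilpotent because $\E \in \LNM$. Lemma \ref{lem:des-unip-1} then says that if $\wh{M}$ is constant or unipotent, so is $M$. Via the equivalence of Proposition \ref{prop:theoremAB}, and the fact that the functor $\Uc_I$ commutes with taking global sections on affinoids, this gives the constance or unipotence of $\E$ over $\Spd A$.

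The main obstacle is bookkeeping rather than analysis. First, the affinoid cover must be chosen compatibly with the completion $\wh{V}$, which was defined using a particular covering; I would use that same covering. Second, in the constant case the locally obtained descents must glue. For the latter I would invoke Theorem \ref{thm:ui-cat-eq} as above: constance of each restriction gives $\E|_{U_j} \simeq \Uc_I(\F_j)$ with all $\partial$'s zero. Full faithfulness of $\Uc_I$ provides unique isomorphisms $\F_j|_{U_{jk}} \simeq \F_k|_{U_{jk}}$, and these satisfy the cocycle condition by uniqueness. They therefore glue to an $\F \in \LNM_V$ with $\E \simeq \pi^*\F$.
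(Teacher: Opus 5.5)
Your proposal is correct and is essentially the paper's argument. The paper proves this lemma in one line, as a rephrasing of Lemma~\ref{lem:des-unip-1}: restrict to the affinoid pieces of the given cover (the same cover used to define the completion), pass to global sections via Theorem A, apply that lemma, and conclude by locality (Corollary~\ref{cor:unip-loc}). Your gluing argument via Theorem~\ref{thm:ui-cat-eq} supplies the constant case, which the paper leaves implicit.

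One small caution: drop the extra shrinking of $V$ to make $M$ free. Lemma~\ref{lem:des-unip-1} is already stated for locally free $M$, and a smaller affinoid need not satisfy Hypothesis~\ref{hypo:pda2}(i); for example, the spectral norm of an annulus inside a disc is not multiplicative.
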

	\begin{proof}
		This corollary is just a rephrase of Lemma \ref{lem:des-unip-1}.
	\end{proof}
	
	\begin{theorem}[Descent of unipotence] \label{thm:des-unip}
		Let $V$ be an smooth partial dagger space covered by affinoid open subsets satisfying Hypothesis \ref{hypo:pda2}. For $\eta \in [0, 1) \cap \Gamma$, let $A^d_{\E_K}[\eta, 1) = \{ x \in \mathbb{A}^{d, \ad}_{\E_K} : \eta \leq |z_i|_x, \, |z_i|_{[x]} < 1 \}$. Then
		\begin{enumerate}
			\item for any $\E \in \LNM_{V \times A^d_{\Ed_K}[0, 1)},$ $\E$ is constant/unipotent if and only if $\E|_{\wh{V} \times A^d_{\E_K}[0, 1)}$ is constant/unipotent;
			\item for any $\E \in \LNM_{V \times A^d_{\Ed_K}[\eta_0, 1)}$ with $\eta_0 \in (0, 1) \cap \Gamma^*,$  $\E | _{V \times A^d_{\Ed_K}[\eta, 1)}$ is constant/unipotent for some $\eta \in [\eta_0, 1) \cap \Gamma^*$ if and only if $\E|_{\wh{V} \times A^d_{\E_K}[\eta, 1)}$ is constant/unipotent for some $\eta \in [\eta_0, 1) \cap \Gamma^*.$
		\end{enumerate}
	\end{theorem}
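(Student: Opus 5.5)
The forward implications in both parts are immediate: constant and unipotent objects are preserved by restriction along the open immersion $\wh{V} \times A^d_{\E_K}(I) \hookrightarrow V \times A^d_{\Ed_K}(I)$, since pullback commutes with $\pi^*$ and with extensions. So the content is the converse. The plan is to reduce to closed aligned subintervals, where Lemma \ref{lem:des-unip-2} applies. Then glue using the non-closed interval results, Lemma \ref{lem:ui-eq} and Theorem \ref{thm:ui-cat-eq}.

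For (1), suppose $\wh{\E} := \E|_{\wh{V} \times A^d_{\E_K}[0,1)}$ is constant (resp. unipotent). For each $\lambda \in (0,1) \cap \Gamma^*$, restrict to the closed polyannulus $[0,\lambda]$. The completion of $V \times A^d_{\Ed_K}[0,\lambda]$ is contained in $\wh{V} \times A^d_{\E_K}[0,1)$, by the Remark after Definition \ref{def:polyannuli}. Hence $\E|_{\wh{V} \times A^d[0,\lambda]}$ is constant (resp. unipotent). By Lemma \ref{lem:des-unip-2}, $\E_\lambda := \E|_{V \times A^d_{\Ed_K}[0,\lambda]}$ is then constant (resp. unipotent) for every $\lambda$.

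Lemma \ref{lem:ui-eq}, with $\eta = 0$ and together with the Remark following it, then gives $\E \simeq \Uc_{[0,1)}(\F)$ for some $\F \in \ULNM_{V \times A^d[0,0]}$. This handles the unipotent case. In the constant case we still need the $\partial_i$ on $\F$ to vanish. By Theorem \ref{thm:ui-cat-eq} for the closed interval $[0,\lambda]$, $\F$ is determined up to isomorphism by $\E_\lambda$. Since $\E_\lambda$ is constant, we get $\E_\lambda \simeq \Uc_{[0,\lambda]}(\F_0)$ with $\F_0$ constant, so $\F \simeq \F_0$ is constant and $\E$ is constant.

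For (2) the argument is identical. Fix an $\eta$ witnessing constancy or unipotence of the completion, and run the same argument on $[\eta,\lambda]$ for $\lambda \in (\eta,1)\cap\Gamma^*$. This uses Lemma \ref{lem:des-unip-2} on each closed piece and then Lemma \ref{lem:ui-eq} on $[\eta,1)$, which gives unipotence of $\E|_{V\times A^d_{\Ed_K}[\eta,1)}$ with the same $\eta$.

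The main point to check is the compatibility of completions. Lemma \ref{lem:des-unip-2} concerns $\E|_{\wh{V}\times A^d_{\Ed_K}(I)}$, and we must identify this with the restriction of $\wh{\E}$ to $\wh{V} \times A^d_{\E_K}(I)$, which is the completion of $V \times A^d_{\Ed_K}(I)$ for the product presentation. Equivalently, we need $\R_{\wh{A},\eta_0,\eta_1}$ to be the ring of functions on $\wh{V} \times A^d_{\E_K}[\eta_0,\eta_1]$ up to the $\epsilon$-overconvergence in the $z$-variables. This holds because a closed $[\eta_0,\eta_1]$ sits inside an open range where $\wh{\E}$ is defined, so sections on a slightly larger closed polyannulus are available. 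I would verify this on affinoid pieces of $V$ and then use Corollary \ref{cor:unip-loc} to pass from local to global unipotence on $V$.
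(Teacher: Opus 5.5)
Your proposal is correct and follows the paper's route. The paper's proof is a one-liner combining Lemma \ref{lem:ui-eq}, which reduces the question to the closed pieces $[\eta,\lambda]$, with Lemma \ref{lem:des-unip-2} applied to each closed piece. Your additions fill in points the paper leaves implicit: the full-faithfulness argument for the constant case, since Lemma \ref{lem:ui-eq} only covers unipotence, and the check that unipotence over the $z$-overconvergent ring $\R_{\wh{A},\eta_0,\eta_1}$ follows by restriction from a slightly larger closed polyannulus inside $[0,1)$.
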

	\begin{proof}
		This follows from Lemma \ref{lem:ui-eq} and Lemma \ref{lem:des-unip-2}.
	\end{proof}
	
	\begin{remark}
		A suitable modification of this descent of unipotence theorem combined with again a suitable modification of Proposition \ref{prop:lnm-colim} gives a correction of an error in the proof \cite[Proposition 3.5.3]{kedlaya2007semistable}. In {\it loc. cit.}, one passes to the tube $]X[^{\mathrm{Tate}}$ in the context of Tate's rigid analytic variety, and it requires some argument to show that unipotence is propagated to a strict neighborhood, as it could be that no strict neighborhoods have integral reductions. In the language of adic spaces, the tube $]X[^{\mathrm{Tate}}$ is interpreted to the ``naive tube'' $]X[^{\ad, \circ}$, which is contained in our tube $]X[^{\ad}$. Our tube is the intersection of all open neighborhoods, which can be regarded as the ``limit'' of strict neighborhoods. By considering unipotence on $]X[^{\ad}$, we can overcome the error. This can be regarded as an evidence that using adic space as rigid analytic spaces has nice aspects compared to others. 
	\end{remark}
	
	\subsection{Convergence and unipotence}
	By posing suitable convergent conditions on log $\nabla$-modules, we can prove the equivalence of extension property and unipotence as in \cite[\S3.6]{kedlaya2007semistable}. Although we may imitate the proof of {\it loc. cit.} to deduce the results in this subsection, for later use it suffices to prove it in the situation of Hypothesis \ref{hypo:pda2} and we can appeal to Theorem \ref{thm:des-unip} to skip the argument.
	
	\begin{definition} \label{def:conv}
		Let $X \to Y$ be an smooth morphism of partial dagger spaces, let $x_1, \dots, x_r$ be a coordinate of $X/Y$ and let $\E \in \LNM_{X/Y}$. Then for $\eta \in \mathbb{R}_{> 0}$, we say that $\E$ is $\eta$-convergent with respect to $x_1, \dots, x_n$ if $X$ can be covered by affinoid open partial dagger spaces $U = \Spd A$ with a fringe inductive system $\{ A_\rho \}_\rho$ such that $\E|_U$ comes from some log $\nabla$-module $\E_\rho$ on an open neighborhood $\Us_\rho = \Spa A_\rho$ and that for any section $s \in \Gamma(\Us_\rho, \E_\rho)$, the multi-indexed sequence $\displaystyle \frac{\ul{\partial}^\unu s}{\unu!}$ is $\eta$-convergent.
		
		For a partial dagger space $V$, $\alpha \in [0, 1) \cap \Gamma$ and $\E \in \LNM_{V \times A^d_{\Ed_K}(\alpha, 1))/V}$ or $\E \in \LNM_{V \times A^d_{\Ed_K}[\alpha, 1))/V}$, we say that $\E$ is convergent if for any $\eta < 1$, there exists $\beta \in (\alpha, 1) \cap \Gamma^*$ such that for all $\gamma \in (b, 1) \cap \Gamma^*$, $\E|_{V \times A^d_{\E_K}[\beta, \gamma]}$ is $\eta$-convergent with respect to the coordinate $z_1, \dots, z_d$ of polyannuli.
	\end{definition}
	\begin{remark} \label{rem:conv}
		In the situation of Definition \ref{def:conv}, if $\E \in \LNM_{X/Y}$ is $\eta$-convergent, then so is $\E|\wh{X}$ in the sense of \cite[Definition 2.4.2]{kedlaya2007semistable}. This follows from the fact that for a partial dagger algebra $A$ and one of its fringe algebra $A_\rho$, there exist a norm $\| \cdot \|_\rho$ of $A_\rho$ and a norm $\| \cdot \|$ of $\widehat{A}$ such that $\| \cdot \|_\rho \geq \| \cdot \|.$ As a consequence, for a partial dagger space $V$, $\alpha \in [0, 1) \cap \Gamma$ and $\E \in \LNM_{V \times A^d_{\Ed_K}(\alpha, 1))/V}$ or $\E \in \LNM_{V \times A^d_{\Ed_K}[\alpha, 1))/V}$, if $\E$ is convergent then so is $\E |_{\wh{X} \times A^d_{\E_K}(\alpha, 1)}$ or $\E|_{\wh{X} \times A^d_{\E_K}[\alpha, 1)}$ convergent in the sense of \cite[Definition 3.6.6]{kedlaya2007semistable}. 
	\end{remark}
	
	\begin{proposition} \label{prop:unip-ext}
		Let $V$ be an smooth partial dagger space covered by affinoid open subsets satisfying Hypothesis \ref{hypo:pda2}, $\alpha \in [0, 1) \cap \Gamma$ and suppose that $\E \in \LNM_{V \times A^d_{\Ed_K}[\alpha, 1)}$ is convergent. Then $\E$ is unipotent if and only if it extends to an object of $\LNM_{V \times A^d_{\Ed_K}[0, 1)}$. Moreover, this extension is unique if it exists and $\E$ is constant if and only if all residue maps for the coordinate $z_i$ of polyannuli are zero.
	\end{proposition}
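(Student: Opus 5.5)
The plan is to reduce everything to the classical statement \cite[Proposition 3.6.9]{kedlaya2007semistable} (extension $\Leftrightarrow$ unipotence for convergent log $\nabla$-modules over $\E_K$-polyannuli) via the completion functor, and then descend back using Theorem \ref{thm:des-unip}. The "only if" direction of the unipotence criterion is formal. If $\E$ is unipotent, then by Theorem \ref{thm:ui-cat-eq} (applied to $I=[\alpha,1)$) we have $\E \simeq \Uc_{[\alpha,1)}(\F)$ for some $\F \in \ULNM_{V \times A^d[0,0]}$. Then $\Uc_{[0,1)}(\F)$ is an object of $\LNM_{V\times A^d_{\Ed_K}[0,1)}$ restricting to $\E$, so it provides the extension.

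For the converse, suppose $\E$ extends to $\E' \in \LNM_{V\times A^d_{\Ed_K}[0,1)}$.
\begin{enumerate}[(1)]
\item Pass to completions. By Remark \ref{rem:conv}, $\wh{\E} := \E|_{\wh V \times A^d_{\E_K}[\alpha,1)}$ is convergent in the sense of \cite[Definition 3.6.6]{kedlaya2007semistable}, and $\wh{\E'}$ extends it over $\wh V\times A^d_{\E_K}[0,1)$.
\item Apply Kedlaya's result. The classical proposition then shows that $\wh{\E'}$ (hence $\wh\E$) is unipotent; here $\wh V$ is smooth over $\E_K$ by the remark after the definition of smoothness.
\item Descend. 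Theorem \ref{thm:des-unip}(1), applied to $\E'$, shows that $\E'$ is unipotent on $V\times A^d_{\Ed_K}[0,1)$, and therefore so is its restriction $\E$.
\end{enumerate}
If one only wants unipotence of $\E$ itself, one can instead use part (2) of Theorem \ref{thm:des-unip} to get unipotence on some $[\eta,1)$, and then use Lemma \ref{lem:ui-eq}. Gluing with the restriction to a closed subinterval $[\alpha,\lambda]$ then handles that subinterval, where unipotence follows from the extension by Lemma \ref{lem:des-unip-2} and the closed-interval classical case.

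Uniqueness of the extension also reduces to Theorem \ref{thm:ui-cat-eq}. Any extension $\E'$ is unipotent by the argument above, so it is of the form $\Uc_{[0,1)}(\F')$. Full faithfulness of $\Uc_{[\alpha,1)}$ and $\Uc_{[0,1)}$ identifies $\F'$ with $\F$ up to unique isomorphism compatible with the identification on $[\alpha,1)$. Two extensions are therefore canonically isomorphic.

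For the constancy criterion, note that the residue of $\Uc_{[0,1)}(\F)$ along $z_i=0$ is exactly $\partial_i$ acting on $\F$ (as in the definition of $\Uc_I$). Hence all residues vanish if and only if all $\partial_i=0$, i.e.\ $\F$ is constant. Since $\Uc$ is an equivalence on unipotent objects, $\F$ is constant if and only if $\E$ is.

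The main obstacle I expect is the hypothesis check in step (3). We must know that $\E'$ lives in the setting of Theorem \ref{thm:des-unip}, namely a log $\nabla$-module with nilpotent residues relative to $\Ed_K$ over $V$ covered by affinoids satisfying Hypothesis \ref{hypo:pda2}. We also need the completion of $V\times A^d_{\Ed_K}[0,1)$ to be $\wh V\times A^d_{\E_K}[0,1)$, which is the remark after Definition \ref{def:polyannuli}. A secondary point is that the residues of the extension are nilpotent, since $\LNM$ is defined with this condition; the classical statement likewise needs this assumption, and I would impose it by definition of $\LNM$.
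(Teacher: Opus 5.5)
Your proposal is correct and follows essentially the same route as the paper: the direction from unipotence to extension goes through Theorem \ref{thm:ui-cat-eq}, and the converse is obtained by completing, invoking Remark \ref{rem:conv} together with \cite[Proposition 3.6.9]{kedlaya2007semistable}, and descending with Theorem \ref{thm:des-unip}. Your handling of uniqueness and of the residue criterion is more explicit than the paper's one-line proof. You first show that every extension is unipotent and then use full faithfulness of $\Uc$, which closes the small gap left by the paper's appeal to Theorem \ref{thm:ui-cat-eq} alone.
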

	\begin{proof}
		If $\E$ is unipotent, then it uniquely extends thanks to Theorem \ref{thm:ui-cat-eq}. Conversely if $\E$ extends, then so does $\wh{\E} = \E|{\wt{V} \times A^d_{\E_K}[a, 1)}$; hence $\wh{\E}$ is unipotent by Remark \ref{rem:conv} and \cite[Proposition 3.6.9]{kedlaya2007semistable}. Applying Theorem \ref{thm:des-unip}, we obtain the statement.
	\end{proof}
	
	\section{Monodromy of isocrystals} \label{sec:monodromy}
	\subsection{Local definition}
	\begin{definition}
		A small $\V[[t]]$-frame (resp. $k((t))$-frame, $\E_K$-frame) is a smooth frame $(X, Y, \Pf, j, i)$ such that $i$ induces an isomorphism $Y \to \Pf \otimes k[[t]]$, $\Pf$ is affine and $Y-X = V(\bar{g}) \cup V(t)$ for some $g \in \Gamma(\Pf, \O).$ 
	\end{definition}
	
	\begin{hypothesis} \label{hypo:small-frame}
		Let $(X, Y, \Pf)$ be a small $\V[[t]]$-frame. Suppose that $Z = Y_\eta - X$ is smooth over $k((t))$ and that there exist $f_1, \dots, f_r \in \Gamma(\Pf, \O)$ which form a coordinate of an open neighborhood of $X$ in $\Pf$ such that $Y-X = V(\bar f_1) \cup V(t)$.
	\end{hypothesis}
	
	\begin{lemma} \label{lem:frame-rel-tube}
		Under Hypothesis \ref{hypo:small-frame}, let $\Qf = V(f_1)$. Then there is an isomorphism $]Z[_\Pf \to ]Z[_\Qf \times A^1_{\Ed_K}$ whose projection to the annulus $A^1$ is induced by $f_1$.
	\end{lemma}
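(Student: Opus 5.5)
This is the analogue of \cite[Lemma 4.1.3]{kedlaya2007semistable} (or Berthelot's fibration theorem in the form ``the tube of a smooth closed subscheme is locally a product with an open disc''). The plan is to build a smooth formal scheme carrying both $\Qf$ and the coordinate $f_1$, and then invoke the strong fibration theorem (Theorem \ref{thm:strong-fibration}). Here $A^1_{\Ed_K}$ should be read as $A^1_{\Ed_K}[0,1)$, the open unit disc over $\Ed_K$.

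\textbf{Step 1: a retraction.} Since $f_1, \dots, f_r$ is a coordinate of an open neighborhood of $X$ in $\Pf$, $\Pf$ is smooth over $\V[[t]]$ near $X$ and $\Qf = V(f_1)$ is smooth there. Shrinking $\Pf$ to an affine neighborhood of $Z$ preserves everything, since tubes are local. By the smooth lifting property for affine formal schemes, I will choose a retraction $r : \Pf \to \Qf$ of the closed immersion $\Qf \hookrightarrow \Pf$. This is possible locally because $\Qf$ is smooth and $\Pf$ is affine, by the infinitesimal lifting property applied modulo $(p, f_1)^n$ and then passing to the limit. Consider
$$u = (r, f_1) : \Pf \to \Qf \times_{\V[[t]]} \Spf \V[[t]]\langle z\rangle .$$
It is étale along $Q = V(\bar f_1) \cap Y$: the differentials of the coordinates $f_2|_\Qf, \dots, f_r|_\Qf$ pulled back by $r$, together with $df_1$, form a basis, since $r$ restricts to the identity on $\Qf$.

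\textbf{Step 2: apply strong fibration.} The morphism $u$ is étale in a neighborhood of $Z$. It induces the identity on special fibers along $Z$, since $u|_\Qf$ is the zero section $z = 0$. So it is a proper étale morphism of frames
$$(Z, \ol{Z}, \Pf) \to (Z, \ol{Z}, \Qf \times \wh{\mathbb{A}}^1),$$
where $\ol Z$ is the closure of $Z$ in $Y$, after shrinking $\Pf$ so that $\ol Z \to \ol Z$ is the identity, hence proper. Theorem \ref{thm:strong-fibration} then gives that $u^\ad$ induces an isomorphism of germs between $]Z[_\Pf$ and $]Z[_{\Qf \times \wh{\mathbb{A}}^1}$. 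Here $Z$ is embedded in the target via the zero section.

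\textbf{Step 3: compute the target tube.} In $\Qf \times \Spf\V[[t]]\langle z\rangle$, the subscheme $Z$ is cut out inside $Q\times \mathbb{A}^1$ by $z=0$ and the defining equations of $Z$ in $Q$. By the description of tubes via $|\cdot|_{[x]}<1$, the tube is
$$\{x : [\sp](x) \in Z \text{ in } \Qf,\ |z|_{[x]} < 1\} = ]Z[_\Qf \times A^1_{\Ed_K}[0,1).$$
The condition $|t|_{[x]} \ge 1$ is automatic, because $Z \subseteq Y_\eta$. Since $u$ composed with the projection is $f_1$, the projection to the annulus is induced by $f_1$.

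\textbf{Main obstacle.} I expect the hardest part to be Step 2. The strong fibration theorem is stated for morphisms restricting to the identity on the closed subscheme of $Y$ and proper on the compactification. Here $u$ is only étale near $Z$, and $\ol Z$ need not map isomorphically in general. My first attempt is to shrink $\Pf$ to an open neighborhood of $\ol{Z}$ on which $u$ is étale, using that the tubes only depend on a neighborhood of $Z$ in the generic fiber $Y_\eta$. If properness fails near $V(t)$, I would instead apply the theorem to the frames with $Y$ replaced by $\ol Z$, working locally on $\Qf$ so that a cofinal system of neighborhoods suffices.
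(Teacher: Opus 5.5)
\textbf{There is a genuine gap in Step 1: the retraction $r\colon \Pf \to \Qf$ does not exist as claimed.}

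Write $\Pf=\Spf A$ and $\Qf=\Spf B$. Infinitesimal lifting gives compatible sections $B \to A/(p,f_1)^n$. Passing to the limit gives a section $B \to \varprojlim_n A/(p,f_1)^n$, which is a retraction of the formal completion $\wh{\Pf}_{/Q}$ of $\Pf$ along $Q$, not of $\Pf$.

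A retraction defined on $\Pf$, or on any Zariski open neighbourhood of $Z$, fails to exist in general. Suppose $Y_\eta$ is an open subset of $\mathbb{A}^2_{k((t))}$ and $Z$ is an open piece of an elliptic curve. Reducing such an $r$ modulo $\varpi$ and restricting to $t\neq 0$ would retract a rational surface onto $Z$. Then $Z$ would be unirational, contradicting L\"uroth. So $u=(r,f_1)$ is not a morphism of $p$-adic formal schemes, and there is no morphism of frames to which Theorem \ref{thm:strong-fibration} applies.

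Working directly with the completion isomorphism $\wh{\Pf}_{/Q}\cong \Qf\times\Spf \V[[t]][[z]]$ instead would require $\Pf$ (and $\Qf$) to be smooth along all of $\ol{Z}$, including $\ol{Z}\cap V(t)$. The hypothesis only gives smoothness near $Z$. The strong fibration theorem is exactly what avoids this: it needs étaleness only near $Z$, and properness comes from $\ol{Z}=\ol{Z}$.

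\textbf{The locality claim is also false in this setting.} You say that shrinking $\Pf$ to a neighbourhood of $Z$ ``preserves everything since tubes are local.'' A point $x\in\,]Z[_\Pf=[\sp]^{-1}(Z)$ can have $\sp(x)\in\ol{Z}\cap V(t)$ even though $\sp([x])\in Z$, and these points carry the overconvergence in $t$. For instance, replacing $\Spf\V[[t]]\langle x\rangle$ by its open $D(t)$ changes $\Gamma(]Y_\eta[,\O)$ from $\Ed_K\langle x\rangle$ to $\E_K\langle x\rangle$. You would then be proving the statement for the completion, not for $\Ed_K$. Only shrinkings that contain $\ol{Z}$ are harmless.

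\textbf{The fix, which is what the paper does, is to replace the retraction by an étale correspondence.} The paper embeds $\ol{Z}$ diagonally in $\Pf\times\Qf$, maps to $\wh{\mathbb{A}}^r\times\Qf$ via $f_1,\dots,f_r$, and cuts out the last $r-1$ coordinates. A more economical version uses $\Pf'=\Pf\times_{\wh{\mathbb{A}}^{r-1}}\Qf$, fibred via $f_2,\dots,f_r$, with $\ol{Z}$ embedded diagonally.
\begin{itemize}
\item $\Pf'\to\Pf$ is a base change of $\Qf\to\wh{\mathbb{A}}^{r-1}$.
\item $\Pf'\to\Qf\times\wh{\mathbb{A}}^1$, $(x,y)\mapsto(y,f_1(x))$, is a base change of $(f_1,\dots,f_r)\colon\Pf\to\wh{\mathbb{A}}^r$.
\end{itemize}
Both maps are therefore étale near $Z$, and both are the identity on $\ol{Z}$. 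Two applications of Theorem \ref{thm:strong-fibration} give $]Z[_\Pf\cong\,]Z[_{\Pf'}\cong\,]Z[_{\Qf\times\wh{\mathbb{A}}^1}$. Your Step 3 then finishes the proof.
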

	\begin{proof}
		Let $\overline{Z}$ denote the closure of $Z$ in $Y$ and let $\Pf \to \wh{\mathbb A}^r_{\V[[t]]}$ be the morphism induced by $f_1, \dots, f_r$.
		Since morphism of frames $(Z, \overline{Z}, \Pf \times \Qf) \to (Z, \overline{Z},  \wh{\mathbb A}^r_{\V[[t]]} \times \Qf)$ is proper etale, by strong fibration theorem \ref{thm:strong-fibration} we get $]Z[_{\Pf \times \Qf} \, \simeq \, ]Z[_{\Pf \times \wh{\mathbb A}^r} \, \simeq \, ]Z[_{\Qf} \times A^r_{\Ed_K}[0, 1)$ whose projection to the polyannulus is given by $f_1, \dots, f_r$. Taking the zero locus of $f_2, \dots, f_r$, we get the desired isomorphism $]Z[_\Pf \xrightarrow{\sim} ]Z[_\Qf \times A^1_{\Ed_K}.$
	\end{proof}
	
	\begin{definition} \label{def:monodromy-restricted}
		Under Hypothesis \ref{hypo:small-frame}, let $\Qf = V(f_1)$, $Z = Y_\eta - X$ and let $E$ be an $\Ed_K$-valued overconvergent isocrystal on $(X, Y)$. Let $\E$ be the realization of $E$ on the frame $(X, Y, \Pf)$. There is an open neighborhood $\Vs$ of $]X[_\Pf$ in $\Pf^\ad$ over which $\E$ is defined. By Lemma \ref{lem:frame-rel-tube}, $\Vs \cap ]Z[_\Pf$ contains an open subset of the form $]Z[_\Qf \times A^1_{\Ed_K}[\lambda, 1).$ We say that $E$ has constant/unipotent monodromy along $Z$ if $\E$ is constant/unipotent over $]Z[_\Qf \times A^1_{\Ed_K}[\lambda, 1)$ for some $\lambda \in (0, 1) \cap \Gamma^*.$
	\end{definition}
	
	\begin{proposition}
		Suppose that both $(X, Y, \Pf)$ and $(X, Y, \Pf')$ are small frames satisfying Hypothesis \ref{hypo:small-frame}. Let $E$ be an $\Ed_K$-overconvergent isocrystal and $\E$ and $\E'$ be the realization of $E$ on the frames $(X, Y, \Pf)$ and $(X, Y, \Pf')$, respectively. Then $\E$ has unipotent/unipotent monodromy along $Z$ if and only if $\E'$ does.
	\end{proposition}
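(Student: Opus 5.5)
The plan is to compare both realizations on the product frame and then move between polyannulus descriptions by an isomorphism that respects the annulus coordinate up to a unit. Throughout, the property ``constant/unipotent on $]Z[_\Qf\times A^1_{\Ed_K}[\lambda,1)$ for some $\lambda$'' is the one being compared, and I take the statement to read ``constant/unipotent''.

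\textbf{Reduction to the product frame.} Put $\Pf'' = \Pf\times_{\V[[t]]}\Pf'$, with $(X,Y)$ embedded diagonally. Then $(X,Y,\Pf'')$ is a smooth frame. Choose $f_1,\dots,f_r$ on $\Pf$ as in Hypothesis \ref{hypo:small-frame}, and complete $f_1$ (pulled back to $\Pf''$) by the coordinates $f'_1,\dots,f'_s$ of $\Pf'$. Since $\Pf''\to\Pf$ is smooth, $f_1, f'_1,\dots,f'_s$ (after dropping $f'_1$ if necessary, or using differences $f'_i-f_i$) form a coordinate of $\Pf''$ near $X$ with $Y-X=V(\bar f_1)\cup V(t)$. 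Hence $(X,Y,\Pf'')$ also satisfies Hypothesis \ref{hypo:small-frame}, possibly after shrinking the affine formal scheme, which is harmless by locality (Corollary \ref{cor:unip-loc}). It therefore suffices to treat a smooth morphism of frames $u:(X,Y,\Pf'')\to(X,Y,\Pf)$ that is the identity on $Y$. The realization on $\Pf''$ is $u^*\E$.

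\textbf{Fibration step.} By the argument of Lemma \ref{lem:frame-rel-tube}, applied to $\Pf''\to\Pf\times\wh{\mathbb A}^{s}$ (proper étale on the relevant frame, via Theorem \ref{thm:strong-fibration}), the tube $]Z[_{\Pf''}$ is isomorphic to $]Z[_{\Pf}\times A^{s}_{\Ed_K}[0,1)$, compatibly with the projection. Combining this with Lemma \ref{lem:frame-rel-tube} for $\Pf$ gives
\[
]Z[_{\Pf''}\simeq ]Z[_{\Qf}\times A^s_{\Ed_K}[0,1)\times A^1_{\Ed_K}[0,1),
\]
where the last coordinate is $f_1$. Likewise $]Z[_{\Qf''}\simeq ]Z[_{\Qf}\times A^s[0,1)$. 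Under these identifications $u^*\E$ on $]Z[_{\Qf''}\times A^1[\lambda,1)$ is the pullback of $\E$ along the projection $]Z[_{\Qf''}\to ]Z[_\Qf$. Pulling back along a smooth morphism of the base clearly preserves constance and unipotence. For the converse, restrict along the zero section $]Z[_\Qf\to ]Z[_\Qf\times A^s[0,1)$: a constant/unipotent object over the larger base restricts to one over the smaller base, and the restriction of $u^*\E$ recovers $\E$. This settles the comparison between $\Pf$ and $\Pf''$.

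\textbf{Comparison with $\Pf'$, the main obstacle.} By symmetry, on $\Pf''$ we also have a second annulus coordinate $f'_1$ coming from $\Pf'$. Both $f_1$ and $f'_1$ cut out $Z$ with multiplicity one, so on an affine neighbourhood $f'_1=wf_1$ with $w$ a unit modulo $\varpi$ near $Z$. The two product decompositions of $]Z[_{\Pf''}$ therefore differ by an automorphism of the form $(q,z)\mapsto(q,w(q,z)z)$, after reparametrizing via the strong fibration theorem. What remains is that constance/unipotence over $]Z[\times A^1[\lambda,1)$ is invariant under such a change of coordinate. I would first try to deform $w$ to $w|_{z=0}$ and argue that the pullback of $\Uc_I(\F)$ is isomorphic to $\Uc_I(\F)$ up to this change, because $\dlog(wz)=\dlog z+\dlog w$ with $\dlog w$ having no pole. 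If this direct route becomes messy over $\Ed_K$, the fallback is to pass to the completion: by Theorem \ref{thm:des-unip}, unipotence can be checked on $\wh{V}\times A^1_{\E_K}[\eta,1)$. There the statement is exactly Kedlaya's classical independence \cite[\S3.5]{kedlaya2007semistable}, and completion commutes with the frame change. This requires covering $]Z[_\Qf$ by affinoids satisfying Hypothesis \ref{hypo:pda2}, which I expect to hold after localization because the tubes are of MW-type over $\E_K$.
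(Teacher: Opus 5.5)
Your direct route (Steps 1--3) has a genuine gap, and it sits at the crux of the argument. On $\Pf''=\Pf\times_{\V[[t]]}\Pf'$, the functions $f_1$ and $f'_1$ are pulled back from different factors. Their zero loci in the special fibre $Y\times_{k[[t]]}Y$ are $Z\times Y$ and $Y\times Z$. These are different divisors, even though each meets the diagonal copy of $Y$ in $Z$. So $f'_1$ is not $w f_1$ for any unit $w$. In your Step 2 coordinates one gets instead $f'_1=\tilde w f_1+y_1$, where $y_1$ is a polydisc coordinate and $\tilde w$ lifts $\bar f'_1/\bar f_1$.

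Consequently the regions $\{\lambda\le|f_1|<1\}$ and $\{\lambda'\le|f'_1|<1\}$ of $]Z[_{\Pf''}$ are incomparable; the second contains points with $f_1=0$. The identification between your two product structures is therefore not of the form $(q,z)\mapsto(q,w(q,z)z)$, and it does not match annuli at all. What must actually be controlled is a simultaneous change of the retraction onto the base and of the annulus coordinate.

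Reducing to a single small frame does not fix this. There a second admissible coordinate has the form $\tilde w f_1+\varpi h$, so $f'_1/f_1$ has a pole along $V(f_1)$ and cannot be ``deformed to $w|_{z=0}$''. The identity $\dlog(wz)=\dlog z+\dlog w$ only handles units pulled back from the base, or units with $|w-1|<1$.

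A minor point: $(X,Y,\Pf'')$ is not small, since its special fibre is $Y\times_{k[[t]]}Y$ rather than $Y$. Also $f_1,f'_1,\dots,f'_s$ are too few functions to be coordinates on $\Pf''$. Steps 1--2 are fine only as a bridge with the product structure you build by hand.

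Your fallback, by contrast, is a complete proof, provided you apply it to $\Pf$ and $\Pf'$ directly rather than on $\Pf''$. The completion of $\Pf''$ is not a small $\E_K$-frame, so Kedlaya's statement does not apply to it verbatim.
For a single small frame, Theorem~\ref{thm:des-unip}(2) with $V=]Z[_\Qf$ and $\wh V=]Z[_{\Qf_\eta}$ gives the following equivalence. $\E$ is constant/unipotent on $]Z[_\Qf\times A^1_{\Ed_K}[\lambda,1)$ for some $\lambda$ iff $\wh\E$ is constant/unipotent on $]Z[_{\Qf_\eta}\times A^1_{\E_K}[\lambda,1)$ for some $\lambda$. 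The latter says exactly that $\wh E$ has constant/unipotent monodromy in Kedlaya's sense, computed on the small $\E_K$-frame $(X,Y_\eta,\Pf_\eta)$. Do the same for $\Pf'$, then invoke Kedlaya's independence of the frame, \cite[Proposition 4.3.5]{kedlaya2007semistable} (not \S3.5).

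This is exactly the reasoning of Proposition~\ref{prop:monodromy-completion}. That proposition is proved frame by frame and does not use the present statement, so there is no circularity. Hypothesis~\ref{hypo:pda2} for $]Z[_\Qf$ is verified there. Steps 1--3 can simply be dropped.

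This is a genuinely different route from the paper's. The paper's one-line proof says to imitate Kedlaya's proof of his Proposition 4.3.5, i.e.\ to carry out the frame comparison and the change of coordinates directly in the partial dagger setting over $\Ed_K$. Your fallback instead imports the classical result through descent of unipotence.

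The trade-off is as follows. Your route is shorter and avoids redoing the coordinate-change analysis for partial dagger spaces, which is exactly where your direct attempt ran into trouble. It does, however, depend on the heavy Theorem~\ref{thm:des-unip}, hence on Hypothesis~\ref{hypo:pda2} and on identifying $]Z[_{\Qf_\eta}$ as a completion of $]Z[_\Qf$. The paper's route stays inside the $\Ed_K$-theory and needs none of these inputs.
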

	\begin{proof}
		We can imitate the proof of \cite[Proposition 4.3.5]{kedlaya2007semistable}.
	\end{proof}
	
	By this proposition, it makes sense to say that an $\Ed_K$-valued overconvergent isocrystal $E$ on a pair $(X, Y)$ with a frame satisfying Hypothesis \ref{hypo:small-frame} has constant/unipotent monodromy along $Z$.
	
	As a consequence of descent of unipotence, we have the following useful proposition.
	
	\begin{proposition} \label{prop:monodromy-completion}
		Under Hypothesis \ref{hypo:small-frame}, let $E$ be an $\Ed_K$-valued overconvergent isocrystal on $(X, Y)$ and let $\widehat{E}$ be the completion of $E$, which is an $\E_K$-valued overconvergent isocrystal on $(X, Y_\eta)$. Then $E$ has constant/unipotent monodromy along $Z$ if and only if $\wh{E}$ has constant/unipotent monodromy along $Z$ in the sense of \cite[Definition 4.3.4]{kedlaya2007semistable}. 
	\end{proposition}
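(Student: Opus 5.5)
We reduce both directions to Theorem \ref{thm:des-unip}(2), applied with $V = ]Z[_\Qf$ and $d=1$. The work lies in matching the two sides of that theorem with the two notions of monodromy, and in checking its hypotheses.

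First we identify the completed side. Take the realization $\E$ of $E$ on $(X,Y,\Pf)$, defined on a neighborhood $\Vs$ of $]X[_\Pf$. By Lemma \ref{lem:frame-rel-tube} and Definition \ref{def:monodromy-restricted}, $\Vs \cap ]Z[_\Pf$ contains $]Z[_\Qf \times A^1_{\Ed_K}[\lambda,1)$, with the annulus coordinate given by $f_1$. The completion $\wh{E}$ is realized on the $\E_K$-frame $(X, Y_\eta, \Pf_\eta)$. Here $Y_\eta \simeq \Pf_\eta\otimes k((t))$, and $f_1,\dots,f_r$ still form coordinates, so Kedlaya's Definition 4.3.4 in \cite{kedlaya2007semistable} uses the tube $]Z[_{\Pf_\eta}\simeq ]Z[_{\Qf_\eta}\times A^1_{\E_K}[0,1)$. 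The realization there is $\wh{\E} = \E\otimes\O$ restricted to $]Z[_{\Qf_\eta}\times A^1_{\E_K}[\lambda,1)$. Locally, $]Z[_{\Qf_\eta}$ is exactly the completion $\wh{V}$ of $V = ]Z[_\Qf$ for the affinoid coverings coming from presentations of $\Gamma(\Qf,\O)$. So $\E|_{\wh V\times A^1_{\E_K}[\lambda,1)}$ is Kedlaya's $\wh{\E}$, and $E$ has constant/unipotent monodromy in his sense exactly when this restriction is constant/unipotent on some $[\eta,1)$. This is the compatibility of tubes with completion, which should follow by comparing the descriptions $\{|t|_{[x]}\ge1\}$ versus points of $\Spa\E_K$, as in the remark after Definition \ref{def:polyannuli}.

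Next we verify the hypotheses of Theorem \ref{thm:des-unip}. We need $V=]Z[_\Qf$ to be a smooth partial dagger space covered by affinoids satisfying Hypothesis \ref{hypo:pda2}. Partial-daggerness comes from Proposition \ref{prop:tube-pds}, with $Z\subset \ol Z$ and $\ol Z - Z$ contained in $V(t)$, a divisor. Smoothness comes from $\Qf$ being smooth near $Z$. There are no log coordinates on $V$ itself ($n=0$), so (ii) is vacuous. Condition (i), that the spectral norm on $\wh A$ is a valuation, is the delicate point. We plan to shrink to a cofinal covering by affinoids $\Spd A$ with $A$ of MW-type over $\Ed_K$, arising from affine opens of the smooth $k((t))$-scheme $Z$ that are connected, hence integral. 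The completion $\wh A$ is then an integral affinoid over $\E_K$ of MW-type with integral reduction, and the paper notes that (i) holds in that case. Constance and unipotence are local on $V$ by Corollary \ref{cor:unip-loc}, which justifies shrinking. The convergent part $[\lambda,1)$ of the tube is covered by such affinoids in the partial dagger sense, via $[Y]_\lambda$-type neighborhoods.

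Finally, Theorem \ref{thm:des-unip}(2), with $\eta_0=\lambda$, says that $\E|_{V\times A^1_{\Ed_K}[\eta,1)}$ is constant/unipotent for some $\eta$ iff $\E|_{\wh V\times A^1_{\E_K}[\eta,1)}$ is for some $\eta$. The left side is the definition of constant/unipotent monodromy of $E$, and by the first step the right side is Kedlaya's notion for $\wh E$. Independence of the choice of frame (the preceding proposition, and \cite[Proposition 4.3.5]{kedlaya2007semistable} on the completed side) removes any dependence on $\Pf$. We expect the main obstacle to be the identification of $\wh V$ with Kedlaya's tube together with condition (i) of Hypothesis \ref{hypo:pda2}. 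If integrality of reductions fails for a chosen cover, we will refine it further, using that $Z$ is smooth, so that locally the MW-type models have geometrically integral special fibers after shrinking.
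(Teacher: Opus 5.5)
Your proposal is correct and follows the paper's proof. Both identify $]Z[_{\Qf_\eta}$ with the completion of $]Z[_\Qf$, check Hypothesis~\ref{hypo:pda2} for $]Z[_\Qf$ (condition (i) via smoothness of $Z$ over $k((t))$), and then apply Theorem~\ref{thm:des-unip}. The one minor difference is condition (ii): you treat it as vacuous because $]Z[_\Qf$ carries no log divisors, while the paper verifies it from smoothness of the frame $(Z, \ol{Z}, \Qf)$ and the strong fibration theorem, as in Lemma~\ref{lem:frame-rel-tube}.
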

	\begin{proof}
		Let $\E$ be the $\nabla$-module on $]X[_\Pf$ induced by $E$; then $\wh{\E} := \E |_{]X[_{\Pf_\eta}}$ is the $\nabla$-module on $]X[_{\Pf_\eta}$ induced by $\wh{E}$. There is an open neighborhood $\Vs$ of $]X[_\Pf$ over which $\E$ is defined; then $\wh{E}$ is defined over $\Vs_\eta$. $\wh{E}$ has constant/unipotent monodromy along $Z$ means that $\wh{E}$ is constant/unipotent over $]Z[_{\Qf_\eta} \times A^1_{\E_K}[\lambda, 1)$ for some $\lambda$. Since $Z = \Qf_\eta \otimes k((t))$, $]Z[_{\Qf_\eta}$ is a completion of $]Z[_\Qf$. Thus it suffices to apply Theorem \ref{thm:des-unip} after checking that Hypothesis \ref{hypo:pda2} holds for $]Z[_\Qf$. Condition (i) of Hypothesis \ref{hypo:pda2} is satisfied since $Z$ is smooth over $k((t))$ and $Z = \Qf_\eta \otimes k((t))$. Condition (ii) is satisfied due to smoothness of the frmae $(Z, \overline{Z}, \Qf)$ and strong fibration theorem as in the proof of Lemma \ref{lem:frame-rel-tube}.
	\end{proof}
	
	\begin{remark} \label{rem:monodromy}
		As in the classical case, we make remarks similar to \cite[Remark 4.4.1]{kedlaya2007semistable}. Under Hypothesis \ref{hypo:small-frame}, let $E$ be an $\Ed_K$-valued overconvergent isocrystal on $(X, Y)$, $Z = Y_\eta - X$.
		\begin{itemize}
			\item For an open cover $Y = U_1 \cup \cdots \cup U_n$, $E$ has constant/unipotent monodromy along $Z$ if and only if $E|(X \cap U_i, U_i)$ has constant/unipotent monodromy along $Z \cap U_i$ for each $i = 1, \dots, n$.
			\item Let $K'$ be a field containing $K$ which is complete for a discrete valuation extending that of $K$, $k'$ the residue field of $K'$, $X' = X \times_{k((t))} k'((t)), \, Y' = Y \times_{k[[t]]} k'[[t]]$ and let $E'$ be the pullback of $E$ to $(X', Y')$, which is an $\Ed_{K'}$-valued overconvergent isocrystal. Then $E$ has constant/unipotent monodromy along $Z$ if and only if $E'$ has constant/unipotent monodromy along $Z' = Y'_{\eta'} - X'$.
			\item Let $U$ be a dense open subset of $Y$ such that $U \cap  Z$ is dense in $Z$. Then $E$ has constant/unipotent monodromy along $Z$ if and only if $E|_{(U \cap X, U)}$ has constant/unipotent monodromy along $U \cap Z$.
			\item If $E$ extends to an overconvergent isocrystal on $(Y_\eta, Y)$, then $E$ has constant monodromy along $Z$.
		\end{itemize}
	\end{remark}
	
	\subsection{Global definition}
	\begin{definition} \label{def:monodromy-global}
		Let $X$ be a $k((t))$-variety, $Y$ a separated $k[[t]]$-scheme of finite type such that $Y_\eta$ is smooth and let $X \hookrightarrow Y$ be an open immersion. We say that an $\Ed_K$-valued overconvergent isocrystal on $(X, Y)$ has constant/unipotent monodromy along $Z = Y_\eta - X$ if for any finite extension $k'[[t']]$ of $k[[t]]$, any field $K'$ containing $K$ which is complete for a discrete valuation extending that of $K$ and whose residue field is $k'$ and any small $\V'[[t']]$-frame $(U', V', \Pf')$ satisfying Hypothesis \ref{hypo:small-frame} with $V'$ an open subset of $Y' = Y \times_{k[[t]]} k'[[t']]$ and $U' = X \times_{k((t))} k'((t'))$, the ${\E'^{\dag}_{K'}}$-valued overconvergent isocrystal $E|_{(U', V')}$ has constant/unipotent monodromy along $Z' = V'_{\eta'} - U'.$ Here ${\E'^{\dag}_{K'}}$ is the bounded Robba ring over $K'$ with parameter $t'$. Thanks to Remark \ref{rem:monodromy}, this extended definition of monodromy coincide with that of Definition \ref{def:monodromy-restricted}.
	\end{definition}
	
	\begin{remark}
		Since $k((t))$ is imperfect, the extension of field is essentially unavoidable in the definition since $Z$ may not be generically smooth. We only have to consider purely inseparable extensions $k'((t'))/k((t))$. If $k$ is perfect, we do not have to take extensions $K'/K$.
		In \cite[Definition 4.4.2]{kedlaya2007semistable}, discreteness of $K$ or $K'$ is not required at this point; however we require it throughout this paper to ensure $\Ed_K$ to be a field.
	\end{remark}
	
	As a consequence of Proposition \ref{prop:monodromy-completion}, we get the following key theorem of this paper.
	
	\begin{theorem} \label{thm:monodromy-completion}
		Let $X$ be a $k((t))$-variety, $Y$ a separated $k[[t]]$-scheme of finite type such that $Y_\eta$ is smooth, let $X \hookrightarrow Y$ be an open immersion, $Z = Y_\eta - X$ and let $E$ be an $\Ed_K$-valued overconvergent isocrystal on $(X, Y)$. Then $E$ has constant/unipotent monodromy along $Z$ if and only if $\wh{E}$ has constant/unipotent monodromy along $Z$.
	\end{theorem}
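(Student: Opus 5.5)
The plan is to reduce the global statement to the local one, Proposition \ref{prop:monodromy-completion}, by unwinding Definition \ref{def:monodromy-global} and its classical counterpart \cite[Definition 4.4.2]{kedlaya2007semistable}. The global definitions quantify over the same data: finite extensions $k'[[t']]/k[[t]]$, complete discretely valued extensions $K'/K$ with residue field $k'$, and small frames satisfying Hypothesis \ref{hypo:small-frame} on opens of the base change. So it suffices to match these data on the two sides and then compare monodromy for each one. Two compatibilities are used at the outset. First, completion commutes with base change along $\Ed_K \to {\E'^{\dag}_{K'}}$ and $\E_K \to \E'_{K'}$, as noted in the discussion of functors on overconvergent isocrystals. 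Second, completion commutes with restriction to opens $(U', V')$. Together these give $\wh{E|_{(U',V')}} = \wh{E}|_{(U', V'_{\eta'})}$.

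For the direction ``$\wh{E}$ constant/unipotent $\Rightarrow$ $E$ constant/unipotent'', take any admissible data $(k', K', (U', V', \Pf'))$. The generic fiber $(U', V'_{\eta'}, \Pf'_{\eta'})$ is a small $\E'_{K'}$-frame. Its boundary $V'_{\eta'} - U' = Z'$ is cut out by $f_1$, which is part of a coordinate system, so it is one of the frames over which Kedlaya's global definition tests $\wh{E}$. By hypothesis, $\wh{E}|_{(U',V'_{\eta'})}$ therefore has constant/unipotent monodromy along $Z'$. Proposition \ref{prop:monodromy-completion}, applied over $K'$ with parameter $t'$, then gives the same property for $E|_{(U',V')}$. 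Since the data were arbitrary, $E$ has constant/unipotent monodromy.

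For the converse, Kedlaya's definition requires checking $\wh{E}$ on all small $\E'_{K'}$-frames over $Y'_{\eta'}$, not only those that arise as generic fibers of $\V'[[t']]$-frames. Here I would use the locality statements of Remark \ref{rem:monodromy} together with their classical analogues \cite[Remark 4.4.1]{kedlaya2007semistable}. Monodromy on the $\E_K$ side can be checked on an open cover and on dense opens meeting $Z'$ densely, and it is independent of the chosen frame by \cite[Proposition 4.3.5]{kedlaya2007semistable}. It therefore suffices to exhibit, locally around generic points of $Z'$ after a suitable purely inseparable extension making $Z'$ smooth, small $\V'[[t']]$-frames satisfying Hypothesis \ref{hypo:small-frame}. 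Their generic fibers then give valid test frames for $\wh{E}$.

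To construct such frames, I would take an affine open $V' \subseteq Y'$ whose generic fiber meets $Z'$ and on which $Z'$ is smooth and cut out by one equation $\bar f_1$. After shrinking, I would choose $\bar f_1, \dots, \bar f_r$ forming étale coordinates near $Z'$, and lift $V'$ to an affine smooth formal $\V'[[t']]$-scheme with lifts $f_i$. Smooth affine $k[[t]]$-schemes lift, and shrinking further, to a dense open, can discard the closed-fiber part. On these frames, Proposition \ref{prop:monodromy-completion} transfers constancy/unipotence from $E$ to $\wh{E}$, and locality plus frame-independence then yields it for $\wh{E}$ in Kedlaya's sense.

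I expect the main obstacle to be this last matching step. One has to ensure that the generic fibers of such $\V'[[t']]$-frames are cofinal enough that \cite[Definition 4.4.2]{kedlaya2007semistable} can be verified on them alone. That rests on Kedlaya's dense-open and cover-locality properties for $\E_K$-valued isocrystals, and on the fact that the field extensions allowed in the two definitions correspond: finite extensions of $k((t))$ lift uniquely to extensions of $\Ed_K$ and of $\E_K$.
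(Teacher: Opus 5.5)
Your proposal is correct and matches the paper, which gives no separate proof. The paper presents the theorem as an immediate consequence of Proposition \ref{prop:monodromy-completion}, applied frame by frame after unwinding Definition \ref{def:monodromy-global} and \cite[Definition 4.4.2]{kedlaya2007semistable}, and your two directions do exactly this, with more care than the paper about matching test frames. That matching in the converse is easier than you expect: a small $\E'_{K'}$-frame on an open of $Y'_{\eta'}$ is already a small $\V'[[t']]$-frame, and on it the realizations of $E$ and $\wh{E}$ coincide. However, Kedlaya's definition allows arbitrary complete extensions of $\E_K$, so your claim that the allowed field extensions ``correspond'' should be replaced by an appeal to his base-change invariance in \cite[Remark 4.4.1]{kedlaya2007semistable}.
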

	
	\begin{proposition} \label{prop:codim-1-nature}
		Let $X \hookrightarrow Y' \hookrightarrow Y$ be open immersions with $X$ a $k((t))$-variety, $Y'$ and $Y$ separated $k[[t]]$-schemes of finite type and suppose that $Y_\eta - Y'_\eta$ has codimension $\geq 2$ in $Y_\eta$. Let $Z = Y_\eta - X$ and $Z' = Y'_\eta - X$. Then for any $\E^\dag_K$-valued overconvergent isocrystal $E$ on $(X, Y)$, $E$ has constant/unipotent monodromy along $Z$ if and only if $E|_{(X, Y')}$ has constant/unipotent monodromy along $Z'$.
	\end{proposition}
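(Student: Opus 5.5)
The plan is to reduce to the classical statement for $\E_K$-valued isocrystals via Theorem \ref{thm:monodromy-completion}, and to use the classical codimension-$2$ result from \cite{kedlaya2007semistable} on the completion. By Theorem \ref{thm:monodromy-completion}, $E$ has constant/unipotent monodromy along $Z$ if and only if $\wh{E}$, an $\E_K$-valued overconvergent isocrystal on $(X, Y_\eta)$, does. The same theorem applied to $E|_{(X, Y')}$ says this restriction has constant/unipotent monodromy along $Z'$ if and only if $\wh{E}|_{(X, Y'_\eta)}$ does. Here we note that completion commutes with restriction along the open immersion $Y' \hookrightarrow Y$: both are obtained by restricting a $\nabla$-module on a strict neighborhood to the tube in the generic fiber $\Pf_\eta$. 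We also need that $Y'_\eta$ is smooth, which holds because it is open in $Y_\eta$.

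It then remains to show that $\wh{E}$ has constant/unipotent monodromy along $Z$ if and only if $\wh{E}|_{(X,Y'_\eta)}$ does along $Z'$, where $Y'_\eta \subseteq Y_\eta$ is an open subset whose complement has codimension $\geq 2$. This is the classical statement \cite[Proposition 4.4.3]{kedlaya2007semistable} (or its analogue in \S4 of \emph{loc.\ cit.}) for overconvergent isocrystals over the field $\E_K$, whose residue field is $k((t))$. If one prefers to avoid citing it, one can argue directly. Monodromy is defined by checking on small frames after finite extensions (Definition \ref{def:monodromy-global} and its classical counterpart), and it can be tested on any dense open subset $U$ of the ambient scheme such that $U \cap Z$ is dense in $Z$ (Remark \ref{rem:monodromy}). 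Since $Y_\eta - Y'_\eta$ has codimension $\geq 2$ and $Z$ has pure codimension $1$ wherever it is a divisor, $Y'_\eta \cap Z$ is dense in every component of $Z$. Therefore $Y'_\eta$ serves as such a $U$ on the generic fiber.

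The point I expect to be the main obstacle is the bookkeeping of the extensions $k'[[t']]$ and $K'$ in Definition \ref{def:monodromy-global}. After base change, the codimension condition and the density of $Z'$ in $Z$ must survive. This is harmless for finite (purely inseparable) extensions of $k((t))$, since the dimension of fibers is preserved and underlying topological spaces are homeomorphic. A second point to check is that every small frame on an open subset $V \subseteq Y$ can be shrunk to one on $V \cap Y'$ after possibly shrinking around the generic points of $Z$. Because $Z$ lives only in the generic fiber, one can alternatively apply the dense-open criterion of Remark \ref{rem:monodromy} directly on the $\Ed_K$ side: taking $U \subseteq Y$ open with $U_\eta = Y'_\eta$ (for example $U = Y'$ itself), $U$ is dense in $Y$ and $U \cap Z$ is dense in $Z$, so the statement follows even without passing through completions. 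Either route should work; I would write the second, using the completion route only to justify the density/locality statement if Remark \ref{rem:monodromy} is deemed insufficient.
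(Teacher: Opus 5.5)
Your proposal is correct and is essentially the paper's proof, which consists of exactly your two routes. The main route imitates Kedlaya's codimension-$2$ argument (cited in the paper as \cite[Proposition 4.4.4]{kedlaya2007semistable}), checking small frame by small frame with the locality and dense-open properties of Remark \ref{rem:monodromy}; the alternative reduces to the classical statement for $\wh{E}$ via Theorem \ref{thm:monodromy-completion}. As you note yourself, apply the dense-open criterion to each small frame $(U',V',\Pf')$ rather than to $(X,Y)$ globally: the hypothesis only constrains $Y_\eta - Y'_\eta$, so $Y'$ need not be dense in $Y$, and only density near $Z$ matters.
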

	\begin{proof}
		Similar to \cite[Proposition 4.4.4]{kedlaya2007semistable}. We can also appeal to Theorem \ref{thm:monodromy-completion} as an another proof (which is essentially equivalent to the other one).
	\end{proof}
	
	\section{Monodromy and extension} \label{sec:monodromy-and-extension}
	As in \cite{kedlaya2007semistable}, we will prove the relation between monodromy and extendability of overconvergent isocrystals.
	\subsection{An extension lemma}
	\begin{lemma} \label{lem:ext-lemma}
		Let $V \subseteq U \subseteq X$ be open immersions smooth $k((t))$-varieties and let $X \subseteq Y$ be an open immersion into a separated $k[[t]]$-scheme of finite type such that $V$ is dense in $Y$, $X - V$ is a strict normal crossings divisor in $X$ and that $X - U$ is a single component of $X - V$. Let $\Yf$ be a formal $\V[[t]]$-scheme such that $Y = \Yf \otimes_{\V[[t]]} k[[t]]$. Suppose that there exist $f_1, \dots, f_n, g \in \Gamma(\Yf, \O_\Yf)$ satisfying follwing conditions:
		which forms a coordinate of $\Yf$ in an open neighborhood of $X$ such that $X - V$ is cut out by $f_1 \cdots f_r$ in $X$ and that
		\begin{itemize}
			\item $f_1, \dots, f_r$ forms a coordinate in an open neighborhood of $X$ in $\Yf$;
			\item $f_1 \cdots f_r$ cuts out $X - V$ in $X$ for some $1 \leq r \leq n$;
			\item $f_1$ cuts out $X - U$ in $X$;
			\item $Y - X = V(\bar g) \cup V(t).$
		\end{itemize}Let $\ol{\Xf} = D(g)$ and $\ol{X} = \ol{\Xf} \otimes_{\V[[t]]} k[[t]]$. Then the following results hold.
		\begin{enumerate}[(i)]
			\item Let $\E$ be a coherent $\nabla$-module on $]U[_\Yf$ which represents an $\Ed_K$-valued overconvergent isocrystal on $(U, Y).$ Then $\E$ has constant monodromy along $\ol{X}_\eta - U$ if and only if $\E$ extends to an $\Ed_K$-valued overconvergent isocrystal on $(X, Y)$. 
			\item Let $\E$ be a log $\nabla$-module with nilpotent residues on $]U[_\Yf$ with respect to $f_1, \dots, f_r$ whose restriction to $]V[_\Yf$ represents an $\Ed_K$-valued overconvergent isocrystal on $(V, Y).$ Then $\E$ has unipotent monodromy along $\ol{X}_\eta - U$ if and only if $\E$ extends to a log $\nabla$-module with nilpotent residues on $]X[_\Yf$ with respect to $f_1, \dots, f_r$.
			\item In both cases, the restriction functor is fully faithful.
		\end{enumerate}
	\end{lemma}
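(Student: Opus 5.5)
We follow the proof of \cite[Lemma 5.1.1]{kedlaya2007semistable}, replacing every classical ingredient by its partial dagger analogue established above. The first step is geometric. By the hypotheses on $f_1, \dots, f_n, g$ and the strong fibration theorem (Theorem \ref{thm:strong-fibration}), applied exactly as in Lemma \ref{lem:frame-rel-tube}, we identify a neighborhood of the tube of $Z_1 := \ol{X}_\eta - U$ (the component cut out by $f_1$). With $\Qf = V(f_1)$ we get an isomorphism $]Z_1[_\Yf \simeq \, ]Z_1[_\Qf \times A^1_{\Ed_K}[0,1)$ whose annulus coordinate is $f_1$. Inside this, $]Z_1[_\Yf \cap \, ]U[_\Yf$ corresponds to $]Z_1[_\Qf \times A^1_{\Ed_K}[\lambda, 1)$ after shrinking to a strict neighborhood. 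The space $]X[_\Yf$ is then covered by $]U[_\Yf$ and $]Z_1[_\Qf \times A^1_{\Ed_K}[0,1)$, glued along this annulus. We also check that $]Z_1[_\Qf$ satisfies Hypothesis \ref{hypo:pda2}, with the remaining $f_2, \dots, f_r$ as log coordinates, as in the proof of Proposition \ref{prop:monodromy-completion}.

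For (ii), the forward direction is the substantive one. Suppose $\E$ has unipotent monodromy, so that $\E$ is unipotent on $]Z_1[_\Qf \times A^1_{\Ed_K}[\lambda,1)$. By Theorem \ref{thm:ui-cat-eq} (in the form of Lemma \ref{lem:ui-eq}), this restriction is $\Uc_{[\lambda,1)}(\F)$ for some $\F$ in $\ULNM_{]Z_1[_\Qf \times A^1[0,0]}$. The object $\Uc_{[0,1)}(\F)$ is then a log $\nabla$-module with nilpotent residues on $]Z_1[_\Qf \times A^1_{\Ed_K}[0,1)$. We glue it with $\E$ on $]U[_\Yf$ to obtain the extension, using the fact that log $\nabla$-modules and isomorphisms between them glue. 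Conversely, if an extension exists, its restriction to $]Z_1[_\Qf \times A^1[\lambda,1)$ extends over $A^1[0,1)$. The restriction to $]V[$ represents an overconvergent isocrystal, so by Proposition \ref{prop:mic-oc} it is convergent in the sense of Definition \ref{def:conv}. Proposition \ref{prop:unip-ext} then gives unipotence. Alternatively, we can pass to completions, invoke \cite[Proposition 3.6.9]{kedlaya2007semistable}, and descend via Theorem \ref{thm:des-unip}.

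For (i) we run the same argument with ``constant'' in place of ``unipotent''. The glued object now has zero residue along $f_1$, so it is an honest $\nabla$-module on a neighborhood of $]X[_\Yf$. We still have to verify that it is overconvergent, i.e.\ that it lies in $\MIC^\dag$. Overconvergence is local on $X$ and can be checked with the criterion of Proposition \ref{prop:mic-oc}. On $]U[$ this holds by hypothesis. On the piece $]Z_1[_\Qf \times A^1[0,1)$ the module is a pullback $\pi^*\F$ with constant connection, so the $\partial_{f_1}$-Taylor series is trivial. The other derivatives come from $\F$, which is the restriction of the isocrystal, and these inherit the $\eta$-convergence estimates. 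Conversely, an overconvergent extension has constant monodromy by the last item of Remark \ref{rem:monodromy}.

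For (iii), full faithfulness reduces, via internal Hom, to showing that horizontal sections over $]U[$ extend uniquely over $]X[$. Uniqueness is clear because $]U[$ is dense in $]X[$. For existence, on $]Z_1[_\Qf \times A^1[\lambda,1)$ we are comparing two objects in the image of $\Uc$, and by the full faithfulness of Theorem \ref{thm:ui-cat-eq} morphisms there come from $A^1[0,0]$. Hence they extend over $A^1[0,1)$. Alternatively, apply Proposition \ref{prop:ext-log-sub} to the graph of the morphism.

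I expect the main obstacle to be the overconvergence verification in (i), and more generally making the gluing precise at the level of germs and strict neighborhoods. The difficulty is that the annulus decomposition is only available on a strict neighborhood of the tube. We need uniform $\eta$-convergence on $[Y]_\lambda \cap [X]_\rho$, and this requires choosing the radii $\lambda, \rho$ compatibly with the cofinal systems coming from Proposition \ref{prop:lnm-colim}.
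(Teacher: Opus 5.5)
Your outline follows the paper's proof: identify $]X-U[_\Yf$ with $]X-U[_{\Zf}\times A^1_{\Ed_K}[0,1)$ via the strong fibration theorem (with $\Zf=V(f_1)$), extend over the annulus by $\Uc$ (equivalently Proposition \ref{prop:unip-ext}), glue with $\E$ on the strict neighbourhood $]U[_{\Yf,\lambda}=\{|t|_{[x]}\ge 1,\,|g|_{[x]}\ge 1,\,|f_1|_{[x]}>\lambda\}$, and defer overconvergence in (i) to Kedlaya. However, you skip the one step that is not formal.

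The monodromy along $\ol{X}_\eta-U$ is defined through the small frame $(U,\ol{X},\ol{\Xf})$ with $\ol{\Xf}=D(g)$. The frame $(U,Y,\Yf)$ does not in general satisfy Hypothesis \ref{hypo:small-frame}, because its boundary also contains $V(\bar g)$. So the hypothesis only tells you that $\E$ is constant/unipotent on $]X-U[_{\Zf\cap\ol{\Xf}}\times A^1_{\Ed_K}(\lambda,1)$, where $|g|_x=1$.

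Your sentence ``so that $\E$ is unipotent on $]Z_1[_\Qf\times A^1_{\Ed_K}[\lambda,1)$'', with $\Qf=V(f_1)\subseteq\Yf$, silently upgrades this. It asserts unipotence on the larger germ $]X-U[_{\Zf}\times A^1_{\Ed_K}(\lambda,1)$, which only imposes $|g|_{[x]}\ge 1$, i.e.\ is overconvergent along $V(\bar g)$. That upgrade is exactly what the gluing needs. The pieces $]X-U[_{\ol{\Xf}}\simeq\,]X-U[_{\Zf\cap\ol{\Xf}}\times A^1_{\Ed_K}[0,1)$ and $]U[_{\Yf,\lambda}$ do not cover $]X[_\Yf$: they miss the points of $]X-U[_\Yf$ with $|f_1|_{[x]}\le\lambda$ and $|g|_x<1$. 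Unipotence on the smaller germ does not formally propagate to a strict neighbourhood.

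The paper supplies this step by descent of unipotence. The spaces $]X-U[_\Zf$ and $]X-U[_{\Zf\cap\ol{\Xf}}$ both satisfy Hypothesis \ref{hypo:pda2} and have the same completion $]X-U[_{\Zf_\eta\cap\ol{\Xf}_\eta}$. By Theorem \ref{thm:des-unip} (applied as in Proposition \ref{prop:monodromy-completion}), constancy/unipotence of $\E_\lambda$ on either is equivalent to constancy/unipotence of the completed module, so the two conditions are equivalent.

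You do verify Hypothesis \ref{hypo:pda2} and cite Theorem \ref{thm:des-unip}, but only as an optional alternative in the converse direction. There it is unnecessary, since restricting from the $\Yf$-tube to the $\ol{\Xf}$-tube preserves unipotence. It has to be used in the forward direction of both (i) and (ii).

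A smaller point: $]U[_\Yf$ is closed, not dense, in $]X[_\Yf$, because its complement $]X-U[_\Yf$ is open. So uniqueness in (iii) should come from the full faithfulness of $\Uc$ in Theorem \ref{thm:ui-cat-eq} on the annulus, or from an identity principle on $]U[_{\Yf,\lambda}$, not from density.
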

	
	\begin{proof}
		Let $\Zf = V(f_1).$ As in Lemma \ref{lem:frame-rel-tube}, there is an isomorphism $\varphi : ]X-U[_\Yf \, \xrightarrow{\sim} \, ]X-U[_\Zf \times A^1_{\Ed_K}[0, 1)$ whose projection to the annulus $A^1$ is given by $f_1$. For $\lambda \in (0, 1) \cap \Gamma^*$, let
		$$ ]U[_{\Yf, \lambda} = \{ x \in \Yf^\ad : |t|_{[x]} \geq 1, \, |g|_{[x]} \geq 1, \, |f_1|_{[x]} > \lambda \}. $$
		Then $\E$ comes from some $\nabla$-module or a log $\nabla$-module with nilpotent residues $\E_\lambda$ on $]U[_{\Yf, \lambda}$. $\varphi$ induces isomorphisms
		\begin{align*}
			]U[_{\Yf, \lambda} \, \cap \, ]X-U[_{\Yf} \ & \simeq  \ ]X-U[_{\Zf} \times A^1_{\Ed_K}(\lambda, 1)\\
			]U[_{\Yf, \lambda} \, \cap \, ]X-U[_{\ol\Xf} \ & \simeq \ ]X-U[_{\Zf \cap \ol\Xf} \times A^1_{\Ed_K}(\lambda, 1).
		\end{align*}
		By definition, $\E$ having constant/unipotent monodromy along $\ol{X}_\eta - U$ is saying that $\E_\lambda$ is constant/unipotent over $]U[_{\Yf, \lambda} \, \cap \, ]X-U[_{\ol\Xf}$ for some $\lambda \in (0, 1) \cap \Gamma^*$.
		Both $]X-U[_{\Zf}$ and $]X-U[_{\Zf \cap \ol{\Xf}}$ admits $]X-U[_{\Zf_\eta \cap \ol{\Xf}_\eta}$ as a completion and therefore we can apply Theorem \ref{thm:des-unip} as in Proposition \ref{prop:monodromy-completion} to deduce that $\E$ has constant/unipotent monodromy along $\ol{X}_\eta - U$ if and only if $\E_\lambda$ is constant/unipotent over $]U[_{\Yf, \lambda} \, \cap \, ]X-U[_{\Yf}$ for some $\lambda \in (0, 1) \cap \Gamma^*$. By Proposition \ref{prop:unip-ext}, $\E$ extends uniquely to a constant/unipotent log $\nabla$-module with nilpotent residues over $]X-U[_{\Zf} \times A^1_{\Ed_K}[0, 1) \simeq ]X - U[_\Yf$. We may glue with $\E|_{]U[_{\Yf, \lambda}}$ to get a log $\nabla$-module with nilpotent residues over $]X[_\Yf$. Proposition \ref{prop:unip-ext} also ensures conversely that if $E$ extends then $\E$ has constant/unipotent monodromy and that extension is unique up to canonical isomorphism.
		
		Finally we have to check overconvergence condition in the case (i); this is proved in the similar way as in \cite[Lemma 5.1.1]{kedlaya2007semistable}.
	\end{proof}
	
	\subsection{Extension of $\Ed_K$-valued overconvergent isocrystals}
	We suppose $k$ is perfect throughout this subsection to invoke Proposition \ref{prop:purely-insep-equiv}.
	Theorems of \cite[\S 5.2]{kedlaya2007semistable} on the extension of overconvergent isocrystals holds in our case. We can almost copy the proofs word by word, so we only give sketches of proofs.
	
	\begin{theorem} \label{thm:ext-oc-isoc}
		Let $U \subseteq X$ be an open immersion of $k((t))$-varieties and $X \subseteq Y$ be an open immersion into a separated $k[[t]]$-scheme of finite type. Let $E$ be an $\Ed_K$-valued overconvergent isocrystal on $(U, Y)$. Then $E$ has constant monodromy along $Y_\eta-X$ if and only if $E$ extends to an $\Ed_K$-valued overconvergent isocrystal on $(X, Y)$. Moreover, the restriction functor $\Isoc(X, Y/\Ed_K) \to \Isoc(U, Y/\Ed_K)$ is fully faithful.
	\end{theorem}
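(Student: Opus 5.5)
I will follow the structure of the proof of \cite[Theorem 5.2.1]{kedlaya2007semistable}: reduce to a local statement where Lemma \ref{lem:ext-lemma}(i) applies, and glue using full faithfulness. The reduction has four steps.

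\textbf{Step 1: the easy direction and reduction to $Y_\eta$ smooth.} If $E$ extends to $(X, Y)$, then constant monodromy holds. Indeed, monodromy is local on $Y$ by Remark \ref{rem:monodromy}, and the extended isocrystal restricted to a small frame is defined across $Y_\eta - X$ at points of $X$. It therefore remains to prove the converse and full faithfulness.

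Both questions are local on $Y$. Morphisms of overconvergent isocrystals glue, and extensions glue as well once full faithfulness is known, since uniqueness supplies the descent data. So we may assume $Y$ is affine.

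Since $k$ is perfect, Proposition \ref{prop:purely-insep-equiv} lets us replace $(U, X, Y)$ by its base change to $k((t^{1/p^n}))$, and similarly we may pass to finite separable extensions of $k((t))$ with Galois descent. After these base changes we may assume the relevant varieties are generically smooth.

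\textbf{Step 2: stratify by codimension.} Proceed by noetherian induction on $X - U$, using Proposition \ref{prop:codim-1-nature} to discard closed subsets of codimension $\geq 2$ in $Y_\eta$. Concretely, first shrink $X$ to an open $X'$ containing $U$ such that $X' - U$ is a smooth divisor and $X - X'$ has codimension $\geq 2$ in $X$.

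On $X'$, locally choose a lift $\Yf$ of $Y$ together with $f_1, \dots, f_n, g$ as in Lemma \ref{lem:ext-lemma}, taking $r = 1$ and $V = U$. Lemma \ref{lem:ext-lemma}(i) then gives the extension to $(X', Y)$ exactly when constant monodromy holds. Lemma \ref{lem:ext-lemma}(iii) gives full faithfulness, so the local extensions glue.

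\textbf{Step 3: extend across codimension $\geq 2$.} It remains to extend from $(X', Y)$ to $(X, Y)$ where $X - X'$ has codimension $\geq 2$, and to prove full faithfulness there. I would prove a Zariski–Nagata type purity statement: an $\Ed_K$-valued overconvergent isocrystal on $(X', Y)$ extends uniquely to $(X, Y)$ when $X$ is smooth and $X - X'$ has codimension $\geq 2$. When $X$ is singular, first reduce to the smooth locus using the same codimension-$2$ trick on $X$ itself, and combine this with the induction.

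For the purity statement I would imitate the classical argument of Kedlaya (\cite[Proposition 5.2.2]{kedlaya2007semistable}): locally write the codimension-$2$ locus inside a smooth divisor $D$ and apply Lemma \ref{lem:ext-lemma}(i) along $D$. The hypothesis of constant monodromy along $D$ is checked by restricting to the generic point of $D$. There, constant monodromy is automatic because the isocrystal is already defined on a dense open subset of $D$ (Remark \ref{rem:monodromy}, last item, combined with its third item).

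\textbf{Main obstacle.} I expect the hardest step to be this codimension-$\geq 2$ extension and the careful gluing of realizations across different frames. In particular, overconvergence along $V(t)$ has to be preserved, which is the content of the overconvergence check at the end of Lemma \ref{lem:ext-lemma}.

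If the direct argument fails, the fallback is Theorem \ref{thm:monodromy-completion}. One would reduce the statement to Kedlaya's $\E_K$-valued theorem for the completion $\wh{E}$, and then descend the extension from $\wh{X}$ using the faithful flatness of Corollary \ref{cor:pda}, arguing locally on partial dagger affinoids as in Lemma \ref{lem:lnm-abel}.
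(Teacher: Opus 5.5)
Your proposal is correct and follows essentially the paper's route. The smooth-divisor case is reduced to Lemma \ref{lem:ext-lemma}. The general case is handled by Kedlaya's argument: extend across the smooth locus of the boundary, then across codimension $\geq 2$ loci by enclosing them, generically, in smooth divisors. This is done after a purely inseparable base change $k((t)) \to k((t^{1/p^n}))$, which is descended via Proposition \ref{prop:purely-insep-equiv}.

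Two things you can drop. The separable extensions with Galois descent are unnecessary: since $k$ is perfect, purely inseparable base change already gives generic smoothness. The remark about reducing a singular $X$ to its smooth locus should also go, because purity across singular loci fails in general. That case never arises anyway: the monodromy hypothesis (Definition \ref{def:monodromy-global}) already presupposes $Y_\eta$, hence $X$, smooth.
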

	\begin{proof}
		When $X-U$ is a smooth divisor, we may argue as in \cite[Theorem 5.1.2]{kedlaya2007semistable} to reduce the assertion to collection of Lemma \ref{lem:ext-lemma}.
		
		In general case, we may argue similarly as in \textit{loc. cit.} to prove that after some base change along $k((t)) \to k((t^{1/p^n}))$. By Proposition \ref{prop:purely-insep-equiv}, this is equivalent to saying that $\E$ extends to $(X, Y)$.
	\end{proof}
	
	\begin{corollary}
		In the situation of Theorem \ref{thm:ext-oc-isoc}, let $\wh{E}$ be the completion of $E$, which is an $\E_K$-valued overconvergent isocrystal on $(U, Y_\eta)$. Then $E$ extends to $(X, Y)$ if and only if $\wh{E}$ extends to $(X, Y_\eta).$
	\end{corollary}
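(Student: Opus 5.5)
The plan is to combine Theorem \ref{thm:ext-oc-isoc} with Theorem \ref{thm:monodromy-completion} and the classical extension theorem \cite[Theorem 5.2.1]{kedlaya2007semistable}. By Theorem \ref{thm:ext-oc-isoc}, $E$ extends to an $\Ed_K$-valued overconvergent isocrystal on $(X, Y)$ if and only if $E$ has constant monodromy along $Y_\eta - X$. By Theorem \ref{thm:monodromy-completion}, this holds if and only if $\wh{E}$ has constant monodromy along $Y_\eta - X$ in the sense of \cite[Definition 4.4.2]{kedlaya2007semistable}. Finally, by Kedlaya's theorem (the $\E_K$-valued analogue of Theorem \ref{thm:ext-oc-isoc}), $\wh{E}$ has constant monodromy along $Y_\eta - X$ if and only if $\wh{E}$ extends to an $\E_K$-valued overconvergent isocrystal on $(X, Y_\eta)$. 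Chaining these three equivalences gives the corollary.

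The forward direction can also be seen directly, without the classical theorem. If $E'$ on $(X, Y)$ extends $E$, then its completion $\wh{E'}$ lies in $\Isoc(X, Y_\eta/\E_K)$. Completion is compatible with restriction, since it is just the extension of scalars from the realization on $]X[_\Pf$ to $]X[_{\Pf_\eta}$. Hence $\wh{E'}$ restricts to $\wh{E}$, and $\wh{E}$ extends. Only the converse genuinely needs descent of unipotence, which enters through Theorem \ref{thm:monodromy-completion}.

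The step needing care is applying Theorem \ref{thm:monodromy-completion}. Its hypotheses ask that $Y_\eta$ be smooth, while Theorem \ref{thm:ext-oc-isoc} has no smoothness assumption. I would deal with this as in the proof of Theorem \ref{thm:ext-oc-isoc}. First, work locally, using Remark \ref{rem:monodromy} and the locality of overconvergent isocrystals. Second, pass to a purely inseparable base change $k((t^{1/p^n}))$, using Proposition \ref{prop:purely-insep-equiv} and its compatibility with completion. Third, use Proposition \ref{prop:codim-1-nature} to discard a codimension $\geq 2$ closed subset of $Y_\eta$. Together these reduce to the situation where $Y_\eta$ is smooth near the generic points of $Y_\eta - X$ and the divisorial components are smooth.

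In that reduced situation, both constant-monodromy conditions are checked on the same small frames satisfying Hypothesis \ref{hypo:small-frame}, so Theorem \ref{thm:monodromy-completion} applies directly. The remaining compatibility is that the pullbacks along purely inseparable extensions commute with the completion functor $\wh{\Box}$, which was noted in the discussion of functors on overconvergent isocrystals.
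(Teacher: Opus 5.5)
Your proposal is correct and is essentially the argument the paper intends; the corollary is stated without proof, as a direct consequence of Theorem \ref{thm:ext-oc-isoc}, Theorem \ref{thm:monodromy-completion} (i.e.\ descent of unipotence), and Kedlaya's classical extension theorem \cite[\S 5.2]{kedlaya2007semistable} on the $\E_K$ side, chained exactly as you do. Your extra care about smoothness (localizing, purely inseparable base change via Proposition \ref{prop:purely-insep-equiv}, and discarding codimension $\geq 2$ loci via Proposition \ref{prop:codim-1-nature}) mirrors the reductions already used in the proof of Theorem \ref{thm:ext-oc-isoc}, so it is a reasonable precaution rather than a different route.
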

	
	As in \cite[\S5.3]{kedlaya2007semistable}, Theorem \ref{thm:ext-oc-isoc} implies following propositions on extensions of overconvergent isocrystals.
	
	\begin{proposition}
		Let $U \hookrightarrow X$ be an open immersion of smooth $k((t))$-varieties, $X\hookrightarrow Y$ an open immersion into a separated $k[[t]]$-scheme of finite type and suppose that $U$ is dense in $Y$. Let $E$ be an $\Ed_K$-valued overconvergent isocrystal on $(X, Y)$, and let $F$ be a subobject of $E|_{(U, Y)}$. Then $F$ is the restriction
		to U of a subobject of $E$.
	\end{proposition}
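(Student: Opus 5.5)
The plan is to follow \cite[Proposition 5.3.1]{kedlaya2007semistable}: extend the subobject $F$ by reducing to a divisor of codimension one, where the local picture is a relative annulus. First, the problem is local on $Y$ provided the extension we build is unique. Uniqueness comes from the full faithfulness part of Theorem \ref{thm:ext-oc-isoc}: restriction $\Isoc(X, Y/\Ed_K) \to \Isoc(U, Y/\Ed_K)$ is fully faithful, so any two subobjects of $E$ restricting to $F$ agree. Local extensions therefore glue, and we may shrink $Y$ freely.

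Next I would make $X-U$ as nice as possible. Removing a closed subset of $X$ of codimension $\geq 2$ does not affect the problem. Indeed, once $F$ extends to a subobject $F'$ of $E|_{(X', Y)}$ for a big open $X' \subseteq X$, the quotient $E|_{(X',Y)}/F'$ has constant monodromy along $Y_\eta - X$ by Proposition \ref{prop:codim-1-nature}. This holds because $E$ itself extends, and subquotients of a constant log $\nabla$-module on an annulus are constant via Proposition \ref{prop:lnm-abel} and Lemma \ref{lem:lnm-abel}. Theorem \ref{thm:ext-oc-isoc} then extends $F'$ and the quotient to $(X,Y)$, and full faithfulness extends the maps. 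So we may assume $X-U$ is a smooth divisor, after a purely inseparable base change $k((t^{1/p^n}))$ if it is not generically smooth. That base change is harmless by Proposition \ref{prop:purely-insep-equiv}, since $k$ is perfect. Working locally, we may further assume a small frame $(U, Y, \Yf)$ with coordinates $f_1,\dots,f_r$ as in Lemma \ref{lem:ext-lemma}, where $f_1$ cuts out $X-U$.

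The core step takes place on the tube. Via Lemma \ref{lem:frame-rel-tube}, $]X-U[_\Yf \simeq\, ]X-U[_\Zf \times A^1_{\Ed_K}[0,1)$. Because $E$ is defined on $(X,Y)$, its realization $\E$ is a $\nabla$-module (log with zero residue) on $]X[_\Yf$, constant on the annulus part by Remark \ref{rem:monodromy}. The realization of $F$ is a $\nabla$-submodule of $\E|_{]U[}$. I would apply Proposition \ref{prop:ext-log-sub} with $X := ]X[_\Yf$ and the single coordinate $f_1$; hypothesis (ii) is supplied by the annulus product structure. This extends $F$ uniquely to a log $\nabla$-submodule $\F$ of $\E$ on $]X[_\Yf$ with nilpotent residues. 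Since $\F \subseteq \E$ and $\E$ has zero residue along $V(f_1)$, the residue of $\F$ is the restriction of zero and hence vanishes. Consequently $\F$ is an honest $\nabla$-module. By Lemma \ref{lem:ext-lemma}(i) together with Proposition \ref{prop:unip-ext} (zero residues means constant), $F$ has constant monodromy, and so $\F$ represents an overconvergent isocrystal extending $F$. Alternatively, one can appeal directly to Theorem \ref{thm:ext-oc-isoc}.

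I expect the main obstacle to be this last step. Proposition \ref{prop:ext-log-sub} gives a submodule in $\LNM$ on the tube, but it is not automatic that this submodule is overconvergent, or that it is compatible across different frames. I would handle overconvergence by using the constant-monodromy criterion of Lemma \ref{lem:ext-lemma}(i), so that the extension produced there coincides with $\F$ by uniqueness. Independence of the frame would follow again from full faithfulness.
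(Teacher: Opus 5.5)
There is a genuine gap in your core step. Proposition~\ref{prop:ext-log-sub} extends a subobject that is given on the complement of the \emph{analytic} zero locus of the coordinate, i.e.\ on $]X[_\Yf \setminus V(f_1)$. Under $]X-U[_\Yf \simeq\, ]X-U[_\Zf \times A^1_{\Ed_K}[0,1)$, this complement contains the whole punctured disc bundle $]X-U[_\Zf \times A^1_{\Ed_K}(0,1)$. The realization of $F$, however, is only defined on a strict neighborhood $\{|f_1|_{[x]} > \lambda\}$ of $]U[_\Yf$. That neighborhood meets the bundle only in $]X-U[_\Zf \times A^1_{\Ed_K}(\lambda,1)$. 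Continuing $F$ across $A^1_{\Ed_K}(0,\lambda]$ is exactly the monodromy question. So the hypothesis of Proposition~\ref{prop:ext-log-sub} is not met, and the residue computation that follows has nothing to act on. The obstacle you anticipated (overconvergence and frame-independence of the extension) is not the real issue; Lemma~\ref{lem:ext-lemma}(i) and Theorem~\ref{thm:ext-oc-isoc} already handle it.

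The missing idea is one you already invoke, but in the codimension-two step, where it is not needed: a subobject of a constant object of $\LNM$ over a relative annulus is constant.
\begin{enumerate}[(i)]
\item By Remark~\ref{rem:monodromy} (or Lemma~\ref{lem:ext-lemma}(i)), the realization of $E$ is constant on $]X-U[_\Zf \times A^1_{\Ed_K}[\lambda,1)$, say isomorphic to $\Uc_{[\lambda,1)}(\F_0,0)$.
\item The subobject coming from $F$ is unipotent by Proposition~\ref{prop:lnm-abel}, applied on closed subintervals and glued as in Lemma~\ref{lem:ui-eq}.
\item By full faithfulness of $\Uc_{[\lambda,1)}$ (Theorem~\ref{thm:ui-cat-eq}), it is $\Uc_{[\lambda,1)}$ of a subobject of $(\F_0,0)$. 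The nilpotent endomorphism of that subobject must vanish, so it is constant.
\item The same argument works in every small frame, including after the base changes in Definition~\ref{def:monodromy-global}. Hence $F$ has constant monodromy along $X-U$.
\item Theorem~\ref{thm:ext-oc-isoc} then extends $F$ to some $F'$ on $(X,Y)$. Full faithfulness extends $F \hookrightarrow E|_{(U,Y)}$ to a map $F' \to E$.
\item The kernel of $F' \to E$ is an isocrystal on $(X,Y)$ that vanishes on the dense open $U$, so it is zero.
\end{enumerate}
This is the route the paper intends: it deduces the proposition from Theorem~\ref{thm:ext-oc-isoc}, as in \cite[\S5.3]{kedlaya2007semistable}. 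It also makes your reductions to $X-U$ a smooth divisor unnecessary, since Theorem~\ref{thm:ext-oc-isoc} already treats arbitrary $X-U$.

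Separately, your codimension-two argument is confused as written. Extending across a complement of codimension $\geq 2$ is just the codimension-two statement that follows from Theorem~\ref{thm:ext-oc-isoc} and Proposition~\ref{prop:codim-1-nature}; there is no quotient to extend.
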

	
	\begin{proposition}
		Let $U \hookrightarrow X$ be an open immersion of smooth $k((t))$-varieties, $X\hookrightarrow Y$ an open immersion into a separated $k[[t]]$-scheme of finite type and suppose that $U$ is dense in $Y$ and $X-U$ has codimension $\geq 2$ in $X$. Then the restriction functor $\Isoc(X, Y) \to \Isoc(U, Y)$ is an equivalence of categories.
	\end{proposition}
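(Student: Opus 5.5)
Since $X-U$ has codimension $\geq 2$ in $X$, the open immersion $U \hookrightarrow X$ induces an equality $(Y_\eta - X) \cup (X - U) = Y_\eta - U$ in which $X - U$ is too small to carry monodromy. The plan is to deduce the equivalence from Theorem \ref{thm:ext-oc-isoc}, using Proposition \ref{prop:codim-1-nature} to transfer monodromy conditions across $X-U$. Full faithfulness of the restriction functor $\Isoc(X, Y/\Ed_K) \to \Isoc(U, Y/\Ed_K)$ is already part of Theorem \ref{thm:ext-oc-isoc}, applied to the pair $U \subseteq X \subseteq Y$. So only essential surjectivity needs proof.

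For essential surjectivity, let $E$ be an $\Ed_K$-valued overconvergent isocrystal on $(U, Y)$. By Theorem \ref{thm:ext-oc-isoc}, it suffices to show that $E$ has constant monodromy along $Y_\eta - X$. Set $Y' = Y - (X-U)$, which is open in $Y$ because $X - U$ is closed in $X$, and $X$ is open in $Y$, so $X-U$ is locally closed and we may take the complement of its closure. If the closure adds points, one works locally on $Y$ with the closure of $X-U$ in $Y_\eta$, which still has codimension $\geq 2$ in $Y_\eta$ near $X$.

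The key step is to consider the overconvergent isocrystal $E|_{(U, Y')}$ and to check that $(Y_\eta - X) \cap Y'_\eta$ is exactly the part of $Y'_\eta - U$ away from the codimension-$\ge 2$ locus. Monodromy along $Y_\eta - X$ is tested only at generic points of its components, by the codimension-one nature in Proposition \ref{prop:codim-1-nature} and the density statement in Remark \ref{rem:monodromy}. These generic points do not lie in $X - U$: since $U$ is dense in $Y$, each component of $Y_\eta - X$ has codimension one, and such a component cannot be contained in the closure of $X-U$, which has codimension $\geq 2$. We then try to extend $E$ over $Y'$ generically. Over a neighbourhood of each generic point $\xi$ of $Y_\eta - X$, the variety $X$ is replaced by a neighbourhood in which $X \cap (\text{nbhd}) = U \cap (\text{nbhd})$ after shrinking. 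There the extension problem is vacuous, so the constant monodromy condition reduces to a statement on $U$ itself.

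This is where I expect the main obstacle, and it seems the naive argument is circular: constant monodromy along $Y_\eta - X$ is not automatic for arbitrary $E$ on $(U,Y)$. What is actually needed is the argument of \cite[Proposition 5.3.3]{kedlaya2007semistable}. I would first apply Theorem \ref{thm:ext-oc-isoc} with $Y$ replaced by the open $X$ itself, so that the pair is $(U, X)$ and $X_\eta = X$. This is legitimate because $X$ is a separated $k[[t]]$-scheme of finite type. The condition then becomes constant monodromy along $X - U$, which has codimension $\geq 2$. By Proposition \ref{prop:codim-1-nature}, applied with $Y' = U \subseteq Y = X$, monodromy along a set of codimension $\geq 2$ is trivially constant, since the restriction to $(U,U)$ has empty boundary. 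Hence $E|_{(U,X)}$ extends to $(X, X)$. The remaining work, which I expect to be the delicate part, is to glue this extension with $E$ on $(U,Y)$ to get an object of $\Isoc(X, Y/\Ed_K)$. This is an overconvergence check along $Y - X$ for which I would reuse the overconvergence argument of Lemma \ref{lem:ext-lemma}(i), together with the full faithfulness from Theorem \ref{thm:ext-oc-isoc} to ensure that the gluing data agree.
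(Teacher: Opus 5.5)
There is a genuine gap, and it comes from how you read Theorem~\ref{thm:ext-oc-isoc}. For a triple $U\subseteq X\subseteq Y$, the hypothesis of that theorem must be constant monodromy along $X-U$, the locus across which one extends, as in the classical extension theorem of \cite[\S5.2]{kedlaya2007semistable}. The printed ``$Y_\eta-X$'' cannot be taken literally.

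You already use this reading yourself when you apply the theorem to $U\subseteq X\subseteq X$. Taken literally, that application would have the empty hypothesis along $X_\eta-X=\emptyset$, and would claim that every object of $\Isoc(U,X/\Ed_K)$ extends to $(X,X)$, which is false.

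With the correct reading, the obstacle in your fourth paragraph disappears. Proposition~\ref{prop:codim-1-nature} applied to $U\subseteq U\subseteq X$, exactly as you do, shows that $E$ has constant monodromy along $X-U$. This condition only involves small frames around points of $X-U$, so it is the same for $E$ on $(U,Y)$ as for $E|_{(U,X)}$ (cf. Remark~\ref{rem:monodromy}). Theorem~\ref{thm:ext-oc-isoc} applied to the original triple $U\subseteq X\subseteq Y$ then produces an extension in $\Isoc(X,Y/\Ed_K)$ directly, with overconvergence along $Y-X$ included. Together with the full faithfulness from the same theorem, that is the whole proof; it is the argument of \cite[\S5.3]{kedlaya2007semistable} that the paper invokes.

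As written, however, you discard the overconvergence along $Y-X$ by replacing $Y$ with $X$, and then leave its recovery as an unproved ``gluing''. That step is not formal:
\begin{itemize}
\item The extension $F$ lives only on the tube $]X[$ of a frame for $(X,X)$, with no strict neighbourhood in $]Y[$.
\item $E$ lives on a strict neighbourhood of $]U[$ in $]Y[$.
\item When the closure of $X-U$ in $Y$ meets $Y-X$, every strict neighbourhood of $]X[$ contains points near the tube of $\overline{X-U}\cap(Y-X)$ where neither object is defined. Something has to be constructed there, not glued.
\end{itemize}
Nor can Lemma~\ref{lem:ext-lemma}(i) simply be reused: it needs $X-U$ to be a smooth component of a strict normal crossings divisor, whereas here $X-U$ has codimension $\geq 2$.

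The ``delicate part'' you postpone is exactly the content of Theorem~\ref{thm:ext-oc-isoc} for the triple $U\subseteq X\subseteq Y$. Invoking it directly closes the gap. It also makes unnecessary the detour through $(X,X)$ and the discussion of $Y-(X-U)$ and of generic points of $Y_\eta-X$ in your second and third paragraphs.
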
 
	
	\begin{proposition}
		Let $U \hookrightarrow X$ be an open immersion of smooth $k((t))$-varieties, $X\hookrightarrow Y$ an open immersion into a separated $k[[t]]$-scheme of finite type and suppose that $U$ is dense in $Y$ and $X-U$ has codimension $\geq 2$ in $X$. Then the restriction functor $\Isoc(X, Y) \to \Isoc(U, Y)$ is an equivalence of categories.
	\end{proposition}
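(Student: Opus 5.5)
The plan is to reduce to Theorem \ref{thm:ext-oc-isoc} together with the subobject extension proposition above. The key input is that an $\Ed_K$-valued overconvergent isocrystal on $(X, Y)$ with $X$ smooth automatically has constant monodromy along every codimension $\geq 2$ part of the boundary inside $X$. For this we use the codimension-one nature of monodromy, Proposition \ref{prop:codim-1-nature}. Let $E \in \Isoc(U, Y/\Ed_K)$. By Theorem \ref{thm:ext-oc-isoc}, $E$ extends to $(X, Y)$ if and only if $E$ has constant monodromy along $Y_\eta - X$, where monodromy is taken for $E$ viewed on the pair $(U, Y)$ with boundary $Y_\eta - U$. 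The same theorem gives full faithfulness of the restriction functor. So only essential surjectivity remains.

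The first step is to verify constant monodromy. We apply Proposition \ref{prop:codim-1-nature} to the chain $U \hookrightarrow Y' \hookrightarrow Y$, where $Y' := Y - (X - U)$; this is open because $X - U$ is closed in $X$. One should check that $Y_\eta - Y'_\eta = X - U$ has codimension $\geq 2$ in $Y_\eta$. This holds because $X$ is open in $Y_\eta$ and $U$ is dense, so $X - U$ has codimension $\geq 2$ in $Y_\eta$ as well. Proposition \ref{prop:codim-1-nature} then reduces the monodromy condition for $E$ along $Y_\eta - U$ to the corresponding condition for $E|_{(U, Y')}$ along $Y'_\eta - U = Y_\eta - X$.

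The second step is to produce the extension. Monodromy along $Y_\eta - X$ is, by definition, a local condition on small frames, and it is only tested on the part of the boundary lying outside $X$. Strictly speaking, Theorem \ref{thm:ext-oc-isoc} gives extension to $(X, Y)$ from the pair $(U, Y)$. The cleaner formulation is therefore the following. Constancy of monodromy along the divisorial components of $Y_\eta - U$ is all that is needed to extend over the generic points of those components. Since $X - U$ contains no such generic points (it has codimension $\geq 2$), the condition relevant to extending over $X$ is vacuous. Concretely, I would apply Theorem \ref{thm:ext-oc-isoc} in the form: $E$ extends to $(X, Y)$ if and only if $E$ has constant monodromy along $X - U$, interpreted via Definition \ref{def:monodromy-global}. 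By Proposition \ref{prop:codim-1-nature}, applied with $Y$ replaced by an open subset $W$ of $Y$ satisfying $W_\eta \subseteq X$, this condition may be checked after removing a codimension $\geq 2$ closed subset. Removing all of $X - U$ leaves no boundary at all, so the monodromy is trivially constant. This yields the extension $\tilde E \in \Isoc(X, Y/\Ed_K)$ with $\tilde E|_{(U,Y)} \simeq E$.

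The main obstacle is making this monodromy bookkeeping precise. Concretely, one must make sure that Theorem \ref{thm:ext-oc-isoc} can be applied locally on $Y$ and then glued using full faithfulness, and that Proposition \ref{prop:codim-1-nature} applies at each step; this step may require the perfectness of $k$ to pass to purely inseparable base changes, as in Proposition \ref{prop:purely-insep-equiv}. Alternatively, the whole statement can be transported through the completion functor. The analogous statement for $\E_K$-valued isocrystals is \cite[\S5.3]{kedlaya2007semistable}, which shows that $\wh E$ extends to $(X, Y_\eta)$. The corollary to Theorem \ref{thm:ext-oc-isoc} then gives that $E$ extends to $(X, Y)$. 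I would present this second route as the main proof, since it avoids redoing the codimension argument.
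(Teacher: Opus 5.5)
Your final argument is correct, but you should discard your first step. First, $Y-(X-U)$ need not be open in $Y$: the set $X-U$ is closed in $X$ but typically not in $Y$. For instance, if $Y$ is proper over $k[[t]]$, the closure of any nonempty closed subset of $Y_\eta$ meets $Y_s$, so you must remove the closure. Second, and more seriously, that step studies monodromy along $Y_\eta-X$. It also speaks of monodromy along $Y_\eta-U$, which is not defined, since Definition~\ref{def:monodromy-global} needs $Y_\eta$ smooth. The part $Y_\eta-X$ is irrelevant to extending from $(U,Y)$ to $(X,Y)$, and $E$ has no reason to have constant monodromy there. The ``$Y_\eta-X$'' in the statement of Theorem~\ref{thm:ext-oc-isoc} must be read as $X-U$, which is what you do in your second step. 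The subobject-extension proposition you mention in your plan plays no role.

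The paper's proof (``as in \cite[\S5.3]{kedlaya2007semistable}'') is your corrected second step:
\begin{itemize}
\item Theorem~\ref{thm:ext-oc-isoc} gives full faithfulness and reduces essential surjectivity to constant monodromy along $X-U$.
\item Proposition~\ref{prop:codim-1-nature}, applied to $U \subseteq W-\overline{X-U} \subseteq W$ with $W\subseteq Y$ open and $W_\eta = X$, identifies this condition with monodromy along the empty set, which is vacuous.
\end{itemize}

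The route you prefer instead applies Kedlaya's $\E_K$-valued purity to $\wh E$ on $(U,Y_\eta)$. It then descends via the corollary following Theorem~\ref{thm:ext-oc-isoc}. This is valid, but it does not avoid the codimension argument. That corollary is Theorem~\ref{thm:ext-oc-isoc}, plus Theorem~\ref{thm:monodromy-completion} (descent of unipotence), plus Kedlaya's extension theorem. Kedlaya's purity statement is his codimension-one result \cite[Proposition 4.4.4]{kedlaya2007semistable} plus the same extension theorem. So your route is the alternative proof of Proposition~\ref{prop:codim-1-nature} via Theorem~\ref{thm:monodromy-completion} that the paper mentions, composed with the extension theorem.

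The direct route stays inside the $\Ed_K$-theory and needs only Proposition~\ref{prop:codim-1-nature}. Yours outsources the geometric input to the classical case, at the cost of invoking descent of unipotence. In both, full faithfulness comes from Theorem~\ref{thm:ext-oc-isoc}, not from the completion.
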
 
	
	\begin{proposition}
		Let $f : X \to X'$ be a proper birational morphism of smooth $k((t))$-varieties, $\E$ an $\Ed_K$-valued overconvergent isocrystal on $X$. Then there exists an $\Ed_K$-valued overconvergent isocrystal $\E'$ on $X'$ such that $f^* \E' \simeq \E$.
	\end{proposition}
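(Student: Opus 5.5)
We follow \cite[Proposition 5.3.6]{kedlaya2007semistable}: descend $\E$ along $f$ by extending across the locus where $f^{-1}$ is undefined, using Theorem \ref{thm:ext-oc-isoc} and the codimension-two equivalence stated just before.

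Let $U' \subseteq X'$ be the largest open subset over which $f$ is an isomorphism, and $U = f^{-1}(U')$. Since $X'$ is smooth, hence normal, and $f$ is proper birational, Zariski's main theorem shows that the exceptional locus $X' - U'$ has codimension $\geq 2$ in $X'$. Restricting $\E$ to $U \simeq U'$ gives an isocrystal $E'_0$ on $U'$. By the preceding proposition (restriction $\Isoc(X', Y') \to \Isoc(U', Y')$ is an equivalence when $X'-U'$ has codimension $\geq 2$, for a compactification $Y'$ of $X'$ over $k[[t]]$), $E'_0$ extends uniquely to some $\E'$ on $X'$. The statement is about isocrystals on varieties, but the equivalence $\Isoc(X'/\Ed_K) \simeq \Isoc(X', Y'/\Ed_K)$ for a proper $\V[[t]]$-pair lets us work with pairs. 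To get one, choose a separated $k[[t]]$-scheme $Y'$ of finite type, proper over $k[[t]]$, containing $X'$ as a dense open (Nagata), and take $Y$ the closure of the graph of $X \to X' \subseteq Y'$ in $\ol{X}\times_{k[[t]]} Y'$, so that $f$ extends to a proper morphism $g : Y \to Y'$.

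Next we show $f^*\E' \simeq \E$. Both are isocrystals on $X$ that agree on the dense open $U$, by construction. By full faithfulness of restriction in Theorem \ref{thm:ext-oc-isoc}, the identification $f^*\E'|_U \simeq \E|_U$ extends uniquely to an isomorphism, provided it extends as a morphism. This follows by applying full faithfulness to $\Homs(f^*\E', \E)$ and its inverse: the Hom sets on $X$ and $U$ coincide, so both the isomorphism and its inverse extend, and their composites are the identity on $U$, hence on $X$.

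The main point to check is the codimension-two step on $X'$. That proposition is only stated after Theorem \ref{thm:ext-oc-isoc}, and its proof reduces to the constant monodromy criterion along divisorial components of $X'-U'$; there are none in codimension one, and we use Proposition \ref{prop:codim-1-nature}. Also, by Proposition \ref{prop:purely-insep-equiv} and perfectness of $k$, we may have to pass to $k((t^{1/p^n}))$ to make the exceptional loci generically smooth; pullback along this base change is an equivalence, so this is harmless. If the codimension-two proposition is taken as given, the whole argument is formal.
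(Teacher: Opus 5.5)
Your proposal is correct and is essentially the paper's argument. The paper compactifies via Nagata and defers to Kedlaya's Proposition 5.3.6, which is exactly this route: extend $\E|_{f^{-1}(U')}$ across the codimension-$\geq 2$ locus $X'-U'$ using the codimension-two equivalence, then identify $f^*\E'$ with $\E$ by full faithfulness of restriction (Theorem \ref{thm:ext-oc-isoc}); your closing remark about passing to $k((t^{1/p^n}))$ is unnecessary here, since that step is already absorbed into those results.
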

	\begin{proof}
		Choose an open immersion $X \hookrightarrow Y$ into a proper $k[[t]]$-scheme by Nagata's compactification theorem; then similar argument as in \cite[Proposition 5.3.6]{kedlaya2007semistable} gives the proposition.
	\end{proof}
	
	\section{Logarithmic extensions} \label{sec:log-ext}
	For an $\Ed_K$-valued overconvergent isocrystal on a $\V[[t]]$-pair $(X, Y)$, we have only defined monodromy along a closed subset of $Y_\eta$. Hence logarithmic extension should terminate at the pair $(Y_\eta, Y)$, which means that we have to develop the theory of overconvergent log isocrystals, differing from the classical case.
	
	\subsection{De Jong's alteration theorem over $k[[t]]$}
	Since in our setting we take a compactification of a $k((t))$-variety into a $k[[t]]$-scheme, we have to deal with alterations of $k[[t]]$-schemes. For reader's sake, we refer to de Jong's theorem in this subsection.
	
	\begin{definition}
		Let $Y$ be a $k[[t]]$-variety (i.e., integral, separated $k[[t]]$-scheme of finite type) and $Z \subseteq Y$ a reduced closed subscheme. We say that $(Y, Z)$ is a strictly semistable pair if:
		\begin{enumerate}[(i)]
			\item $Y$ is stricly semistable over $k[[t]]$;
			\item $Z$ is a  strict normal crossings divisor in $Y$;
			\item Let $Z_1, \dots, Z_n$ be irreducible components of $Z$ each of which dominates $k[[t]]$; then for any $1 \leq i_1 < \cdots < i_p \leq n$, $Z_{i_1} \cap \cdots \cap Z_{i_p}$ is a disjoint union of strictly semistable $k[[t]]$-varieties.
		\end{enumerate}
		In this situation, we denote $Z_1 \cup \cdots \cup Z_n$ by $Z_f$. The components of $Z$ contained in $Z_f$ are called vertical divisors and the those contained in the closed fiber are called horizontal divisors.
	\end{definition}
	
	We will only consider strictly semistable pairs $(Y, Z)$ such that $Z$ contains $Y_s$.
	
	\begin{remark}
		For a strictly semistable pair $(Y, Z)$ and $x \in Y_s$, let $Z_1, \dots, Z_n$ be irreducible components of $Z_f$ containing $x$ and let $Y_1, \dots, Y_m$ be irreducible components of $Y_s$ containing $x$. Then there is an open neighborhood $U$ of $x$ in $Y$ such that there is a smooth $k[[t]]$-morphism $U \to k[[t]][z_1, \dots, z_n, y_1, \dots, y_n]/(t - y_1 \cdots y_n)$ whose pullback of $V(z_i)$ is $U \cap Z_i$ and whose pullback of $V(y_j)$ is $U \cap Y_j$. This is pointed out in \cite[Remark 1.2.6]{caro2021arithmetic}.
	\end{remark}
	
	\begin{theorem}[{\cite[Theorem 6.5]{jong1996alteration}}] \label{thm:alteration}
		Let $Y$ be a $k[[t]]$-variety and $Z \subseteq Y$ be a proper closed subset containing $Y_s$. Then there is a finite extension $k[[t]] \to k'[[t']]$ of complete discrete valuation rings, a $k'[[t']]$-variety $Y'$, an alteration (i.e., proper, dominant and generically finite) $f : Y' \to Y$ over $k[[t]]$ and an open immersion $Y' \hookrightarrow \ol{Y'}$ of $k'[[t']]$-shcemes with following properties:
		\begin{enumerate}[(i)]
			\item $\ol{Y'}$ is projective over $k'[[t']]$ with geometrically irreducible generic fiber;
			\item $(\ol{Y'}, (f^{-1}(Z)_{\red}) \cup (\ol{Y'}-Y'))$ is a strictly semistable pair.
		\end{enumerate}
	\end{theorem}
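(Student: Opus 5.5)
This is de Jong's theorem, cited as \cite[Theorem 6.5]{jong1996alteration}. I will not reprove it. Instead I will reduce the present formulation to de Jong's setting: a variety $X$ over a complete discrete valuation ring $S$ with a proper closed subset $Z$. The work consists of matching hypotheses and unwinding the definition of strictly semistable pair given above.

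\emph{Step 1: compactify.} By Nagata's theorem, choose an open immersion $Y \hookrightarrow \ol{Y}$ into a proper $k[[t]]$-variety. By Chow's lemma, together with the fact that projective alterations of $\ol{Y}$ pull back to alterations of $Y$, we may assume $\ol{Y}$ is projective. Replace $Z$ by the closed subset $\ol{Z} \cup (\ol{Y} - Y)$. This is still proper, since $Y$ is dense, and it contains $\ol{Y}_s$ because $Z \supseteq Y_s$ and $\ol{Y}_s - Y_s \subseteq \ol{Y} - Y$.

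\emph{Step 2: apply de Jong over $S = \Spec k[[t]]$.} De Jong's Theorem 6.5 gives the following data:
\begin{itemize}
\item a finite extension $S' = \Spec k'[[t']]$;
\item a projective $S'$-variety $\ol{Y'}$ with geometrically irreducible generic fiber;
\item an alteration $\ol{f}: \ol{Y'} \to \ol{Y}$;
\end{itemize}
such that $\ol{Y'}$ is strictly semistable over $S'$ and $\ol{f}^{-1}(\ol{Z})_\red$ is a strict normal crossings divisor. Set $Y' = \ol{f}^{-1}(Y)$ and $f = \ol{f}|_{Y'}$. Then $f$ is proper, dominant and generically finite, and $\ol{Y'} - Y' = \ol{f}^{-1}(\ol{Y} - Y)$. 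Hence $(f^{-1}(Z)_\red) \cup (\ol{Y'} - Y') = \ol{f}^{-1}(\ol{Z})_\red$, which gives (i) and conditions (i), (ii) of a strictly semistable pair.

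\emph{Step 3: verify condition (iii).} Condition (iii) asks that intersections of the horizontal components of $Z_f$ be disjoint unions of strictly semistable $k'[[t']]$-varieties. I expect this to be the main obstacle. De Jong's statement gives strict normal crossings of the total divisor $\ol{f}^{-1}(\ol{Z})_\red \supseteq \ol{Y'}_s$ inside the regular scheme $\ol{Y'}$. This divisor includes the special-fiber components, and the special fiber is the reduced divisor $V(t')$. In a strict normal crossings divisor containing $V(t')$, locally the horizontal components are cut out by parameters $z_i$ that are transverse to the $y_j$ with $t' = \prod y_j$, so each intersection $Z_{i_1} \cap \cdots \cap Z_{i_p}$ is locally $V(z_{i_1}, \dots, z_{i_p})$. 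That locus is again regular with reduced strict normal crossings special fiber, i.e.\ strictly semistable; this is the local description in the Remark above. If the flat-over-$S'$ components meet the special fiber non-transversally, I would instead use de Jong's construction directly, which (Section 6 of \cite{jong1996alteration}) produces exactly this semistable-pair structure, as recorded in \cite[Remark 1.2.6]{caro2021arithmetic}. Finally, connected components of these intersections are integral because they are regular, so each intersection is a disjoint union of $k'[[t']]$-varieties.
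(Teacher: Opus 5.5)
\paragraph{Verdict.} Your core move, citing de Jong, is the paper's, but your Step 3 misreads his conclusion and gives an argument that is wrong.

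\paragraph{Steps 1 and 2.} The paper gives no proof of its own. The statement is de Jong's Theorem 6.5, and the strictly semistable pair defined just before it, including condition (iii) on intersections of the components dominating the base, is modelled on de Jong's notion of a strict semi-stable pair. His theorem already delivers all three conditions. Steps 1 and 2 are correct but unnecessary. De Jong's theorem applies to non-proper $Y$ and itself supplies the projective $\overline{Y'}$ and the open immersion $Y' \hookrightarrow \overline{Y'}$. So there is no need for Nagata or Chow. The only hypothesis worth recording is that $Y$ is flat over $k[[t]]$, which holds because $Y_s \subseteq Z \neq Y$.

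\paragraph{Step 3.} You treat de Jong's conclusion as giving only a strict normal crossings divisor, and then try to recover (iii) from that. This implication fails. A scheme that is regular with reduced strict normal crossings special fibre need not be strictly semistable. Strict semistability also requires the generic fibre to be smooth over the imperfect field $k'((t'))$, and the strata of the special fibre to be smooth over $k'$.

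For a counterexample, take $a \in k \setminus k^p$. In $\mathbb{P}^1_{k[[t]]}$, the divisor $V(t) \cup V(x^p - a z^p)$ has strict normal crossings: at the intersection point, $t$ and $x^p - a$ form a regular system of parameters. Yet the horizontal component $V(x^p - a z^p) \simeq \Spec k(a^{1/p})[[t]]$ is regular but not strictly semistable. Its generic fibre is not smooth over $k((t))$, and its special fibre is not smooth over $k$. So conditions (i) and (ii) of the definition hold while (iii) fails, even with the ambient space projective and smooth.

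Invoking the Remark after the definition does not help either. That Remark presupposes that $(Y, Z)$ is already a strictly semistable pair, so using it to establish (iii) is circular.

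\paragraph{Fix.} Delete Step 3. Condition (iii) is part of what de Jong's Theorem 6.5 asserts, which is why the paper can simply cite it. Your fallback sentence, deferring to de Jong's construction, is the correct one.
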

	
	\subsection{Overconvergent log $\nabla$-module}
	Before starting to discuss log isocrystals, we first define overconvergent log $\nabla$-modules and point out the equivalence between extendability to overconvergent log $\nabla$-modules and unipotence of monodromy.
	
	\begin{hypothesis} \label{hypo:ss-pair}
		Let $\Pf = \Spf \V[[t]] \langle t_1, \dots, t_{n} \rangle / (t - t_1 \cdots t_r)$, $\Df = V(t_1 \cdots t_m) \subseteq \Pf$ for some $r \leq m \leq n$, let $f : \Yf \to \Pf$ be an etale morphism of affine formal $\V[[t]]$-schemes and $\Zf$ the pullback of $\Df$ along $f$. By the corresponding block characters we denote the induced $k[[t]]$-schemes. Define $X = Y-Z.$
	\end{hypothesis}
	
	\begin{definition}
		Under Hypothesis \ref{hypo:ss-pair}, let $\E$ be a log $\nabla$-module on $]Y_\eta[_{\Yf}$ with respect to the log structure defined by $t_2, \dots, t_m$. We say that $\E$ is overconvergent if its restriction to $]X[_{\Yf}$ is overconvergent.
	\end{definition}
	
	\begin{lemma} \label{lem:ext-log-nabla}
		Under Hypothesis \ref{hypo:ss-pair}, let $\E$ be an overconvergent $\nabla$-module over $]X[_{\eta}$. Then $\E$ has unipotent monodromy along $Z_\eta = Y_\eta - X$ if and only if it extends to an overconvergent log $\nabla$-module on $]Y_\eta[_{\Yf}$ with nilpotent residues. Moreover, the restriction functor from overconvergent log $\nabla$-modules with nilpotent residues on $]Y_\eta[_{\Yf}$ to overconvergent $\nabla$-modules on $]X[_{\eta}$ is fully faithful.
	\end{lemma}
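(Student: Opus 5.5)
The plan is to reduce to the one-component extension result, Lemma \ref{lem:ext-lemma}(ii), and apply it one horizontal component at a time, as in \cite[\S6]{kedlaya2007semistable}.

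Write $Z_\eta = Z_{2,\eta} \cup \cdots \cup Z_{m,\eta}$, where $Z_i = V(t_i)$ on $Y$ for $r < i \leq m$. On the generic fiber the components $V(t_j)$ with $j \leq r$ do not meet $Y_\eta$, and $t_1 = t/(t_2\cdots t_r)$ is a unit there. Hence the relevant coordinates are $t_2, \dots, t_n$, and the boundary components come from $t_{r+1}, \dots, t_m$. The horizontal-divisor parts are invisible, so in fact only $t_{r+1},\dots,t_m$ cut out $Z_\eta$. For $J \subseteq \{r+1, \dots, m\}$, set $X_J = Y_\eta - \bigcup_{i \notin J} Z_i$. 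Then $X_\emptyset = X$ and $X_{\{r+1,\dots,m\}} = Y_\eta$.

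I extend $\E$ inductively along the chain $X = X_{J_0} \subset X_{J_1} \subset \cdots$, adding one index at a time. At each step the pair $V \subseteq U \subseteq X_{J\cup\{i\}}$ is as follows: $V = X$, $U = X_J$, and $X_{J \cup \{i\}} - U$ is the single smooth component $Z_{i,\eta} \cap X_{J\cup\{i\}}$. This is exactly the setting of Lemma \ref{lem:ext-lemma}(ii), with $f_1 = t_i$, the other $f$'s being the remaining boundary coordinates, and $g = 1$ after shrinking $\Yf$ locally. Lemma \ref{lem:ext-lemma} is stated with $X$ a $k((t))$-variety, and the ambient $Y$ here already contains everything. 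The hypothesis that $f_1, \dots, f_r$ form a coordinate holds because $f$ is étale over $\Pf$.

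For the monodromy input, unipotent monodromy along $Z_\eta$ restricts to unipotent monodromy along each $Z_{i,\eta}$ intersected with the relevant open. This follows from Remark \ref{rem:monodromy} on restricting to dense opens and from Proposition \ref{prop:codim-1-nature}, which lets me discard the codimension $\geq 2$ intersections with the other components. Conversely, if an extension exists, its restriction to a neighbourhood of a generic point of $Z_{i,\eta}$ is a log $\nabla$-module with nilpotent residue. Lemma \ref{lem:ext-lemma}(ii) then gives unipotent monodromy there, and again by Remark \ref{rem:monodromy} and Proposition \ref{prop:codim-1-nature} this gives unipotence along all of $Z_\eta$. Since the chart is local, I glue the extensions using the uniqueness in Lemma \ref{lem:ext-lemma}(iii).

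Overconvergence of the final log module is, by definition, overconvergence of its restriction to $]X[_{\Yf}$, which is the original $\E$. Full faithfulness follows by iterating Lemma \ref{lem:ext-lemma}(iii) along the chain. Alternatively, horizontal sections of internal Homs extend uniquely by Proposition \ref{prop:ext-log-sub}, applied to graphs of morphisms.

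I expect the main obstacle to be the intermediate steps, where $U = X_J$ already carries a log structure along the components in $J$. There one must check that Lemma \ref{lem:ext-lemma}(ii) applies with monodromy measured on a log module rather than an isocrystal. One also needs the tube decomposition $]Z_i[ \simeq {]Z_i[_{V(t_i)}} \times A^1_{\Ed_K}[0,1)$ to be compatible with the existing log coordinates. This is where Hypothesis \ref{hypo:pda2}(ii) and the strong fibration theorem must be verified for the strata $V(t_i) \cap X_J$. If this becomes messy, I will instead first extend over the complement of all pairwise intersections, one component at a time, where the components are disjoint. I would then extend across the codimension $\geq 2$ locus using Lemma \ref{lem:ext-lemma} in the log setting together with local freeness from Lemma \ref{lem:lnm-abel}.
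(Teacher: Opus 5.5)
Your proposal is correct and follows the paper's route. The paper's proof only says to localize on an affine cover and apply Lemma \ref{lem:ext-lemma}; your component-by-component chain $X = X_{J_0}\subset X_{J_1}\subset\cdots\subset Y_\eta$ is the natural way to make that reduction explicit. The obstacle you anticipate does not arise, because Lemma \ref{lem:ext-lemma}(ii) is already stated for log $\nabla$-modules on $]U[_{\Yf}$.

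One small correction: you cannot take $g=1$ by shrinking, since shrinking $Y$ changes the ambient and hence overconvergence. At the step adding $i$ you need $g=\prod t_j$ over $j\in\{r+1,\dots,m\}\setminus(J\cup\{i\})$, so that $Y - X_{J\cup\{i\}} = V(\bar g)\cup V(t)$, as Lemma \ref{lem:ext-lemma} requires.
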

	\begin{proof}
		By taking an affine open covering, the statement is reduced to a collection of cases where Theorem \ref{lem:ext-lemma}.
	\end{proof}
	
	\begin{lemma} \label{lem:conv-mul-seq}
		Under Hypothesis \ref{hypo:ss-pair}, let $\E$ be an overconvergent log $\nabla$-module over $]Y_\eta[_{\Yf}$ relative to $\Ed_K$. For $\eta \in (0, 1) \cap \Gamma^*$, let
		$$\Ys_{\lambda} = \{ x \in \Yf^\ad : |t|_x \geq \lambda \}.$$
		Choose a log $\nabla$-module $\E_{\lambda_0}$ on $\Ys_{\lambda_0}$ which gives rise to $\E$. Let $v \in \Gamma(\Ys_{\lambda}, \E_{\lambda_0})$ for some $\lambda \in [\lambda, 1) \cap \Gamma^*$. Then for any $\eta \in (0, 1)$, there exists $\lambda \in [\lambda', 1) \cap \Gamma^*$ such that the multi-sequence in $(\nu_2, \dots, \nu_n)$
		$$ \frac{1}{\nu_2! \cdots \nu_n!}
		\left( \prod_{i=2}^{m} \prod_{l=0}^{\nu_j-1} \left( \frac{\partial}{\partial t_j} - l \right) \right)
		\left( \prod_{j=m+1}^{n} \frac{\partial^{\nu_j}}{\partial t_j^{\nu_j}} \right)v $$
		is $\eta$-convergent on $\Ys_{\lambda'}.$
	\end{lemma}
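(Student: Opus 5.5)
This is the analogue of \cite[Lemma 6.2.1]{kedlaya2007semistable}: a log overconvergence estimate on the tube of the generic fibre, where the log directions are $t_2,\dots,t_m$ and the nonlog directions are $t_{m+1},\dots,t_n$. The plan is to reduce it to the overconvergence of the restriction of $\E$ to $]X[_{\Yf}$, via Proposition \ref{prop:mic-oc}, and then to propagate the estimate across the log divisors.

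\textbf{Step 1 (away from the divisors).} On $]X[_\Yf$ the functions $t_2,\dots,t_m$ are invertible, so the log operators make sense there. The operator $\prod_{l=0}^{\nu_j-1}(t_j\partial/\partial t_j - l)$ equals $t_j^{\nu_j}\partial^{\nu_j}/\partial t_j^{\nu_j}$ (reading $\partial/\partial t_j$ in the statement as the log derivation $t_j\partial_{t_j}$). The coordinates $t_2,\dots,t_n$ give a basis of $\Omega^1$ relative to $\V[[t]]_K$, because $t_1$ is eliminated by $t=t_1\cdots t_r$. Hence Proposition \ref{prop:mic-oc} applies. It gives the following: for every $\eta<1$ there are $\lambda,\rho$ such that on $[Y]_\lambda\cap[X]_\rho$ the sequence $\ul\partial^{\unu}v/\unu!$ is $\eta$-convergent. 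On the region $|t_j|\le 1$ the factors $t_j^{\nu_j}$ have norm $\le1$. Therefore the log-type multisequence of the statement is $\eta$-convergent on every affinoid of the form $\{|t|\ge\lambda',\ |t_2\cdots t_m g|\ge\rho\}$.

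\textbf{Step 2 (extending across the divisors by the maximum principle).} We must remove the condition $|t_j|\ge\rho$ for $2\le j\le m$. Near each $Z_j=V(t_j)$, Lemma \ref{lem:frame-rel-tube} together with the strong fibration theorem \ref{thm:strong-fibration} identifies a neighbourhood with $]Z_j[\times A^1_{\Ed_K}[0,1)$, with coordinate $t_j$. The sections
$$w_{\unu}=\frac{1}{\unu!}\prod_j\prod_l(t_j\partial_{t_j}-l)\prod_{j>m}\partial_{t_j}^{\nu_j}v$$
are defined on the whole space $\Ys_{\lambda_0}$, since the operators are log operators and $\E_{\lambda_0}$ is a log $\nabla$-module. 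Choose a basis of $\E$ on an affinoid piece, which is possible after shrinking because $\E$ is locally free. Then $w_\unu$ is given by a vector of analytic functions on a polyannulus/disc product $\{|t_j|\le 1\}$. Its norm on the whole disc in $t_j$ is bounded by its norm on a boundary annulus $\{\rho\le|t_j|\le1\}$, by the maximum modulus principle for series in $t_j$ over the Banach algebra of the remaining variables. The bound only loses a constant coming from the change of basis and from the norms of the fringe algebras. Consequently $\eta$-convergence on the boundary annuli implies $\eta$-convergence on $\Ys_{\lambda'}$, after possibly shrinking $\eta$ slightly, which is harmless since $\eta$ is arbitrary.

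\textbf{Step 3 (the main obstacle).} The hard point is uniformity in the partial dagger setting. The estimate of Step 1 lives on neighbourhoods $\{|t|_x\ge\lambda\}$ whose radius depends on $\eta$. The maximum principle must therefore be applied on fringe algebras $A_\rho\langle t_j\rangle$, not on the completion. I would handle this as follows:
\begin{itemize}
\item work on the explicit neighbourhoods $\Ys_\lambda$;
\item use that these are affinoid over $\E_{\lambda}$;
\item use that the canonical Banach norm on $A_\rho\langle t_j\rangle_{|t_j|\le 1}$ is the sup over $\nu$ of $\|a_\nu\|_\rho$, which equals the norm on $|t_j|=1$. This makes the maximum principle formal.
\end{itemize}
A second, smaller point is that the finitely many affinoids covering $\Ys_{\lambda'}$ each give their own $\lambda'$. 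We take the maximum over the cover, using quasi-compactness of $\Ys_{\lambda'}$, which holds because $\Yf$ is affine.
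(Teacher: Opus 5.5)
Your proposal is correct and follows the paper's argument. Like the paper, you rewrite $\prod_{l}(t_j\partial_{t_j}-l)$ as $t_j^{\nu_j}\partial_{t_j}^{\nu_j}$, apply Proposition \ref{prop:mic-oc} to get $\eta$-convergence of $\ul{\partial}^{\unu}v/\unu!$ on a neighbourhood $\{|t|,|g|\geq\rho\}$ of $]X[_{\Yf}$, and absorb the factors $t_j^{\nu_j}$ using $|t_j|\leq 1$.

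Your Step 2 spells out what the paper's ``we get the statement'' leaves implicit: the maximum-modulus (Gauss norm) passage from that neighbourhood to all of $\Ys_{\lambda'}$. One correction there: the product structure only exists on the open tubes $]Z_J[_{\Yf}\simeq\,]Z_J[_{\Zf_J}\times A^{|J|}[0,1)$, not on $|t_j|\leq 1$. So run the argument on the polydisc bundles $\{|t_j|\leq\rho'<1,\ j\in J\}$, with boundary annuli chosen inside the region covered by Step 1.
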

	\begin{proof}
		As in \cite[Lemma 6.3.4]{kedlaya2007semistable}, the multi-sequence is equal to
		$$ \frac{t_2^{\nu_2} \cdots t_m^{\nu_m}}{\nu_2! \cdots \nu_m!}
		\left( \prod_{i=2}^{m} \prod_{l=0}^{\nu_j-1} \left( \frac{\partial}{\partial t_j} - l \right) \right)
		\frac{1}{\nu_{m+1}! \cdots \nu_{n}!}
		\left( \prod_{j=m+1}^{n} \frac{\partial^{\nu_j}}{\partial t_j^{\nu_j}} \right)v $$
		Since $\E$ is convergent, given any $\eta \in (0, 1)$, this multi-sequence is $\eta$-null on some open neighborhood of $]X[_\Yf$ by Proposition \ref{prop:mic-oc}. Since $\| y_j \| \leq 1$, we get the statement.
	\end{proof}
	
	\subsection{Log frames and log tubular neighborhoods}
	From this subsection we begin to define overconvergent log isocrystals, which is a generalization of log convergent isocrystals and overconvergent isocrystals.
	
	Until the end of this section, we let $\Bf = \Spf \V[[t]]$ and let $\M_\Bf$ be the fine log structure on $\Bf$ induced by $\mathbb{N} \to \V[[t]], \ 1 \mapsto t.$ Let $(B, \M_B) = (\Bf, \M_\Bf) \times_{\V} k.$ By $\LFS/\Bf$ we denote the category of fine log formal schemes (not necessarily $p$-adic) topologically of finite type over $\Bf$ and let $\pLFS/\Bf$ be the full subcategory of fine log $p$-adic formal schemes. Let $\LS/B$ be the category of fine log schemes of finite type over $B$.
	
	We will only consider fine log structures on schemes or formal schemes and fiber products are taken in the category of fine log schemes or fine log formal schemes.
	
	In order to simultaneously define log isocrystals with derivations by $t$, we make the following definition as in the case of $K$-valued overconvergent isocrystals.
	
	\begin{definition}
		A fine log scheme $X$ over $k$ (resp. a fine log formal scheme $\Xf$ over $\V$) is said to be of pseudo finite type if there is a morphism $X \to (B, M_B) \times \cdots \times (B, M_B)$ (resp. $\Xf \to (\Bf, M_\Bf) \times \cdots \times (\Bf, M_\Bf)$ of finite type.
	\end{definition}
	
	By $\LFS/\V$ we denote the category of fine log formal schemes (not necessarily $p$-adic) of pseudo finite type over $\V$ and let $\pLFS/\V$ be the full subcategory of fine log $p$-adic formal schemes. Let $\LS/k$ be th category of fine log formal schemes of pseudo finite type over $k$.
	
	In what follows we let $(\LFS, \pLFS, \LS) = (\LFS/\Bf, \pLFS/\Bf, \LS/B)$ or $(\LFS/\V, \pLFS/\V, \LS/k)$.
	
	\begin{definition}[cf. {\cite[Definition 2.1.9]{shiho2002crystallineII}}, {\cite[Definition 2.1]{shiho2007relativeI}}]
		Let $(\Xf, M_\Xf) \hookrightarrow (\Yf, M_\Yf)$ be a strict open immersion in $\pLFS$ and let $(\Yf, M_\Yf) \to (\Tf, M_{\Tf})$ be a morphism in $\pLFS$.
		The category of log frames over $(\Xf, M_\Xf) \hookrightarrow (\Yf, M_\Yf)$ relative to $(\Tf, M_\Tf)$ (or simply over $(\Xf, \Yf)^{\log}/\Tf$ or over $(\Xf^\sharp, \Yf^\sharp) / \Tf^\sharp$)is defined as follows: an object is a sextuple $((X', M_{X'}), \allowbreak (Y', M_{Y'}), \allowbreak (\Yf', M_{\Yf'}), \allowbreak j, i, z)$ consisting of:
		\begin{itemize}
			\item log schemes $(X', M_{X'}), (Y', M_{Y'})$,
			\item a formal log scheme $(\Yf', M_{\Yf'})$ over $(\Tf, M_\Tf)$,
			\item a strict open immersion $j: (X', M_{X'}) \hookrightarrow (Y', M_{Y'})$,
			\item a closed immersion $i: (Y', M_{Y'}) \hookrightarrow (\Yf', M_{\Yf'})$,
			\item a morhism $z: (Y', M_{Y'}) \to (\Yf, M_{\Yf})$ with $z(j(X')) \subseteq \Xf$
		\end{itemize}
		such that the diagram
		\[
		\xymatrix{
			(X', M_{X'}) \ar@{^{(}->}[r]^j \ar[d] & (Y', M_{Y'}) \ar@{^{(}->}[r]^i \ar[d]^z & (\Yf', M_{\Yf'}) \ar[d] \\
			(\Xf, M_\Xf) \ar@{^{(}->}[r] & (\Yf, M_{\Yf}) \ar[r] & (\Tf, M_\Tf) 
		}
		\]
		is commutative.
		A morphism of sextuples is defined in the usual way. For a log frame $((X', M_{X'}), \allowbreak (Y', M_{Y'}), \allowbreak (\Yf', M_{\Yf'}), \allowbreak j, i, z)$, we will often use the notation $((X', M_{X'}), (Y', M_{Y'}), (\Yf', M_{\Yf'}))$ or even $(X', Y', \Yf')^{\log}$ if the log structure is obvious from the context.
	\end{definition}
	
	\begin{definition}
		Let $\Xf$ be an affine formal $\V[[t]]$-scheme with an ideal of definition $\mathcal{I}$. Let $f_1, \dots, f_r \in \Gamma(\Xf, \mathcal{I})$ be its generators. Following \cite[0.2.6]{berthelot1996cohomologie}, we define the associated rigid analytic space
		$$\Xf_K = \bigcup_{\eta \in (0, 1) \cap \Gamma^*} \{ x \in \Xf^\ad : |f_1|_x, \dots, |f_r|_x \leq \eta \} = \{ x \in \Xf^\ad : |f_1|_{[x]}, \dots, |f_r|_{[x]} < 1\}.$$
		This definition is independent of the choice of $\mathcal{I}$ and $f_1, \dots, f_r$ and is functorial for $\Xf$. Thus we can glue them to define the associated rigid analytic space $\Xf_K$ for any formal $\V[[t]]$-scheme $\Xf$. We also have the specialization map $\sp : \Xf_K \to \Xf.$
	\end{definition}
	
	Note that if $\Xf$ is a $p$-adic formal $\V[[t]]$-scheme, then $\Xf_K = \Xf^\ad.$ Furthermore, if $Z$ is a closed subscheme of $\Xf$ cut out by $f_1, \dots, f_r \in \Gamma(\Xf, \O)$, then the rigid analytic space $\wh{\Xf}_K$ associated to the completion of $\Xf$ along $Z$ coincides with $]Z[_{\Xf}.$
	
	\begin{definition}[{cf. \cite[Definition 2.19]{shiho2007relativeI}}]
		Let $(Z, M_Z) \in \LS$ and $(Z, M_Z) \hookrightarrow (\Zf, M_{\Zf})$ a closed immersion in $\LFS$. The (log) tubular neighborhood $]Z[^{\log}_\Zf$ is defined to be the rigid analytic space $(\Zf^\ex)_K$. We define the specialization map $\sp: ]Z[^{\log}_\Zf \to \wh{\Zf}$ (completion of $Z$ along $\Zf$) to be the composition
		$$]Z[^{\log}_\Zf = (\Zf^\ex)_K \xrightarrow{\sp} \Zf^\ex \to \wh{\Zf}$$
		where the last map is induced by $\Zf^\ex \to \Zf$. Let $[\sp] : ]Z[^{\log}_\Zf \to \wh{\Zf}$ be the composition
		$$]Z[^{\log}_\Zf = (\Zf^\ex)_K \xrightarrow{[\cdot]} (\Zf^\ex)_K \xrightarrow{\sp} \wh{\Zf}.$$
		For a locally closed subset $U$ of $Z$, regard it as an open subset of $\wh{\Zf}$ via the homeomorphism $Z \xrightarrow{\sim} \wh{\Zf}$ and define $]U[^{\log}_\Zf = [\sp]^{-1}(U)$, which forms a germ together with $]Z[^{\log}_\Zf$.
	\end{definition}
	
	\begin{theorem}[log weak fibration theorem] \label{lem:tub-nei-form}
		Let $(Z, M_Z) \in \LS$, let $U \subseteq Z$ be an open subset, let $i : (Z, M_Z) \hookrightarrow (\Zf, M_{\Zf})$ be a closed immersion in $\LFS$ and let $(Z, M_Z) \xrightarrow{i'} (\Zf', M_{\Zf'}) \xrightarrow{f} (\Zf, M_{\Zf})$ be a factorization of $i$ into an exact closed immersion $i'$ and a morphism $f$ formally log \'{e}tale in an open neighborhood of $U$, defined by infinitesimal lifting property in $\LFS$. Then $(\wh{\Zf'})_K$ is canonically isomorphic to $]Z[^{\log}_\Zf = (\Zf^\ex)_K$ and $]U[_{\wh{\Zf'}}$ is canonically isomorphic to $]U[^{\log}_{\Zf'}$ as germs.
	\end{theorem}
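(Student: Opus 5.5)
The plan follows the classical argument of Shiho for the log weak fibration theorem, namely \cite[Lemma 2.10, Proposition 2.12]{shiho2002crystallineII} and \cite[Proposition 2.20]{shiho2007relativeI}, checking that each step survives in the $\V[[t]]$-setting with adic spaces and with non-$p$-adic formal schemes in $\LFS$.

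First, reduce to the case where $i$ itself is exact. The exactification $\Zf^\ex$ is characterised by a universal property among exact closed immersions of $(Z,M_Z)$ into objects formally log étale over $\Zf$. Since $i'$ is exact and $f$ is formally log étale near $U$, the universal property gives a canonical morphism $\wh{\Zf'} \to \wh{\Zf^\ex}$ (completions along $Z$), at least over a neighbourhood of $U$. Conversely, $\Zf^\ex \to \Zf$ is formally log étale. So by the infinitesimal lifting property in $\LFS$, the compatible maps $Z \hookrightarrow \Zf'$ lift successively to morphisms from the infinitesimal neighbourhoods of $Z$ in $\Zf^\ex$ to $\Zf'$. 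These lifts are compatible, so they pass to the limit and give $\wh{\Zf^\ex} \to \wh{\Zf'}$. Uniqueness in the lifting property shows that the two composites are the identity on formal completions along $Z$. Hence $\wh{\Zf'} \simeq \wh{\Zf^\ex}$ near $U$.

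Second, pass to rigid spaces. By the remark preceding the definition of tubes, the functor $\Xf \mapsto \Xf_K$ sends the completion of $\Xf$ along $Z$ to $]Z[_\Xf$, and it is functorial. Applying it to the isomorphism of formal completions gives $(\wh{\Zf'})_K \simeq (\wh{\Zf^\ex})_K$. Because the exactification is exact and $Z \hookrightarrow \Zf^\ex$ is a homeomorphism onto the special locus, $(\wh{\Zf^\ex})_K = (\Zf^\ex)_K$, which is $]Z[^{\log}_\Zf$ by definition. This identification is compatible with $[\sp]$, since both specialisation maps factor through the same completions. Taking $[\sp]^{-1}(U)$ on both sides then gives the statement about $]U[$ as germs. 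The isomorphism need only hold on an open neighbourhood of $]U[$, because $f$ is log étale only near $U$, and germs localise at strict neighbourhoods.

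The main obstacle is the first step when $f$ is only formally log étale near $U$ and the formal schemes are not $p$-adic. There the lifting must be done on the $\mathcal{I}$-adic infinitesimal neighbourhoods $Z_n \subset \Zf^\ex$, and one must ensure the lifts land in the open where $f$ is log étale. I would handle this by first shrinking $\Zf'$ to an open $\Zf'_0$ that contains $U$ and is log étale over $\Zf$. I would then note that $\Zf'_0 \cap Z$ is open in $Z$ and work with the open formal subscheme of $\wh{\Zf^\ex}$ lying over it. Its rigid generic fibre is an open neighbourhood of $]U[^{\log}_\Zf$, because $[\sp]^{-1}$ of an open set is open. This is exactly the strict-neighbourhood flexibility that the germ formalism allows.
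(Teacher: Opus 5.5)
Your Step 1 (uniqueness of the exactification via the universal property and infinitesimal lifting) is fine. If $f$ is formally log \'etale on a neighbourhood of all of $Z$, your argument is complete: $\wh{\Zf'}$ is then itself an exactification, so $(\wh{\Zf'})_K \simeq (\Zf^{\ex})_K$ globally and compatibly with $[\sp]$, and the tubes match. The gap is in the case actually stated, where $f$ is log \'etale only near $U$.

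There your comparison exists only over an open $Z_0 = Z \cap \Zf'_0 \supseteq U$. The generic fibre of the open formal subscheme over $Z_0$ is $\sp^{-1}(Z_0)$, not $[\sp]^{-1}(Z_0)$, and the tube ${]U[} = [\sp]^{-1}(U)$ is not contained in $\sp^{-1}(Z_0)$. For $x \in {]U[}$, the point $\sp(x)$ is only a specialization of $\sp([x]) \in U$, so it can lie in $\overline{U} \setminus Z_0$. For example, take $U = Z_0 = D(z)$ in $\mathbb{A}^1$ with the closed unit disc as generic fibre. The rank-two point just inside the Gauss point in the direction of $0$ lies in ${]U[}$ but specializes to the origin.

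The sentence justifying your last step is also false: $[\sp]^{-1}$ of an open set is closed, not open, since its complement is the open tube of the closed complement. What you actually obtain is an isomorphism on a neighbourhood of the naive tube ${]U[}^{\circ} = \sp^{-1}(U)$. This set is open and dense in ${]U[}$, but it is not a strict neighbourhood. An isomorphism of germs needs an isomorphism on an open set containing the higher-rank boundary points of ${]U[}$, and your argument does not even produce a morphism there.

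Closing this requires a strong-fibration-type argument, which is the second half of the paper's proof.
\begin{enumerate}[(i)]
\item To get a comparison morphism defined on the whole completion rather than only over $Z_0$, the paper passes to $\Zf' \times_{\Zf} \Zf''$, restricted so that $Z$ embeds by an exact closed immersion. This reduces to a morphism $p : \Zf'' \to \Zf'$ over $\Zf$.
\item The analogue of your Step 1 then shows that $p$ is an isomorphism on naive tubes.
\item The paper shows that $p$ is a homeomorphism on the closures ${]U[}$.
\item It shows that $\mathcal{H}(p(v)) \to \mathcal{H}(v)$ is an isomorphism for every $v \in {]U[}$, because each $v$ generalizes into the naive tube and $\mathcal{H}$ depends only on $[v]$.
\item It concludes with Le Stum's Proposition 5.19, giving \'etaleness on an open neighbourhood of ${]U[}$, and Huber's Proposition 2.3.7, giving equivalence of the \'etale topoi of the germs.
\end{enumerate}
You need this step, or an appeal to a strong fibration theorem for a globally defined morphism \'etale near $U$, to pass from the naive tube to the germ.
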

	\begin{proof}
		By construction of exactification (see \cite[Proposition-Definition 2.10]{shiho2007relativeI}), we have a factorization $(Z, M_Z) \xrightarrow{i^\ex} (\Zf^\ex, M_{\Zf^\ex}) \xrightarrow{g} (\Zf, M_\Zf)$ with $i^\ex$ exact, $g$ formally log \'{e}tale and $\Zf^\ex \simeq \wh{\Zf^\ex}$. Hence it suffices to check that $\wh{\Zf'}_K$ and $]U[_{\wh{\Zf'}}$ are independent of the choice of factorizations.
		We follow the Let $(Z,M_Z) \xrightarrow{i''} (\Zf'',M_{Zf''}) \xrightarrow{f''} (\Zf,M_\Zf)$ be another such factorization.
		
		For $]Z[^{\log}_\Zf$, we proceed as in \cite[Lemma 2.2.2]{shiho2002crystallineII}. Put $(\Zf^0,L)=(\Zf',M_{\Zf'}) \times_{(\Zf, M_\Zf)}(\Zf'',M_{\Zf''})$, the fibered product of fine formal log schemes and let $(Z,M_Z) \xrightarrow{k} (\Zf^0,L) \xrightarrow{g} (\Zf,M_\Zf)$ for the induced morphism. $k$ is a locally closed immersion and $h$ is formally log etale in an open neighborhood of $\Zf^0$. By restricting to an open subset of $\Zf^0$, we may assume that $k$ is a closed immersion. Let us check that $k$ is exact. by definition $k^*L$ is the pushout of the diagram:
		\[
		\xymatrix{
			i^*L \ar[r] \ar[d] & i''^*M_{P''} \\
			M_{P'}. &
		}
		\]
		Since $i'$ and $i''$ are strict, we have a commutative diagram
		\[
		\xymatrix{
			i^*L \ar[r]\ar[d] & i''^*M_{\Zf''}\ar[d]^{\wr} \\
			i'^*M_{\Zf'} \ar[r]^{\sim} & M_{Z}
		}
		\]
		with left vertical map and bottom horizontal map isomorphisms. Hence $M_Z$ is the pushout and $k^*L \simeq M_Z$.
		We have the following commutative diagram
		\[
		\xymatrix{
			& & (\Zf',M_{\Zf'}) \ar[rd]^{f'} \\
			(U,M_U) \ar@{^{(}->}[r] & (Z, M_Z) \ar@{^{(}->}[r]^k \ar@{^{(}->}[ur]^{i'} \ar@{^{(}->}[dr]_{i''} & (\Zf^0,L) \ar[u]^{p_1} \ar[d]_{p_2} \ar[r]^g & (\Zf,M_\Zf) \\
			& & (\Zf'',M_{\Zf''}). \ar[ur]^{f''} \\
		}
		\]
		It suffices to prove that $p_1$ and $p_2$ induces isomorphisms $(\wh{\Zf^0})_K \simeq (\wh{\Zf'})_K$, $]U[_{\wh{\Zf^0}} \simeq ]U[_{\wh{\Zf'}}$ and $(\wh{\Zf^0})_K \simeq (\wh{\Zf''})_K$, $]U[_{\wh{\Zf^0}} \simeq ]U[_{\wh{\Zf''}}$. Therefore we may assume in the first place that we have a commutative diagram
		\[
		\xymatrix{
			& & (\Zf',M_{\Zf'}) \ar[rd]^{f'} \\
			(U,M_U) \ar@{^{(}->}[r] & (Z, M_Z) \ar@{^{(}->}[r]^{i''} \ar@{^{(}->}[ur]^{i'} & (\Zf'', M_{\Zf''}) \ar[u]^{p} \ar[r]^{f''} & (\Zf,M_\Zf). \\
		}
		\]
		Let $\Uf' \subseteq \Zf'$ and $\Uf'' \subseteq \Zf''$ be open subsets over which $f'$ and $f''$ are formally log etale, respectively. $\Vf'' = \{ x \in \Zf'' : (p^*M_{\Zf'})_{\bar{x}} \simeq (M_{\Zf''})_{\bar{x}} \text{is an isomorphism} \}$ is an open subset of $\Zf''$ containing $U$. By replacing $\Uf''$ by $\Uf'' \cap \Vf''$ we may assume that $p$ is formally log etale on $\Uf''$. Let $\wh{\Zf''}$ and $\wh{\Zf'}$ be the formal completions along $Z$ and let $\wt{\Zf''}$ be the formal completion of $\Zf''$ along $Z' = Z \times_{\Zf'} \Zf''$. Put $U' := U \times_{\Zf'} \Zf''.$ The projection $U' \to U$ is formally log etale and the canonical morphism $U \to U'$ is a section; hence $U$ is a summand of $U'$. If we denote by $\Wf', \, \Wf''$ and $\wt{\Wf''}$ the open subsets of $\wh{\Zf'}, \, \wh{\Zf''}$ and $\wt{\Zf''}$ whose underlying set are $U, \, U$ and $U'$, then $\Wf''$ is a summand of $\wt{\Wf''}$ and $\wt{\Wf''} \to \Wf'$ is formally log etale. Therefore $\Wf'' \to \Wf'$ is formally log etale and gives an identity map $X \to X$ of their schemes of definition. It follows that $\Wf'' \to \Wf'$ is an isomorphism. Hence $p$ induces an isomorphism (of rigid analytic spaces) of naive tubes $]U[^{\circ}_{\wh{\Zf''}} \xrightarrow{\sim} ]U[^{\circ}_{\wh{\Zf'}}$ , where $]U[^{\circ}_{\wh{\Zf'}} = \sp^{-1}_{\wh{\Zf'}}(U) = \Wf'_K$ and similarly for double primes.
		
		Recall that from \cite[{\S 4}]{huber1994generalization}, $(\wh{\Zf'})_K$ is the inverse limit of all admissible blowups as a topological space and similarly for double primes; this implies that $p$ induces a homeomorphism $(\wh{\Zf''})_K \to (\wh{\Zf'})_K.$ Since $]U[_{\wh{\Zf'}}$ is the closure of naive tube $]U[^{\circ}_{\wh{\Zf'}}$ in $(\wh{\Zf'})_K$ and similarly for double primes, it follows that $p$ gives a homeomorphism $]U[_{\wh{\Zf''}} \xrightarrow{\sim} ]U[_{\wh{\Zf'}}$. Now we are in the situation of the proof of  \cite[Theorem 5.21]{stum2017rigidI}. For reader's sake, we execute the argument of {\it loc. cit.} to finish the proof. For any point $v \in ]U[_{\wh{\Zf''}}$ since $]U[^{\circ}_{\wh{\Zf''}}$ is dense open, $v$ generalizes to some point $w \in ]U[^{\circ}_{\wh{\Zf''}}.$ The local homomorphism $\O_{\wh{\Zf''}, v} \to \O_{\wh{\Zf''}, w}$ induces the isomorphism $\mathcal{H}(v) \xrightarrow{\sim} \mathcal{H}(w)$ of completed residue fields; similarly we have $\mathcal{H}(p(v)) \xrightarrow{\sim} \mathcal{H}(p(w))$. Since $w$ and $p(w)$ are in the naive tubes which are isomorphic as rigid analytic spaces, we conclude that $\mathcal{H}(p(v)) \xrightarrow{\sim} \mathcal{H}(v)$ is an isomorphism. By \cite[Proposition 5.19]{stum2017rigidI} there is an open neighborhood $V$ of $]U[_{\wh{\Zf''}}$ such that $V \to (\wh{\Zf'})_K$ is etale and by \cite[Proposition 2.3.7]{huber2013etale}, etale topoi of $(]X[_{\wh{\Zf''}}, V)$ and $(]X[_{\wh{\Zf'}}, (\wh{\Zf'})_K)$ are equivalent. This implies that germs $]X[_{\wh{\Zf''}}$ and $]X[_{\wh{\Zf'}}$ are isomorphic.
	\end{proof}
	
	\subsection{Overconvergent log isocrystals}
	\begin{definition}
		Let $(\Xf, M_\Xf) \hookrightarrow (\Yf, M_\Yf)$ be a strict open immersion in $\pLFS$ and let $(\Yf, M_\Yf) \to (\Tf, M_{\Tf})$ be a morphism in $\pLFS$. An overconvergent log isocrystal $\E$ over $((\Xf, M_\Xf), (\Yf, M_\Yf))/(\Tf, M_\Tf)$ (or simply over $(\Xf, \Yf)^{\log}/\Tf$ or over $(\Xf^\sharp, \Yf^\sharp) / \Tf^\sharp$) consists of:
		\begin{enumerate}
			\item a coherent $\O_{]X'[^{\log}_{\Yf'}}$-module $\E_{\Yf'}$ for each log frame $(X', Y', \Yf')^{\log}$ over $(\Xf, \Yf)^{\log}/\Tf$;
			\item for each morphism $f: \Yf'_1 \to \Yf'_2$ of log frames over $(\Xf,\Yf)^{\log}/\Tf$, an isomorphism $\phi_f : f^\dag \mathcal{E}_{\Yf'_2} \to \mathcal{E}_{\Yf'_1}$, where $f^\dag : \Coh(\O_{]X'_2[^{\log}_{\Yf'_2}}) \to \Coh(\O_{]X'_1[^{\log}_{\Yf'_1}})$ is defined in the usual way;
		\end{enumerate}
		with $\phi$ satisfying obvious cocycle condition. The category of overconvergent log isocrystals over $(\Xf,\Yf)^{\log}/\Tf$ is denoted by $I^{\log}_{\oc}(\Xf,\Yf/\Tf)$ or by $I_{\oc}(\Xf^\sharp, \Yf^\sharp / \Tf^\sharp)$.
	\end{definition}
	
	\begin{remark}
		It is obvious from definition that the category of overconvergent log isocrystals over $(\Yf, \Yf)^{\log} / \Tf$ is equivalent to the category of convergent log isocrystals over $(\Yf, M_\Yf) / (\Tf, M_\Tf)$ in the sense of \cite[Definition 2.5]{shiho2007relativeI}.
	\end{remark}
	
	The category of overconvergent log isocrystals is local for the frame as in the case of overconvergent isocrystals:
	
	\begin{proposition}
		Let $\{ \Yf_i \}_i$ be a Zariski open covering of $\Yf$ and let $\Xf_i = \Yf_i \times_{\Yf} \Xf.$ For $i_1, \dots,  i_p$, put $\Yf_{i_1 \dots i_p} = \Yf_{i_1} \times_{\Yf} \cdots \times_{\Yf} \Yf_{i_p}$ and $\Xf_{i_1 \dots i_p} = \Xf_{i_1} \times_{\Xf} \cdots \times_{\Xf} \Xf_{i_p}$ Then there is a canonical equivalence of categories
		\[
		I^{\log}_\oc(\Xf, \Yf / \Tf) \simeq 2\mathchar`-\lim \left[ \prod_i I^{\log}_\oc(\Xf_i, \Yf_i / \Tf) \substack{\longrightarrow \\[-2pt] \longrightarrow} \prod_{i, j} I^{\log}_\oc(\Xf_{ij}, \Yf_{ij} / \Tf) \substack{\longrightarrow \\[-2pt] \longrightarrow \\[-2pt] \longrightarrow} \prod_{i, j, k} I^{\log}_\oc(\Xf_{ijk}, \Yf_{ijk} / \Tf) \right].
		\]
	\end{proposition}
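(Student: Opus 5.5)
The plan is to reduce this Zariski descent statement to the same property at the level of individual log frames and of coherent sheaves on tubes. The argument will be the standard one for overconvergent isocrystals, transported to the log setting.

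\textbf{Functor and full faithfulness.} First I would write down the comparison functor. Any log frame $(X', Y', \Yf')^{\log}$ over $(\Xf_i, \Yf_i)^{\log}/\Tf$ is, by composing the structure morphism $z$ with the open immersion $\Yf_i \hookrightarrow \Yf$, also a log frame over $(\Xf, \Yf)^{\log}/\Tf$. This gives restriction functors $I^{\log}_\oc(\Xf, \Yf/\Tf) \to I^{\log}_\oc(\Xf_i, \Yf_i/\Tf)$, compatible on overlaps, and hence a functor to the $2$-limit. Full faithfulness should be formal. Given a log frame $(X', Y', \Yf')^{\log}$ over $(\Xf, \Yf)$, the subsets $Y'_i = z^{-1}(\Yf_i)$ form an open cover of $Y'$. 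Each $Y'_i$ is open in $Y'$, so it is cut out by an open formal subscheme $\Yf'_i \subseteq \Yf'$, and $(X' \cap Y'_i, Y'_i, \Yf'_i)^{\log}$ is a log frame over $(\Xf_i, \Yf_i)$. The sets $]X' \cap Y'_i[^{\log}_{\Yf'_i}$ are open in $]X'[^{\log}_{\Yf'}$ and cover it, because $[\sp]$ is continuous and $]U[^{\log} = [\sp]^{-1}(U)$. Morphisms of coherent sheaves glue along open covers, and this gives full faithfulness.

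\textbf{Essential surjectivity.} Now start from compatible data $(\E_i, \alpha_{ij})$ satisfying the cocycle condition. Fix a log frame $(X', Y', \Yf')^{\log}$ over $(\Xf, \Yf)$ and define $\E_{\Yf'}$ on $]X'[^{\log}_{\Yf'}$ by gluing the sheaves $\E_{i, \Yf'_i}$ on the open cover $\{]X' \cap Y'_i[^{\log}_{\Yf'_i}\}$. The gluing isomorphisms come from $\alpha_{ij}$ evaluated on the frames $\Yf'_i \cap \Yf'_j$, which are frames over $(\Xf_{ij}, \Yf_{ij})$. The cocycle condition on triple overlaps gives a coherent sheaf. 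Then for a morphism of log frames $f : \Yf'_1 \to \Yf'_2$, I would define $\phi_f$ locally on the induced covers, observing that $f$ maps $\Yf'_{1,i}$ into $\Yf'_{2,i}$. These local isomorphisms glue, and the cocycle condition for $\phi$ follows from the local ones.

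\textbf{Main obstacle.} The key point to verify is that the tube construction is compatible with restriction to open subsets. Concretely, I need
$]X' \cap Y'_i[^{\log}_{\Yf'_i} = ]X'[^{\log}_{\Yf'} \cap [\sp]^{-1}(Y'_i)$
as germs. This requires two things:
\begin{enumerate}[(i)]
\item exactification commutes with open immersions, i.e. $(\Yf'_i)^\ex$ is the open piece of $\Yf'^\ex$ lying over $\Yf'_i$;
\item the functor $(\cdot)_K$ is compatible with open formal subschemes.
\end{enumerate}
For (i), I would use that exactification is étale-local and is constructed by gluing (\cite[Proposition-Definition 2.10]{shiho2007relativeI}). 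The log weak fibration theorem \ref{lem:tub-nei-form} would also let me compute with any exact factorization, and restricting such a factorization to an open subset remains one. Once this compatibility holds, $f^\dag$ commutes with restriction to these opens. The rest is the usual sheaf-theoretic descent for coherent modules along an open cover of a germ.
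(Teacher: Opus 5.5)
\textbf{The approach is right, but the step you flag as the key point is false as stated.} Your strategy is the standard argument the paper has in mind; the paper gives no proof beyond ``as in the case of overconvergent isocrystals''. You build restriction functors and then glue frame by frame along the cover of $Y'$ by $z^{-1}(\Yf_i)$.

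The problem is that the map $[\sp]=\sp\circ[\cdot]$ is not continuous. In this adic formalism the tube of a \emph{closed} subset is open, since $]Z[=\{|f_i|_{[x]}<1\}=\bigcup_\eta\{|f_i|_x\le\eta\}$. Hence the tube of an \emph{open} subset is closed. This is exactly why the paper can treat $]X[_\Pf\hookrightarrow]Y[_\Pf$ as a closed immersion and $(]X[,]Y[)$ as a germ.

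Consequently $]X'[^{\log}_{\Yf'}\cap[\sp]^{-1}(Y'_i)=[\sp]^{-1}(X'\cap Y'_i)$ is closed in $]X'[^{\log}_{\Yf'}$. In general it is strictly larger than $]X'\cap Y'_i[^{\log}_{\Yf'_i}$. Already for ordinary tubes, take $\Yf'=\Spf\V\langle x\rangle$, $X'=Y'=\mathbb{A}^1_k$ and $Y'_1=D(x)$. The rank-two point $v$ specializing the Gauss point towards $0$ has $|x|_{[v]}=1$ but $|x|_v<1$. So $v\in[\sp]^{-1}(D(x))$, while $\sp(v)=0\notin D(x)$, i.e.\ $v\notin\,]Y'_1[_{\Yf'_1}$.

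So your identity fails already as sets, hence also as germs. The sets you propose to glue along are not open, and they are not where the local data $\E_{i,\Yf'_i}$ live.

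\textbf{The repair uses the honest specialization map, which is continuous.} Since $\Yf'^{\ex}$ is complete along $Y'$, the map $\sp\colon(\Yf'^{\ex})_K\to\Yf'^{\ex}$ takes values in a space homeomorphic to $Y'$. Granting your points (i) and (ii), $((\Yf'_i)^{\ex})_K$ is the open subspace $\sp^{-1}(Y'_i)$ of $(\Yf'^{\ex})_K$. Since $[x]$ lies in every open neighbourhood of $x$, maximal generalizations computed in this open subspace agree with those in the whole space. This gives $]X'\cap Y'_i[^{\log}_{\Yf'_i}=\,]X'[^{\log}_{\Yf'}\cap\sp^{-1}(Y'_i)$, with $\sp$ rather than $[\sp]$.

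These sets are open. They cover $]X'[^{\log}_{\Yf'}$ because $\sp(x)$ itself lies in some $Y'_i$. Then $\sp([x])$, being a generalization of $\sp(x)$ and lying in $X'$, lies in $X'\cap Y'_i$. Note that the covering property comes from $\sp(x)$, not from continuity of $[\sp]$.

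With this correction the rest of your gluing argument goes through. One small point remains in the functoriality step. A morphism $f\colon\Yf'_1\to\Yf'_2$ of log frames need not map your chosen $\Yf'_{1,i}$ into $\Yf'_{2,i}$. Replace $\Yf'_{1,i}$ by $\Yf'_{1,i}\cap f^{-1}(\Yf'_{2,i})$; by (i) this does not change the log tube.
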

	
	\subsection{Log connections over log tubular neighborhoods}
	Using tubular neighborhoods, we can give interpretations of overconvergent log isocrystals in terms of modules with log stratifications or integrable log connections as in convergent case.
	
	Let us consider the following situation. Consider a commutative diagram in $\LFS$:
	\[
	\xymatrix{
		(X, M_X) \ar@{^{(}->}[r] \ar@{=}[d] & (Y, M_Y) \ar@{=}[d] \ar@{^{(}->}[r]^i & (\Pf, M_\Pf) \ar[d]^{g}\\
		(X, M_X) \ar@{^{(}->}[r]  & (Y, M_Y) \ar[r] & (\Tf, M_\Tf)
	}
	\]
	where $(X, M_X)$, $(Y, M_Y)$ are in $\LS$, $(\Pf, M_\Pf)$, $(\Tf, M_\Tf)$ are in $\pLFS$, $j$ an open immersion, $i$ a closed immersion and $g$ formally log smooth morphism.
	Let $(\Pf(n), M_{\Pf(n)})$ be the $(n+1)$-th fold fibered product of $(\Pf, M_\Pf)$ over $(\Tf, M_\Tf)$.
	Then we have a log frame $(X, M_X) \hookrightarrow (Y, M_Y) \hookrightarrow (\Pf(n), M_{\Pf(n)})$ of $(X, M_X) \hookrightarrow (Y, M_Y) / (\Tf, M_\Tf)$. Then projection maps and diagonal map induce canonical morphisms
	\begin{itemize}
		\item $p_{i}: ]X[^{\log}_{\Pf(1)} \to ]X[^{\log}_P$ ($i=0,1$),
		\item $p_{ij}: ]X[^{\log}_{\Pf(2)} \to ]X[^{\log}_{P(1)}$ ($0 \leq i < j \leq 2$),
		\item $\Delta: ]X[^{\log}_\Pf \to ]X[^{\log}_{\Pf(1)}$.
	\end{itemize}
	Let $\Strat(X, Y, \Pf/\Tf)^{\log}$ be the category of coherent $\mathcal{O}_{]X[^{\log}_{\Pf(1)}}$-modules $\mathcal{E}$ equipped with an $\O$-linear isomorphism $\epsilon: p_i^*\mathcal{E} \rightarrow p_0^*\mathcal{E}$ such that $\Delta^*\epsilon = \id$ and $p_{01}^*\epsilon = (p_{02}^*\epsilon)(p_{12}^*\epsilon)$.
	
	Then we have the following expected proposition as in \cite[Proposition 2.2.7]{shiho2002crystallineII} under the situation where $(\LFS, \pLFS, \LS) = (\LFS/\Bf, \pLFS/\Bf, \LS/B)$.  When $(\LFS, \pLFS, \LS) = (\LFS/\V, \pLFS/\V, \LS/k)$ and $\Tf = \Spf \V$, one can handle the fibered products of $\V[[t]]$ over $\V$ similarly as in the proof of \cite[Proposition 5.12]{lazda2016rigid} to prove the following:
	
	\begin{proposition}
		We have a canonical equivalence of categories
		$$I^{\log}_{\oc}(X \hookrightarrow Y / \Tf) \simeq \Strat(X, Y, \Pf/\Tf)^{\log}.$$
	\end{proposition}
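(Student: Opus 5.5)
The plan follows Shiho's proof of \cite[Proposition 2.2.7]{shiho2002crystallineII} (itself modelled on Berthelot's proof in the non-log case), replacing naive tubes by the germs $]X[^{\log}$. The whole argument rests on the log weak fibration theorem (Theorem \ref{lem:tub-nei-form}).

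\textbf{Functor from isocrystals to stratifications.} Given $\E \in I^{\log}_{\oc}(X \hookrightarrow Y/\Tf)$, evaluate it on the log frames $(X, Y, \Pf(n))^{\log}$ for $n = 0, 1, 2$. The morphisms $p_0, p_1$ give isomorphisms $\phi_{p_i} : p_i^\dag \E_\Pf \to \E_{\Pf(1)}$. Set $\epsilon = \phi_{p_0}^{-1}\phi_{p_1}$. The cocycle condition for $\phi$ applied to $p_{ij}$ and $\Delta$ then gives $\Delta^*\epsilon = \id$ and the cocycle identity for $\epsilon$. This direction is formal.

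\textbf{Quasi-inverse.} Start from a stratified module $(\E, \epsilon)$ and a log frame $(X', Y', \Yf')^{\log}$ over $(X, Y)^{\log}/\Tf$. Form the fibre product $(\Yf'', M) = (\Yf', M_{\Yf'}) \times_{\Tf} (\Pf, M_\Pf)$, which carries the diagonal immersion of $Y'$. Its first projection $q : \Yf'' \to \Yf'$ is formally log smooth, since $g$ is. The key step is to show that $q$ induces, after exactification, a morphism of germs
$$]X'[^{\log}_{\Yf''} \to \, ]X'[^{\log}_{\Yf'}$$
that locally has a section and whose relevant fibres are log-étale-type polydiscs. For this, factor $Y' \hookrightarrow \Yf''$ and $Y' \hookrightarrow \Yf'$ through exact closed immersions followed by formally log étale maps. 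By Theorem \ref{lem:tub-nei-form} we may compute the tubes on these exact models. On the exact models, a formally log smooth morphism along which $Y'$ embeds exactly is, locally near $X'$, formally étale over a formal affine space. The usual weak fibration theorem then makes $]X'[^{\log}_{\Yf''}$ locally isomorphic, as a germ, to $]X'[^{\log}_{\Yf'} \times$ an open unit polydisc.

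With this in hand, define $\E_{\Yf'}$ as follows. Pick local sections of $q$, which exist after shrinking, by the fibration structure. Pull back $\E$ along the composite to $\Pf$, and glue using $\epsilon$. The cocycle condition guarantees independence of the choices and compatibility with morphisms of frames. Descent uses locality in the frame (the preceding proposition) together with Proposition \ref{prop:tube-sheaf} for coherent modules on germs. In the case $(\LFS/\V, \pLFS/\V, \LS/k)$ with $\Tf = \Spf \V$, the fibre products over $\V$ involve $\V[[t]] \wh{\otimes}_\V \V[[t]]$, which is not $p$-adic. There I would handle the extra variable $t_1 - t_2$ as in \cite[Proposition 5.12]{lazda2016rigid}, treating it as one additional open disc coordinate in the tube.

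\textbf{Expected main obstacle.} The hardest step is the log fibration statement: that the exactified tube of $\Yf' \times_\Tf \Pf$ is locally a product of $]X'[^{\log}_{\Yf'}$ with an open polydisc, and that this holds as germs rather than only on naive tubes. I would attempt it by reducing, via Theorem \ref{lem:tub-nei-form}, to exact frames. There, log smoothness gives local charts such that the exactification is an honest formal étale cover of an affine space over $\Yf'$. The closure and germ issue is then handled exactly as at the end of the proof of Theorem \ref{lem:tub-nei-form}: apply the argument of \cite[Theorem 5.21]{stum2017rigidI} to the naive-tube isomorphism.
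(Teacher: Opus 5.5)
Your proposal is correct and is essentially the argument the paper intends. The paper gives no separate proof: it defers to \cite[Proposition 2.2.7]{shiho2002crystallineII}, which is the Berthelot-style construction of the quasi-inverse via the log weak fibration for $\Yf' \times_\Tf \Pf$ that you reproduce, and, for $\Tf = \Spf \V$, to the treatment of the fibre products of $\V[[t]]$ over $\V$ in \cite[Proposition 5.12]{lazda2016rigid}, exactly as you plan. One remark: your ``main obstacle'' is milder than you expect. Once the induced map between the exact models is strict (hence formally smooth), the exact model for $\Yf''$ is locally a formal open polydisc over the exact model for $\Yf'$ already at the level of formal schemes (compare Lemma \ref{lem:log-strong-fibration}), so the germ statement follows directly by taking $[\sp]^{-1}(X')$ and the argument of \cite[Theorem 5.21]{stum2017rigidI} is not needed.
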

	
	Let $E$ be an overconvergent log isocrystal on $(X, Y) / \Tf$ and let $(\mathcal{E}, \epsilon) \in \Strat$ be the associated object.
	Let $(P', M_{P'})$ be the first log infinitesimal thickening of $(P, M_P)$ in $(P(1), M_{P(1)})$.
	$\mathcal{E}$ induces an $\O_{]X[^{\log}_{\Pf'}}$-linear isomorphism $\O_{]X[^{\log}_{\Pf'}} \otimes_{\O_{]X[^{\log}_{\Pf}}} \E \rightarrow \E \otimes_{\O_{]X[^{\log}_{\Pf}}} \O_{]X[^{\log}_{\Pf'}}$). Define the log connection
	$\nabla: \E \to \E \otimes_{\O_{]X[^{\log}_\Pf}} \omega_{]X[^{\log}_\Pf / \Tf_K}$
	by
	$\nabla(e) = \epsilon \cdot (1 \otimes e) - e \otimes 1$. Here $\omega_{]X[^{\log}_\Pf / \Tf_K}  = \omega_{\Pf/\Tf}^{\text{log}} |_{]X[^{\log}_{\Pf}}$.
	Similarly to \cite[Lemma 2.2.8]{shiho2002crystallineII}, we have the following lemma.
	
	\begin{lemma} \label{lem:int-log-con}
		Let $E$ be an overconvergent log isocrystal on $(X, Y)^{\log}/ \Tf$ and let $\nabla: \E \to \E \otimes_{\O_{]X[^{\log}_\Pf}} \omega_{]X[^{\log}_\Pf / \Tf_K}$ as above. Then $\nabla$ is integrable.
	\end{lemma}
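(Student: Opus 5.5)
The plan is to follow the proof of \cite[Lemma 2.2.8]{shiho2002crystallineII}: write $\nabla^2$ in terms of the stratification $\epsilon$ on the second log infinitesimal neighbourhood, and reduce the vanishing of $\nabla^2$ to the cocycle condition $p_{02}^*\epsilon = (p_{01}^*\epsilon)(p_{12}^*\epsilon)$. The question is local on $]X[^{\log}_\Pf$ and on $\Pf$, so I would first pass to an affine open of $\Pf$. On it I would choose log coordinates $u_1, \dots, u_n$ with $\dlog u_1, \dots, \dlog u_n$ a basis of $\omega^{\log}_{\Pf/\Tf}$; this is possible because $g$ is formally log smooth.

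The first step is to describe the structure sheaves of the log infinitesimal neighbourhoods. Let $\Pf^{(m)}$ be the $m$-th log infinitesimal neighbourhood of $\Pf$ in $\Pf(1)$, and $\Pf^{(m)}(2)$ the analogue in $\Pf(2)$. Put $\tau_i = p_1^*u_i/p_0^*u_i - 1$, which is a unit-type function because the log structure is fine and the coordinates are logarithmic. Then $\O_{\Pf^{(m)}}$ is free over $\O_\Pf$ on the monomials $\ul{\tau}^{\unu}$ with $|\unu| \leq m$. I would then check that the same description holds after passing to rigid fibres. By the log weak fibration theorem (Theorem \ref{lem:tub-nei-form}), the tubes $]X[^{\log}_{\Pf^{(m)}}$ are computed from exact factorizations, so they are the first-order neighbourhoods of $]X[^{\log}_\Pf$ inside $]X[^{\log}_{\Pf(1)}$. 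This gives $\O_{]X[^{\log}_{\Pf^{(1)}}} = \O \oplus \bigoplus_i \O\,\tau_i$, with $\tau_i$ corresponding to $\dlog u_i$. For $\Tf$ over $\V$ and fibre products of $\V[[t]]$, I would handle the fibre products as in \cite[Proposition 5.12]{lazda2016rigid}.

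The second step is the computation itself. Restrict $\epsilon$ to $\Pf^{(2)}(2)$, the second log infinitesimal neighbourhood of the diagonal in $\Pf(2)$, and expand $\epsilon(1\otimes e) = \sum_{|\unu|\le 2} \ul{\tau}^{\unu}\otimes \theta_{\unu}(e)$. Then $\theta_{e_i}$ is the component of $\nabla$ along $\dlog u_i$. The cocycle condition on $\Pf^{(2)}(2)$ gives, as in the classical stratification-to-connection argument (Berthelot--Ogus), relations among the $\theta_{\unu}$. These relations show that $\partial_i\partial_j = \partial_j\partial_i$, where $\partial_i$ is the log derivation dual to $\dlog u_i$. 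The one place the log setting changes the bookkeeping is that $p_{02}^*\tau_i$ satisfies $1 + p_{02}^*\tau_i = (1 + p_{01}^*\tau_i)(1 + p_{12}^*\tau_i)$, rather than being additive. The cross term $p_{01}^*\tau_i\, p_{12}^*\tau_j$ is exactly what produces the symmetric second-order coefficient. Since $\nabla^2 = \sum_{i<j}(\partial_i\partial_j - \partial_j\partial_i)\,\dlog u_i\wedge \dlog u_j$, and $d(\dlog u_i) = 0$, this yields $\nabla^2 = 0$.

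The hard part is the first step: identifying $]X[^{\log}_{\Pf^{(m)}}$ with the formal infinitesimal neighbourhoods of the diagonal inside $]X[^{\log}_{\Pf(1)}$ and $]X[^{\log}_{\Pf(2)}$, compatibly with the germ structure. I would deduce this from Theorem \ref{lem:tub-nei-form} applied to the exactification of the diagonal, together with Proposition \ref{prop:tube-sheaf} to pass between coherent modules on germs and on strict neighbourhoods. After that, the integrability computation is purely formal.
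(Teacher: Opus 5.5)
Your proposal is correct and follows the paper's route. The paper gives no separate argument and simply defers to \cite[Lemma 2.2.8]{shiho2002crystallineII}, which is the standard derivation of integrability from the cocycle condition on the second log infinitesimal neighbourhood of the diagonal in $\Pf(2)$, as you outline.

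One clarification: $\partial_i\partial_j=\partial_j\partial_i$ comes from matching the coefficients $\theta_{e_i}\theta_{e_j}$ of $p_{01}^*\tau_i\,p_{12}^*\tau_j$ and $\theta_{e_j}\theta_{e_i}$ of $p_{01}^*\tau_j\,p_{12}^*\tau_i$ against the single symmetric coefficient $\theta_{e_i+e_j}$ of $p_{02}^*\epsilon$. The multiplicative relation $1+p_{02}^*\tau_i=(1+p_{01}^*\tau_i)(1+p_{12}^*\tau_i)$ plays no role there; it only yields the diagonal relation $\theta_{e_i}^2=\theta_{e_i}+2\theta_{2e_i}$.
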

	
	\subsection{Equivalence of categories} \label{subsec:equiv-cat}
	In this subsection, we consider overconvergent isocrystals over $(\Bf, M_\Bf)$. Let $\Yf, \Zf, \Pf, \Df$ and $X, Y, Z, \allowbreak P, D$ be as in Hypothesis \ref{hypo:ss-pair}. The relative strict normal crossings divisor $\Df$ of $\Pf$ defines the fine log structure on $\Pf$, which we denote by $M_{\Pf}.$ Let $M_{\Yf}$ be the pullback of $M_{\Pf}$, which is the fine log structure of $\Yf$ defined by $\Zf$. The canonical morphism $(\Yf, M_\Yf) \to (\Pf, M_\Pf)$ is strict.
	
	We follow \cite[\S6.2]{kedlaya2007semistable} to construct factorizations as in Lemma \ref{lem:tub-nei-form} of $(k+1)$-fold fiber products of $(\Pf, M_\Pf)$. In what follows we let $(\LFS, \pLFS, \LS) = (\LFS/\Bf, \pLFS/\Bf, \LS/B)$.
	
	Let $\Yf(k)$ and $\Pf(k)$ be $(k+1)$-fold fibered product over $\V[[t]]$ and let $\mathrm{pr}_i$ be the $i$-th projection ($0 \leq i \leq k$).
	Let $M_{\Pf(k)}$ be the log structure $\mathrm{pr}_0^* M_\Pf \oplus_{p^*M_\Bf} \cdots \oplus_{p^*M_\Bf} \mathrm{pr}_k^* M_\Sf$, where $\oplus$ is the pushout here and $p: \Pf(k) \to \Bf$ is the structure morphism.
	One can check by a direct calculation that this log structure $M_{\Pf(k)}$ is fine and hence $(\Pf(k), M_{\Pf(k)})$ is the $(k+1)$-fold fibered product over $(\Bf, M_\Bf)$.
	
	Let $\wh{\mathbb{A}}^{m(k+1)^2}_{\V[[t]]}$ be the affine space over $\V[[t]]$ with variables $u_i^{(l,l')}$ ($i=1, \dots, m$, $l,l'=0, \dots, k$). Let $t^{(l)}_i = \mathrm{pr}_l^* t_i$ be the section of $\Pf(k).$
	Let $\Pf'(k)$ be the schematic image of the graph $\Pf(k)^{\mathrm{triv}} \to \Pf(k) \times_{\V[[t]]} \wh{\mathbb{A}}_{\V[[t]]}^{(n+m)(k+1)^2}$ induced by $u_i^{(l, l')} \mapsto t_i^{(l)}/t_i^{(l')}$. Here ``triv'' means the open locus where the log structure is trivial. Then the rational map $P \hookrightarrow \Pf(k) \dashrightarrow \Pf'(k)$ is in fact a regular morphism, which we will denote by $i'(k): P \hookrightarrow \Pf'(k)$.
	
	Explicitly, we have
	$$\Pf'(k) = \Spf \V[[t]] \langle t_i^{(0)}, t_i^{(1)}/t_i^{(0)} \dots, t_i^{(k)}/t_i^{(k-1)} \rangle_{i = 1, \dots, m} \langle t_i^{(0)}, \dots, t_i^{(k)} \rangle_{i = m+1, \dots, n} /(t - t_1^{(l)} \dots t_r^{(l)})_{l = 0, \dots, k}.$$
	The divisor $V(t_1^{(0)} \cdots t_{m}^{(0)})$ defines a fine log structure $M_{\Pf'(k)}$ on $\Pf'(k)$. Since $V(t_i^{(0)}) \cap P = V(t_i)$, this implies as in \cite[Lemma 6.2.9]{kedlaya2007semistable} that $i'(k)$ defines an exact closed immersion $(P, M_P) \hookrightarrow (\Pf'(k), M_{\Pf'(k)})$ with $M_P$ induced by $D$. Note that both $(\Pf'(k), M_{\Pf'(k)})$ and $(\Pf(k), M_{\Pf(k)})$ are formally log smooth over $(\Sf, M_\Sf)$. Let $f'(k) : (\Pf'(k), M_{\Pf'(k)}) \to (\Pf(k), M_{\Pf(k)})$ be the canonical morphism. Since $\dlog (t_i^{(l)}/t_i^{(l')}) = \dlog t_i^{(l)} - \dlog t_i^{(l')}$, the canonical map $f'(k)^* \omega_{(\Pf(k), M_{\Pf(k)})/(\Sf, M_\Sf)} \to \omega_{(\Pf'(k), M_{\Pf'(k)})/(\Sf, M_\Sf)}$ is an isomorphism. Therefore by \cite[Proposition 3.12]{kato1988logarighmic}, we conclude that $f'(k)$ is formally log etale.
	
	Define $(\Yf'(k), M_{\Yf'(k)})$, $i'(k)$ and $f'(k)$ (by abuse of notation) so that the diagram
	\[
	\xymatrix{
		(Y, M_Y) \ar[r]^(0.40){i'(k)} \ar[d] & (\Yf'(k), M_{\Yf'(k)}) \ar[r]^{f'(k)} \ar[d] & (\Yf(k), M_{\Yf(k)}) \ar[d] \\
		(\Pf, M_\Pf) \ar[r] & (\Pf'(k), M_{\Pf'(k)}) \ar[r] & (\Pf(k), M_{\Pf(k)})
	}
	\]
	is commutative with all squares cartesian. By arguments above, we get the following lemma.
	\begin{lemma}
		The closed immersion $i'(k) : (Y, M_Y) \hookrightarrow (\Yf'(k), M_{\Yf'(k)})$ is exact and $f'(k) : (\Yf'(k), M_{\Yf'(k)}) \to (\Yf(k), M_{\Yf(k)})$ is formally log etale.
	\end{lemma}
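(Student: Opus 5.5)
The plan is to reduce both assertions to the corresponding statements for $\Pf'(k) \to \Pf(k)$ and $P \hookrightarrow \Pf'(k)$, and then to transport them along the étale morphism $\Yf \to \Pf$. By Hypothesis \ref{hypo:ss-pair}, $\Yf \to \Pf$ is étale and $M_\Yf$ is the pullback of $M_\Pf$. Hence $(\Yf(k), M_{\Yf(k)}) \to (\Pf(k), M_{\Pf(k)})$ is a strict étale morphism, being a fibre product of strict étale maps over $(\Bf, M_\Bf)$. By construction, $(\Yf'(k), M_{\Yf'(k)}) = (\Yf(k), M_{\Yf(k)}) \times_{(\Pf(k), M_{\Pf(k)})} (\Pf'(k), M_{\Pf'(k)})$. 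On the level of the closed immersion, $Y \to \Yf'(k)$ is the base change of $P \to \Pf'(k)$, taken along the strict morphism $\Yf'(k) \to \Pf'(k)$ and restricted to the component through the diagonal copy of $Y$.

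For formal log étaleness of $f'(k)$ on the $\Yf$ side, I would use that this property is stable under base change in $\LFS$. The map $f'(k) : \Pf'(k) \to \Pf(k)$ is formally log étale, as shown above via \cite[Proposition 3.12]{kato1988logarighmic}, using the isomorphism $f'(k)^*\omega_{\Pf(k)} \xrightarrow{\sim} \omega_{\Pf'(k)}$ coming from $\dlog(t_i^{(l)}/t_i^{(l')}) = \dlog t_i^{(l)} - \dlog t_i^{(l')}$. Since the fibre product is taken in fine log formal schemes and the other leg is strict, the fibre product of underlying formal schemes already carries the pullback log structure. The infinitesimal lifting property then transfers directly.

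For exactness of $i'(k)$, recall that a closed immersion is exact iff the induced map $i'(k)^*M_{\Yf'(k)} \to M_Y$ is an isomorphism. Since $\Yf'(k) \to \Pf'(k)$ is strict, $M_{\Yf'(k)}$ is the pullback of $M_{\Pf'(k)}$, and $M_Y$ is the pullback of $M_P$. The composite $Y \to \Yf'(k) \to \Pf'(k)$ equals $Y \to P \to \Pf'(k)$. Therefore $i'(k)^*M_{\Yf'(k)}$ is identified with the pullback of $i'(k)^*M_{\Pf'(k)} \simeq M_P$ along $Y \to P$, which is $M_Y$.

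The main obstacle is the exactness on the $P$ side, namely the identification $i'(k)^* M_{\Pf'(k)} \simeq M_P$. I would verify this explicitly, as in \cite[Lemma 6.2.9]{kedlaya2007semistable}. The log structure $M_{\Pf'(k)}$ is the divisorial one given by $V(t_1^{(0)} \cdots t_m^{(0)})$. Each $t_i^{(l)}/t_i^{(l-1)}$ restricts to $1$ on the diagonal copy of $P$, so it is a unit near $P$. Consequently the charts $t_i^{(l)}$, which generate the pushout log structure, become equivalent to $t_i^{(0)}$ up to units. The pulled-back divisor is then $V(t_1 \cdots t_m) = D$ with multiplicity one, because $V(t_i^{(0)}) \cap P = V(t_i)$, and this gives exactness. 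One must also check that $i'(k)$ is really a closed immersion, not merely a locally closed one. If the construction only gives a locally closed immersion, I would shrink $\Yf'(k)$ to an open neighbourhood of the image of $Y$, as is done in the proof of Theorem \ref{lem:tub-nei-form}; this suffices for the intended application of that theorem.
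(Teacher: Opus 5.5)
Your proposal is correct and follows the paper's route. You establish exactness of $P \hookrightarrow \Pf'(k)$ from $V(t_i^{(0)}) \cap P = V(t_i)$ (as in Kedlaya's Lemma 6.2.9) and formal log étaleness of $\Pf'(k) \to \Pf(k)$ from the $\mathrm{dlog}$ identity and Kato's criterion, then transport both along the cartesian squares defining $\Yf'(k)$, filling in the strictness and base-change details that the paper's ``by arguments above'' leaves implicit. Your hedge about a merely locally closed immersion is unnecessary: $f'(k) \circ i'(k)$ is the diagonal closed immersion $Y \hookrightarrow \Yf(k)$ and $f'(k)$ is affine, hence separated, so $i'(k)$ is automatically a closed immersion.
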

	
	With these preparation, we can finally state the equivalence of categories of overconvergent log $\nabla$-modules and overconvergent log isocrystals.
	
	\begin{theorem} \label{thm:cat-eq-log-isoc-log-nabla}
		Under Hypothesis \ref{hypo:ss-pair}, there is an equivalence between the category $I^{\log}_{\oc}(Y_\eta, Y/\Bf)$ of overconvergent log isocrystals over $(Y_\eta, Y)^{\log} / \Bf$ and the category of overconvergent log $\nabla$-modules over $]Y_\eta[_{\Yf}/\Ed_K$ with respect to the log structure induced by $Z$.
	\end{theorem}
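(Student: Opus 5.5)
The plan is to follow \cite[Theorem 6.4.1]{kedlaya2007semistable} and \cite[Proposition 2.2.7, Theorem 2.2.9]{shiho2002crystallineII}. We chain three equivalences: overconvergent log isocrystals, log stratifications on the log tubes of the exactified fibre products, and overconvergent log $\nabla$-modules. Take $(\Yf, M_\Yf)$ as the log smooth ambient, formally log smooth over $(\Bf, M_\Bf)$. By the proposition preceding Lemma \ref{lem:int-log-con}, $I^{\log}_\oc(Y_\eta, Y/\Bf) \simeq \Strat(Y_\eta, Y, \Yf/\Bf)^{\log}$. By the lemma above together with the log weak fibration theorem \ref{lem:tub-nei-form}, the log tube $]Y_\eta[^{\log}_{\Yf(k)}$ is identified, as a germ, with the ordinary tube $]Y_\eta[_{\wh{\Yf'(k)}}$, where the completion is taken along $Y$ embedded by the exact closed immersion $i'(k)$. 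Hence a log stratification is a coherent module $\E$ on $]Y_\eta[_\Yf$ with an isomorphism $\epsilon : p_1^*\E \to p_0^*\E$ over $]Y_\eta[_{\Yf'(1)}$ satisfying the cocycle condition on $]Y_\eta[_{\Yf'(2)}$. Lemma \ref{lem:int-log-con} then gives a functor to integrable log connections relative to $\Ed_K$ with respect to $t_2,\dots,t_m$. Here $\dlog t_1$ is eliminated by $\dlog t = \sum_{i \le r}\dlog t_i = 0$ over $\Bf$, and the log differentials are computed by the isomorphism $f'(k)^*\omega \simeq \omega$ noted above.

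Next I would check that the resulting connection is an overconvergent log $\nabla$-module. First, $\E$ is locally free: on $]X[_\Yf$ it is a coherent module with integrable connection, hence locally free, and along the boundary we argue as in \cite[Lemma 3.2.8]{kedlaya2007semistable}, using Lemma \ref{lem:lnm-abel}. Second, the restriction to $]X[_\Yf$ gives an object of $\Strat^\dag(X, Y, \Yf/\Ed_K)$, since $]X[_{\Yf'(1)} = ]X[_{\Yf(1)}$ because the log structure is trivial there. So the restriction is overconvergent, and this is exactly the definition of overconvergence.

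For the inverse functor, let $(\E,\nabla)$ be an overconvergent log $\nabla$-module. We construct $\epsilon$ by the log Taylor series
$$\epsilon(1\otimes v) = \sum_{\unu} \prod_{i=2}^{m}\binom{\partial_i}{\nu_i}\,(u_i-1)^{\nu_i} \prod_{j=m+1}^{n}\frac{\partial_j^{\nu_j}v}{\nu_j!}\,(t_j^{(1)}-t_j^{(0)})^{\nu_j} \otimes 1,$$
where $\partial_i = t_i\,\partial/\partial t_i$ for $i \le m$ and $u_i = t_i^{(1)}/t_i^{(0)}$. On $]Y_\eta[_{\Yf'(1)}$ the quantities $|u_i - 1|$ and $|t_j^{(1)}-t_j^{(0)}|$ are $<1$ in the sense of $[\cdot]$. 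The \textbf{main step} is convergence, and Lemma \ref{lem:conv-mul-seq} is designed for it. For each $\eta<1$, that lemma gives $\lambda$ such that the coefficient sequence is $\eta$-convergent on $\Ys_\lambda$. Therefore the series converges on the part of the tube where these quantities are $\le\eta$ and $|t|\ge\lambda$. Exhausting by $\eta\to1$ and using Proposition \ref{prop:germ-section} (the order of the limits matters, as in Example \ref{ex:robba}) defines $\epsilon$ on the germ. Here I expect the main difficulty to be that $\lambda$ depends on $\eta$. To handle it, I would cover $]Y_\eta[_{\Yf'(1)}$ by the sets $\{|u_i-1|,|t_j^{(1)}-t_j^{(0)}|\le\eta,\ |t|_{[x]}\ge1\}$ and note that each such set has the neighbourhood $\{|t|\ge\lambda(\eta)\}$. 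This suffices because a section of a germ only needs to be defined on some neighbourhood of each quasi-compact piece.

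Finally, the cocycle condition and $\Delta^*\epsilon=\id$ follow formally from integrability, as in \cite[Proposition 2.2.7]{shiho2002crystallineII}. The two constructions are mutually inverse: restricting to the first infinitesimal neighbourhood recovers $\nabla$, and $\epsilon$ is determined by $\nabla$. For the latter, two stratifications inducing the same connection agree on $]X[$ by overconvergence, and hence on $]Y_\eta[$ since $]X[$ is dense and the modules are locally free.
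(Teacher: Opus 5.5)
Your outline is the same as the paper's proof. You pass to log stratifications on the tubes of the exactified products $\Yf'(k)$, obtain the connection from Lemma \ref{lem:int-log-con}, and deduce overconvergence from $]X[_{\Yf'(1)} = ]X[_{\Yf(1)}$. You then invert by the log Taylor series, whose convergence on the pieces $\{|u_i-1|_x,\ |t_j^{(1)}-t_j^{(0)}|_x \le \eta\}$ comes from Lemma \ref{lem:conv-mul-seq}, and glue as $\eta \to 1$. Your treatment of $\lambda$ depending on $\eta$ is a more explicit version of the paper's remark that the series converges on $\max|u_j-1|_x \le \lambda$ for every $\lambda$, and hence on the whole log tube.

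One step fails, however: the paragraph claiming that $\E$ is automatically locally free. The tools you cite cannot give this, since \cite[Lemma 3.2.8]{kedlaya2007semistable} and Lemma \ref{lem:lnm-abel} both assume locally free objects from the start. In fact the claim is false. Suppose $m>r$ and take $\E=\O/(t_m)$ on $]Y_\eta[_{\Yf}$. On $]Y_\eta[_{\Yf'(1)}$ we have $t_m^{(1)}=u_m t_m^{(0)}$ with $|u_m-1|_{[x]}<1$, so $u_m$ is a unit and $p_1^*\E=p_0^*\E$. The identity map is then a log stratification satisfying $\Delta^*\epsilon=\id$ and the cocycle condition. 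This gives an object of $I^{\log}_\oc(Y_\eta,Y/\Bf)$ with zero connection and zero (hence nilpotent) residue. It vanishes on $]X[_{\Yf}$ and is not locally free.

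So the statement can only hold with ``locally free'' added on the isocrystal side, which is how the paper actually uses it (cf.\ Theorem \ref{thm:log-ext-unip-monodromy}); the paper's proof is silent on this point. Under that reading, local freeness is a hypothesis and your paragraph should simply be dropped. Your uniqueness argument then works, understood as the identity principle applied on a strict neighbourhood; note that $]X[$ is closed in $]Y_\eta[$, not dense.

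If you instead want coherent objects on both sides, the final step must not rely on agreement on $]X[$, since that says nothing about torsion such as $\O/(t_m)$. Use instead that $]Y_\eta[_{\Yf'(1)}$ is, Zariski locally, a relative open polydisc over $]Y_\eta[_{\Yf}$ in the coordinates $u_i-1$ and $t_j^{(1)}-t_j^{(0)}$ (cf.\ Lemma \ref{lem:log-strong-fibration}). Then $\epsilon$ is determined by its Taylor coefficients along the diagonal, and those coefficients are determined by $\nabla$.
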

	
	\begin{proof}
		Since $\Yf'(k) \to \Yf(k)$ is etale in an open neighborhood of $X$, by strong fibration theorem \ref{thm:strong-fibration}, we have an isomorphism $]X[^{\log}_{\Yf(k)} = ]X[_{\Yf'(k)} \simeq ]X[_{\Yf(k)}.$
		Given an overconvergent log isocrystal $E$, $E$ restricts to an overconvergent isocrystal on $(X, Y)$ by this isomorphism and $\epsilon$ induces the data of a overconvergent log $\nabla$-module by \ref{lem:int-log-con}
		Conversely, given a convergent log $\nabla$-module $\mathcal{E}$, we define the $\mathcal{O}_{]Y[_{\Yf'(1)}}$-linear isomorphism $\epsilon: p_1^*\mathcal{E} \rightarrow p_0^*\mathcal{E}$ by
		$$
		1 \otimes v \mapsto \sum_{i,i'}\left(\prod_{j=1}^{n+m} \frac{(u_j-1)^{i_j}}{i_j!} \right) \otimes \left( \prod_{j=1}^n \prod_{l=0}^{i_j-1} \left( t_j \frac{\partial}{\partial t_j}-l \right) \right)v
		$$
		where $u_j = t_j^{(1)}/t_j^{(0)}$. Thanks to Lemma \ref{lem:conv-mul-seq}, this sum uniformly converges on the subspace of $]Y_\eta[^{\log}_{\Yf(1)} = ]Y_\eta[_{\Yf'(1)}$ given by $\max |u_j-1|_x \leq \lambda$ for any $\lambda \in (0,1) \cap \Gamma^*$. Hence it is defined on all of  $]Y_\eta[_{\Yf(1)}$. This $\mathcal{E}$ satisfies $\Delta^*\epsilon = \mathrm{id}$ and the cocycle condition $p_{13}^*\epsilon = (p_{12}^*\epsilon)(p_{23}^*\epsilon)$ by direct calculation or deduce it by observing that it holds after passing to $]X[^{\log}_{\Yf(2)} = ]X[_{\Yf'(2)}$ as remarked in the proof of \cite[Theorem 6.4.1]{kedlaya2007semistable}.
	\end{proof}
	
	\begin{remark}
		The above equivalence of categories is compatible with restrictions to open subschemes and can be Zariski locally glued to the global case.
	\end{remark}
	
	\begin{definition}
		An overconvergent log isocrystal $E$ over $(Y_\eta, Y)/\Bf$ is said to have nilpotent residues if the corresponding log $\nabla$-module $\E$ does. More precisely, if $(Y, Z)$ is a strictly semistable pair, then an overconvergent log isocrystal $E$ on $(Y_\eta, Z_\eta) \hookrightarrow (Y, Z)$ is said to have nilpotent residues if there is an open covering $Y = \bigcup_i U_i$ such that $(U_i, U_i \cap Z)$ satisfies Hypothesis \ref{hypo:ss-pair} and on each of $U_i$, the corresponding log $\nabla$-module $\E_i$ does.
	\end{definition}
	
	\subsection{Semistable reduction theorem for $\Ed_K$-valued overconvergent isocrystals}
	Combining Lemma \ref{lem:ext-log-nabla} and Theorem \ref{thm:cat-eq-log-isoc-log-nabla}, we obtain the following theorem.
	
	\begin{theorem} \label{thm:log-ext-unip-monodromy}
		Let $(Y, Z)$ be a strictly semistable pair, $X = Y-Z$ and $E$ an $\Ed_K$-valued overconvergent isocrystal on $(X, Y)$. Then $E$ has unipotent monodromy along $Z_\eta$ if and only if $E$ extends to a locally free overconvergent log isocrystal on $(Y_\eta^\sharp, Y^\sharp) / \Bf^\sharp$ with nilpotent residues.
		Moreover, the restriction functor $I^{\log}_\oc(Y_\eta, Y/\Bf) \to \Isoc(X, Y / \Ed_K)$ is fully faithful.
	\end{theorem}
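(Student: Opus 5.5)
The plan is to reduce to the local situation of Hypothesis \ref{hypo:ss-pair} and then splice together Lemma \ref{lem:ext-log-nabla} and Theorem \ref{thm:cat-eq-log-isoc-log-nabla}. By the remark following the definition of strictly semistable pairs, $Y$ admits a Zariski covering $Y = \bigcup_i U_i$ such that each $(U_i, U_i \cap Z)$ admits an étale map to a model $k[[t]][t_1, \dots, t_n]/(t - t_1 \cdots t_r)$ with $Z \cap U_i$ cut out by $t_1 \cdots t_m$. Since we only consider pairs with $Y_s \subseteq Z$, the horizontal components are among $V(t_1), \dots, V(t_r)$. We may shrink each $U_i$ to be affine and lift the étale map to an étale morphism of affine formal $\V[[t]]$-schemes $\Yf_i \to \Pf$, so that Hypothesis \ref{hypo:ss-pair} holds on each $U_i$. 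Both sides of the claimed equivalence are Zariski local on $Y$. Overconvergent log isocrystals glue by the descent proposition for $I^{\log}_\oc$. Overconvergent isocrystals glue by the locality of $\MIC^\dag$. Unipotence of monodromy along $Z_\eta$ is local by Remark \ref{rem:monodromy}.

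Fix one such chart $U$ with lift $\Yf$. Realise $E|_{(X \cap U, U)}$ as an overconvergent $\nabla$-module $\E$ on $]X \cap U[_\Yf$ relative to $\Ed_K$. Lemma \ref{lem:ext-log-nabla} then says that $\E$ has unipotent monodromy along $Z_\eta \cap U$ if and only if $\E$ extends to an overconvergent log $\nabla$-module on $]U_\eta[_\Yf$ with nilpotent residues. It also says that the restriction functor on such objects is fully faithful. Theorem \ref{thm:cat-eq-log-isoc-log-nabla} identifies these log $\nabla$-modules with $I^{\log}_\oc(U_\eta, U/\Bf)$. By definition, nilpotence of residues is the corresponding condition on the log isocrystal side. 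Local freeness of the resulting log isocrystal follows because its realisation is a log $\nabla$-module in the sense of Definition \ref{def:log-nabla-mod}, which is locally free.

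I need to check that this equivalence is compatible with restriction to $(X, Y)$. The restriction is computed via the isomorphism $]X[^{\log}_{\Yf(k)} \simeq ]X[_{\Yf(k)}$ used in the proof of Theorem \ref{thm:cat-eq-log-isoc-log-nabla}. Under this isomorphism, the log stratification restricts to the ordinary overconvergent stratification of $E$. The first step would be to verify this on the explicit Taylor-type formula for $\epsilon$, noting that on $]X[$ the $t_j$ are invertible and $u_j - 1$ differs from $(t_j^{(1)} - t_j^{(0)})/t_j^{(0)}$ by an invertible factor.

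Gluing then runs as follows. Suppose $E$ has unipotent monodromy. The local extensions $\E_i$ are unique up to unique isomorphism by full faithfulness. Hence on overlaps $U_i \cap U_j$ the two extensions of $E|_{U_i \cap U_j}$ are canonically isomorphic, these isomorphisms satisfy the cocycle condition, and they glue to a global object of $I^{\log}_\oc(Y_\eta, Y/\Bf)$. Conversely, if an extension exists, restricting it to each chart and applying Lemma \ref{lem:ext-log-nabla} gives local unipotence, hence global unipotence. Full faithfulness globally follows from local full faithfulness, since $\Hom$ sheaves are local.

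The main obstacle is the compatibility check between the chart-dependent log $\nabla$-module description and the intrinsic restriction functor $I^{\log}_\oc \to \Isoc$, including independence of the chosen lift $\Yf$. Without it, the local extensions might not glue. I would handle independence of the lift via the log weak fibration theorem \ref{lem:tub-nei-form}: comparing two lifts through their fibre product identifies the log tubes as germs, and the stratifications are then canonically identified.
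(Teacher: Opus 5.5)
Your proposal is correct and follows the paper's route. The paper proves this theorem by combining Lemma~\ref{lem:ext-log-nabla} with Theorem~\ref{thm:cat-eq-log-isoc-log-nabla} on charts as in Hypothesis~\ref{hypo:ss-pair}, and leaves the compatibility with restriction and the Zariski gluing to the remark after Theorem~\ref{thm:cat-eq-log-isoc-log-nabla}; you simply spell these out. Your phrase ``on such objects'' reflects what both your argument and the paper's actually give: full faithfulness only on the locally free objects with nilpotent residues, and the ``Moreover'' clause must be read that way (for instance $(\O, d)$ and $(\O, d + a\,\dlog t_m)$ with $a$ a nonzero integer become isomorphic on $]X[$ but are not isomorphic on $]Y_\eta[$).
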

	
	Combining Theorem \ref{thm:des-unip}, Theorem \ref{thm:log-ext-unip-monodromy} and \cite[Theorem 6.4.5]{kedlaya2007semistable}, we can finally show that log extendability is equivalent after taking the completion:
	
	\begin{theorem} \label{thm:log-ext-compl}
		Let $(Y, Z)$ be a strictly semistable pair, $X=Y-Z$ and $E$ an $\Ed_K$-valued isocrystal on $(X, Y)$. Then $E$ extends to a locally free overconvergent log isocrystal on $(Y_\eta^\sharp, Y^\sharp) / \Bf^\sharp$ with nilpotent residues if and only if $E$ extends to a locally free convergent log isocrystal on $Y_\eta^\sharp / \Spf \O_{\E_K}$ with nilpotent residues.
	\end{theorem}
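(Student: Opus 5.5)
The plan is to pass through monodromy on both sides. By Theorem \ref{thm:log-ext-unip-monodromy}, $E$ extends to a locally free overconvergent log isocrystal on $(Y_\eta^\sharp, Y^\sharp)/\Bf^\sharp$ with nilpotent residues if and only if $E$ has unipotent monodromy along $Z_\eta$. On the classical side, the completion $\wh{E}$ is an $\E_K$-valued overconvergent isocrystal on $(X, Y_\eta)$, and $(Y_\eta, Z_\eta)$ is a smooth pair over $k((t))$. By \cite[Theorem 6.4.5]{kedlaya2007semistable}, applied over the base $\Spf \O_{\E_K}$ with residue field $k((t))$, $\wh{E}$ extends to a locally free convergent log isocrystal on $Y_\eta^\sharp/\Spf \O_{\E_K}$ with nilpotent residues if and only if $\wh{E}$ has unipotent monodromy along $Z_\eta$. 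The theorem therefore reduces to showing that $E$ has unipotent monodromy along $Z_\eta$ if and only if $\wh{E}$ does. This is exactly Theorem \ref{thm:monodromy-completion}, which is where descent of unipotence (Theorem \ref{thm:des-unip}) enters.

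I would carry this out in the following order.

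First, I will read the statement as being about $\wh{E}$. The phrase ``$E$ extends to a convergent log isocrystal on $Y_\eta^\sharp$'' means that $\wh{E}$ extends, since the only object living over $\E_K$ is the completion.

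Second, I will check that the hypotheses of \cite[Theorem 6.4.5]{kedlaya2007semistable} hold. Condition (iii) in the definition of a strictly semistable pair makes the horizontal-in-$Y_\eta$ components $Z_i\cap Y_\eta$ and their intersections smooth over $k((t))$. So $(Y_\eta, Z_\eta)$ is a smooth pair with $Z_\eta$ a strict normal crossings divisor, which is the setting of \emph{loc.\ cit.}

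Third, I will invoke Theorem \ref{thm:monodromy-completion} with $Y$ strictly semistable, so that $Y_\eta$ is smooth. Kedlaya's monodromy is defined component by component along the smooth parts of $Z_\eta$. To match it with Definition \ref{def:monodromy-global}, I will first restrict, by Proposition \ref{prop:codim-1-nature}, to the open set where $Z_\eta$ is smooth; its complement in $Y_\eta$ has codimension at least $2$. Then I will work one component at a time, using the first point of Remark \ref{rem:monodromy} to localize.

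The main obstacle is compatibility of definitions. Kedlaya's monodromy for $\wh{E}$ is defined using arbitrary frames over $\O_{\E_K}$ and extensions of $\E_K$. Definition \ref{def:monodromy-global} instead uses small $\V'[[t']]$-frames and finite extensions $k'[[t']]$, which complete to particular frames over $\O_{\E_{K'}}$. To get Kedlaya's unipotence from ours, I would use his frame-independence \cite[Proposition 4.3.5]{kedlaya2007semistable} together with Proposition \ref{prop:monodromy-completion}. This should show that checking on completions of small frames suffices, since those frames cover $Y_\eta$ after the purely inseparable base changes permitted by the Remark after Definition \ref{def:monodromy-global}. Kedlaya's definition also allows such base changes, and both notions are insensitive to them.
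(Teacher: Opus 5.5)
Your proposal is correct and is the same argument as the paper's. Theorem \ref{thm:log-ext-unip-monodromy} turns the left-hand side into unipotent monodromy of $E$ along $Z_\eta$, \cite[Theorem 6.4.5]{kedlaya2007semistable} turns the right-hand side into unipotent monodromy of $\wh{E}$, and descent of unipotence (Theorem \ref{thm:des-unip}, in the form of Theorem \ref{thm:monodromy-completion}) identifies the two, while your extra reduction via Proposition \ref{prop:codim-1-nature} is harmless but unnecessary, since Theorem \ref{thm:monodromy-completion} applies directly once $Y_\eta$ is smooth.
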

	Now we can show the first main theorem of this paper.
	
	\begin{theorem}\label{thm:main-edk}
		Let $X$ be a $k((t))$-variety, $Y$ a $k[[t]]$-variety, $X \subseteq Y$ an open immersion and $E$ an $\Ed_K$-valued overconvergent $F$-isocrystal on $X$. Then after base changnig $k[[t]]$ to $k^{1/q^n}[[t^{1/q^n}]]$, there is a finite extension $k[[t]] \hookrightarrow k'[[t']]$, a $k'[[t']]$-scheme $Y'$, a projective alteration $f : Y' \to Y$ over $k[[t]]$ such that for $Z' = (f^{-1}(Y_\eta-X))_\red \cup Y'_{s'}$, $(Y', Z')$ is a strictly semistable pair over $k'[[t']]$ and that $f^*E$ extends to an overconvergent log isocrystal over $({Y'}_{\eta'}^\sharp, {Y'}^\sharp) / {\Bf'}^\sharp$.
		Here $\eta'$ is the closed point of $k'[[t']]$, $\Bf' = \Spf \V'[[t']]$ with $\V'$ the ring of integers of the field $K'$ which is the finite separable extension of $K$ with residue field $k'$.
	\end{theorem}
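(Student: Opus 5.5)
The plan is to reduce to Kedlaya's classical semistable reduction theorem for $\E_K$-valued overconvergent $F$-isocrystals, applied to the completion $\wh{E}$. The transfer back to $E$ will go through Theorem \ref{thm:log-ext-compl} (equivalently Theorem \ref{thm:monodromy-completion} combined with Theorem \ref{thm:log-ext-unip-monodromy}).

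First, we pass to the base change along $k[[t]] \to k^{1/q^n}[[t^{1/q^n}]]$. By Proposition \ref{prop:purely-insep-equiv} and perfectness of $k$, this does not change the category of isocrystals, and it allows the relevant boundary components to become generically smooth. We may also enlarge $Y$ to be proper by Nagata, since $E$ is an isocrystal on $X$ and hence lives on $(X, Y)$ for any compactification.

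Next, consider the completion $\wh{E}$, an $\E_K$-valued overconvergent $F$-isocrystal on $X$ over $k((t))$. By Kedlaya's theorem \cite{kedlaya2011semistable}, applied over the perfection-twisted field, there is an alteration of $Y_\eta$ after which $\wh{E}$ acquires unipotent monodromy along every boundary divisor. Because unipotent monodromy is a codimension-one condition (Proposition \ref{prop:codim-1-nature}) and is compatible with generically étale pullbacks, I would use the following formulation. There is a dominating alteration such that, for every further alteration, the pullback of $\wh{E}$ has unipotent monodromy along every divisor lying over the boundary of $Y_\eta$.

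With that formulation, we apply de Jong's Theorem \ref{thm:alteration} to $Y$ (or to the closure of the previous alteration) with $Z = (Y_\eta - X)^- \cup Y_s$. This gives a finite extension $k'[[t']]$ and a projective alteration $f: Y' \to Y$ with $(Y', Z')$ strictly semistable. Projectivity is obtained by taking $\ol{Y'}$ and restricting, or by passing through a projective dominating cover. Then $\wh{f^*E} = f^*\wh{E}$ has unipotent monodromy along $Z'_{\eta'}$. By Theorem \ref{thm:monodromy-completion} (noting that the hypothesis $Y'_{\eta'}$ smooth holds), $f^*E$ has unipotent monodromy along $Z'_{\eta'}$. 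Theorem \ref{thm:log-ext-unip-monodromy} then yields the desired overconvergent log extension on $({Y'}_{\eta'}^\sharp, {Y'}^\sharp)/{\Bf'}^\sharp$. The horizontal components in the closed fiber cause no trouble, since no monodromy condition is needed there.

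The main obstacle is the middle step, namely guaranteeing that a \emph{single} semistable alteration over $k'[[t']]$ makes $\wh{E}$ unipotent on the generic fiber. Kedlaya's theorem produces an alteration of the generic fiber $Y_\eta$ over the imperfect field $k((t))$ (after purely inseparable base change), not a semistable model over $k'[[t']]$.

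To handle this, I would take the Kedlaya alteration $W \to Y_\eta$ and its closure $\ol{W}$ in a projective $k[[t]]$-scheme. Applying Theorem \ref{thm:alteration} to $\ol{W}$ then gives $Y'$. The pullback of a unipotent-monodromy isocrystal along an alteration stays unipotent along the divisors that dominate the boundary; this is the standard pullback property, checked via the Robba-ring description and the $p$-adic local monodromy theorem after completing. New exceptional divisors in the generic fiber are also handled by Kedlaya's ``unipotent along every divisor over the boundary'' form of the theorem, as in \cite{kedlaya2011semistable}.

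The Frobenius structure is used only through Kedlaya's theorem applied to $\wh{E}$, which is compatible with completion as noted earlier.
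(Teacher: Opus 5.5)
Your proposal is correct and follows the paper's proof: apply Kedlaya's theorem to $\wh{E}$, dominate the resulting generic-fiber alteration by a projective $k[[t]]$-model, apply de Jong's theorem, and transfer back to $E$. Your chain through Theorem \ref{thm:monodromy-completion} and Theorem \ref{thm:log-ext-unip-monodromy} is just Theorem \ref{thm:log-ext-compl} written out, and the paper invokes that theorem directly.

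The step you justify loosely is the survival of unipotence under pullback, including along exceptional divisors; the appeal to Robba rings and the $p$-adic local monodromy theorem is unnecessary. The paper handles it by functoriality of the log extension along the morphism of smooth pairs from $(Y'_{\eta'}, Z'_{\eta'})$ to the Kedlaya alteration $W$ with its boundary. That morphism exists because $Z'_{\eta'}$ is exactly the preimage of the boundary of $W$.
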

	\begin{proof}
		Put $Z = Y-X$. Applying Kedlaya's semistable reduction theorem \cite[Theorem 2.4.4]{kedlaya2011semistable} to the overconvergent isocrystal $\wh{E}$ on $(X, Y_\eta)$, after replacing $k((t))$ by $k^{1/q^n}((t^{1/q^n}))$ for some $n$, there is a projective alteration $f_1 : Y_1 \to Y_\eta$ such that for $Z_1 = f^{-1}(Z_\eta)$, $(Y_1, Z_1)$ is a smooth pair and $f^*(\wh{E})$ extends to a locally free convergent log isocrystal over $Y_1^\sharp = (Y_1, Z_1)$ with nilpotent residues. Choose a projective alteration $f_2 : Y_2 \to Y$ of $k[[t]]$-varieties such that $f_2 : f_2^{-1}(Y_\eta) \to Y_\eta$ factors into $f_2^{-1}(Y_\eta) \xrightarrow{g} Y_1 \xrightarrow{f_1} Y_\eta.$ Applying de Jong's alteration theorem \ref{thm:alteration} to $\ol{g^{-1}(Z_1)} \cup Y_{2, s} \subseteq Y_2$, we get a finite extension $k[[t]] \hookrightarrow k'[[t']]$, a projective alteration $h : Y_3 \to Y_2$ over $k[[t]]$ and an open immersion $Y_3 \hookrightarrow \ol{Y_3}$ into a projective $k'[[t']]$-variety such that $(\ol{Y_3}, (h^{-1}(\ol{g^{-1}(Z_1)}) \cup \ol{Y_3}_{, s'})_\red)$ is a strictly semistable pair over $k'[[t']]$. Thus, we get a projective alteration $f : Y' \to Y$ over $k[[t]]$ such that for $Z' = f^{-1}(Z)$, $(Y', Z')$ is a strictly semistable pair over $k'[[t']]$ and that $f_\eta^*(\wh{E})$ extends to a locally free convergent log isocrystal over ${Y'}_{\eta'}^\sharp = (Y'_{\eta'}, Z'_{\eta'})$.
		By Theorem \ref{thm:log-ext-compl}, $f^*(E)$ extends to a locally free overconvergent log isocrystal over $({Y'_{\eta'}}^\sharp, {Y'}^\sharp) / {\Bf'}^\sharp.$
	\end{proof}
	
	\section{Semistable reduction for $K$-valued overconvergent isocrystals} \label{sec:ssr-rel-K}
	In this section, we will prove the semistable reduction theorem for $K$-valued overconvergent isocrystals.
	
	By the method of applying Kedlaya's semistable reduction theorem using descent of unipotence, we can only make monodromy along divisors dominating $k[[t]]$ being unipotent, and this results in the fact that one can only extends to an overconvergent log isocrystal over $(Y_\eta, Y)^{\log}/\V[[t]].$ In order to extend to a convergent log isocrystal over $Y/\V$, it is also necessary to consider monodromy along divisors contained in $Y_s$ and the derivation by $t$.
	
	\subsection{Log $\nabla$-modules relative to $K$}
	In the situation of Hypothesis \ref{hypo:ss-pair}, let $\Zf_i = V(t_i)$ for $i = 1, \dots, m+n$ and $Z_i = \Zf_i \times_{\V[[t]]} k[[t]].$ $Z_1, \dots, Z_m$ are the irreducible components of $Y_s$ and $Z_{m+1}, \dots, Z_{m+n}$ are those of $Z$ which dominates $k[[t]]$.
	
	\begin{lemma} \label{lem:rel-ann}
		There is an isomorphism $]Z_i[_{\Yf} \, \simeq \, ]Z_i[_{\Zf_i} \times_K A^1_K [0, 1)$ whose projection to $A^1$ is given by $z_i$, for $i = 1, \dots, m+n$.
	\end{lemma}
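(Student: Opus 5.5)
The plan is to imitate the proof of Lemma \ref{lem:frame-rel-tube}, now working over $\V$ (that is, with $k((t))$-frames and $K$-valued tubes), and to obtain the isomorphism from the strong fibration theorem \ref{thm:strong-fibration}. Write $z_i$ for the coordinate $t_i$ pulled back along $f : \Yf \to \Pf$.

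\textbf{Step 1: an étale map to a product.} Consider the morphism
$$\Phi : \Yf \to \Zf_i \times_{\V} \Spf \V[[z]]$$
given by $(r, z_i)$. Here $r : \Yf \to \Zf_i$ is a local retraction, which can be arranged Zariski locally after shrinking: the closed immersion $\Zf_i \hookrightarrow \Yf$ is a smooth divisor cut out by $z_i$, so $\Yf$ is étale locally $\Zf_i \times \Spf\V[[z_i]]$. Concretely, I would use the étale map $\Yf \to \Pf$ and the explicit description of $\Pf$.
\begin{itemize}
\item For $i \le m$: set $\Pf_i = V(t_i) \subseteq \Pf$. Then $\Pf \cong \Pf_i$-type coordinates together with $t_i$, except that the relation $t = t_1 \cdots t_r$ involves $t_i$.
\item The relation is resolved by noting that on $]Z_i[$ the parameter $t$ is no longer a coordinate. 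Over $\V$ (not $\V[[t]]$), $\Pf$ is $\Spf \V\langle t_1,\dots,t_n\rangle$ with $t$ eliminated, so it is formally smooth over $\V$ with coordinates $t_1, \dots, t_n$.
\end{itemize}
Hence $(t_1,\dots,t_n)$ gives an étale morphism $\Yf \to \wh{\mathbb{A}}^n_\V$. Taking $\Qf = \Zf_i$, the morphism $\Yf \times_\V \Zf_i \to \wh{\mathbb{A}}^n_\V \times_\V \Zf_i$ is étale along the diagonal copy of $Z_i$.

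\textbf{Step 2: apply strong fibration.} The morphism of frames
$$(Z_i, Z_i, \Yf\times\Zf_i) \to (Z_i, Z_i, \wh{\mathbb{A}}^n\times\Zf_i)$$
is proper étale, taking $Z_i$ embedded diagonally with its reduced structure (on the special fibre, $Y$ coincides with the reduction of $\Yf$). By Theorem \ref{thm:strong-fibration}, in its $K$-valued form for $k((t))$-frames, we get
$$]Z_i[_{\Yf\times\Zf_i} \simeq ]Z_i[_{\Zf_i}\times A^n_K[0,1),$$
with the projection given by $t_j^{(0)} - t_j^{(1)}$. The projection $\Yf \times \Zf_i \to \Yf$ is smooth, with a section along $Z_i$. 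Restricting to the zero locus of the coordinates $t_j - t_j^{\Zf_i}$ for $j \ne i$, and noting that $t_i$ vanishes on $\Zf_i$, we obtain $]Z_i[_\Yf \simeq ]Z_i[_{\Zf_i}\times A^1_K[0,1)$ with projection $z_i = t_i$. This is exactly the cutting-down step of Lemma \ref{lem:frame-rel-tube}.

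\textbf{Main obstacle.} The hard part is verifying étaleness of the relevant map near $Z_i$ for the vertical components $i \le m$. There $\Yf$ is not smooth over $\V[[t]]$, so one must work over $\V$ and use $t = t_1\cdots t_r$ to see that $t_1,\dots,t_n$ form étale coordinates over $\V$. One must also check that the tubes taken over $\V$ (pseudo finite type) obey the strong fibration theorem. I would treat this by the same argument as in the classical case, as in \cite[Proposition 6.2.x]{kedlaya2007semistable}, since the relevant spaces are ordinary rigid spaces over $K$.
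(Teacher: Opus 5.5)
Your route (\'etale coordinates over $\V$, the strong fibration theorem on $\Yf\times_\V\Zf_i$, then cutting down as in Lemma \ref{lem:frame-rel-tube}) is not the paper's, and the step it rests on is false as stated.

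\textbf{The gap.} The ring $\V[[t]]\langle t_1,\dots,t_n\rangle/(t-t_1\cdots t_r)$ is not $\V\langle t_1,\dots,t_n\rangle$ ``with $t$ eliminated''. It contains all power series in $t_1\cdots t_r$, and it is not of finite type over $\V\langle t_1,\dots,t_n\rangle$. So $(t_1,\dots,t_n)\colon\Yf\to\wh{\mathbb{A}}^n_\V$ is not \'etale. It is at best formally \'etale: modulo $p$ it is a base change of $k[t]\to k[[t]]$, which is relatively perfect when $k$ is perfect.

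Consequently Theorem \ref{thm:strong-fibration} cannot be invoked. It is stated for \'etale morphisms of frames whose first entry is a $k((t))$-variety, and neither condition holds for $(Z_i,Z_i,\Yf\times\Zf_i)\to(Z_i,Z_i,\wh{\mathbb{A}}^n\times\Zf_i)$. What you actually need is an isomorphism of formal completions along $Z_i$ for a formally \'etale map not of finite type. That is exactly what is left unproved, and the placeholder citation does not supply it.

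You have also put the difficulty in the wrong place. For vertical $Z_i$ one has $t\in(t_i)$, so $t$ is nilpotent modulo $(p,t_i)^N$, and the map does become \'etale after completing along $Z_i$. The real problem is the horizontal case. There $Y\times_{\mathbb{A}^n_k}Z_i$ involves $k[[t]]\otimes_{k[t]}k[[t]]$, in which the diagonal is not open, so you cannot shrink to a neighbourhood in which $Z_i$ is the whole preimage. Separately, the Zariski-local retraction $r\colon\Yf\to\Zf_i$ in Step 1 does not exist in general.

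\textbf{The paper's argument.} The paper needs no fibration theorem and no global coordinates. Since $\Zf_i$ is formally smooth over $\V$, the identity of $\Zf_i$ lifts to a retraction from the formal completion of $\Yf$ along $Z_i$ onto $\Zf_i$. As in \cite[Lemma 4.3.3]{kedlaya2007semistable}, this gives
\[
\Gamma(]Z_i[_{\Zf_i},\O^+)[[t_i]]\simeq\Gamma(]Z_i[_{\Yf},\O^+),
\]
and passing to generic fibres yields the isomorphism, with projection given by $t_i$.

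If you want to keep your set-up, the missing step is a lifting argument of exactly this kind: formal \'etaleness modulo each $p^N$, together with the isomorphism on reductions, forces an isomorphism of the completions. Once you have that, the product with $\Zf_i$ and the cutting-down are unnecessary.
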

	\begin{proof}
		Note that $\Zf_i$ is formally smooth over $\V$. Thus, similarly as in \cite[Lemma 4.3.3]{kedlaya2007semistable}, we have $\Gamma(]Z_i[_{\Zf_i}, \O^+)[[t_i]] \simeq \Gamma(]Z_i[_{\Yf}, \O^+).$ This yields the desired isomorphism.
	\end{proof}
	
	\begin{definition} \label{def:almost-smooth}
		Let $f: \Xs \to \Ys$ be a morphism of rigid analytic spaces. We say that $f$ is almost smooth if locally for $\Xs$, $f$ factors through a smooth morphism $\Xs \to \Ys \times \Spa \V[[t]]_K \langle t_1, \dots, t_r \rangle /(t - t_1 \cdots t_r)$ followed by the projection.
		
		We say that $x'_1, \dots, x'_n, t'_1, \dots, t'_r \in \Gamma(X, \O_X)$ is a coordinate of $f$ if $t'_1 \cdots t'_r = t$ and the morphism $\Xs \to \Ys \times \Spa \V[[t]]_K \langle x_1, \dots, x_n, t_1, \dots, t_r \rangle /(t - t_1 \cdots t_r)$ induced by $x_i \mapsto x'_i, \, t_j \mapsto t'_j$ is etale.
		
		A rigid analytic space $X$ is said to be almost smooth if the structure morphism $X \to \Spa \V[[t]]_K$ is almost smooth.
	\end{definition}
	
	\begin{remark}
		Note that if $X$ is an almost smooth partial dagger space over $\V[[t]]_K$, then $X \times_{\Spa \V[[t]]_K} \Spd \Ed_K = \{ x \in X : |t|_{[x]} = 1 \}$ is a smooth partial dagger space over $\Ed_K$ and $\{ x \in X : |t|_{[x]} < 1 \}$ is a ``classical'' rigid analytic space smooth over $K$, which means that it is locally of finite type over $K$.
	\end{remark}
	
	For an almost smooth rigid analytic space $X$, we can define log $\nabla$-modules (with nilpotent residues) over $X/K$ in the same way. We can prove same results of \S\ref{sec:log-nabla-mod} with some modifications in the proof. We will only sketch those modifications in this subsection. We first start with a useful local freeness criterion.
	
	\begin{lemma} \label{lem:loc-free2}
		Let $X$ be a partial dagger space over $\V[[t]]_K$, $X' = X \times_{\Spa \V[[t]]_K} \Spd \Ed_K = \{ x \in X : |t|_{[x]} = 1 \}$ and $U = X - X'.$ Write $i : X' \hookrightarrow X$ for the closed immersion. If a coherent $\O_X$-module $E$ is locally free over $U$ and $i^{-1}\E$ is locally free over $X'$, then $\E$ is locally free over $X$.
	\end{lemma}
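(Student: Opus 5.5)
The question is local on $X$, so we may assume $X = \Spd A$ is affinoid, or more generally cut out from an affinoid $\Xs = \Spa B$ over $\V[[t]]_K$. Coherence of $\E$ means that local freeness at a point $x$ is equivalent to flatness of the stalk $\E_x$ over $\O_{X,x}$. Since $\O_{X,x}$ is noetherian local (Proposition \ref{prop:theoremAB} and its analogue here), this is in turn equivalent to freeness of $\E_x$. Points of $U$ are handled by hypothesis, so we only need to treat a point $x \in X'$, i.e. one with $|t|_{[x]} = 1$.

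Fix such a point $x$. Since $X'$ is closed in $X$, the stalk of $i^{-1}\E$ at $x$ is just $\E_x$, now regarded as a module over $(i^{-1}\O_X)_x = \O_{X,x}$. The subtle point is that the structure sheaf of the germ $X'$ is $i^{-1}\O_X$ rather than some quotient of it, because $X'$ carries no closed subscheme structure: it is a germ defined by an inequality $|t|_{[x]} \geq 1$, not by the vanishing of an ideal. I would check this directly from the definitions. The germ $(X', \Xs)$ has $\O_{X'} := \O_\Xs|_{X'}$ by definition of germs, and the same holds for $(X, \Xs)$ and the inclusion between them. Hence $\O_{X',x} = \O_{\Xs,x} = \O_{X,x}$. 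It follows that $(i^{-1}\E)_x = \E_x$ as an $\O_{X,x}$-module, and this is free by hypothesis. So $\E$ is free at every point of $X'$.

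Next I would promote pointwise freeness to freeness on an open neighborhood. Choose generators $e_1, \dots, e_r$ of $\E_x$ forming a basis, lift them to sections of $\E$ on an open $W \ni x$, and consider $\phi : \O_W^r \to \E|_W$. The kernel and cokernel of $\phi$ are coherent and have zero stalks at $x$. Their supports are closed, so after shrinking $W$ both vanish. Thus $\E$ is free on an open neighborhood of each point of $X'$, and it is locally free on $U$ by assumption. Since $U$ and these neighborhoods cover $X$, $\E$ is locally free on $X$.

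I expect the main obstacle to be the identification $\O_{X',x} = \O_{X,x}$ in the second step, that is, making sure that ``$i^{-1}\E$ locally free over $X'$'' really means locally free over $i^{-1}\O_X$ on the germ. If instead $X'$ were given a structure sheaf via $\Spd$ of a partial dagger algebra that differs from $i^{-1}\O_X$, I would fall back on a faithful flatness argument. By Proposition \ref{prop:germ-section} together with Corollary \ref{cor:pda}, local freeness over the partial dagger algebra descends and ascends along the flat map to stalks. Freeness of finitely generated modules over the noetherian local ring $\O_{X,x}$ can then be tested after a faithfully flat base change.
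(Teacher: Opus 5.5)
Your proof is correct, but it takes a different route from the paper's.

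\textbf{The paper's route.} It works in the ambient space. It represents $E$ by a coherent sheaf $\E$ on some $\Xs_{\eta_0}$. Using that the locus where a chosen basis fails to generate or to be independent is closed, it spreads local freeness from $X'$ to a neighbourhood $\Xs_\eta\cap\{|t|_x\geq\epsilon\}$. It does the same for the piece $\{x\in X:|t|_x\leq\epsilon\}\subset U$, obtaining $\Xs_\eta\cap\{|t|_x\leq\epsilon\}$ after shrinking $\eta$. It concludes because these two rational pieces cover $\Xs_\eta$.

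\textbf{Your route.} You stay on the germ itself. The structure sheaf of a germ is by definition the restriction of $\O_{\Xs}$, so $\O_{X',x}=\O_{X,x}$ and $(i^{-1}\E)_x=\E_x$ for $x\in X'$. A free stalk of a finitely presented module then propagates to an open neighbourhood in $X$.

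\textbf{Comparison.} Your argument shows the lemma is a purely formal fact about a ringed space with a closed subset carrying the restricted structure sheaf. It also avoids any uniform choice of $\eta,\epsilon$ over the sets $X'$ and $\{x\in X:|t|_x\le\epsilon\}$. The paper's version, in exchange, directly exhibits an explicit strict neighbourhood $\Xs_\eta$ on which a representative is locally free. That conclusion also follows from your argument: apply it to the representative on $\Xs_{\eta_0}$ at the points of $X$, then use cofinality of the $\Xs_\eta$.

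\textbf{Minor points.}
\begin{itemize}
\item Noetherianity of $\O_{X,x}$ is not needed, and it is not what Proposition~\ref{prop:theoremAB} asserts; only ``free stalk implies free nearby'' is used.
\item For the kernel of $\phi$ to be of finite type, either invoke coherence of $\O_X$, or note that once $\phi$ is surjective near $x$ its kernel is of finite type because $\E$ is finitely presented.
\item Your fallback paragraph is unnecessary: with the paper's definition of germs, $\O_{X'}$ is exactly $i^{-1}\O_X$.
\end{itemize}
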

	\begin{proof}
		The assertion is local, so we assume that $X$ is an affinoid. Let $\Xs$ be an ambient space of $X$ and let $\Xs_\eta$ be a cofinal system of open neighborhoods of $X$ in $\Xs$. Then $E$ comes from some coherent $\O_{\Xf_{\eta_0}}$-module $\E$.
		$X'$ has a cofinal system of open neighborhoods $\Xs_\eta \cap \{ x \in \Xs : |t|_x \geq \epsilon \}$ with $\eta$ and $\epsilon \in (0, 1) \cap \Gamma^*$. By considering the (closed) locus of a basis of $E$ fails to generate $\E$ or to be linearly independent, one concludes that $\E$ is locally free over $\Xs_\eta \cap \{ x \in \Xs : |t|_x \geq \epsilon \}$ for some $\eta, \epsilon$. On the other hand, since $E$ is locally free over $\{ x \in X : |t|_x \leq \epsilon \} \subset U$, by the same argument, after replacing $\eta$, one concludes that $\E$ is locally free over $\Xs_\eta \cap \{ x \in \Xs : |t|_x \leq \epsilon \}$. These two cover $\Xs_\eta$, so we get the assertion.
	\end{proof}
	
	\begin{lemma} \label{lem:lnm-abel2}
		Let $X$ be an almost smooth rigid analytic space over $\V[[t]]_K$ with coordinate $t_1, \dots, t_n$ such that $t_1 \cdots t_r = t$ and zero loci are almost smooth and meet transversely. Then for any morphism $f : \E \to \F$ of $\LNM_{X/K}$, $\ker f$ and $\coker f$ belong to $\LNM_{X/K}$.
	\end{lemma}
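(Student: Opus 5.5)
The plan is to reduce to local freeness of $\ker f$ and $\coker f$ and then apply Lemma \ref{lem:loc-free2}. Since log connections on kernels and cokernels are inherited from $\E$ and $\F$, with integrability, $\O$-coherence and nilpotence of residues passing to sub- and quotient objects, the only real content is local freeness.

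The question is local, so we may assume $X$ is an affinoid partial dagger space over $\V[[t]]_K$ with the given coordinates. Let $X' = \{ x \in X : |t|_{[x]} = 1 \}$ and $U = X - X'$. Then $X'$ is a smooth partial dagger space over $\Ed_K$ and $U$ is a classical rigid analytic space smooth over $K$.

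On $U$ we use the coordinates $t_1, \dots, t_n$. The relation $t_1 \cdots t_r = t$ does not obstruct anything: over $K$ the zero loci of $t_1, \dots, t_n$ form a strict normal crossings divisor on a smooth space, and $f|_U$ is a morphism of log $\nabla$-modules relative to $K$. Hence \cite[Lemma 3.2.14]{kedlaya2007semistable} applies directly and shows that $\ker f|_U$ and $\coker f|_U$ are locally free.

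On $X'$ the function $t$ is invertible, so every $t_j$ with $j \le r$ is a unit there, and only the coordinates $t_{r+1}, \dots, t_n$ carry log poles. The restriction $i^{-1} f$ is then a morphism of log $\nabla$-modules relative to $K$, and a fortiori relative to $\Ed_K$ (we simply forget the derivation by $t$), with respect to coordinates whose zero loci are smooth and meet transversely. Lemma \ref{lem:lnm-abel} therefore shows that $\ker(i^{-1} f)$ and $\coker(i^{-1} f)$ are locally free. Because $i^{-1}$ is exact, $i^{-1}\ker f = \ker(i^{-1} f)$ and $i^{-1}\coker f = \coker(i^{-1} f)$.

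With both pieces in hand, Lemma \ref{lem:loc-free2} applied to the coherent modules $\ker f$ and $\coker f$ gives local freeness on all of $X$. Coherence itself follows from Proposition \ref{prop:tube-sheaf}(ii) and Proposition \ref{prop:theoremAB}.

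The main obstacle is the $X'$ step. We must check that the hypotheses of Lemma \ref{lem:lnm-abel} genuinely hold on $X'$, namely that $X'$ is a smooth partial dagger space with $t_{r+1}, \dots, t_n$ forming a coordinate relative to $\Ed_K$. This should follow from the Remark after Definition \ref{def:almost-smooth}, using the étale map to $\Spa \V[[t]]_K\langle t_1,\dots,t_n\rangle/(t - t_1\cdots t_r)$. On the locus $|t|_{[x]}=1$ all $t_j$ with $j \le r$ are units, so after choosing $t_2, \dots, t_r$ as free coordinates and expressing $t_1 = t(t_2\cdots t_r)^{-1}$, we obtain an étale map to a product of a Lazda--Pál type algebra with a polydisc. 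A second point needs care: the kernel or cokernel taken in coherent sheaves may a priori differ from the one computed on the completion, and faithful flatness (Corollary \ref{cor:pda}), as used in the proof of Lemma \ref{lem:lnm-abel}, is what lets us pass between them.
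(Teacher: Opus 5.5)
Your proposal is correct and is essentially the paper's proof: split $X$ into $X' = \{ x \in X : |t|_{[x]} = 1 \}$, where forgetting the derivation by $t$ lets you apply Lemma \ref{lem:lnm-abel}, and the classical part $U = X - X'$, where Kedlaya's lemma from \cite{kedlaya2007semistable} applies; then glue with Lemma \ref{lem:loc-free2}. Your additional checks only make explicit what the paper leaves implicit: exactness of $i^{-1}$, and smoothness of $X'$ over $\Ed_K$, which is exactly the paper's example $\Spf \V[[t]]\langle x_1, \dots, x_m \rangle/(t - x_1 \cdots x_m)$ with coordinates $x_2, \dots, x_m$.
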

	\begin{proof}
		Let $X'$ and $U$ be as in Lemma \ref{lem:loc-free2}. After ``forgetting the derivation by $t$'', we get a morphism of $\LNM_{X'/\Ed_K}$. The kernel and cokernel of $f$ are locally free after restricted to $X'$ by the corresponding Lemma \ref{lem:lnm-abel}. Since $U$ is the ``classical'' rigid analytic space smooth over $K$, the kernel and cokernel are locally free over $U$ thanks to \cite[Lemma 3.2.13]{kedlaya2007semistable}.
	\end{proof}
	\begin{proposition}
		For any almost smooth rigid analytic space $X$, $\LNM_{X/K}$ is an abelian category. If $X = V \times_K A^d_K(I)$ for some almost smooth rigid analytic space $V$ and an aligned subinterval $I$ of $[0, \infty)$, the category $\ULNM_{X/K}$ is an abelian tensor subcategory of $\LNM_{X/K}$.
	\end{proposition}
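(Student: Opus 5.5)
The plan is to follow the proof of Proposition \ref{prop:lnm-abel}, which is itself modelled on \cite[Proposition 3.2.20]{kedlaya2007semistable}, and to pass between the two regions of $X$ by the local freeness criterion of Lemma \ref{lem:loc-free2}. The first assertion, that $\LNM_{X/K}$ is abelian, follows at once from Lemma \ref{lem:lnm-abel2}. That lemma says that kernels and cokernels of morphisms in $\LNM_{X/K}$ are again locally free log $\nabla$-modules. Their residues are nilpotent, since residues of sub- and quotient objects are restrictions and inductions of nilpotent endomorphisms. Image and coimage then agree, because this can be checked on the underlying coherent sheaves. The category is a tensor category because tensor products of locally free modules with integrable log connections and nilpotent residues are again of this type.

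For the unipotent part, let $X = V \times_K A^d_K(I)$. Closure under tensor products and extensions is formal, since it holds in the definition. What remains is closure under kernels and cokernels. I would follow the first three paragraphs of \cite[Proposition 3.2.20]{kedlaya2007semistable} word by word. This uses induction on the unipotent length, choosing constant subobjects $\pi^*\F \subseteq \E$ and $\pi^*\F' \subseteq \E'$. The argument then reduces to showing that certain morphisms of log $\nabla$-modules vanish, namely $\E' \to \E/\pi^*\F$ and $\pi^*\F' \to \E/\E'$.

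To prove this vanishing I would use the decomposition of Lemma \ref{lem:loc-free2}, writing $X' = \{ x : |t|_{[x]} = 1\}$ and $U = \{ x : |t|_{[x]} < 1\}$. On $U$ the space is a classical smooth rigid space over $K$, so the maps vanish by \emph{loc.\ cit.} directly. On $X'$ we forget the derivation by $t$, and the objects become unipotent in $\LNM_{X'/\Ed_K}$. There the vanishing follows from Proposition \ref{prop:lnm-abel}, more precisely from its proof, which checks the claim on the completion $\wh{X'}$ and descends by faithful flatness (Corollary \ref{cor:pda}). A morphism of coherent sheaves vanishing on $U$ and on $i^{-1}$ of $X'$ vanishes on $X$: $X'$ is closed and $U$ is its open complement, so the morphism vanishes on stalks everywhere.

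The main obstacle is that Proposition \ref{prop:lnm-abel} is stated only for closed aligned $I$, whereas here $I$ is an arbitrary aligned subinterval. I would handle this as in Lemma \ref{lem:ui-eq}: exhaust $I$ by closed aligned subintervals $J$, and note that the vanishing of a morphism is local, so it can be checked on each $V \times A^d(J)$. A second, smaller concern is that unipotence relative to $K$ restricts to unipotence relative to $\Ed_K$ on $X'$. This holds because forgetting the $t$-derivation sends constant objects to constant objects and preserves extensions.
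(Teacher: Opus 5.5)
Your proposal is correct and follows the paper's proof. The first part comes from Lemma \ref{lem:lnm-abel2}; for $\ULNM$ you reduce, as in \cite[Proposition 3.2.20]{kedlaya2007semistable}, to the vanishing of the two maps, checked separately on $X'$ (via Proposition \ref{prop:lnm-abel} and its proof) and on $U$ (via \emph{loc.\ cit.}). Your remarks on non-closed aligned $I$ and on forgetting the $t$-derivation only make explicit points the paper passes over.
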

	\begin{proof}
		The first part is clear from Lemma \ref{lem:lnm-abel2}. For $\ULNM$ being an abelian tensor subcategory, we follow the proof of \cite[Proposition 3.2.20]{kedlaya2007semistable}. Using the same symbol as in {\it loc. cit.}, we have to show that $\E' \to \E/\pi_1^*\F$ and $\pi_1^*\F \to \E/\E'$ are zero maps. Let $X'$ and $U$ be as in Lemma \ref{lem:loc-free2}, this can be checked over $X'$ and $U$ separately; over $X'$ we can invoke Proposition \ref{prop:lnm-abel} and over $U$ we can invoke \cite[Proposition 3.2.20]{kedlaya2007semistable}.
	\end{proof}
	
	With these propositions in hand, we can show that for a quasi-open subinterval $I$ of $[0, \infty)$, the functor $\Uc_I$ is an equivalence of categories:
	
	\begin{theorem} \label{thm:unip-equiv2}
		Let $V$ be an almost smooth rigid analytic space and $I$ a quasi-open subinterval of $[0, \infty)$. Then the functor
		$$ \Uc_I : \ULNM_{V \times_K A^d_K[0, 0]} \to \ULNM_{V \times_K A^d_K(I)}$$
		is an equivalence of categories.
	\end{theorem}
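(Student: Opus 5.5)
I would mirror the structure of the proof of Theorem \ref{thm:ui-cat-eq}: first treat closed aligned subintervals, and then pass to a quasi-open $I$ by exhausting it with closed ones and gluing as in Lemma \ref{lem:ui-eq}. The key input is the analogue of Lemma \ref{lem:ui-cat-eq2} in the almost smooth setting. For $\E, \E' \in \ULNM_{V \times A^d[0,0]}$ and $J$ a closed aligned subinterval, the map $\Ext^i(\E,\E') \to \Ext^i(\Uc_J\E, \Uc_J\E')$ should be an isomorphism. Using the relative version of Lemma \ref{lem:ui-cat-eq1} (now with log differential operators relative to $K$), this reduces to showing that
$$g : \Gamma(V, \Omega^{\cdot,\log}_{V\times A^d[0,0]/V}) \to \Gamma(V, \Omega^{\cdot,\log}_{V \times A^d(J)/V})$$
is a chain homotopy equivalence for $V$ an affinoid. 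For this I would reuse the explicit homotopy $s$ (integrate the lowest-index monomial against $\dlog z_h$) from the proof of Lemma \ref{lem:ui-cat-eq2}.

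The only new issue is that $V$ is now almost smooth. An affinoid neighborhood splits into the locus $\{|t|_{[x]} = 1\}$, which is a partial dagger space, and the locus $\{|t|_{[x]}<1\}$, which is a classical rigid space over $K$. On the first locus, well-definedness of $s$ is exactly the fringe-norm estimate in Lemma \ref{lem:ui-cat-eq2}. On the second, it is Kedlaya's estimate for \cite[Lemma 3.3.2]{kedlaya2007semistable}. Since $s$ only multiplies coefficients by $1/i_h$, it is compatible with restriction, so it defines a map on sections over the whole space.

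Theorem A and B are needed to pass from hypercohomology to global sections. On the partial dagger part they hold by Proposition \ref{prop:theoremAB}, and on the classical part they are standard. I would glue the two using a Mayer–Vietoris argument on a cover of the type appearing in the proof of Lemma \ref{lem:loc-free2}, namely $\{|t|_x \ge \epsilon\}$ and $\{|t|_x\le \epsilon\}$. I expect this to be the main obstacle. One must check that the homotopy $s$ and the vanishing of higher cohomology are uniform across this cover, and that the closed-interval statement holds for the annulus part over $K$. The latter is where the remark after Lemma \ref{lem:ui-cat-eq2} matters: over $K$ one can integrate on open discs, while over $\Ed_K$ one needs closed ones. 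Since $I$ is quasi-open, for each closed $J \subset I$ one can shrink slightly inside $I$, which covers both behaviours.

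Once the closed case is in hand, full faithfulness of $\Uc_I$ follows from the $\Ext^0$ isomorphism for each closed $J\subset I$, together with compatibility under restriction. For essential surjectivity, given a unipotent $\E$ on $V\times A^d_K(I)$, each restriction $\E|_{V\times A^d(J)}$ is unipotent. Since $\ULNM$ is an abelian tensor subcategory by the preceding proposition, and by the $\Ext^1$ isomorphism, each such restriction lies in the essential image of $\Uc_J$. I would then choose $\F_J$ and isomorphisms $\phi_J$, and build the transition isomorphisms $\psi_{J,J'}$ exactly as in Lemma \ref{lem:ui-eq}; full faithfulness makes them unique and hence cocycles. Finally I would glue along an exhausting sequence of closed aligned subintervals, using quasi-openness of $I$, to obtain $\E \simeq \Uc_I(\F)$.
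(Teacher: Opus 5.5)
\textbf{Genuine gap: the closed-interval step is false here.} Your key lemma asserts that $\Ext^i(\E,\E')\to\Ext^i(\Uc_J\E,\Uc_J\E')$ is an isomorphism for every \emph{closed} aligned $J$, and hence that each $\E|_{V\times A^d(J)}$ lies in the essential image of $\Uc_J$. In the $K$-valued setting this fails on both loci of $V$.

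For closed $J$, $A^d_K(J)$ is the honest closed polyannulus, so $V\times_K A^d_K(J)$ has no overconvergence in the $z_i$. The estimate in Lemma~\ref{lem:ui-cat-eq2} works only because $A^d_{\Ed_K}(J)$ is cut out by $|z_i|_{[x]}\in J$. It therefore has neighbourhoods $\Vs_\eta\times A^d_{\E_\lambda}[\alpha-\epsilon,\beta+\epsilon]$, and the factors $1/i_h$ are paid for by shrinking $\epsilon$. The locus $\{|t|_{[x]}=1\}$ of $V\times_K A^d_K(J)$ is a different germ: the identification $V'\times_K A^d_K(I)\simeq V'\times_{\Ed_K}A^d_{\Ed_K}(I)$ in Theorem~\ref{thm:des-unip2} is only for quasi-open $I$. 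On $\{|t|_{[x]}<1\}$, \cite[Lemma 3.3.2]{kedlaya2007semistable} likewise holds only for non-closed intervals.

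Here is a concrete counterexample. Let $V=\Spa \V[[t]]_K$ (almost smooth with $t_1=t$), $d=1$, $J=[0,1]$, and $\omega=f\,\dlog z$ with $f=\sum_{n\ge1}p^n z^{p^{2n}}$. Then $\omega$ is closed and defines a unipotent extension of $\O$ by $\O$ on $V\times_K A^1_K[0,1]$. Suppose $\omega=\alpha+c\,\dlog z+dg$ with $\alpha$ and $c$ coming from $V$. Then the $z^{p^{2n}}$-coefficient of $g$ must be $p^{-n}$, so $g\notin\Gamma(V\times_K A^1_K[0,1],\O)$ and the $\Ext^1$ map is not surjective. Since $f$ does not involve $t$, this persists on either locus. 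This is exactly why the theorem is restricted to quasi-open $I$. Your remark about ``shrinking slightly inside $I$'' does not repair it, because you still assert the closed-$J$ isomorphism and then glue the resulting $\F_J$ as in Lemma~\ref{lem:ui-eq}.

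\textbf{What is needed instead.} Run Kedlaya's argument \cite[Lemma 3.3.2, Theorem 3.3.4]{kedlaya2007semistable} directly on the quasi-open $I$. The paper gives no separate proof; this is what it means by saying the theorem follows ``with these propositions in hand''. The steps are as follows.
\begin{enumerate}[(i)]
\item For affinoid $V=\Spa A$ (an honest adic space here, not a germ), $\Gamma(V\times_K A^d_K(I),\O)$ is the intersection of the rings $A\langle z_1,\dots,z_d\rangle_{J^d}$ over closed aligned $J\subset I$.
\item Dividing coefficients by $\nu_h$ costs only polynomial growth. So the homotopy $s$ maps $A\langle z\rangle_{J^d}$ into $A\langle z\rangle_{J'^d}$ for any closed $J'$ obtained by slightly shrinking $J$, and hence preserves the intersection. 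This holds uniformly over $V$, with no splitting along $|t|$.
\item Since $V\times_K A^d_K(I)$ is quasi-Stein, higher coherent cohomology vanishes (see the appendix, cf.\ \cite[Theorem 2.6.5]{kedlaya2016relative}). The $K$-analogue of Lemma~\ref{lem:ui-cat-eq1} then gives the $\Ext^i$ isomorphism for $I$ itself.
\item Full faithfulness is the case $i=0$. Essential surjectivity follows by induction on unipotent length from the case $i=1$.
\end{enumerate}
The only new input over Kedlaya is that $\LNM_{X/K}$ is abelian with $\ULNM$ an abelian tensor subcategory (Lemma~\ref{lem:lnm-abel2} and the proposition after it). That is the one place where the decomposition into $\{|t|_{[x]}=1\}$ and $\{|t|_{[x]}<1\}$ enters. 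No gluing over closed subintervals and no Mayer--Vietoris cover in $|t|$ are needed.
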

	
	Now we can follow the entire argument of \cite[3.4]{kedlaya2007semistable} and \S\ref{subsec:descent} to prove the following theorem, which together with Theorem \ref{thm:des-unip} implies that a locally free log $\nabla$-module $\E$ relative to $K$ being constant/unipotent can be checked after ``forgetting the derivation by $t$'':
	
	\begin{theorem} \label{thm:des-unip2}
		Let $V = \Spa A$ be an affinoid almost smooth rigid analytic space with coordinate $t_1, \dots, t_n$ such that $t_1 \cdots t_r = t$, let $V' = V \times_{\Spa \V[[t]]_K} \Spd \Ed_K = \{ x \in V : |t|_{[x] = 1} \}$ and let $B = A \langle t^{-1} \rangle$. Suppose that:
		\begin{itemize}
			\item the spectral norm $| \cdot |_{\sup}$ of the affinoid algebra $B$ over $\E_K$ is a valuation;
			\item for $Z_i$ the zero locus of $t_i$, $Z_i$ locally admits an open neighborhood of the form $Z_i \times_K A^1_K[0, \eta)$ in $V$.
		\end{itemize}
		Let $L$ be a field containing $B$, which is complete with respect to a valuation restricting to the spectral norm of $B$. Then for any quasi-open aligned subinterval $I \subseteq [0, 1)$, the following hold:
		\begin{itemize}
			\item For any $\E \in \LNM_{V \times_K A^d_K(I)/K}$, $\E$ is  constant/unipotent if and only if the induced object $\F \in \LNM_{A^d_L(I)/L}$ is constant/unipotent.
			\item For any $\E \in \LNM_{V' \times_K A^d_K(I)/K}$, $\E$ is  constant/unipotent if and only if the induced object $\F \in \LNM_{A^d_L(I)/L}$ is constant/unipotent.
		\end{itemize}
		Remark that $V' \times_K A^d_K(I) \simeq V' \times_{\Ed_K} A^d_{\Ed_K}(I)$, an object $\E \in \LNM_{V' \times_K A^d_K(I)/K}$ induces an object $\E^\circ \in \LNM_{V' \times_{\Ed_K} A^d_{\Ed_K}(I) / \Ed_K}$. Then $\E$ is constant/unipotent if and only if $\E^\circ$ is constant/unipotent.
	\end{theorem}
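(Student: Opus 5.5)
The plan is to imitate the descent argument of \S\ref{subsec:descent} (Lemmas \ref{lem:unip-f}, \ref{lem:hor-sec}, \ref{lem:des-unip-1}), now with $L$-coefficients in place of $\wh{A}$-coefficients and derivations relative to $K$ instead of $\Ed_K$. The ``only if'' directions are trivial, since base change preserves constance and unipotence. For the converse we first reduce to closed aligned intervals. By Theorem \ref{thm:unip-equiv2} and the argument of Lemma \ref{lem:ui-eq}, unipotence over $V \times A^d_K(I)$ for quasi-open $I$ is equivalent to unipotence over every $V \times A^d_K(J)$ with $J \subseteq I$ closed aligned, compatibly with the essential images of $\Uc_J$. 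We may therefore assume that $I=[\eta_0,\eta_1]$ (or $[0,\eta]$) and that $\E$ is free, working locally on $V$, which is legitimate by the analogue of Corollary \ref{cor:unip-loc} that follows from Theorem \ref{thm:unip-equiv2}.

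The key steps are as follows. (1) Produce horizontal sections. Since $\F$ is unipotent over $L$, Remark \ref{rem:R-unip} (over the field $L$) gives a free $L$-module $F_L$ with commuting nilpotent $\partial_1,\dots,\partial_d$ inducing $\F$. We then define the operators $f_l$ of Lemma \ref{lem:unip-f} on $\E$ using the $z_i$-derivations only. Convergence of $f_l(x)$ in the $L$-norm follows from Lemma \ref{lem:unip-f}. (2) Show that the limit $f(x)$ lies in $\Gamma(V\times A^d_K(I),\E)$ and is nonzero for some $x$. On the part $\{|t|<1\}$, which is a classical smooth rigid space, this is the content of \cite[Lemma 3.4.2]{kedlaya2007semistable}, using that the spectral norm is a valuation. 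On the part $V'$ near $|t|=1$, it is Lemma \ref{lem:hor-sec}. We would glue the two by the covering $\{|t|\le\epsilon\}\cup\{|t|\ge\epsilon\}$, exactly as in Lemma \ref{lem:loc-free2}. (3) Let $N'$ be the span of sections horizontal in the $z$-directions over the non-log locus. It is stable under $\nabla$ relative to $K$, because the $z$-derivations commute with the $t_j$- and $x_j$-derivations. By Proposition \ref{prop:ext-log-sub}, using the second hypothesis on the $Z_i$, $N'$ extends to a log submodule $N$, and $N$ lies in $\LNM$ by Lemma \ref{lem:lnm-abel2}. (4) Show that $N$ is constant. Here we copy the injectivity argument $H_0\otimes_A R\hookrightarrow M$ of Lemma \ref{lem:des-unip-1}, replacing faithful flatness $A\to\wh{A}$ by injectivity $A\hookrightarrow B\hookrightarrow L$, which holds since $B$ is a domain. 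We then conclude by induction on rank, using that $\ULNM$ is closed under quotients by the abelian tensor subcategory property proved above.

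For the second bullet the same argument applies verbatim on $V'$, which is the case already covered by Lemma \ref{lem:des-unip-1} once the $t$-derivation is added; the extra structure is harmless because horizontal sections for the $z_i$ are stable under it. The final remark comparing $\E$ and $\E^\circ$ follows from combining the second bullet with Theorem \ref{thm:des-unip}, since both conditions are equivalent to unipotence of $\F$ over $L$.

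I expect step (2) to be the main obstacle, namely proving that $f(x)$ converges in the mixed space $V$, which has both an archimedean-type overconvergent direction ($|t|\to1$, the fringe) and a classical part. The plan is to apply the Hadamard-type interpolation of Lemma \ref{lem:hadamar} uniformly. Failing that, we would verify convergence separately on the two pieces of the covering and use the sheaf property.
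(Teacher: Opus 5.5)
Your overall route is the one the paper intends: it gives no separate proof and only says to rerun \cite[\S3.4]{kedlaya2007semistable} together with \S\ref{subsec:descent}. However, two of your steps would fail as you have set them up.

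\textbf{The reduction to closed intervals.} This is not available relative to $K$. In \S\ref{sec:log-nabla-mod}, closed intervals were allowed only because partial dagger polyannuli carry a margin $[\eta_0-\epsilon,\eta_1+\epsilon]$ in the $z$-direction (see the remark following Lemma \ref{lem:ui-cat-eq2}). Over the classical part of $V$, however, $A^d_K(J)$ is an honest affinoid. There the proof of Lemma \ref{lem:unip-f} breaks down, because it absorbs the polynomial factor $(|\unu|+l+1)^{de}$ by shrinking $\epsilon$ to $\epsilon'$. Moreover, unipotent objects need not lie in the essential image of $\Uc_J$: the extension of $\O$ by $\O$ on $|z|=1$ with class $\sum_k p^k z^{p^k}\,\dlog z$ is not cohomologous to a constant. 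This is why Theorem \ref{thm:unip-equiv2} and the statement require $I$ quasi-open. You must keep $I$ quasi-open and, as Kedlaya does, estimate on a closed $J\subset I$ using sections over a slightly larger $J'\subset I$.

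Likewise, do not localize on $V$ to make $\E$ free. The hypothesis lives at the single field $L$ and is not inherited by smaller affinoids, which need not contain the Shilov boundary. Use instead that $\Gamma(\E)$ is a direct summand of a free module; over $L$, $F_L$ is free anyway.

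\textbf{Step (2).} This is the more serious gap: as proposed, it has no working mechanism.
\begin{itemize}
\item \cite[Lemma 3.4.2]{kedlaya2007semistable} cannot be invoked on $\{|t|\le\epsilon\}$. It needs the sup norm of that affinoid to be multiplicative, and unipotence over the completion of that affinoid's own fraction field. You have neither: near a point with $t=t_1t_2$, the domain $\{|t_1t_2|\le\epsilon\}$ has two Shilov points, while your $L$ sits over $|t|=1$.
\item For the first bullet, $\{|t|\ge\epsilon\}$ is an affinoid, not a germ. Lemmas \ref{lem:hor-sec} and \ref{lem:hadamar} therefore only give convergence on some $\{|t|\ge\lambda'\}$, with $\lambda'$ depending on $x$ and pushed towards $1$. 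The two pieces of your cover never glue.
\end{itemize}

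The missing idea is that no splitting is needed. The coefficients of $f_l(x)$ are series in $z$ over $A=\Gamma(V,\O)$ itself. The sup norm over $V$ is attained on $V'$; this holds when the reduction of $V$ is flat over $k[[t]]$, as for tubes in semistable models. Some such condition must be imposed, since a component of $V$ lying inside $\{|t|<1\}$ is invisible to $L$. Granting it, the $L$-norm restricted to $A$ is the spectral norm of $A$, which is equivalent to the Banach norm because $A$ is reduced. Hence $A\langle z\rangle_J$ is closed in $L\langle z\rangle_J$, and $L$-convergence already yields convergence in $A$. This is exactly Kedlaya's mechanism applied to $A$, with the maximum principle in $t$ handling the classical part automatically.

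The same input is needed in your step (4): that $B$ is a domain does not by itself make $A\to B$ injective. The Hadamard interpolation is needed only on the germ $V'$, i.e.\ for the second bullet. There, your reduction to Lemma \ref{lem:des-unip-1}, together with stability of $z$-horizontal sections under the $t$-derivation, is fine.
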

	
	\subsection{Equivalence of categories revisited}
	\begin{lemma} \label{lem:conv-mul-seq2}
		Under Hypothesis \ref{hypo:ss-pair}, let $x_i = t_i, \, y_j = t_{m+j}$ and suppose that $\Yf$ is affine and let $\E$ be a $\nabla$-module over $]Y[_{\Yf}$ relative to $K$ which restricts to an overconvergent isocrystal over $]X[_\Yf$. For $\eta \in (0, 1) \cap \Gamma^*$, let
		$$\Ys_{\lambda} = \{ x \in \Yf^\ad : |t|_x \geq \lambda \}.$$
		Choose a log $\nabla$-module $\E_{\lambda_0}$ on $\Ys_{\lambda_0}$ which gives rise to $\E$. Let $v \in \Gamma(\Ys_{\lambda}, \E_{\lambda_0})$ for some $\lambda \in [\lambda, 1) \cap \Gamma^*$. Then for any $\eta \in (0, 1)$, there exists $\lambda \in [\lambda', 1) \cap \Gamma^*$ such that the multi-sequence in $(\nu_1, \nu_2, \dots, \nu_n)$
		$$ \frac{1}{\nu_1! \cdots \nu_n!}
		\left( \prod_{i=1}^{m} \prod_{l=0}^{\nu_j-1} \left( \frac{\partial}{\partial t_j} - l \right) \right)
		\left( \prod_{j=m+1}^{n} \frac{\partial^{\nu_j}}{\partial t_j^{\nu_j}} \right)v $$
		is $\eta$-convergent on $\Ys_{\lambda'}.$
	\end{lemma}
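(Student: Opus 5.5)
The argument will follow Lemma \ref{lem:conv-mul-seq} (and \cite[Lemma 6.3.4]{kedlaya2007semistable}). The new feature is that the log variables $t_1,\dots,t_m$ cutting out the components of the closed fiber now carry log derivations relative to $K$. The first step is to rewrite the operator algebraically. In terms of ordinary derivations we have the identity
$$\prod_{l=0}^{\nu-1}\Bigl(t_j\frac{\partial}{\partial t_j}-l\Bigr)=t_j^{\nu}\frac{\partial^{\nu}}{\partial t_j^{\nu}}$$
on the locus where $t_j$ is invertible. Hence, on $]X[_\Yf$ (where all $t_1,\dots,t_m$ are invertible since $|t|_{[x]}\ge 1$), the multi-sequence in the statement equals
$$\frac{t_1^{\nu_1}\cdots t_m^{\nu_m}}{\nu_1!\cdots\nu_m!}\prod_{j\le m}\frac{\partial^{\nu_j}}{\partial t_j^{\nu_j}}\cdot\frac{1}{\nu_{m+1}!\cdots\nu_n!}\prod_{j>m}\frac{\partial^{\nu_j}}{\partial t_j^{\nu_j}}v .$$
Because $t=t_1\cdots t_r$, the functions $t_1,\dots,t_n$ give étale coordinates of $\Yf$ over $\V$ near $X$, with one relation. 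I would pass to a genuine coordinate system relative to $K$, for instance $t,t_2,\dots,t_n$ on a piece of $\Yf^\ad$ over which $\partial/\partial t_1$ is expressed via $\partial/\partial t$. The cleaner route is to note that the log derivations $t_j\partial/\partial t_j$ are the natural coordinate vector fields of the log smooth chart, and to apply Proposition \ref{prop:mic-oc} for $K$-valued isocrystals to the frame $(X,Y,\Yf)$ with these coordinates.

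The second step uses Proposition \ref{prop:mic-oc}. Overconvergence of $\E|_{]X[_\Yf}$ relative to $K$ says the following: for every $\eta<1$ there are $\lambda,\rho$ such that $\frac{\underline\partial^{\underline\nu}}{\underline\nu!}e$ is $\eta$-convergent on $[Y]_\lambda\cap[X]_\rho$ for sections $e$ there. Since $\Yf$ is affine with special fiber $Y$, the tube of $Y$ is all of $\Yf^\ad$, so $[Y]_\lambda$ imposes no condition. The condition $[X]_\rho$ reduces to $|t|_x\ge\rho$, and this is exactly $\Ys_{\rho}$, on which $|t_j|_x\le 1$ for all $j$. Multiplying by $t_1^{\nu_1}\cdots t_m^{\nu_m}$ therefore does not increase norms, and $\eta$-convergence of the rewritten sequence follows on $\Ys_{\lambda'}$ with $\lambda'=\rho$, after restricting $v$ from $\Ys_\lambda$.

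The main obstacle is that $\E$ is only given on $]Y[_\Yf$ as a log-type object, while Proposition \ref{prop:mic-oc} is phrased for ordinary connections with free differentials relative to $K$. One must also handle the relation $t=t_1\cdots t_r$ among the coordinates. For this I would work on the open locus where all $t_j$ ($j\le m$) are units, which contains $\Ys_{\lambda'}\cap]X[$. There $\Omega^1_{/K}$ is free on $\dlog t_2,\dots,\dlog t_m$ together with $dt_{m+1},\dots,dt_n$ and $\dlog t$. I would express the operators $t_j\partial_{t_j}$ for $j\le r$ as $\mathbb Z$-linear combinations of the log derivations in this basis, for example $\partial_{\log t}$ minus the others. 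Since these are integer combinations with commuting operators, the multinomial expansions of $\prod_l(D-l)/\nu!$ preserve $\eta$-convergence; this uses the binomial-type identity $\binom{D_1+D_2}{\nu}=\sum\binom{D_1}{a}\binom{D_2}{\nu-a}$. Finally, convergence on a strict neighbourhood is extended to the whole of $\Ys_{\lambda'}$ by the maximum-modulus argument of \cite[Lemma 6.3.4]{kedlaya2007semistable}, since sections on the affinoid $\Ys_{\lambda'}$ are determined by their norms on the locus where the $t_j$ are units.
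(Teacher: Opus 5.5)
Your route is the paper's. As in Lemma~\ref{lem:conv-mul-seq} (following \cite[Lemma 6.3.4]{kedlaya2007semistable}), you rewrite $\prod_l(t_j\partial_j-l)$ as $t_j^{\nu_j}\partial_j^{\nu_j}$ on $]X[_\Yf$. You then get $\eta$-convergence of $\ul{\partial}^{\unu}v/\unu!$ from Proposition~\ref{prop:mic-oc}, and absorb $t_1^{\nu_1}\cdots t_m^{\nu_m}$ using $|t_j|_x\le 1$. Your closing maximum-modulus step makes explicit what the paper leaves to the citation. Two points need correcting.

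First, $[X]_\rho$ is not $\Ys_\rho$ unless $m=r$. Since $X=Y\cap D(\overline{t\,t_{r+1}\cdots t_m})$, one has $[X]_\rho=\{x : |t|_x\ge\rho,\ |t_{r+1}\cdots t_m|_x\ge\rho\}$. So Proposition~\ref{prop:mic-oc} says nothing near the zero locus of $t_{r+1}\cdots t_m$, and the conclusion of your second paragraph (``on $\Ys_{\lambda'}$ with $\lambda'=\rho$'') is false as stated. The maximum-modulus argument is therefore the essential step, not a final touch. It applies to the sections $\prod_l(t_j\partial_j-l)\cdots v$, which exist on all of $\Ys_\lambda$ because $\E_{\lambda_0}$ is a log $\nabla$-module. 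Their norm on the affinoid $\Ys_{\lambda'}$ is controlled by their norm on $\Ys_{\lambda'}\cap\{|t_{r+1}\cdots t_m|_x\ge\rho\}$.

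Second, the ``relation'' you worry about does not exist. Relative to $K$, $t$ is not a constant, and
\[
dt=\sum_{j\le r}\Bigl(\prod_{i\le r,\, i\ne j}t_i\Bigr)\,dt_j ,
\]
so $dt_1,\dots,dt_n$ freely generate $\Omega^1_{/K}$. Hence Proposition~\ref{prop:mic-oc} applies directly with the coordinates $t_1,\dots,t_n$. Moreover $t_1,\dots,t_r$ are units on $\Ys_{\lambda'}$, so the only genuinely logarithmic directions there are $t_{r+1},\dots,t_m$.

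The $\dlog t$ detour should be dropped. If you kept it, Vandermonde's identity alone would not suffice. The operator dual to $\dlog t$ would appear in several factors, and you would also need the integral expansion of $\binom{D}{a}\binom{D}{b}$ in terms of the $\binom{D}{c}$ with $c\le a+b$.
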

	\begin{proof}
		Similar to Lemma \ref{lem:conv-mul-seq}.
	\end{proof}
	
	Under Hypothesis \ref{hypo:ss-pair}, let $\Yf(k)$ and $\Pf(k)$ be $(k+1)$-fold fibered product over $\V$, let $p_i: \Pf(k) \to \Pf$ be the $i$-th projection and let $M_{\Pf(k)}$ be the log structure $p_0^*M_{\Pf} \oplus \cdots \oplus p_k^*M_{\Pf}$. Then $M_{\Pf(k)}$ is fine and therefore $(\Pf(k), M_{\Pf(k)})$ is the fibered product of $(\Pf, M_{\Pf})$ in the category $\pLFS/\V$.

	Let $S(k) = \V[[t^{(0)}]] \wh{\otimes}_\V \cdots \wh{\otimes}_\V \V[[t^{(k)}]]$ and let $t_i^{(l)} = p_l* t_i$ be the section of $\Pf(k)$ pulled back by the $l$-th projection $\Pf(k) \to \Pf$. Then we have
	$$\Pf(k) = \Spf S(k)\langle t_i^{(0)}, \dots, t_i^{(k)} \rangle_{i = 1, \dots, n}/(t^{(l)} - t_1^{(l)} \dots t_r^{(l)})_{l = 0, \dots, k}.$$
	Let
	$$\Pf'(k) = \Spf S(k)\langle t_i^{(0)}, t_i^{(1)}/t_i^{(0)}, \dots, t_i^{(k)}/t_i^{(k-1)}\rangle_{i=1, \dots, m} \langle t_i^{(0)}, \dots, t_i^{(k)} \rangle_{i = m+1, \dots, n} / (t^{(l)} - t_1^{(l)} \dots t_r^{(l)})_{l = 0, \dots, k}.$$
	
	The divisor $V(t_1^{(0)} \cdots t_m^{(0)})$ of $\Pf'(k)$ defines a log structure $M_{\Pf'(k)}$ on $\Pf'(k)$. There exists canonical morphisms of log formal schemes $i'(k) : (P, M_P) \hookrightarrow (\Pf'(k), M_{\Pf'(k)})$ and $f(k) : (\Pf'(k), M_{\Pf'(k)}) \to (\Pf(k), M_{\Pf(k)})$ whose composition is the closed immersion $i(k) : (P, M_P) \hookrightarrow (\Pf(k), M_{\Pf(k)}).$ By the same argument as in \S\ref{subsec:equiv-cat}, $i'(k)$ is an exact closed immersion and $f(k)$ is formally log etale. We can use $\Pf'(k)$ to calculate the log tubular neighborhood $]Y[^{\log}_{\Yf(k)}$ and similarly as in \S\ref{subsec:equiv-cat}, we get the following theorem:
	
	\begin{theorem} \label{thm:cat-eq-log-isoc-log-nabla2}
		Under Hypothesis \ref{hypo:ss-pair}, there is an equivalence between the category $I^{\log}_{\mathrm{conv}}(Y/\V)$ of convergent log isocrystals over $Y^\sharp / \V$ (resp. the category $I^{\log}_{\oc}(Y_\eta, Y/\V)$ of overconvergent log isocrystals over $(Y_\eta^\sharp, Y^\sharp)/\V$) and the category of overconvergent log $\nabla$-modules over $]Y[_{\Yf}/K$ (resp. the category of overconvergent log $\nabla$-modules over $]Y_\eta[_{\Yf}/K$) with respect to the log structure induced by $Z$.
	\end{theorem}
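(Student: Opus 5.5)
The plan is to imitate the proof of Theorem \ref{thm:cat-eq-log-isoc-log-nabla}, now working over $\V$ with the fibered products $\Pf(k)$ over $\V$ and the exactifications $\Pf'(k)$ constructed above. I will treat both the convergent case over $Y$ and the overconvergent case over $(Y_\eta, Y)$ in parallel; the only difference is whether tubes are taken of $Y$ or of $Y_\eta$.

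The first step is to compute log tubes. Base changing $i'(k)$ and $f(k)$ along the étale map $\Yf \to \Pf$ gives a factorization $(Y, M_Y) \hookrightarrow (\Yf'(k), M_{\Yf'(k)}) \to (\Yf(k), M_{\Yf(k)})$. In this factorization the first map is an exact closed immersion and the second is formally log étale. By the log weak fibration theorem \ref{lem:tub-nei-form}, we get
$$]Y[^{\log}_{\Yf(k)} \simeq (\wh{\Yf'(k)})_K, \qquad ]Y_\eta[^{\log}_{\Yf(k)} \simeq ]Y_\eta[_{\Yf'(k)}$$
as germs. Over $X$ the log structure is trivial, so $f(k)$ is étale near $X$, and the strong fibration theorem \ref{thm:strong-fibration} identifies $]X[_{\Yf'(k)}$ with $]X[_{\Yf(k)}$.

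The second step produces a log connection from an isocrystal. By the stratification description (the analogue of \cite[Proposition 2.2.7]{shiho2002crystallineII}, adapted via \cite[Proposition 5.12]{lazda2016rigid} for fibered products over $\V$), a (over)convergent log isocrystal is the same as a coherent module $\E$ on $]Y[_{\Yf}$ (resp. $]Y_\eta[_\Yf$) together with a log stratification $\epsilon$ on the log tube of $\Yf(1)$. Lemma \ref{lem:int-log-con} then gives an integrable log connection relative to $K$. This connection includes the derivation $t_i\partial/\partial t_i$ for $i=1,\dots,m$, including the components of $Y_s$. Overconvergence along $X$ follows because restricting to $]X[$ recovers a $K$-valued overconvergent isocrystal. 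The construction is fully faithful, since the connection is obtained from $\epsilon$ by restricting to the first infinitesimal neighborhood and $\epsilon$ is determined by its Taylor expansion.

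The third step goes the other way: from a log $\nabla$-module, define $\epsilon$ on $]Y[_{\Yf'(1)}$ by the Taylor formula
$$1\otimes v \mapsto \sum_{\ul{i}} \prod_{j=1}^{m}\frac{(u_j-1)^{i_j}}{i_j!}\prod_{l=0}^{i_j-1}\Bigl(t_j\frac{\partial}{\partial t_j}-l\Bigr)\prod_{j=m+1}^{n}\frac{(t_j^{(1)}-t_j^{(0)})^{i_j}}{i_j!}\frac{\partial^{i_j}}{\partial t_j^{i_j}}\,v,$$
with $u_j=t_j^{(1)}/t_j^{(0)}$. Here $\Pf'(k)$ is over $S(k)$, so the variables $t^{(0)}, t^{(1)}$ differ. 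The Taylor expansion must therefore also include the $t$-direction, but that direction is already encoded through $t = t_1\cdots t_r$ and the log coordinates $u_1,\dots,u_r$, so no extra factor is needed.

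The main obstacle is convergence of this series on all of the log tube, and in particular near the closed fiber, where $|t|_x<1$ and the partial-dagger (completion) arguments no longer apply. Lemma \ref{lem:conv-mul-seq2} gives $\eta$-convergence of the relevant multi-sequence on each $\Ys_{\lambda'}$, for every $\eta<1$. Hence the series converges uniformly on the subspaces $\{\max_j|u_j-1|\le\lambda,\ \max_j |t_j^{(1)}-t_j^{(0)}|\le \lambda\}$, and these subspaces exhaust $]Y[_{\Yf'(1)}$.

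In the convergent case one has to make sure that Lemma \ref{lem:conv-mul-seq2} really covers a neighborhood of the whole of $]Y[$, including the region $|t|\to 0$, not only $|t|\ge\lambda'$. To handle this I would first apply Lemma \ref{lem:conv-mul-seq2} on $|t|\ge\lambda'$. On the complement $\{|t|<\lambda'\}$, which is a classical smooth rigid space over $K$ with an snc divisor, I would invoke the classical estimate \cite[Lemma 6.3.4]{kedlaya2007semistable}, and then glue the two regions.

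Finally, $\Delta^*\epsilon=\id$ and the cocycle condition hold on $]X[_{\Yf'(2)}$ because the restriction there is an overconvergent isocrystal. They then extend to the whole tube by density of $]X[$ and uniqueness of analytic continuation of coherent sections. This gives the inverse functor and hence the equivalence.
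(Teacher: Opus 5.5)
Your overall route is the paper's. The paper simply says the result follows ``similarly as in \S\ref{subsec:equiv-cat}'': the exactification $\Pf'(k)$ over $S(k)$, the log weak fibration theorem, Lemma \ref{lem:int-log-con} for one direction, the log Taylor series with convergence from Lemma \ref{lem:conv-mul-seq2}, and the cocycle condition checked on $]X[_{\Yf'(2)}$. Your remark that the $t$-direction is absorbed by $t^{(1)}/t^{(0)}=u_1\cdots u_r$ is correct. The overconvergent case is fine as you describe it, since $]Y_\eta[_{\Yf}$ lies in every $\Ys_{\lambda'}$.

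The gap is in the convergent case. You correctly notice that Lemma \ref{lem:conv-mul-seq2} only controls the log Taylor coefficients on $\Ys_{\lambda'}=\{|t|\ge\lambda'\}$, while the base of the tube of the diagonal is all of $]Y[_{\Yf}=\Yf^{\ad}$. Your remedy, however, does not work.
\begin{itemize}
\item An estimate of the type \cite[Lemma 6.3.4]{kedlaya2007semistable} takes as input overconvergence of the restriction to the tube of the open stratum. Overconvergence only gives estimates on neighbourhoods of the form $\{|t|,|g|\ge\rho\}$ (Proposition \ref{prop:mic-oc}), with $\rho\to1$ as $\eta\to1$.
\item Here $X\subseteq Y_\eta$, so $]X[_{\Yf}\subseteq\{|t|_{[x]}=1\}$. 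The region $\{|t|<\lambda'\}$ is disjoint from it, and the restriction of $\E$ there carries no overconvergence hypothesis along anything. Convergence of a log $\nabla$-module on such a tube is a genuine extra condition, not automatic, so nothing intrinsic to $\{|t|<\lambda'\}$ can be invoked.
\item Gluing an estimate on a non-quasi-compact open piece would also not give the uniform bound over the whole quasi-compact base. That uniform bound is what uniform convergence on $\{\max_j|u_j-1|\le\lambda,\ \max_j|t_j^{(1)}-t_j^{(0)}|\le\lambda\}$ requires.
\end{itemize}

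The missing ingredient is the maximum modulus principle on the affinoid $\Yf^{\ad}$. Take $\Yf$ affine and, after shrinking, $\E$ free.
\begin{itemize}
\item The log Taylor terms are global sections over $\Yf^{\ad}$, because $\E$ and the derivations $t_j\partial/\partial t_j$ ($j\le m$) and $\partial/\partial t_j$ ($j>m$) are defined on all of it.
\item The supremum of a function over $\Yf^{\ad}$ is attained at the Shilov boundary, i.e.\ the points attached to the generic points of the components of $Y$. Since $Y$ is flat over $k[[t]]$, these lie over $\eta$. Hence $|t|=1$ there, and they belong to $\Ys_{\lambda'}$.
\item So the sup norms of the coefficients over $\Yf^{\ad}$ and over $\Ys_{\lambda'}$ coincide, and $\eta$-convergence on $\Ys_{\lambda'}$ upgrades to $\eta$-convergence on all of $]Y[_{\Yf}$.
\end{itemize}
This is the same mechanism by which the deep part of the tube is controlled in the classical case. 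With this replacement for your two-region gluing, the rest of your argument goes through.
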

	
	The above equivalence glue to give the global case as before, and we make the following similar definition:
	
	\begin{definition}
		A convergent log isocrystal $E$ over $Y/\V$ is said to have nilpotent residues if the corresponding log $\nabla$-module $\E$ does. More precisely, if $(Y, Z)$ is a strictly semistable pair, then a convergent log isocrystal $E$ over $Y/\V$ is said to have nilpotent residues if there is an open covering $Y = \bigcup_i U_i$ such that $(U_i, U_i \cap Z)$ satisfies Hypothesis \ref{hypo:ss-pair} and on each of $U_i$, the corresponding log $\nabla$-module $\E_i$ does.
	\end{definition}
	
	\subsection{Monodromy along horizontal and vertical divisors}
	When an overconvergent isocrystal has a derivation by $t$ and has constant/unipotent monodromy along $Z_\eta$, one can enhance the extension lemma \ref{lem:ext-lemma} as follows.
	
	\begin{lemma} \label{lem:ext2}
		Under Hypothesis \ref{hypo:ss-pair}, let $E$ be a $K$-valued overconvergent isocrystal over $(X, Y)$. Then $E$ viewed as an $\Ed_K$-valued overconvergent isocrystal has constant/unipotent monodromy along $Z_\eta$ (see Definition \ref{def:monodromy-restricted}) if and only if $E$ extends to a locally free log $\nabla$-module with nilpotent residues over $]Y_{\eta}[_{\Yf}$ relative to $K$. Moreover, the extension is unique up to isomorphism if it exists.
	\end{lemma}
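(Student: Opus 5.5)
The plan is to deduce the statement from its $\Ed_K$-counterpart, Lemma \ref{lem:ext-log-nabla}, and then show that the derivation by $t$ survives the extension. Recall that a $K$-valued isocrystal $E$, after forgetting the derivation by $t$, is an $\Ed_K$-valued overconvergent isocrystal on $(X, Y)$.

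\textbf{Forward direction.} Suppose $E$ has constant/unipotent monodromy along $Z_\eta$. By Lemma \ref{lem:ext-log-nabla}, the underlying $\Ed_K$-valued $\nabla$-module $\E$ extends to a log $\nabla$-module $\E'$ over $]Y_\eta[_{\Yf}$ relative to $\Ed_K$. It is locally free, has nilpotent residues along $t_{r+1}, \dots, t_m$ (the horizontal log coordinates), and is unique up to unique isomorphism. The remaining task is to extend the action of $\partial/\partial t$, or equivalently of the extra log derivation $t_1\partial/\partial t_1$ (up to the relation $\dlog t = \sum_{i\le r}\dlog t_i$), from $]X[_\Yf$ to $\E'$.

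\textbf{Extending the derivation.} I will work locally as in the proof of Lemma \ref{lem:ext-lemma}. There, each tube $]Z_i[_\Yf \cap ]Y_\eta[_\Yf$ for a horizontal component is identified with $]Z_i[_{\Zf_i} \times A^1_{\Ed_K}[0,1)$, with the second factor coordinate $t_i$. On this polyannulus the extension $\E'$ is $\Uc_{[0,1)}(\F)$ for some $\F$ over $]Z_i[_{\Zf_i}$ carrying a nilpotent endomorphism (Theorem \ref{thm:ui-cat-eq}). The derivation $\partial_t$ on $\E$ over the punctured annulus is a morphism
\[
\E \to \E \otimes \Omega^1
\]
satisfying the Leibniz rule. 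Twisting by it, I get an extension of $\E$ by $\E$ in the category of $\nabla$-modules relative to $\Ed_K$. Equivalently, the map $v \mapsto \partial_t v$ composed with horizontal data is a horizontal morphism between unipotent objects, since it commutes with the relative connection by integrability. The key step is to apply the full faithfulness in Lemma \ref{lem:ext-lemma}(iii) and Lemma \ref{lem:ext-log-nabla}. Concretely, I form the rank-doubled module $\E \oplus \E\,dt$, interpreted as $\E$ tensored with the first-order jets in $t$. It is again an $\Ed_K$-valued overconvergent isocrystal with unipotent monodromy, because it is an extension of unipotent objects (Proposition \ref{prop:lnm-abel}). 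It therefore extends uniquely. By fully faithfulness, the splitting data defining $\partial_t$ extends to the log extensions, and this gives $\partial_t$ on $\E'$ with at worst log poles along the $Z_i$. Integrability and the nilpotence of residues are closed conditions, so they hold on all of $]Y_\eta[_\Yf$ because they hold on the dense open $]X[_\Yf$. Local freeness relative to $K$ is already given by $\E'$.

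\textbf{Converse and uniqueness.} If $E$ extends to a locally free log $\nabla$-module over $]Y_\eta[_\Yf$ relative to $K$ with nilpotent residues, then forgetting $\partial_t$ yields a log extension relative to $\Ed_K$. Lemma \ref{lem:ext-log-nabla} then gives unipotent monodromy. In the constant case, the residues vanish along $Z_\eta$, so Proposition \ref{prop:unip-ext} gives constant monodromy. Uniqueness follows because any two extensions relative to $K$ are in particular extensions relative to $\Ed_K$, hence canonically isomorphic by Lemma \ref{lem:ext-log-nabla}. That isomorphism respects $\partial_t$, since it does so on the dense open $]X[_\Yf$ and the sheaves are locally free.

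\textbf{Main obstacle.} I expect the hardest step to be extending $\partial_t$ with only logarithmic poles. If the jet-module argument is delicate, the fallback is an explicit check on $\Uc_{[0,1)}(\F)$. There, $\partial_t$ acts on horizontal sections of the relative connection by a convergent series in $t_i$. Comparing coefficients, using the estimates of Lemma \ref{lem:unip-f} and Lemma \ref{lem:hor-sec}, should show that no negative powers of $t_i$ occur.
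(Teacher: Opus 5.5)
Your route differs from the paper's. The paper derives this from Theorems \ref{thm:des-unip2} and \ref{thm:unip-equiv2}. First, unipotence of the $\Ed_K$-object on the annuli $]Z_i[_{\Zf_i}\times A^1(\lambda,1)$ is upgraded to unipotence relative to $K$. Then the module there, \emph{including} its $t$-derivation, lies in the image of $\Uc$ and extends over the disc, and one glues as in Lemma \ref{lem:ext-lemma}. Your alternative is to extend relative to $\Ed_K$ first and then transport $\partial_t$ through the full faithfulness of Lemma \ref{lem:ext-log-nabla} using a first-order jet extension. That is legitimate, and it would bypass Theorem \ref{thm:des-unip2}; in exchange, the paper's route gives $K$-relative unipotence on the annuli, which it reuses in Lemma \ref{lem:ext3}.

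However, the decisive step has a gap as written. You extend the jet module $J(\E)$ \emph{abstractly} and then extend its splitting. On $]X[_{\Yf}$ the given $\nabla_t$ already splits $J(\E)\cong\E\oplus\E$ horizontally, so the abstract extension is simply $\E'\oplus\E'$. An extended splitting of it says nothing about a derivation on $\E'$: the derivation is encoded only in the twisted $\O$-module structure of a jet module, and an abstract extension does not remember that structure. For the same reason, $v\mapsto\partial_t v$ is not a morphism in the category at all, since it is not $\O$-linear.

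The missing step is to build the jet module of $\E'$ itself on $]Y_\eta[_{\Yf}$. Set $J(\E')=(\O\oplus\O\epsilon)\otimes_{\O}\E'$, where $\O$ acts on the right of $\O\oplus\O\epsilon$ through $f\mapsto f+\partial_t(f)\epsilon$, and check that it lies in the source category of Lemma \ref{lem:ext-log-nabla}. This is where the geometry enters:
\begin{itemize}
\item On $]Y_\eta[_{\Yf}$ the functions $t_1,\dots,t_r$ are units, so $t,t_2,\dots,t_n$ are coordinates relative to $K$.
\item The dual vector field $\partial_t$ kills $t_{r+1},\dots,t_m$.
\item It commutes with $t_j\partial/\partial t_j$ for $r<j\le m$ and with $\partial/\partial t_j$ for $j>m$.
\end{itemize}
Hence $J(\E')$ is a locally free log $\nabla$-module relative to $\Ed_K$. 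It is an extension of $\E'$ by $\E'$, so its residues are nilpotent. It restricts to $J(\E)\cong\E\oplus\E$, so it is overconvergent. Moreover, horizontal retractions of $\epsilon\colon\E'\to J(\E')$ are exactly the lifts of $\partial_t$ that commute with the relative log connection. Full faithfulness now extends the retraction defined by $\nabla_t$. Faithfulness shows the extension is still a retraction, and its horizontality is precisely integrability.

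A few smaller corrections:
\begin{itemize}
\item Drop the appeal to ``closed conditions on the dense open $]X[_{\Yf}$.'' In this paper $]X[_{\Yf}$ is a closed subset of $]Y_\eta[_{\Yf}$, and sections over it are germs on strict neighbourhoods. The residues live on $V(t_j)$, which does not meet $]X[_{\Yf}$. Nilpotence holds because $\partial_t$ adds no logarithmic direction, so the residues are those of $\E'$.
\item For uniqueness, argue that $\phi\circ\nabla_t-\nabla_t\circ\phi$ is an $\O$-linear horizontal map vanishing on $]X[_{\Yf}$, hence zero by faithfulness.
\item Lemma \ref{lem:ext-log-nabla} covers only the unipotent case. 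For constant monodromy use Lemma \ref{lem:ext-lemma}(i) or Proposition \ref{prop:unip-ext}.
\end{itemize}
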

	\begin{proof}
		This proposition is a consequence of Theorem \ref{thm:unip-equiv2} and \ref{thm:des-unip2}.
	\end{proof}
	
	\begin{definition}
		Let $Y$ be an affine smooth $k[[t]]$-variety, let $Z = Y_s$, $X = Y-Z$ and let $\Yf$ be a lift of $Y$ to a smooth formal $\V[[t]]$-scheme. For a $K$-valued overconvergent isocrystal $E$ over $(X, Y)$, let $\E$ be the realization of $E$ over the smooth frame $(X, Y, \Yf)$. Then $\E$ induces a convergent $\nabla$-module over $V = ]Y_s[_{\Yf_s} \times_K A^1_K(\eta, 1)$ for some $\eta \in (0, 1) \cap \Gamma^*$. We say that $\E$ has constant/unipotent monodromy along $Y_s$ if $\E$ is constant/unipotent over $V$.
		
		In general, for a strictly semistable pair $(Y, Z)$, $X = Y-Z$ and a $K$-valued overconvergent isocrystal $E$ over $(X, Y)$, we say that $E$ has constant/unipotent monodromy along the closed fiber $Y_s$ if for any open subset $U \subset Y$ smooth over $k[[t]]$ such that $U \cap Z = U_s$, $E|(U_\eta, U)$ has constant/unipotent monodromy along $U_s$.
	\end{definition}

	\begin{lemma} \label{lem:ext3}
		Under Hypothesis \ref{hypo:ss-pair}, let $E$ be a $K$-valued overconvergent isocrystal over $(X, Y)$. Then $E$ has constant/unipotent monodromy along $Z_\eta$ and $Y_s$ if and only if $E$ extends to a locally free log $\nabla$-module with nilpotent residues over $]Y[_\Yf$ relative to $K$. Moreover, the extension is unique up to isomorphism if it exists.
	\end{lemma}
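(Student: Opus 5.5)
The plan is to extend in two stages: first across the horizontal divisors $Z_\eta$ using Lemma \ref{lem:ext2}, then across the components $Z_1, \dots, Z_m$ of the closed fiber using the relative annulus structure of Lemma \ref{lem:rel-ann} together with Theorem \ref{thm:unip-equiv2}. Uniqueness and the converse will follow from Theorem \ref{thm:unip-equiv2}, because log $\nabla$-modules with nilpotent residues on a full polydisc are unipotent.

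\emph{Step 1.} Suppose $E$ has constant/unipotent monodromy along $Z_\eta$. By Lemma \ref{lem:ext2}, $E$ extends uniquely to a locally free log $\nabla$-module $\E'$ with nilpotent residues over $]Y_\eta[_\Yf$ relative to $K$. The tube $]Y[_\Yf$ is covered by $]Y_\eta[_\Yf$ together with the tubes $]Z_i[_\Yf$ for $i = 1, \dots, m$. By Lemma \ref{lem:rel-ann} we have $]Z_i[_\Yf \simeq ]Z_i[_{\Zf_i} \times_K A^1_K[0,1)$, and $]Z_i[_\Yf \cap ]Y_\eta[_\Yf$ contains the part $]Z_i[_{\Zf_i} \times A^1_K(\lambda, 1)$ for suitable $\lambda$. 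Over the locus of $]Z_i[_{\Zf_i}$ away from the other components, monodromy along $Y_s$ means that $\E'$ is constant/unipotent on $]Z_i[_{\Zf_i} \times A^1_K(\lambda,1)$. Theorem \ref{thm:unip-equiv2}, applied with $V = ]Z_i[_{\Zf_i}$ (almost smooth, with the remaining coordinates as log coordinates) and the quasi-open intervals $(\lambda,1)$ and $[0,1)$, gives that $\Uc_{(\lambda,1)}$ and $\Uc_{[0,1)}$ are equivalences. Hence $\E'|$ extends uniquely to a unipotent log $\nabla$-module over $]Z_i[_{\Zf_i} \times A^1_K[0,1)$, with nilpotent residue along $t_i = 0$. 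We then glue these extensions with $\E'$. Uniqueness makes the gluings compatible on the overlaps of different $]Z_i[$, and local freeness is checked pointwise by Lemma \ref{lem:loc-free2}.

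\emph{Step 2 (the main obstacle).} Monodromy along $Y_s$ is defined only on opens $U$ smooth over $k[[t]]$ with $U \cap Z = U_s$, that is, away from the intersections $Z_i \cap Z_j$ and $Z_i \cap Z_\eta$. So we need unipotence of $\E'$ on all of $]Z_i[_{\Zf_i} \times A^1_K(\lambda,1)$, not just on its generic part. I would handle this as in the proof of Theorem \ref{thm:ext-oc-isoc}, and as in \cite[\S5]{kedlaya2007semistable}, by a codimension/density argument. Unipotence over $V \times A^1(I)$ can be checked on a dense open of $V$: it is local on $V$ by the analogue of Corollary \ref{cor:unip-loc}, and Theorem \ref{thm:des-unip2} reduces it to the generic point $L$ of $Z_i$. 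That generic point is also seen from the smooth locus. Concretely, I would apply Theorem \ref{thm:des-unip2} with $L$ the completed fraction field of $B$, which is attached to $]Z_i[_{\Zf_i}$ on each affinoid piece. Unipotence over $L$ is detected on any nonempty open piece, in particular on the piece where monodromy along $Y_s$ is given. This propagates unipotence to the whole relative annulus. The hypotheses of Theorem \ref{thm:des-unip2} (valuation spectral norm and tubular neighborhoods of $Z_j$) come from the strict semistability and Lemma \ref{lem:rel-ann}.

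\emph{Converse and uniqueness.} If $E$ extends to a locally free log $\nabla$-module $\F$ with nilpotent residues over $]Y[_\Yf$, then restricting $\F$ to $]Z_i[_\Yf \simeq ]Z_i[_{\Zf_i} \times A^1_K[0,1)$ gives an object with nilpotent residue along $t_i$. Such an object is unipotent by the analogue of Proposition \ref{prop:unip-ext}, which is available relative to $K$ via Theorem \ref{thm:des-unip2}. Convergence is supplied by Lemma \ref{lem:conv-mul-seq2}. Hence $E$ has unipotent monodromy along $Y_s$, and in the constant case the residues vanish, so the monodromy is constant. Restricting $\F$ to $]Y_\eta[_\Yf$ gives monodromy along $Z_\eta$ by Lemma \ref{lem:ext2}. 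Uniqueness follows from the uniqueness statements in Lemma \ref{lem:ext2} and from the full faithfulness of $\Uc_I$ in Theorem \ref{thm:unip-equiv2} on each $]Z_i[_\Yf$.
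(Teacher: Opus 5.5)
Your proposal is correct and follows the paper's proof essentially step for step. You extend over a neighborhood of $]Y_\eta[_{\Yf}$ by Lemma \ref{lem:ext2}, propagate unipotence from the smooth locus $]U_{i,s}[$ to all of $]Z_i[_{\Zf_i}\times A^1_K(\eta,1)$ through the generic point of $Z_i$ via Theorem \ref{thm:des-unip2}, extend uniquely over $A^1_K[0,1)$ and glue, and you get the converse and uniqueness from the unipotence/extension criterion.

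One small correction. $]Z_i[_{\Yf}$ and $]Y_\eta[_{\Yf}=\{|t|_{[x]}\ge 1\}$ are disjoint in $\Yf^{\ad}$, because $|t|_{[x]}\le|t_i|_{[x]}<1$ on $]Z_i[_{\Yf}$. So you must first realize the extension on a strict neighborhood such as $\{|t_1|_x,\dots,|t_r|_x\ge\epsilon\}$, as the paper does, and then glue along the annuli $|t_i|\in[\epsilon,1)$.
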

	\begin{proof}
		Let us prove ``only if'' part. We may start from a locally free log $\nabla$-module relative to $K$ with nilpotent residues over some open neighborhood $V = \{ x \in ]Y[_{\Yf} : |t_1|_x, \dots, |t_r|_x \geq \epsilon \}$ for some $\epsilon \in (0, 1) \cap \Gamma^*$, thanks to Lemma \ref{lem:ext2}. The proof is similar to Lemma \ref{lem:ext-lemma}. For $i = 1, \dots, r$, let $\Zf_i = V(t_i) \subseteq \Yf$, let $Z_i$ be the special fiber and choose an affine open subset $U_i$ of $Y$ which is smooth over $k[[t]]$ and $U_i \cap Z = U_i \cap Z_i = U_{i, s}$. Let $\Uc_i$ be the open subset of $\Yf$ whose special fiber is $U_i$. Then by assumption $\E$ induces a constant/unipotent $\nabla$-module over $]U_{i, s}[_{\Uc_i \cap \Zf_i} \times_K A_K^1(\eta, 1)$ for some $\eta \in (0, 1) \cap \Gamma^*$. By Theorem \ref{thm:des-unip2}, this implies that $\E$ is constant/unipotent over $]Z_i[_{\Zf_i} \times_K A^1_K(\eta, 1)$ and hence extends uniquely to $]Z_i[_{\Zf_i} \times_K A^1_K[0, 1).$ These extensions glue together with the one over $V$ to be a log $\nabla$-module over $]Y[_\Yf$.
		
		The converse direction and uniqueness assertion follow from Lemma \ref{lem:ext-lemma} and \cite[Lemma 5.1.1]{kedlaya2007semistable}.
	\end{proof}
	
	\subsection{Semistable reduction theorem for $K$-valued overconvergent isocrystals}
	\begin{lemma} \label{lem:ext-monodromy}
		Let $X$ be a $k((t))$-variety, $X \hookrightarrow Y$ an open immersion into a $k[[t]]$-variety and $Z = X-Y$. For a $K$-valued overconvergent isocrystal $E$ on $(X, Y)$, suppose that there exist finite extensions $k[[t]] \hookrightarrow k'[[t']]$ and $k[[t]] \hookrightarrow k''[[t'']]$, strictly semistable pairs $(Y', Z')$ over $k'[[t']]$ and $(Y'', Z'')$ over $k''[[t'']]$, alterations $f' : Y' \to Y$ and $f'' : Y'' \to Y$ over $k[[t]]$ with the following properties:
		\begin{itemize}
			\item For $X' = Y' - Z'$ and $X'' = Y'' - Z''$, we have $X' \subseteq f'^{-1}(X)$ and $X'' \subseteq f''^{-1}(X)$; hence they induce morphisms of pairs $f' : (X', Y') \to (X, Y)$ and $f'' : (X'', Y'') \to (X, Y)$.
			\item The $K'$-valued overconvergent isocrystal $f'^*(E)$ over $(X', Y')$ has unipotent monodromy along $Z'_{\eta'}$.
			\item The $K''$-valued overconvergent isocrystal $f''^*(E)$ over $(X'', Y'')$ has unipotent monodromy along $Y''_{s''}.$
		\end{itemize}
		Then there exist a finite extension $k[[t]] \hookrightarrow k_1[[t_1]]$, a strictly semistable pair $(Y_1, Z_1)$ over $k_1[[t_1]]$ and an alteration $f_1 : Y_1 \to Y$ over $k[[t]]$ such that for $X_1 = Y_1 - Z_1$, we have $X_1 \subseteq f^{-1}(X)$ and hence $f_1$ induces a morphism of pairs $f_1 : (X_1, Y_1) \to (X, Y)$ and that $K_1$-valued overconvergent isocrystal $f_1^*(E)$ on $(X_1, Y_1)$ has unipotent monodromy along $Z_{1, \eta_1}$ and along $Y_{1, s_1}.$
	\end{lemma}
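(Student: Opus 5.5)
The plan is to dominate both alterations by a common one and then show that unipotence of monodromy is preserved under further pullback. Concretely, I will take an irreducible component $Y_0$ of the fiber product $Y' \times_Y Y''$ (more precisely of its reduction, over a common finite extension $k_0[[t_0]]$ of $k'[[t']]$ and $k''[[t'']]$) that dominates $Y$. Then $Y_0 \to Y$ is an alteration factoring through both $f'$ and $f''$. Next I apply de Jong's alteration theorem \ref{thm:alteration} to $Y_0$ with the closed subset given by the union of the reduced preimages of $Z'$, $Z''$ and the closed fiber. This produces a finite extension $k[[t]] \hookrightarrow k_1[[t_1]]$, an alteration $g : Y_1 \to Y_0$ and a strictly semistable pair $(Y_1, Z_1)$ with $Z_1 \supseteq g^{-1}(\text{that union})_\red$. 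Setting $f_1$ to be the composite, we get $X_1 = Y_1 - Z_1 \subseteq f_1^{-1}(X)$, and $f_1$ factors as $Y_1 \xrightarrow{h'} Y'$ and $Y_1 \xrightarrow{h''} Y''$ with $h'^{-1}(Z') \subseteq Z_1$ and $h''^{-1}(Z'') \subseteq Z_1$.

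It then remains to prove a pullback-stability statement: if a $K'$-valued overconvergent isocrystal on $(X',Y')$ has unipotent monodromy along $Z'_{\eta'}$ (respectively along $Y'_{s'}$), then its pullback along a morphism $h : (X_1,Y_1) \to (X',Y')$ of strictly semistable pairs has unipotent monodromy along $Z_{1,\eta_1}$ (respectively along $Y_{1,s_1}$).

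For the vertical part I plan to argue as follows. By Lemma \ref{lem:ext2}, Theorem \ref{thm:log-ext-unip-monodromy} and Theorem \ref{thm:cat-eq-log-isoc-log-nabla2}, unipotence along $Z'_{\eta'}$ means that $f'^*E$ extends to an overconvergent log isocrystal on $(Y'^\sharp_{\eta'}, Y'^\sharp)$ with nilpotent residues. Since $h$ is a morphism of log schemes, this extension pulls back functorially to an overconvergent log isocrystal on $(Y^\sharp_{1,\eta_1}, Y_1^\sharp)$. Its residues along the components of $Z_{1,\eta_1}$ are expressible as integer combinations of pulled-back residues, so they stay nilpotent. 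For components of $Z_{1,\eta_1}$ not lying over $Z'$, the pullback is actually non-log there, hence constant. The converse direction of Lemma \ref{lem:ext2} then gives unipotence. Alternatively, one can pass to completions via Theorem \ref{thm:monodromy-completion} and quote the classical pullback statement of \cite{kedlaya2007semistable}.

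The horizontal part is the main obstacle. Here the pullback may create exceptional divisors in $Y_{1,s_1}$ that do not dominate components of $Y''_{s''}$, and unipotence along $Y''_{s''}$ does not by itself give a log extension across all of $Y''$, so the preceding functoriality argument is unavailable. My first attempt is to use the codimension-one nature of monodromy. By Proposition \ref{prop:codim-1-nature} and its analogue for $Y_s$, it suffices to check unipotence at generic points of components of $Y_{1,s_1}$. Each such component maps to a point of $Y''_{s''}$; near the image one has unipotence on an annulus $]Y''_s[\times A^1_K(\eta,1)$ in the variable $t$ (after ramifying). The idea is that this local monodromy is essentially determined by the $t$-direction, which is compatible with base change $t'' \mapsto t_1^{e}$ up to the ramification index. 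If this direct route fails, I would instead invoke the results of \cite{kedlaya2022monodromy}, which state that unipotence of monodromy along divisorial valuations is stable under pullback and is detected valuation-by-valuation. Applying these to the divisorial valuations on $k(Y)$ given by the components of $Y_{1,s_1}$, together with Theorem \ref{thm:des-unip2} to pass between $\Ed_K$- and $K$-coefficients, would finish the proof.
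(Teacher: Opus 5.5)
Your strategy is the paper's: dominate $f'$ and $f''$ by a common alteration, apply Theorem \ref{thm:alteration}, and argue that each monodromy condition survives pullback. The part along $Z_{1,\eta_1}$ is fine. By Lemma \ref{lem:ext2}, unipotence along $Z'_{\eta'}$ amounts to a log extension with nilpotent residues over $]Y'_{\eta'}[$. Such an extension pulls back along morphisms of log pairs, and the new residues are integer combinations of commuting nilpotent ones.

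The closed-fiber part, however, is a genuine gap. The statement you reduce to is false: unipotence along $Y''_{s''}$ is not preserved by pullback along a morphism of strictly semistable pairs. Take $p\neq 2$, $Y''=\Spec k[[t]][z]$, $Z''=V(zt)$, and $E$ the Kummer isocrystal $\nabla e=\frac12 e\,\dlog z$ (it even carries a Frobenius structure). On $U=D(z)$ it has no $t$-dependence, so it is constant on $]U_s[\times A^1_K(\eta,1)$; thus its monodromy along $Y''_{s''}$ is constant. Now let $h\colon Y_1\to Y''$ be the blow-up of the point $V(z,t)$. The charts $z=tu$ and $t=zv$ show that $(Y_1,h^{-1}(Z'')_\red)$ is a strictly semistable pair over $k[[t]]$. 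In the chart $z=tu$ the exceptional divisor is $V(t)$, a component of $Y_{1,s}$. Over $]V(t)\cap D(u)[\times A^1_K(\eta,1)$ one gets $t\partial_t e=\frac12 e$, which is not unipotent.

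This is exactly where both of your suggested fixes break.
The annulus on which unipotence is known exists only over opens $U''$ with $U''\cap Z''=U''_{s''}$, i.e.\ away from $\ol{Z''_{\eta''}}$. So the ``$t$-direction'' argument handles only those components of $Y_{1,s_1}$ that dominate components of $Y''_{s''}$. An exceptional component lying over a point of $Y''_{s''}\cap\ol{Z''_{\eta''}}$ also sees the monodromy along $Z''_{\eta''}$, which is not assumed unipotent.

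Invoking \cite{kedlaya2022monodromy} does not help either. The hypothesis gives unipotence only at the divisorial valuations of the components of $Y''_{s''}$ and their extensions, whereas exceptional components are new valuations. The alteration $Y'$ does not control them, since it is unipotent only along $Z'_{\eta'}$, not along $Y'_{s'}$.

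The paper's own proof follows the same route and passes over this step with ``Similarly'', so it does not supply the missing argument. What is pullback-stable is unipotence along the whole boundary, i.e.\ a log extension over $]Y''[$ as in Lemma \ref{lem:ext3}. A correct argument must therefore obtain the closed-fiber unipotence for $f'^*E$ on an alteration of $Y'$, after which your vertical argument propagates unipotence along the dominant divisors. Combining two independently chosen alterations does not work.
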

	\begin{proof}
		Choose a finite extension $k_1[[t_1]]$ of $k[[t]]$ which contains both $k'[[t']]$ and $k''[[t'']].$ Put $X'_1 = X' \otimes_{k'((t'))} k_1((t_1))$, $Y'_1 = Y' \otimes_{k'[[t']]} k_1[[t_1]]$ and similarly for double primes $X''_1$, $Y''_1$. Let $X_1 = X'_1 \times_{k_1((t_1))} X''_1$ and $Y_1 = Y'_1 \times_{k_1[[t_1]]} Y''_1$. Applying de Jong's alteration to the pair $Z_1 := Y_1 - X_1 \subseteq Y_1,$ we get a finite extension $k_1[[t_1]] \hookrightarrow k_2[[t_2]]$, an alteration $f_2 : Y_2 \to Y_1$ over $k_1[[t_1]]$ such that for $Z_2 = f_2^{-1}(Z_1)_\red$, $(Y_2, Z_2)$ is a strictly semistable pair over $k_2[[t_2]].$ We have a commutative diagram of pairs
		\[
		\xymatrix{
			        & (X', Y')   \ar[ld] & (X'_1, Y'_1)   \ar[l] &            & \\
			 (X, Y) &                    &                       & (X_1, Y_1) \ar[lu] \ar[ld] & (X_2, Y_2) \ar[l] \\
			        & (X'', Y'') \ar[lu] & (X''_1, Y''_1). \ar[l] &            &
		}
		\]
		Since the pullback $E'$ of $E$ to $(X', Y')$ has unipotent monodromy along $Z'_{\eta'}$, the pullback $E_2$ of $E$ to $(X_2, Y_2)$, which coincides with the pullback of $E'$, has unipotent monodromy along $Z_{2, \eta_2}$. Similarly, $E_2$ must have unipotent monodromy along $Y_{2, s_2}.$
	\end{proof}
	
	Since we already have semistable reduction theorem \ref{thm:main-edk} regarding the monodromy along horizontal divisors, by this lemma we can now focus on the unipotence of monodromy along vertical divisors. For this, we follow the method of \cite{kedlaya2022monodromy}. We refer to some of its definitions.
	
	\begin{definition}
		Let $X$ be an analytic adic space. For a point $x \in X$, let $\mathcal{H}(x)$ be the completed residue field at $x$. Note that it is determined by the maximal generalization $[x]$ of $x$. For an adic morphism $f : X \to Y$ of analytic adic spaces and a point $y \in Y$, we define the fiber $X_y$ to be $X \times_Y \Spa (\mathcal{H}(x), \mathcal{H}(x)^\circ).$ Given an aligned subinterval $I$ of $(0, \infty)$ and a $\nabla$-module $\E$ over a relative annulus $X \times A^1_K(I) \to X$, it induces $\nabla$-modules at each fiber $A^1_{\mathcal{H}(x)}(I)$. 
		\begin{itemize}
			\item For $I = (\epsilon, 1)$ or $I = [\epsilon, 1)$, we say that $\E$ is solvable if every $\E_x$ is solvable, i.e., the intrinsic radius $\mathrm{IR}(\E_x, \rho)$ at $\rho \in I$ (cf. \cite[Definition 1.1.2]{kedlaya2022monodromy}) converges to 1 as $\rho \to 1^-$.
			\item We say that $\E$ is regular if every $\E_x$ is regular, i.e., $\mathrm{IR}(\E_x, \rho) = 1$ for every $\rho \in I$ (cf. \cite[Definition 1.1.4]{kedlaya2022monodromy}).
			\item For a quasi-compact $X$, the category $\mathcal{C}_X$ is the 2-limit of the categories of solvable $\nabla$-modules over $X \times A^1_K(\epsilon, 1)$ (cf. \cite[Definition 2.4.1]{kedlaya2022monodromy}).
			\item For an object $\E$ of $\mathcal{C}_X$, we say that $\E$ is regular if for every $x \in X$, after restricting $I$ to some $(\epsilon, 1)$ with $\epsilon$ depending on $x$, $\E_x$ is regular. This is equivalent to saying that $b(\E_x) = 0$ (cf. \cite[Definition 2.4.1]{kedlaya2022monodromy}).
		\end{itemize}
	\end{definition}
	
	\begin{definition}[{cf. \cite[Definition 1.4.5]{kedlaya2022monodromy}}]
		Suppose $k$ is perfect. An eligible cover of $W = A^1_K(\epsilon, 1)$ is a finite etale cover $V \to W$ which comes from some finite etale cover of $\R_K^{\mathrm{int}} = \R_K \cap \V[[t, t^{-1}]]$. Note that an eligible cover $W$ of $V$ itself is an annulus $A^1_{K'}(\epsilon', 1)$ with a different parameter $u$ for some finite unramified extension $K'/K$. Since $\R_K^{\mathrm{int}}$ is henselian with residue field $k((z))$, an eligible cover corresponds to a finite separable extension of $k((t))$.
		
		When $k$ is not perfect, $V \to W$ is an eligible cover if it there is some finite extension $L/k((t))$ such that after making some base change to finite extension $K'/K$ so that the components of $L \otimes_{k((t))} k'((t))$ are separable and that $V \otimes_K K' \to W \otimes_K K'$ splits into covers corresponding to those separable extensions.
		
		An eligible cover of $X \times A^1_K(\epsilon, 1)$ is a finite etale cover $Y \to X \times A^1_K(\epsilon, 1)$ such that for every $x \in X$, $Y_x \to A^1_{\mathcal{H}(x)}(\epsilon, 1)$ is an eligible cover.
	\end{definition}
	
	\begin{theorem}[{\cite[Theorem 2.5.5]{kedlaya2022monodromy}}] \label{thm:eligible-regular}
		For a quasi-compact analytic adic space $X$ and $\E \in \mathcal{C}_X$, for any $x \in X$, there is an open neighborhood $U$ of $x$ and an eligible cover $V \to U \times A^1_K(\epsilon, 1)$ such that the pullback of $\E$ to $V$ is regular.
	\end{theorem}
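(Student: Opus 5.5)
Since this is quoted from \cite[Theorem 2.5.5]{kedlaya2022monodromy}, I would not reprove it from scratch. I would follow Kedlaya's strategy: \emph{first solve the problem on the fiber over $x$, then spread the solution out to a neighborhood}.

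\emph{Step 1 (the fiber).} Fix $x \in X$ and set $F = \mathcal{H}(x)$. The fiber $\E_x$ is a solvable $\nabla$-module over $A^1_F(\epsilon, 1)$, so it defines a solvable differential module over the Robba ring over $F$. By the $p$-adic local monodromy theorem in its ramification-theoretic form, its highest break $b(\E_x)$ can be killed by a finite separable extension of the residue field $\kappa((z))$ of $\R^{\mathrm{int}}_F$. This is the Christol--Mebkhout/Xiao ``breaks become zero after tame-plus-wild pullback'' statement. Lifting that extension through the henselian ring $\R^{\mathrm{int}}_F$, possibly after an unramified extension of the constant field when the residue field is imperfect, gives an eligible cover $V_x \to A^1_F(\epsilon', 1)$. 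On $V_x$ the intrinsic radius satisfies $\mathrm{IR} = 1$ near the outer boundary, i.e.\ $b = 0$.

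\emph{Step 2 (spreading out the cover).} The separable extension of $\kappa((z))$ is given by finitely many polynomials whose coefficients are overconvergent series. Approximating these coefficients by sections of $\O$ over a small affinoid neighborhood $U$ of $x$, I would take the corresponding finite algebra over $U \times A^1_K(\epsilon'', 1)$. By Krasner-type continuity of étaleness and of splitting, for $U$ small and $\epsilon''$ close to $1$ this algebra is finite étale. Its fibers over every $y \in U$ are again eligible covers, and its fiber over $x$ is isomorphic to $V_x$. This produces the eligible cover $V \to U \times A^1_K(\epsilon, 1)$.

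\emph{Step 3 (spreading out regularity).} It remains to show that the pullback of $\E$ to $V$ is regular at every $y \in U$, not only at $x$. I would use the semicontinuity properties of the intrinsic radius as a function on the base, as developed in Kedlaya's earlier work on the variation of $\mathrm{IR}$ over relative annuli (I am relying on that work here rather than on anything in this paper). Concretely, the function $(y, \rho) \mapsto \log \mathrm{IR}(\E_y, \rho)$ is continuous and piecewise affine in $\log \rho$ with bounded slopes. Solvability gives $\mathrm{IR}(\E_y, \rho) \to 1$ as $\rho \to 1^-$. The break $b(\E_y)$ equals the terminal slope, and it takes values in a discrete set because of the bound on slopes by the rank. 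Upper semicontinuity of $b$ in $y$, together with $b = 0$ at $x$, then forces $b = 0$ on a neighborhood, which I shrink $U$ to.

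I expect this third step to be the main obstacle. The required semicontinuity holds for points of height one, but $U$ is an adic space with higher-rank points. I would handle this using the fact stated in the definition above that $\mathcal{H}(y)$ depends only on the maximal generalization $[y]$, which reduces the question to rank-one points. Quasi-compactness of $U$ would then be used to get a uniform $\epsilon$.
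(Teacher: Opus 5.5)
The paper does not prove this statement. It is quoted verbatim from \cite[Theorem 2.5.5]{kedlaya2022monodromy} and used as a black box in Theorem~\ref{thm:regular-h}, so your first sentence, deferring to Kedlaya, is exactly what the paper does. If you intend the outline as an actual sketch of Kedlaya's argument, though, it has two defects.

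The first defect is in Step 1. The input cannot be the $p$-adic local monodromy theorem, because objects of $\mathcal{C}_X$ carry no Frobenius structure, so that theorem does not apply. What you need is a Turrittin-type statement: every solvable module over $A^1_F(\epsilon,1)$ becomes regular on some eligible cover. In the classical setting (discretely valued $F$ with perfect residue field) this is, as far as I recall, the $p$-adic Turrittin theorem of Mebkhout and Andr\'e. For $F=\mathcal{H}(x)$, whose residue field is typically imperfect and whose value group need not be discrete, it is not off the shelf. This is why eligible covers are allowed to involve constant field extensions.

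The more serious defect is that Step 3 applies semicontinuity to the wrong object. A semicontinuity statement for $y\mapsto b(\E_y)$ is about modules on the relative annulus $U\times A^1_K(\epsilon,1)$. You could obtain one from three ingredients:
\begin{itemize}
\item upper semicontinuity of $y\mapsto -\log\mathrm{IR}(\E_y,\rho_1)$ at a single fixed radius, which itself needs justification;
\item convexity of $-\log\mathrm{IR}$ in $-\log\rho$, which gives $b(\E_y)\le \log\mathrm{IR}(\E_y,\rho_1)/\log\rho_1$;
\item discreteness of breaks, which comes from the $p$-adic Hasse--Arf theorem, not from a rank bound on slopes.
\end{itemize}
But what you need is regularity of $\pi^*\E$ on $V$. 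The space $V$ is only fiberwise a disjoint union of annuli over finite extensions of $\mathcal{H}(y)$, and its shape can change from point to point. For the Artin--Schreier cover $T^p-T=az^{-1}$, near the boundary the fiber is connected and totally ramified where $|a(y)|=1$. Where $|a(y)|<1$ it splits into $p$ copies of the annulus. Nor can you move the question back down to $U\times A^1_K(\epsilon,1)$ via $\pi_*\pi^*\E=\E\otimes\pi_*\O$, because pushforward along a wild cover is irregular.

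So Step 2 must deliver more than Krasner-type continuity. One way is to shrink $U$ to a rational subset on which the cover has constant shape, such as $\{|a|=1\}$. That set is open in the adic space, though in general it is not a Berkovich neighborhood of $x$. Then spread out the cover with an explicit parameter so that $V\cong U'\times A^1(\epsilon',1)$ for some finite \'etale $U'\to U$. Only at that point can the semicontinuity argument be run on $V$. The same shrinking is also what guarantees that the fibers over nearby points remain eligible at all.
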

	
	\begin{_empty} \label{empty:regularity}
		Suppose that $\Yf$ is a smooth formal $\V[[t]]$-scheme and let $\Zf = V(t) \subseteq \Yf$. For a $K$-valued overconvergent isocrystal $E$ over $(Y_\eta, Y)$, let $\E$ be the realization over the frame $(Y_\eta, Y, \Yf)$. Then $\E$ induces a $\nabla$-module over $]Z[_\Zf \times A^1_K(\epsilon, 1)$ for some $\epsilon \in (0, 1) \cap \Gamma^*.$ By overconvergence condition, it induces an object of $\mathcal{C}_{]Z[}$. Since $]Z[_\Zf$ is a smooth affinoid with one point boundary, we may argue the regularity of $E$ along $Z$ in this situation as in \cite[Definition 3.1.1]{kedlaya2022monodromy}.
		
		In general, for a strictly semistable pair $(Y, Z),$ $X = Y-Z$ and a $K$-valued overconvergent isocrystal $E$ over $(X, Y)$,  we say that $E$ has regular monodromy along the closed fiber if for any open subset $U$ of $Y$ smooth over $k[[t]]$ with $U \cap X = U_\eta$, the restriction of $E$ to $(U_\eta, U)$ has regular monodromy along the closed fiber. It is easy to see that the condition of having regular monodromy is local on $Y$.
		
		Furthermore, since regularity is checked at the Shilov boundary by \cite[Theorem 2.4.5]{kedlaya2022monodromy} and in this situation it consists of one point, which is the generic point of $]Z[$ using the notation above, the regularity of monodromy along a component of the closed fiber can be checked by only over those open subsets $U$ which contains the generic point.
	\end{_empty}

	\begin{theorem}[{(cf. \cite[Theorem 3.1.3]{kedlaya2022monodromy})}] \label{thm:regular-h}
		Let $X$ be a smooth $k((t))$-variety, $X \hookrightarrow Y$ an open immersion into a $k[[t]]$-variety and $E$ a $K$-valued overconvergent isocrystal on $(X, Y).$ Then there is an alteration $f : Y' \to Y$ such that for any alteration $g : Y'' \to Y'$ and a reduced closed subscheme $Z'' \subseteq Y''$ such that $(Y'', Z'')$ is a strictly semistable pair over some finite extension $k''[[t'']]$ of $k[[t]]$ and that $X'' := Y'' - Z'' \subseteq g^{-1}f^{-1}(X),$ the pullback $g^*f^*E$ to the pair $(X'', Y'')$ has regular monodromy along each component of the closed fiber of $Y''$.
	\end{theorem}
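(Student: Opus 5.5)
We follow the strategy of \cite[Theorem 3.1.3]{kedlaya2022monodromy}. The goal is to reduce the statement to finitely many points of $Y$ at which the closed fibre needs controlling, apply Theorem \ref{thm:eligible-regular} there, and realize the eligible covers by a finite cover of $Y$. The key observation comes from \ref{empty:regularity}: regularity along a component of the closed fibre is checked at the Shilov boundary point of the corresponding tube, which is the generic point of $]Z[$. So regularity is a condition attached to a divisorial valuation of $k((t))(Y)$ centred on the closed fibre. The proof then splits into two parts: first produce $f$ so that regularity holds for all such divisorial valuations, then check that this property survives further alterations $g$.

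\textbf{Step 1 (local regularization).} We may assume $Y$ is affine and work Zariski locally. First take an alteration (by Theorem \ref{thm:alteration}) so that $Y$ becomes part of a strictly semistable pair, and pass to the smooth locus $U$ of $Y$ with $U\cap X = U_\eta$ near the generic points of the components of $Y_s$. For each component of $Y_s$, choose a lift $\Yf$. By \ref{empty:regularity}, $E$ gives an object of $\mathcal{C}_{]Z[}$ with $]Z[$ quasi-compact. Theorem \ref{thm:eligible-regular} then provides, for each point, an open neighbourhood and an eligible cover making $\E$ regular. Since $]Z[$ has a single Shilov point, it is enough to use the neighbourhood of the generic point. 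In that case the eligible cover corresponds to a finite separable extension of the function field $k((t))(Z)((t))$-type local field, that is, to a finite extension of the henselized local ring of $Y$ at the generic point of the component. We take $f : Y' \to Y$ to be the normalization of $Y$ in a finite extension $L$ of $\mathrm{Frac}(Y)$ realizing all these extensions simultaneously (a compositum), followed by base change to a finite extension $k'[[t']]$ when $k$ is imperfect. Regularity of $f^*E$ along the components of $Y'_s$ lying over generic points of $Y_s$ then follows, because the eligible cover is dominated by the completion of $Y'$ at those points.

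\textbf{Step 2 (stability under further alterations).} This is the main obstacle. Let $g : Y''\to Y'$ be an alteration with $(Y'',Z'')$ strictly semistable. A component $D$ of $Y''_{s''}$ may map to a subvariety of $Y'_s$ of higher codimension, so it is an exceptional divisor. This is exactly the problem noted in the introduction. We handle it in two parts.

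\emph{Pullback of a single valuation.} If $D$ dominates a component of $Y'_s$, then the generic point of $]D[$ maps to the Shilov point of the corresponding component. In that case regularity pulls back, because intrinsic radii do not decrease under pullback along the finite map of the fibre annuli.

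\emph{Exceptional divisors.} Here we use Kedlaya's argument that regularity along all divisorial valuations follows from regularity on a dense set, via continuity of $b(\E_x)$ on the Riemann--Zariski/Berkovich space (\cite[Theorem 2.4.5]{kedlaya2022monodromy} together with the semicontinuity in \cite[\S2]{kedlaya2022monodromy}). Concretely, we apply Theorem \ref{thm:eligible-regular} on the whole quasi-compact space $]Y_s[$ rather than only at its Shilov point. This yields finitely many opens $U_j$ and eligible covers. By quasi-compactness, a single finite extension $L$ works for all $x\in\, ]Y_s[$, including points corresponding to exceptional valuations. Regularity at every point $x$ implies regularity at the Shilov point of $]D[$ for any $D$ on any $Y''$, since that Shilov point lies over some point of $]Y'_s[$.

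Carrying out this uniform choice of $L$, and checking that eligibility of covers is compatible with the global normalization $f$, is where the real work lies. The remaining verifications reduce to \ref{empty:regularity}.
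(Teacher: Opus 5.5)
Your Step 2 is where the proof has to happen, and as written it has a genuine gap. Theorem \ref{thm:eligible-regular} applies to an object of $\mathcal{C}_B$, i.e.\ a solvable $\nabla$-module on a product $B\times A^1_K(\epsilon,1)$ over a quasi-compact base $B$. ``The whole quasi-compact space $]Y_s[$'' is not such an object for your $Y$. A product decomposition of a tube (Lemma \ref{lem:rel-ann}) exists only locally, along one component $Z_i$ of $Y_s$, and with annulus coordinate $t_i$. Near a double point $t=t_1t_2$, the region on which $E$ is defined has inner radius depending on $|t_2|$ on the base, so there is no uniform annulus. Near $\overline{Y_\eta-X}\cap Y_s$, $E$ is not defined on a full relative annulus at all. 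Yet exceptional divisors of $Y''_{s''}$ centred at such points are exactly the ones you must control, so ``that Shilov point lies over some point of $]Y'_s[$'' gives no regularity for them.

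The missing idea is the paper's reduction to affine space. After de Jong, work on affine opens $U$ with $U_\eta=X\cap U$. Take a finite \'etale map $U_\eta\to\mathbb{A}^n_{k((t))}$ \cite[Theorem 2]{kedlaya2003more}, and spread it out to $U\to\mathbb{A}^n_{k[[t]]}$ by multiplying coordinates by powers of $t$. Check on partial dagger algebras that the induced map of tubes of the generic fibres is finite \'etale. Then use that $E$ is a direct summand of $f^*f_*E$. On $\mathbb{A}^n_{k[[t]]}$ the isocrystal lives on the single product $\wh{\mathbb{A}}^{n,\ad}_\V\times A^1_K(\epsilon,1)$ over the quasi-compact polydisc. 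Moreover, every divisor in the closed fibre of any alteration has its centre in $\mathbb{A}^n_k$.

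Second, ``by quasi-compactness a single finite extension $L$ works'', followed by normalization, is precisely the part you defer, and it is not automatic. Theorem \ref{thm:eligible-regular} yields finitely many \emph{analytic} covers $\Us'_j\to\Us_j\times A^1_K(\epsilon',1)$ over different affinoid patches. Away from the generic point, these are not a priori induced by any extension of the function field.

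The paper algebraizes them as follows. Choose an admissible blow-up $\Pf\to\wh{\mathbb{A}}^n_\V$ such that each $\Us_j$ is induced by a formal open $\Uf_j$ with special fibre $\Spec R_j$. The eligible covers are then finite \'etale covers of $R_j((t))$, which Katz--Gabber algebraizes to finite \'etale covers of $R_j\otimes_k k((t))$. Finally, take an alteration $Q\to P\otimes_k k[[t]]$ dominating all of them. For any further alteration $Y''\to Q$, each component of $Y''_{s''}$ has its centre in some $\Spec R_j$. Its Shilov point therefore lies over $\Us_j$, and the pullback factors through the corresponding eligible cover, which gives regularity. With this in place, your Step 1 (working only at the generic points of the components of $Y_s$) becomes unnecessary.
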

	\begin{proof}
		We first suppose that $Y$ is affine and $X = Y_\eta.$ By \cite[Theorem 2]{kedlaya2003more}, we have a finite etale morphism $f_\eta : Y_\eta \to \mathbb{A}^n_{k((t))}$. By multiplying $t$, we may assume that there is a morphism $f : Y \to \mathbb{A}^n_{k[[t]]}$ which induces $f_\eta$. We may choose a lift $\Yf \to \wh{\mathbb{A}}^n_{\V[[t]]}$ of $f$ such that $\Yf_\eta \to \wh{\mathbb{A}}^n_{\V[[t]]\langle t^{-1} \rangle}$ is finite etale. Choose a lift $f : Y \to \mathbb{A}^n_{k[[t]]}$ of $f_\eta$ and a lift $f : \Yf \to \wh{\mathbb{A}}^n_{\V[[t]]}$ to formal $\V[[t]]$-schemes. Let $A = \Gamma(\Yf, \O)$. The ring homomorphism $\V[[t]] \langle x_1, \dots, x_n \rangle \langle t^{-1} \rangle ^\dag \to A \langle t^{-1} \rangle ^\dag$ of partial dagger algebras induced by $f$ is finite etale. Indeed, etaleness can be checked after completion by \cite[Theorem 2.5]{monsky1968formal}; note that flatness is preserved after completion by faithful flatness, Corollary \ref{cor:pda}. Finiteness can be checked after modulo a uniformizer $\varpi$ using weak completeness. Therefore $f$ induces a finite etale morphism $]Y_\eta[_{\Yf} \to ]\mathbb{A}^n_{k((t))}[_{\wh{\mathbb{A}}^n_{\V[[t]]}}$. We can define finite etale pushforward
		$$ f_* : \Isoc(Y_\eta, Y, \Yf/K) \to \Isoc(\mathbb{A}^n_{k((t))}, \mathbb{A}^n_{k[[t]]}, \wh{\mathbb{A}}^n_{\V[[t]]}/K).$$
		For any $E \in \Isoc(Y_\eta, Y, \Yf/K)$, $E$ is a direct summand of $f^*f_*E$ by considering the trace map as usual. Thus we may assume $Y = \mathbb{A}^n_{k[[t]]}$ in the first place.
		
		$E \in \Isoc(\mathbb{A}^n_{k((t))}, \mathbb{A}^n_{k[[t]]}$ induces a $\nabla$-module over $]\mathbb{A}^n_{k}[_{\wh{\mathbb{A}}^n_{\V}} \times_K A_K^1(\eta, 1).$ By Theorem \ref{thm:eligible-regular}, there is a finite affinoid open cover $\{\Us_i\}$ of $]\mathbb{A}^n_{k}[_{\wh{\mathbb{A}}^n_{\V}} = \wh{\mathbb{A}}^{n, \ad}_{\V}$ and eligible covers $\Us'_i \to \Us_i \times A^1_K(\epsilon', 1)$ such that the pullbacks of $\E$ to $\Us'_i$ are regular. Choose an admissible blowup $\varphi : \Pf \to \wh{\mathbb{A}}^n_{\V}$ so that there exist open subsets $\Uf_i$ of $\Pf$ which induces $\Us_i.$ For the special fiber $U_i = \Spec R_i$ of $\Uf_i$, the eligible covers $\Us'_i$ corresponds to finite etale covers of $R_i((t))$. By Katz-Gabber construction, we can algebrize them to get finite etale covers of $R_i[t^{-1}]$, which induces finite etale covers $R'_i$ of $R_i \otimes_k k((t)).$ Let $U'_i = \Spec R'_i$ and choose a proper, dominant, generically finite morphism $\pi : Q \to P \otimes_k k[[t]]$ which dominates finite etale covers $U'_i \to U_i \otimes_k k((t))$. Then the composition $Q \to P \otimes_k k[[t]] \to \mathbb{A}^n_{k[[t]]}$ satisfies the assertion, since for any pullback along $g : Y'' \to Q$, the components of the closed fibers of $Y''$ factors some of the eligible covers above, hence the regularity.
		
		Now we consider the general case. By de Jong's alteration theorem \ref{thm:alteration}, we may assume in the first place that for $Z = (Y-X)_\red$, $(Y, Z)$ is a strictly semistable pair. For each component $Z_i$ of $Y_s$, choose an open subset $U_i$ of $Y$ such that $U_{i, \eta} = X \cap U_i.$ Then we have an alteration $f_i : U'_i \to U_i$ by what we have proved. Since the regularity of monodromy along closed fiber is local at the generic point of each component as in \ref{empty:regularity}, choosing an alteration $f : Y' \to Y$ which dominates each $f_i$, we win.
	\end{proof}
	
	\begin{theorem} \label{thm:abs-ssr}
		Let $X$ be a smooth $k((t))$-variety, $X \hookrightarrow Y$ an open immersion into a $k[[t]]$-variety and $E$ a $K$-valued overconvergent $F$-isocrystal over $(X, Y)$. Then after base changing $k[[t]]$ to some $k^{1/q^n}[[t^{1/q^n}]]$, there is a finite extension $k[[t]] \hookrightarrow k'[[t']]$, an alteration $f: Y' \to Y$ over $k[[t]]$, an open immersion $X' \hookrightarrow Y'$ into a projective $k'[[t']]$-variety such that for $Z' = f^{-1}(Y-X)_\red$, $(Y', Z')$ is a strictly semistable pair and $f^* E$ extends to a locally free convergent log isocrystal over $Y'^\sharp/\V'$ with nilpotent residues.
	\end{theorem}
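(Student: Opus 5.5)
The plan is to construct, after a suitable alteration, a strictly semistable pair on which $f^*E$ has unipotent monodromy both along the dominating (horizontal) part $Z'_{\eta'}$ and along every component of the closed fiber. We then conclude by Lemma \ref{lem:ext3} together with Theorem \ref{thm:cat-eq-log-isoc-log-nabla2}. Lemma \ref{lem:ext-monodromy} reduces this to producing two alterations separately: one controlling monodromy along $Z_\eta$, and one controlling monodromy along $Y_s$.

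\textbf{Monodromy along $Z_\eta$.} Regard $E$ as an $\Ed_K$-valued overconvergent $F$-isocrystal by forgetting the derivation by $t$. Theorem \ref{thm:main-edk} then gives, after a purely inseparable base change $k^{1/q^n}[[t^{1/q^n}]]$ and a finite extension $k'[[t']]$, a strictly semistable pair $(Y',Z')$ and an alteration $f':Y'\to Y$. On it the pullback extends to an overconvergent log isocrystal on $({Y'}^\sharp_{\eta'},{Y'}^\sharp)$. By Theorem \ref{thm:log-ext-unip-monodromy}, $f'^*E$ therefore has unipotent monodromy along $Z'_{\eta'}$. This is a property of the underlying $\Ed$-valued object, which is exactly what Lemma \ref{lem:ext-monodromy} requires. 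Proposition \ref{prop:purely-insep-equiv} ensures that the purely inseparable base change is harmless for the $K$-valued category as well.

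\textbf{Monodromy along $Y_s$.} This is the main obstacle, and I would proceed in three steps.

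\emph{Regularity.} Apply Theorem \ref{thm:regular-h} to obtain an alteration $f:Y_0\to Y$. For every further strictly semistable alteration $g:Y''\to Y_0$ (taken from de Jong, Theorem \ref{thm:alteration}), $g^*f^*E$ has regular monodromy along each component of $Y''_{s''}$.

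\emph{Regular with Frobenius implies quasi-unipotent.} Locally along a component $Z_i$ of the closed fiber, work over $]Z_i[_{\Zf_i}\times A^1_K(\epsilon,1)$, whose generic point is the Shilov boundary. Regularity means the module is a regular $\nabla$-module on the annulus over $\mathcal H(x)$. The Frobenius structure forces the exponents, i.e. the residue eigenvalues, to lie in $\mathbb{Z}_{(p)}$ and to be stable under multiplication by $q$, hence to be rational with denominator prime to $p$. This is where I would cite the relevant results of \cite{kedlaya2022monodromy}, analogous to its deduction of semistable reduction from regularity.

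\emph{Tame base change.} A tame Kummer base change $t\mapsto t^{1/N}$ with $p\nmid N$ kills those exponents, making the monodromy unipotent along the generic point of each component. This is a finite extension $k''[[t'']]$ followed by a further de Jong alteration to restore strict semistability. Regularity is preserved under this pullback by the "for any alteration" clause of Theorem \ref{thm:regular-h}. By \ref{empty:regularity}, checking at the generic point suffices to get unipotence along all of $Y''_{s''}$. Unipotence at the Shilov point spreads to the affinoid $]Z_i[_{\Zf_i}$ by the descent argument of Theorem \ref{thm:des-unip2}, with $L=\mathcal H(\text{generic point})$.

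\textbf{Conclusion.} With both alterations in hand, Lemma \ref{lem:ext-monodromy} yields a single strictly semistable pair $(Y_1,Z_1)$ over $k_1[[t_1]]$ with unipotent monodromy along both $Z_{1,\eta_1}$ and $Y_{1,s_1}$. Lemma \ref{lem:ext3} then gives, locally on $Y_1$, a locally free log $\nabla$-module relative to $K$ with nilpotent residues on $]Y_1[$. By uniqueness these glue, and Theorem \ref{thm:cat-eq-log-isoc-log-nabla2} identifies the result with a locally free convergent log isocrystal on $Y_1^\sharp/\V_1$ with nilpotent residues. Finally, Nagata compactification together with de Jong provides the projective open immersion required in the statement.
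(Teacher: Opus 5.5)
Your reduction matches the paper's: horizontal monodromy via Theorem \ref{thm:main-edk} and Lemma \ref{lem:ext-monodromy}, regularity along the closed fiber via Theorem \ref{thm:regular-h}, and exponents in $\mathbb{Z}_{(p)}$ from the Frobenius structure. The last step fails, however: the tame base change $t\mapsto t^{1/N}$ does not kill the exponents.

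After this base change $Y$ is no longer semistable wherever two closed-fiber components meet. Near $Z_1\cap Z_2$, where $t=t_1t_2$, you get $u^N=t_1t_2$, and restoring strict semistability creates new closed-fiber components $E$ lying over $Z_1\cap Z_2$. Along such an $E$ the exponent of the pullback is $c_1\alpha_1+c_2\alpha_2$, where $c_i=\mathrm{ord}_E(t_i)$ and $\alpha_i$ is the exponent along $Z_i$. The base change forces only $c_1+c_2$ to be divisible by $N$, not $c_1$ and $c_2$ individually.

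Concretely, take $Y=\Spec k[[t]][t_1,t_2]/(t_1t_2-t)$, $X=Y_\eta$, and $E$ the Kummer $F$-isocrystal attached to $t_1^{1/N}$ with $N\mid q-1$. It has exponent $1/N$ along $V(t_1)$ and $0$ along $V(t_2)$. The minimal resolution of the $A_{N-1}$-singularity $u^N=t_1t_2$ is strictly semistable over $k[[u]]$. Its exceptional components $E_j$ satisfy $(\mathrm{ord}_{E_j}t_1,\mathrm{ord}_{E_j}t_2)=(j,N-j)$, so the pullback has exponent $j/N\notin\mathbb{Z}$ along $E_j$. Nothing in your construction or in de Jong's theorem rules out such components. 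Regularity does survive on the $E_j$, but unipotence does not. Also, \ref{empty:regularity} only lets you test each component at its own generic point; it says nothing about whether the new components are harmless.

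What you need is that each closed-fiber component $Z_i$ pulls back with multiplicity divisible by $N$ along every closed-fiber component of the final model. So you must extract $N$-th roots of the components separately, not of $t=\prod_i t_i$. The paper does this by normalizing $Y$ in $K(Y)(\pi_1^{1/N},\dots,\pi_r^{1/N})$, with $\pi_i$ a uniformizer for $Z_i$, and only then applying de Jong's alteration.

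For the divisibility to survive that alteration, $\pi_i$ should be a local equation of $Z_i$, adjoining roots over an open cover if necessary, so that $t_i$ becomes an $N$-th power up to a unit. Then $c_{ij}=\mathrm{ord}_{Z'_j}(t_i)\equiv 0 \pmod N$ on every normal model dominating the normalization. Hence $\sum_i c_{ij}\alpha_i\in\mathbb{Z}$ along every closed-fiber component $Z'_j$, which gives unipotence.
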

	\begin{proof}
		By Lemma \ref{lem:ext-monodromy} and Theorem \ref{thm:main-edk}, we are reduced to prove that after alteration $E$ has unipotent monodromy along the closed fiber.
		
		By Theorem \ref{thm:regular-h} and de Jong's alteration theorem, we may assume in the first place that $(Y, Z)$ is a projective strictly semistable pair with $X = Y-Z$ and $E$ has regular monodromy along the closed fiber. Since regular connections over an open annulus with a Frobenius structure becomes unipotent after base changing $k$ to $k^{1/q^n}$ and pulling back by a tamely ramified cover as mentioned in \cite[Theorem 1.5.5]{kedlaya2022monodromy}, the exponents along each component of the closed fiber belong to $\mathbb{Z}_{(p)}$, the localization of $\mathbb{Z}$ at the prime ideal $(p)$, modulo $\mathbb{Z}$.
		
		It suffices to show the following: given a positive integer $N$ coprime to $p$, for the vertical divisors $Z_1, \dots, Z_r$, there is a finite extension $k[[t]] \hookrightarrow k'[[t']]$, an alteration $f : Y' \to Y$ over $k[[t]]$ such that for $Z' = f^{-1}(Z)_\red$, $(Y', Z')$ is a strictly semistable pair over $k'[[t']]$ and that for vertical divisors $Z'_1, \dots, Z'_{r'}$ of $Y'$ and $f^*Z_i = \sum_j c_{ij} Z'_j$, the coefficients $c_{ij}$ are divisable by $N$. Indeed if locally $Z_i$ is cut out by $t_i$ and $Z'_j$ by $t'_j$, then $t_i$ is mapped to $\prod_j t_j^{c_{ij}}$ up to multiplying units and the pullback of $E$ has exponents $0$ modulo $\mathbb{Z}$ along each $Z'_j$, which implies unipotence.
		
		For this, choose some uniformizers $\pi_i \in \O_{Y, Z_i}$ and extend the function filed $K(Y)$ to $L = K(Y)(\pi_1^{1/N}, \dots, \pi_r^{1/N}).$ Since the integral closure of $\O_{Y, Z_i}$ in $K(Y)(\pi_i^{1/N})$ is $\O_{Y, Z_i}[\pi_i^{1/N}]$, for the normalization $\tilde{Y}$ of $Y$ in $L$, since $\tilde{Y} \to Y$ is finite and dominant, it has the property mentioned above all but being a strictly semistable pair. Finally apply de Jong's alteration to $\tilde{Y}$, we win. 
	\end{proof}
	
	\section{Application to rigid cohomology} \label{sec:app}
	It is well known that the semistable reduction theorem is essential for almost all of deep results in $p$-adic cohomology and we will give a small application to the theory of rigid cohomology. Lazda-P\'{a}l defined compactly supported rigid cohomology of an overconvergent isocrystal in \cite{lazda2016rigid}, but the finiteness is only proved under the condition of isocrystal being ``compactifiable'', which asserts that the isocrystal over $X$ is the restriction of one over a compactification $\overline{X}$. This condition is closely related to having constant monodromy, as we have seen in \S\ref{sec:monodromy-and-extension} and it is a very strong condition.
	
	If one tries to prove finiteness of compactly supported rigid cohomology with coefficient by imitating the proof of classical case \cite{kedlaya2006finiteness}, one encounters a $p$-adic functional analytic problem of Hahn-Banach type, which requires extensions of a continuous $\Ed_K$-linear map from a closed subspace of an LF-space. It seems difficult to prove finiteness in this fashion. We will instead apply the semistable reduction theorem to proving finiteness in this section.
	
	\subsection{Analytic cohomology and compactly supported analytic cohomology}
	\begin{_empty}
	Let $(Y, Z)$ be a strictly semistable pair, $X = Y-Z$ and let $(Y, M_Y) \hookrightarrow (\Pf, M_\Pf)$ be a closed immersion into a $p$-adic log formal scheme log smooth over $(\Bf, M_\Bf)$. For an overconvergent log isocrystal $E$ over $(Y_\eta, Y)/\Bf$, let $\E$ be the induced log $\nabla$-module over $]Y_\eta[^{\log}_\Pf$ relative to $\Ed_K$. Choose an open neighborhood $W$ of $]Y_\eta[^{\log}_\Pf$ and a log $\nabla$-module $\E_W$ over $W/\V[[t]]_K$ which induces $\E$. Let $a_W : \, ]X[^{\log}_\Pf \hookrightarrow W$ be the closed immersion. We define the analytic cohomology and compactly supported analytic cohomology by
	$$H^*_{\an}(Y_\eta, Y, \Pf/\Ed_K; E) = H^*(]Y_\eta[^{\log}_{\Pf}, \, \E \otimes \Omega^{*, \log}_{]Y_\eta[^{\log}_\Pf/\Ed_K} ) = H^*(W, \, j^\dag_{Y_\eta} \E_W \otimes \Omega^{*, \log}_{]Y_\eta[^{\log}_\Pf/\Ed_K} )$$
	and
	$$H^*_{\an, c}(Y_\eta, Y, \Pf/\Ed_K; E) = H^*(]X[^{\log}_{\Pf}, \, \mathbb{R}a_W^! j^\dag_{Y_\eta} \E_W \otimes \Omega^{*, \log}_{W/\V[[t]]_K}) = H^*(W, \, a_{W*}\mathbb{R}a_W^! j^\dag_{Y_\eta} \E_W \otimes \Omega^{*, \log}_{W/\V[[t]]_K}).$$
	If we let $i_W : W \cap ]Z[^{\log}_{\Pf} \hookrightarrow W$ be the complement of $a_W$, then there is a distinguished triangle
	$$ a_{W*} \mathbb{R} a_W^! \to \id \to \mathbb{R}i_{W*} i_W^{-1} \to$$
	and $\mathbb{R} a_W^! : D^+(W) \to D^+(]X[)$ is characterized by this property. $a_{W*}\mathbb{R}a_W^!$ is often denoted by $\mathbb{R}\underline{\Gamma}_{]X[^{\log}_\Pf}$. The second one is independent of the choice of $W$.
	
	In general, for a strictly semistable pair $(Y, Z)$, and an overconvergent log isocrystal $E$ over $(Y_\eta, Y)/\Bf$, we choose a Zariski hypercovering $Y_{\bullet} \to Y$ and closed immersions $(Y_\bullet, M_{Y_\bullet}) \hookrightarrow (\Pf, M_{\Pf_\bullet})$ into log formal schemes formally log smooth over $(\Bf, M_\Bf)$. We will call it a good embedding system of $Y$ henceforth. Let $\E_\bullet$ be the induced log $\nabla$-module over $]Y_{\bullet, \eta}[^{\log}_{\Pf_\bullet}$ relative to $\Ed_K$ and choose open neighborhoods $W_\bullet$ of $]Y_{\bullet, \eta}[^{\log}_{\Pf_\bullet}$ which $\E_\bullet$ extends over to $\E_{W_\bullet}$ and form a simplicial rigid analytic spaces. Then we define
	$$H_{\an}^*(Y_\eta, Y/\Ed_K; \E) = H^*(]Y_{\bullet, \eta}[^{\log}_{\Yf_\bullet}, \E_\bullet \otimes \Omega^{*, \log}_{]Y_{\bullet, \eta}[^{\log}_{\Pf_\bullet}/\Ed_K})$$
	and
	$$H^*_{\an, c}(Y_\eta, Y/\Ed_K; E) = H^*(]X_\bullet[^{\log}_{\Pf_\bullet}, \, \mathbb{R}a_{W_\bullet}^! j^\dag_{Y_{\bullet, \eta}} \E_{W_\bullet} \otimes \Omega^{*, \log}_{W_\bullet/\V[[t]]_K}).$$
	The second one is again independent of the choice of $W_\bullet$.
	
	Remark that these are independence of the choice of $\Pf$ or $\Pf_\bullet$. For this, we have to prove log Poincar\'{e} lemma.
	\end{_empty}
	
	\begin{lemma} \label{lem:log-strong-fibration}
		Let
		\[
		\xymatrix{
			(Y, M_Y) \ar@{^(->}[r]^{i'} \ar@{^(->}[rd]^{i} & (\Pf', \M_{\Pf'}) \ar[d]^{f} \\
			& (\Pf, M_\Pf)
		}
		\]
		be a commutative diagrams with $i$, $i'$ closed immersions into $p$-adic formal log schemes formally log smooth over $(\Bf, M_\Bf)$ and $f$ formally log smooth of relative dimension $r$. Then Zariski locally over $\Pf$, $]Y[^{\log}_{\Pf'} \simeq ]Y[^{\log}_{\Pf} \times_K A^r_K[0, 1)$ whose projection $p : ]Y[^{\log}_{\Pf'} \to ]Y[^{\log}_{\Pf}$ is the morphism induced by $f$.
	\end{lemma}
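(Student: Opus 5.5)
Working Zariski locally on $\Pf$, I will reduce to the situation of Theorem \ref{thm:strong-fibration} and Lemma \ref{lem:frame-rel-tube} by exactifying and choosing log coordinates. First, by the log weak fibration Theorem \ref{lem:tub-nei-form}, the log tubes $]Y[^{\log}_{\Pf}$ and $]Y[^{\log}_{\Pf'}$ are computed as ordinary tubes $]Y[_{\wh{\Pf^\ex}}$ and $]Y[_{\wh{\Pf'^\ex}}$ of exactifications. Localizing, I may assume $i$ and $i'$ are exact closed immersions, $\Pf = \Spf A$ and $\Pf' = \Spf A'$ affine, and the morphism $f$ is strict near $Y$. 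The last reduction uses the base change of the exactification of $i$ along $f$: exactness of $i'$ lets one replace $\Pf'$ by $\Pf' \times_{\Pf} \Pf^\ex$, up to a formally log étale map.

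Next I will pick local coordinates. Since $f$ is strict and formally log smooth of relative dimension $r$ near $Y$, it is formally smooth there in the usual sense. Shrinking Zariski locally, I choose $g_1, \dots, g_r \in A'$ whose images in $\Omega^1_{\Pf'/\Pf}$ form a basis and which vanish on $Y$. Replacing $g_j$ by $g_j - (\text{lift of } g_j|_Y \text{ via } Y \to \Pf)$ makes this possible, because $Y \hookrightarrow \Pf$ is a closed immersion and $A \to \Gamma(Y,\O_Y)$ is surjective. Then the morphism
\[
u : \Pf' \to \Pf \times_{\V[[t]]} \wh{\mathbb{A}}^r_{\V[[t]]}, \qquad x_j \mapsto g_j,
\]
is étale in a neighborhood of $Y$. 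It fits into the diagram of frames $(Y, Y, \Pf') \to (Y, Y, \Pf \times \wh{\mathbb{A}}^r)$, where $Y$ embeds into the target via $i$ and the zero section; this morphism is proper on the middle term, since it is the identity of $Y$, and étale.

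By the strong fibration Theorem \ref{thm:strong-fibration} (an analogue for a closed subscheme $Y$ of positive characteristic fiber, argued exactly as in Lemma \ref{lem:frame-rel-tube}), $u$ induces an isomorphism of germs
\[
]Y[_{\Pf'} \simeq\ ]Y[_{\Pf \times \wh{\mathbb{A}}^r} = ]Y[_{\Pf} \times_K A^r_K[0,1).
\]
Under this isomorphism the projection to $]Y[_{\Pf}$ is induced by $f$. The last equality holds because the tube of the zero section of $\wh{\mathbb{A}}^r$ is the open unit polydisc, by the description of tubes by the inequalities $|x_j|_{[x]} < 1$.

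\textbf{Main obstacle.} I expect the main obstacle to be the first step: making sure the exactification is compatible with $f$, so that $]Y[^{\log}_{\Pf'}$ really equals the ordinary tube in a strict formally smooth $\Pf'' \to \Pf^\ex$. Here I would follow \cite[Lemma 2.2.2, Proposition 2.2.4]{shiho2002crystallineII}. I would also check that relative dimension is preserved, using $f^*\omega^{\log}_{\Pf} \to \omega^{\log}_{\Pf'}$ being injective with locally free cokernel of rank $r$.
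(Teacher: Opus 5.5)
Your proposal is correct and is essentially the argument the paper defers to via \cite[Lemma 2.3.1]{shiho2007relativeI}. You use exactification and the base change $\Pf' \times_{\Pf} \Pf^{\ex}$ to reduce to the case where $i$ and $i'$ are exact and $f$ is strict, hence formally smooth, near $Y$, and then run the classical weak fibration argument with coordinates $g_1, \dots, g_r$ vanishing on $Y$. The only step to phrase more carefully is the last one: what you need is not Theorem \ref{thm:strong-fibration} as stated, since its frames require a $k((t))$-variety in the first slot. You need the simpler fact that an \'etale map restricting to the identity on $Y$ identifies the formal completions along $Y$, hence the tubes, just as in the proof of Theorem \ref{lem:tub-nei-form}.
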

	\begin{proof}
		Similar to \cite[Lemma 2.3.1]{shiho2007relativeI}.
	\end{proof}
	
	\begin{proposition}[Log Poincar\'{e} lemma]
		In the situation of Lemma \ref{lem:log-strong-fibration}, let $W$ be an open neighborhood of $]Y_\eta[^{\log}_{\Yf}$ and $\E_W$ a log $\nabla$-module over $W/\V[[t]]_K$. Then
		$$\mathbb{R}p_*(j^\dag_{Y_\eta} p^*\E_W \otimes \Omega^{*, \log}_{W \times A^r[0, 1)/\V[[t]]_K}) \simeq j^\dag_{Y_\eta} \E_W \otimes \Omega^{*, \log}_{W/\V[[t]]_K}$$
		and
		$$\mathbb{R}p_* (\mathbb{R}\underline{\Gamma}_{]X[^{\log}_{\Pf'}} j^\dag_{Y_\eta} p^*\E_W \otimes \Omega^{*, \log}_{W \times A^r[0, 1)/\V[[t]]_K}) \simeq \mathbb{R}\underline{\Gamma}_{]X[^{\log}_{\Pf}} j^\dag_{Y_\eta} \E_W \otimes \Omega^{*, \log}_{W/\V[[t]]_K}.$$
	\end{proposition}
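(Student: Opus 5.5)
We reduce to the classical relative Poincar\'e lemma on the fibres of $p$, following the proof of \cite[Lemma 2.3.1]{shiho2007relativeI} and the analogous statement in \cite{kedlaya2006finiteness}. Both isomorphisms are local on $\Pf$, so we work Zariski locally. By Lemma \ref{lem:log-strong-fibration} we may assume $]Y[^{\log}_{\Pf'} \simeq ]Y[^{\log}_{\Pf} \times_K A^r_K[0,1)$, with $p$ the first projection and coordinates $z_1, \dots, z_r$ on the polydisc. Shrinking $W$ (allowed, since everything is computed after $j^\dag_{Y_\eta}$), the neighborhood $W' = p^{-1}(W)$ of $]Y_\eta[^{\log}_{\Pf'}$ is $W \times A^r[0,1)$. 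The log structure of $\Pf'$ is pulled back from $\Pf$ along the polydisc directions because $f$ is log smooth and $i'$ is exact, so
$$\Omega^{\cdot,\log}_{W'/\V[[t]]_K} \simeq p^*\Omega^{\cdot,\log}_{W/\V[[t]]_K} \otimes \Omega^\cdot_{A^r[0,1)/K}.$$
Hence the de Rham complex of $p^*\E_W$ is the total complex of the double complex obtained by tensoring $\E_W \otimes \Omega^{\cdot,\log}_W$ with the relative de Rham complex in $dz_1,\dots,dz_r$.

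The proof then reduces to the relative statement
$$\mathbb{R}p_*\bigl(j^\dag_{Y_\eta}\O_{W'} \otimes \Omega^\cdot_{W'/W}\bigr) \simeq j^\dag_{Y_\eta}\O_W.$$
Compatibility of $j^\dag$ with $p^{-1}$ holds because the polydisc factor is full, so $]Y_\eta[_{\Pf'} = p^{-1}(]Y_\eta[_{\Pf})$. We prove the relative statement by induction on $r$, reducing to $r=1$. Over an affinoid (or, after Proposition \ref{prop:tube-pds}, partial dagger) open $V \subseteq W$, Theorem B (Proposition \ref{prop:theoremAB}) and a cofinal system of neighborhoods $V_\lambda \times A^1[0,\lambda]$ with $\lambda \to 1$ kill the higher cohomology of $p_*$ of each term, up to the $\varprojlim^1$ issue addressed in the last paragraph. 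The cohomology is then that of $\Gamma(V,\O)\{z\} \xrightarrow{d/dz} \Gamma(V,\O)\{z\}dz$, where $\{z\}$ denotes series convergent on $|z|<1$. This map is surjective because termwise integration $\sum a_n z^n dz \mapsto \sum a_n z^{n+1}/(n+1)$ preserves convergence on $|z|<1$: the same estimate as in the remark after Lemma \ref{lem:ui-cat-eq2}, since $|1/(n+1)|$ grows only polynomially. Its kernel is $\Gamma(V,\O)$. Passing to the direct limit over strict neighborhoods commutes with cohomology by Proposition \ref{prop:germ-section}.

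For the second isomorphism, apply the first to the distinguished triangle $\mathbb{R}\underline{\Gamma}_{]X[} \to \id \to \mathbb{R}i_*i^{-1} \to$. Because $]Z[^{\log}_{\Pf'} = p^{-1}(]Z[^{\log}_{\Pf})$ and $]X[^{\log}_{\Pf'} = p^{-1}(]X[^{\log}_{\Pf})$ (again by Lemma \ref{lem:log-strong-fibration}, with $X$ preimage of $X$), the same fibrewise computation applies to $\mathbb{R}i_{W*}i_W^{-1}$. Base change for the open/closed pieces gives the isomorphism on the third vertex of the triangle, and the five lemma finishes the argument.

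The main obstacle is the interchange of $\mathbb{R}p_*$ with the projective limit over $\lambda \to 1$ (the open polydisc is not quasi-compact) and with $j^\dag$. I would handle it exactly as in Example \ref{ex:robba}: take the inductive limit over strict neighborhoods first, then the projective limit over $\lambda$. The $\varprojlim^1$ terms vanish by a Mittag-Leffler argument, since the restriction maps have dense image and the spaces are Fr\'echet in the affinoid/fringe topology.
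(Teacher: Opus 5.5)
Your overall route is the standard one: reduce via Lemma \ref{lem:log-strong-fibration} to a product with an open polydisc, pass to the relative de Rham complex of $j^\dag_{Y_\eta}\O$ via the double-complex decomposition, integrate termwise on the disc, and deduce the compactly supported statement from the localization triangle. This is what the paper means when it calls the proposition the log version of the Poincar\'e lemma of \cite{lazda2016rigid} and says the proof is similar.

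The step that does not go through as you justify it is the vanishing of $\varprojlim^1$. Consider the spaces
$$B_\rho=\Gamma(V\times A^1[0,\rho], j^\dag_{Y_\eta}\O)=\varinjlim_\lambda \Gamma(V_\lambda\times A^1[0,\rho],\O).$$
These are not Fr\'echet in either topology you name. In the fringe topology they are LF-spaces (countable inductive limits of Banach spaces). In the affinoid topology they are dense, incomplete subspaces of $\wh{A}\langle z\rangle_\rho$. The topological Mittag-Leffler lemma (dense transition maps imply $\varprojlim^1=0$) needs complete metrizable terms, so it does not apply here.

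This is exactly the issue the paper handles in Appendix A. There one chooses a fringe radius $\lambda_i$ for each index and uses that every element of $\prod_i M_i$ already lies in some product $\prod_i M_i^{\lambda_i}$ of Banach modules. One then invokes the Banach case \cite[Lemma 2.6.3]{kedlaya2016relative}. That lemma, together with Proposition \ref{prop:vanish-jdag}, supplies the coherent vanishing on such quasi-Stein products that your argument implicitly uses.

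You can also bypass $\varprojlim^1$ of section spaces entirely, as follows.
\begin{enumerate}[(i)]
\item Write $\mathbb{R}\Gamma(V\times A^1[0,1),-)=\mathbb{R}\varprojlim_n\mathbb{R}\Gamma(V\times A^1[0,\rho_n],-)$ with $\rho_n\to 1$.
\item Compute each term by Theorem B (Proposition \ref{prop:theoremAB}) as the complex $B_{\rho_n}\xrightarrow{d}B_{\rho_n}\,dz$.
\item Note that $H^0=\Gamma(V,j^\dag_{Y_\eta}\O)$ is a constant system.
\item Note that the transition maps on $H^1$ are zero, since a form on the disc of radius $\rho_{n+1}$ integrates on the disc of radius $\rho_n$.
\item The Milnor sequence then shows the result is concentrated in degree $0$.
\end{enumerate}

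Two smaller corrections:
\begin{enumerate}[(a)]
\item $V_\lambda\times A^1[0,\lambda]$ with a single $\lambda$ is not a neighborhood system of anything. The strict-neighborhood parameter must be allowed to depend on the disc radius, as in Example \ref{ex:robba}; your last paragraph effectively does this.
\item $i'$ is not assumed exact in Lemma \ref{lem:log-strong-fibration}. So the fact that the polydisc coordinates carry no log structure should be read off on the exactifications (the induced map between them is strict near $Y$, hence formally smooth), not from exactness of $i'$.
\end{enumerate}
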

	\begin{proof}
		This proposition is just the log version of \cite[Proposition 2.4.8]{lazda2016rigid} and \cite[Lemma 4.3.7]{lazda2016rigid} and the proof is similar.
	\end{proof}
	
	\begin{corollary}
		Let $(Y_\bullet, M_{Y_\bullet}) \hookrightarrow (\Pf_\bullet, M_{\Pf_\bullet})$ and $(Y_\bullet, M_{Y_\bullet}) \hookrightarrow (\Pf'_\bullet, M_{\Pf'_\bullet})$ be two good embedding systems. Then for any overconvergent log isocrystal $E$ over $(Y_\eta, Y)/\Bf$, we have an isomorphism
		$$H_{\an}^*(Y_{\bullet, \eta}, Y_\bullet, \Pf_\bullet/\Ed_K; E) \simeq H_{\an}^*(Y_{\bullet, \eta}, Y_\bullet, \Pf'_\bullet/\Ed_K; E)$$
		and
		$$H_{\an, c}^*(Y_{\bullet, \eta}, Y_\bullet, \Pf_\bullet/\Ed_K; E) \simeq H_{\an, c}^*(Y_{\bullet, \eta}, Y_\bullet, \Pf'_\bullet/\Ed_K; E).$$
	\end{corollary}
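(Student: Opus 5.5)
The plan is to reduce the comparison of the two good embedding systems to the case where one system maps to the other, and then to apply the log Poincar\'{e} lemma levelwise together with cohomological descent for the Zariski hypercovering.

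\textbf{Step 1: a common refinement.} Given $(Y_\bullet, M_{Y_\bullet}) \hookrightarrow (\Pf_\bullet, M_{\Pf_\bullet})$ and $(Y_\bullet, M_{Y_\bullet}) \hookrightarrow (\Pf'_\bullet, M_{\Pf'_\bullet})$, form the diagonal embedding
$$(Y_\bullet, M_{Y_\bullet}) \hookrightarrow (\Pf''_\bullet, M_{\Pf''_\bullet}) := (\Pf_\bullet, M_{\Pf_\bullet}) \times_{(\Bf, M_\Bf)} (\Pf'_\bullet, M_{\Pf'_\bullet}),$$
with the fibre product taken in fine log formal schemes. This is again formally log smooth over $(\Bf, M_\Bf)$, and each projection $\Pf''_\bullet \to \Pf_\bullet$, $\Pf''_\bullet \to \Pf'_\bullet$ is formally log smooth. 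The diagonal need not be exact, but the tubes are defined through exactification, so by the log weak fibration theorem (Theorem \ref{lem:tub-nei-form}) we may replace $\Pf''_\bullet$ locally by an exact factorization without changing the tubes. It therefore suffices to construct isomorphisms for the single morphism $\Pf''_\bullet \to \Pf_\bullet$, and symmetrically for $\Pf''_\bullet \to \Pf'_\bullet$.

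\textbf{Step 2: levelwise comparison.} At each simplicial level, Lemma \ref{lem:log-strong-fibration} shows that, Zariski locally over $\Pf_n$, the tube $]Y_{n,\eta}[^{\log}_{\Pf''_n}$ is $]Y_{n,\eta}[^{\log}_{\Pf_n} \times A^r[0,1)$, with projection $p$. We choose neighbourhoods $W''_n = p^{-1}(W_n)$, shrinking as needed so that they are compatible with the simplicial maps; this is legitimate because the cohomology is independent of $W$. The log Poincar\'{e} lemma then gives quasi-isomorphisms
$$\mathbb{R}p_*\bigl(j^\dag p^*\E_W\otimes\Omega^{*,\log}\bigr) \simeq j^\dag \E_W\otimes\Omega^{*,\log}$$
and the analogous one for $\mathbb{R}\underline{\Gamma}_{]X[}$. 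These local isomorphisms glue to global ones because they are induced by the canonical adjunction map $\id \to \mathbb{R}p_*p^*$, which is defined globally. Being canonical, they are also compatible with the simplicial structure maps.

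\textbf{Step 3: passing to the simplicial object.} Taking hypercohomology on the simplicial spaces gives the two claimed isomorphisms through the spectral sequence $E_1^{n,q} = H^q(\text{level } n) \Rightarrow H^{n+q}$, since the map is an isomorphism on $E_1$. Finally we check that the resulting identification is independent of the choices: composing the isomorphisms through $\Pf''_\bullet$ gives a cocycle, which follows from the triple product.

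\textbf{Main obstacle.} The delicate point is the compactly supported version. We must check that $\mathbb{R}\underline{\Gamma}_{]X[}$ commutes with $p^*$, that is, $p^{-1}(]X[^{\log}_{\Pf}) = ]X[^{\log}_{\Pf''}$, which holds because both are preimages under $[\sp]$. We must also check that the triangle $a_*\mathbb{R}a^! \to \id \to \mathbb{R}i_*i^{-1}$ is preserved by $\mathbb{R}p_*$. For the latter I would apply the first (non-compact) isomorphism on $W$ and on $W \cap {]Z[}$ separately, then use the five lemma on the triangle. On $W \cap {]Z[}$ this needs $p$ to remain a relative open polydisc, which follows from Lemma \ref{lem:log-strong-fibration} restricted to the tube of $Z$.
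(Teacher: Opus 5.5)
Your proposal is correct and is essentially the argument the paper intends: the paper gives no separate proof of this corollary and deduces it from Lemma \ref{lem:log-strong-fibration} and the log Poincar\'{e} lemma, which handle a morphism of embeddings. You reduce to that case in the standard way, through the product embedding $\Pf_\bullet \times_{(\Bf, M_\Bf)} \Pf'_\bullet$ and the levelwise $E_1$ spectral sequence. (Two of your steps are redundant: the detour through exactifications in Step 1 is not needed, since Lemma \ref{lem:log-strong-fibration} does not assume exactness, and your five-lemma argument for the compactly supported case re-derives the second statement of the log Poincar\'{e} lemma.)
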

	
	Thus it makes sense to write $H^*_{\an}(Y_\eta, Y/\Ed_K; E)$ or $H^*_{\an, c}(Y_\eta, Y/\Ed_K; E)$.
	
	We will prove that if $Y$ is projective and $E$ is locally free with nilpotent residues, then the analytic cohomology is isomorphic to the rigid cohomology and the compactly supported cohomology is isomorphic to compactly supported rigid cohomology and is of finite dimension.
	
	Before proceeding, let us note that we can make similar definition for the convergent log isocrystals $\wh{E}$ over $Y_\eta/\E_K.$ We will prove finiteness by showing the comparison of $\Ed_K$-valued ones and $\E_K$-valued ones.
	
	\subsection{Finiteness of analytic and compactly supported analytic cohomology}
	Suppose that $(Y, Z)$ is a projective strictly semistable pair, $X = Y-Z$ and $E$ is a locally free overconvergent log isocrystal over $(Y_\eta, Y)$ with nilpotent residues. Recall that we have the canonical restriction functor $j^\dag : I^{\log}_{\oc}(Y_\eta, Y/\Bf) \to \Isoc(X, Y/\Bf).$
	
	\begin{theorem} \label{thm:an-rig-comparison}
		$H^*_{\an}(Y_\eta, Y/\Ed_K; E) \simeq H^*_{\rig}(X/\Ed_K; j^\dag E).$
	\end{theorem}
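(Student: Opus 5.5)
Both sides can be computed on a common good embedding system, so the plan is to compare the two de Rham complexes there. We reduce to a comparison of sheaves on $]Y_\eta[^{\log}_\Pf$ and then do a local calculation along the divisor, in the style of Kedlaya's comparison of log and overconvergent cohomology (\cite[\S6.5]{kedlaya2007semistable}) and Shiho's log-rigid comparison.

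\textbf{Step 1: common embedding and local reduction.} Choose a good embedding system $(Y_\bullet, M_{Y_\bullet}) \hookrightarrow (\Pf_\bullet, M_{\Pf_\bullet})$ which Zariski locally is as in Hypothesis \ref{hypo:ss-pair}, with $\Pf_\bullet = \Yf_\bullet$ smooth lifts. By the strong fibration argument in the proof of Theorem \ref{thm:cat-eq-log-isoc-log-nabla}, $]X_\bullet[^{\log}_{\Yf_\bullet} = ]X_\bullet[_{\Yf_\bullet}$. Since $Y$ is proper over $k[[t]]$, the triple $(X, Y, \Yf_\bullet)$ computes $H^*_\rig(X/\Ed_K; j^\dag E)$. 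Both cohomologies are then hypercohomologies on $]Y_{\bullet,\eta}[_{\Yf_\bullet}$, of $\E\otimes\Omega^{\cdot,\log}$ and of $j^\dag_X(\E\otimes\Omega^{\cdot})$ respectively, and there is a natural map $\E\otimes\Omega^{\cdot,\log} \to j^\dag_X(\E\otimes\Omega^\cdot)$ between them. By cohomological descent it suffices to show this map is a quasi-isomorphism of complexes of sheaves. That statement is local on $Y$, so we may assume $Y$ affine as in Hypothesis \ref{hypo:ss-pair}.

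\textbf{Step 2: local computation near the divisor.} Away from $]Z_\eta[$ the map is an isomorphism, so it suffices to compare sections over small neighborhoods of points of $]Z_\eta[$. Use Lemma \ref{lem:frame-rel-tube}, iterated over the components $Z_{m+1},\dots$, to write such neighborhoods as $V\times A^d_{\Ed_K}[0,1)$, with $V$ the tube of a stratum and the $z_i$ the local equations. By Lemma \ref{lem:ext-log-nabla} and Proposition \ref{prop:unip-ext}, $\E$ restricted there is unipotent. Via Theorem \ref{thm:ui-cat-eq} it has the form $\Uc_{[0,1)}(\F)$ with nilpotent residues. A dévissage along the unipotent filtration, using exactness of both sides, reduces us to the constant case $\F$ pulled back from $V$. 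By a Künneth-type argument we are then reduced to $d=1$ and to two computations:
\begin{itemize}
\item the log de Rham cohomology of $A^1_{\Ed_K}[0,1)$ relative to $V$ is $\F$ in degree $0$ and $\F\,\dlog z$ in degree $1$;
\item the corresponding $j^\dag$ complex, which is a colimit over strict neighborhoods $A^1[\lambda,1)$, i.e.\ Robba-ring-type annuli over $V$, has the same cohomology.
\end{itemize}
The chain homotopy of Lemma \ref{lem:ui-cat-eq2}, which integrates $z^i\,\dlog z$ for $i\neq0$, computes both, and one checks that the natural map matches the classes $1$ and $\dlog z$. Nilpotence of the residues is what allows the dévissage to cover the non-constant unipotent case.

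\textbf{Main obstacle.} The hardest part is the analytic control of the integration homotopy on the half-open annuli $[\lambda,1)$ over a partial dagger base. Lemma \ref{lem:ui-cat-eq2} only treats closed intervals. Passing to $[\lambda,1)$ and then to the colimit over $\lambda$ requires the order of limits in Example \ref{ex:robba} (inverse limit inside direct limit) together with Proposition \ref{prop:germ-section}, which lets us compute sheaf cohomology on germs as a colimit over neighborhoods. My first approach is to prove vanishing of $\varprojlim^1$ on the affinoid pieces using Theorem B (Proposition \ref{prop:theoremAB}), and to check that the integration constants $1/i$ only cost a shrinkage $\epsilon\to\epsilon'$, exactly as in the remark after Lemma \ref{lem:ui-cat-eq2}. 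If this becomes unwieldy, the fallback is to compare with the completion $\wh{E}$ using faithful flatness (Corollary \ref{cor:pda}), and then invoke the classical comparison of \cite{kedlaya2007semistable} and \cite{shiho2002crystallineII}.
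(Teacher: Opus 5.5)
There is a genuine gap in how you pass from the local models to the global statement. Your Step 1 reduces the theorem to showing that $\E\otimes\Omega^{\cdot,\log}\to j^\dag_X(\E\otimes\Omega^{\cdot})$ is a quasi-isomorphism of complexes of sheaves, to be checked on small neighborhoods of points of $]Z_\eta[$. That statement is false.

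In this framework $]Z_\eta[$ is open in $]Y_\eta[$ and disjoint from the closed set $]X[$. So $j^\dag_X=i_*i^{-1}$ restricts to zero on $]Z_\eta[$. On the other hand, already for $\E=\O$, the sheaf $\mathcal{H}^0$ of the log de Rham complex contains $1$ there.

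The comparison is intrinsically not local on the rigid space. For unipotent coefficients, restricting the log complex on a full disc $A^1[0,1)$ to the ordinary complex on the annulus $A^1(\eta,1)$ induces an isomorphism on cohomology. But for a disc of radius $\rho\le\eta$ around a point of the divisor, the annulus part is empty.

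For the same reason, your identification of ``the $j^\dag$ complex'' on $V\times A^1_{\Ed_K}[0,1)$ with a colimit over $A^1[\lambda,1)$ is not right as stated. That piece is the tube of a stratum, on which $j^\dag_X$ vanishes. Also, the tube of a small Zariski open of $Y$ is not itself of the form $V\times A^d_{\Ed_K}[0,1)$.

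What is missing is the global mechanism the paper supplies. Your reduction to $Y$ affine satisfying Hypothesis \ref{hypo:ss-pair}, with target $H^*(]Y_\eta[_{\Yf},\E\otimes\Omega^{\cdot,\log})\simeq H^*(]X[_{\Yf},\E\otimes\Omega^{\cdot})$, is fine. The paper then proceeds as follows.
\begin{enumerate}
\item Fix $\eta$. Take $W=\{x : |t_1|_x,\dots,|t_r|_x\ge\epsilon\}$ and the neighborhood $V^\eta=\{x\in W : |t_{r+j}|_{[x]}>\eta \ \text{for all } j\}$ of $]X[$.
\item Show that restriction $H^*(W,j^\dag_{Y_\eta}\E_W\otimes\Omega^{\cdot,\log})\to H^*(V^\eta,j^\dag_{Y_\eta}\E\otimes\Omega^{\cdot})$ is an isomorphism. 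Then use $H^*(]X[_{\Yf},\E\otimes\Omega^{\cdot})=\varinjlim_{\eta\to1}H^*(V^\eta,j^\dag_{Y_\eta}\E\otimes\Omega^{\cdot})$.
\item For fixed $\eta$, use the matched open covers $W=\bigcup_J W^\eta_J$ and $V^\eta=\bigcup_J V^\eta_J$. Their finite intersections are $U'\times A^s[0,1)$ and $U'\times A^s(\eta,1)$ respectively, over the same base $U'$.
\item \v{C}ech spectral sequences reduce the comparison to these products.
\item Katz--Oda, together with vanishing of higher coherent cohomology on quasi-Stein spaces (Proposition \ref{prop:vanish-jdag}), reduces further to cohomology relative to $U'$.
\end{enumerate}
At that last stage your unipotence, d\'evissage and $\E=\O$ computation is exactly what is needed. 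So your core computation is the right one, but it proves the theorem only once placed inside such a pair of compatible covers, not through a sheaf-level quasi-isomorphism on $]Y_\eta[$.
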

	\begin{proof}
		Choose a good embedding system $(Y_\bullet, M_{Y_\bullet}) \hookrightarrow (\Yf_\bullet, M_{\Yf_\bullet})$ such that each $Y_\bullet$ and $\Yf_\bullet$ satisfies Hypothesis \ref{hypo:ss-pair}. If we let $\E_\bullet$ to be the induced log-$\nabla$ module over $]Y_{\bullet, \eta}[_{\Yf_\bullet}$, we have
		$$H^*_{\an}(Y_\eta, Y/\Ed_K; E) \simeq H^*(]Y_{\bullet, \eta}[_{\Yf_\bullet}, \E_\bullet \otimes \Omega^{*, \log}_{]Y_{\bullet, \eta}[_{\Yf_\bullet}/\Ed_K}).$$
		
		On the other hand, choose a closed immersion $Y \hookrightarrow \Pf$ into a smooth projective formal $\V[[t]]$-scheme (which is possible since $Y$ is projective) and choose a Zariski hypercover $\Pf_\bullet \to \Pf$ which induces $Y_\bullet \to Y.$ If we let $\E'$ be the $\nabla$-module over $]X[_{\Pf}$ induced by $j^\dag E$ and let $\E'_\bullet$ be the induced $\nabla$-module over $]X_\bullet[_{\Pf_\bullet}$, then we have 
		$$H^*_{\rig}(X/\Ed_K; j^\dag E) \simeq H^*(]X[_{\Pf}, \E' \otimes \Omega^{*}_{]X[_\Pf/\Ed_K}) \simeq H^*(]X_\bullet[_{\Pf_\bullet}, \E'_\bullet \otimes \Omega^{*}_{]X_\bullet[_{\Pf_\bullet}/\Ed_K}).$$
		
		Since $\Yf$ and $\Pf$ are both smooth around $X$, we have $H^*(]X_\bullet[_{\Pf_\bullet}, \E'_\bullet \otimes \Omega^{*}_{]X_\bullet[_{\Pf_\bullet}/\Ed_K}) \simeq H^*(]X_{\bullet}[_{\Yf_\bullet}, \E_\bullet \otimes \Omega^{*}_{]X_{\bullet}[_{\Yf_\bullet}/\Ed_K})$ and by considering the $E_1$-terms of the associated spectral sequences, we are reduced to prove that if $(Y, Z)$ and $\Yf$ satisfies Hypothesis \ref{hypo:ss-pair}, there is a canonical isomorphism $H^*(]Y_\eta[_{\Yf}, \E \otimes \Omega^{*, \log}_{]Y_\eta[_{\Yf}/\Ed_K}) \simeq H^*(]X[_{\Yf}, \E \otimes \Omega^{*}_{]X[_{\Yf}/\Ed_K}).$ The proof is similar to that of \cite{shiho2002crystallineII}.
		
		Choose an open neighborhood $W = \{ x \in \Yf_K : |t_1|_x, \dots, |t_r|_x \geq \epsilon \}$ over which a locally free log $\nabla$-module $\E_W$ with nilpotent residues giving rise to $\E$ is defined. Let $V^\eta = \{ x \in W : |t_{r+1}|_[x], \dots, |t_m|_[x] > \eta \}$ for $\eta \in (\epsilon, 1) \cap \Gamma^*$. Since we have $H^*(]Y_\eta[_{\Yf}, \E \otimes \Omega^{*, \log}_{]Y_\eta[_{\Yf}/\Ed_K}) \simeq H^*(W, j^\dag_{Y_\eta} \E_W \otimes \Omega^{*, \log}_{W/\V[[t]]_K})$ and $H^*(]X[_{\Yf}, \E \otimes \Omega^{*}_{]X[_{\Yf}/\Ed_K}) \simeq \varinjlim_{\eta \to 1} H^*(V^\eta, j^\dag_{Y_\eta} \E_{V^\eta} \otimes \Omega^{*}_{V^\eta/\V[[t]]_K})$, it suffices to prove that there is a canonical isomorphism $H^*(]Y_\eta[_{\Yf}, \E \otimes \Omega^{*, \log}_{]Y_\eta[_{\Yf}/\Ed_K}) \simeq H^*(V^\eta, j^\dag_{Y_\eta} \E_{V^\eta} \otimes \Omega^{*}_{V^\eta/\V[[t]]_K})$.
		
		Let $k = m-r$. For $J \subseteq \{1, \dots, k\}$, put
		$$W^{\eta}_J = \{ x \in W : |t_{r+j}|_{[x]} < 1 \text{ for } j \in J \text{ and } |t_{r+j}|_{[x]} > \eta \text{ for } j \in \{1, \dots, k\}-J\},$$
		$$U^{\eta}_J = \{ x \in W : |t_{r+j}|_{[x]} = 0 \text{ for } j \in J \text{ and } |t_{r+j}|_{[x]} > \eta \text{ for } j \in \{1, \dots, k\}-J\},$$
		and let $V^\eta_J = W^\eta_J \cap V^\eta.$ There is an isomorphism $W^\eta_J \simeq U^\eta_J \times_K A^{|J|}_K[0, 1)$ whose projection to the polydisc is given by $t_{r+j}$ for $j \in J$, which maps $V^\eta_J$ precisely to $U^\eta_J \times_K A^{|J|}_K(\eta, 1)$.
		Given $J_0, \dots, J_p \subseteq \{ 1, \dots, k \}, $ we have
		$$W^{\eta}_{J_0} \cap \cdots \cap W^{\eta}_{J_p} = \{ x \in W : |t_{r+j}|_{[x]} < 1 \text{ for } j \in J_0 \cup \cdots \cup J_p \text{ and } |t_{r+j}|_{[x]} > \eta \text{ for } j \in \{1, \dots, k\}-(J_0 \cap \cdots \cap J_p)\}.$$
		Put $U'^{\eta}_{J_0, \dots, J_p} = \{ x \in U^\eta_{J_0 \cup \cdots \cup J_p} : |t_{r+j}|_{[x]} > \eta \text{ for } j \in \{ 1, \dots, k \} - (J_0 \cap \cdots \cap J_p)\}.$ Then we have a commutative diagram
		\[
		\xymatrix{
			 & W^\eta_{J_0 \cup \cdots \cup J_p} \ar[rr]^{\sim} &  & U^\eta_{J_0 \cup \dots \cup J_p} \times A^{|J_0 \cup \cdots \cup J_p|}[0, 1) \\
			W^\eta_{J_0} \cap \cdots \cap W^\eta_{J_p} \ar@{^(->}[ru] \ar[rr]^{\sim}&  & U'^\eta_{J_0, \dots, J_p} \times A^{|J_0 \cup \cdots \cup J_p|}[0, 1) \ar@{^(->}[ru] &  \\
			 & V^\eta_{J_0 \cup \cdots \cup J_p} \ar@{^(-->}[uu] \ar@{-->}[rr]^{\sim}&  & U^\eta_{J_0 \cup \dots \cup J_p} \times A^{|J_0 \cup \cdots \cup J_p|}(\eta, 1) \ar@{^(->}[uu]\\
			V^\eta_{J_0} \cap \cdots \cap V^\eta_{J_p} \ar@{^(->}[uu] \ar@{^(->}[ru] \ar[rr]^{\sim} &  & U'^\eta_{J_0, \dots, J_p} \times A^{|J_0 \cup \cdots \cup J_p|}(\eta, 1) \ar@{^(->}[uu] \ar@{^(->}[ru] &  \\
		}
		\]
		
		By the spectral sequences
		$$E_1^{p, q} = \bigoplus_{J_0, \dots, J_p} H^q(W^\eta_{J_0} \cap \cdots \cap W^\eta_{J_p}, \Box) \Rightarrow H^{p+q}(W, \Box)$$
		and
		$$'E_1^{p, q} = \bigoplus_{J_0, \dots, J_p} H^q(V^\eta_{J_0} \cap \cdots \cap V^\eta_{J_p}, \Box) \Rightarrow H^{p+q}(V^\eta, \Box)$$
		associated to open covers $W = \cup_J W^\eta_J$ and $V^\eta = \cup_J V^\eta_J$, it suffices to compare $E_1$-terms.
		
		Let $U = U'^\eta_{J_0, \dots, J_p}, \, s = |J_0 \cup \cdots \cup J_p|$. The case where $s = 0$ is obvious since $W^\eta_\emptyset = V^\eta_\emptyset$. We will assume $s > 0$ and prove that
		$$H^*(U \times A^s[0, 1), j^\dag_{Y_\eta} \E \otimes \Omega^{*, \log}_{U \times A^s[0, 1)/\V[[t]]_K}) \simeq H^*(U \times A^s(\eta, 1), j^\dag_{Y_\eta} \E \otimes \Omega^{*}_{U \times A^s(\eta, 1)/\V[[t]]_K})$$
		is an isomorphism. For this, consider the Katz-Oda spectral sequences
		\begin{align*}
			E_2^{p, q} & = H^p(\Gamma(U, j^\dag_{Y_\eta}\Omega^{*}_{U/\V[[t]]_K}) \otimes_{j^\dag_{Y_\eta} \O(U)} H^q(U \times A^s[0, 1), j^\dag_{Y_\eta} \E \otimes \Omega^{*, \log}_{U \times A^s[0, 1) / U})) \\
			& \Rightarrow H^{p+q}(U \times A^s[0, 1), j^\dag_{Y_\eta} \E \otimes \Omega^{*, \log}_{U \times A^s[0, 1)/\V[[t]]_K})
		\end{align*}
		and
		\begin{align*}
			E_2^{p, q} & =  H^p(\Gamma(U, j^\dag_{Y_\eta}\Omega^{*}_{U/\V[[t]]_K}) \otimes_{j^\dag_{Y_\eta} \O(U)} H^q(U \times A^s(\eta, 1), j^\dag_{Y_\eta} \E \otimes \Omega^{*}_{U \times A^s(\eta, 1) / U})) \\
			& \Rightarrow H^{p+q}(U \times A^s(\eta, 1), j^\dag_{Y_\eta} \E \otimes \Omega^{*}_{U \times A^s(\eta, 1)/\V[[t]]_K})
		\end{align*}
		Note that higher cohomologies of coherent $j^\dag_{Y_\eta} \O$-module over $U$, $U \times A^s[0, 1)$ and $U \times A^s[0, 1)$ vanish since $U$ is ``quasi-Stein'' in a suitable sense. See Propositoin \ref{prop:vanish-jdag} for details. We are therefore reduced to prove that $H^q(U \times A^s[0, 1), j^\dag_{Y_\eta} \E \otimes \Omega^{*, \log}_{U \times A^s[0, 1) / U})$ and $H^q(U \times A^s(\eta, 1), j^\dag_{Y_\eta} \E \otimes \Omega^{*}_{U \times A^s(\eta, 1) / U})$ are canonically isomorphic.
		Since $\E$ is locally free with nilpotent residues, by Proposition \ref{prop:unip-ext}, $j^\dag\E$ over $U \times A^s[0, 1)$ is unipotent. Therefore by devissage, we are reduced to the case where $\E$ is constant. Then using local freeness, we are further reduced to the case $\E = \O$. Then easy calculation shows that $H^q(U \times A^s[0, 1), j^\dag_{Y_\eta} \Omega^{*, \log}_{U \times A^s[0, 1) / U})$ and $H^q(U \times A^s(\eta, 1), j^\dag_{Y_\eta} \Omega^{*, \log}_{U \times A^s(\eta, 1) / U})$ are isomorphic.
	\end{proof}
	
	\begin{lemma}
		$H^*_{\an, c}(Y_\eta, Y/\Ed_K; E) \simeq H^*_{\rig, c}(X/\Ed_K; j^\dag E)$ and $H^*_{\an, c}(Y_\eta/\E_K; \wh{E}) \simeq H^*_{\rig, c}(X/\E_K; j^\dag \wh{E})$
	\end{lemma}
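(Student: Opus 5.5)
The plan mirrors the proof of Theorem \ref{thm:an-rig-comparison}, with $\mathbb{R}\underline{\Gamma}_{]X[}$ inserted throughout; the $\E_K$-valued statement is handled by the same argument with the completions $\wh{E}$, $]Y_\eta[_{\Yf_\eta}$ in place of the partial dagger objects (it is essentially Shiho's classical comparison), so I focus on the $\Ed_K$-valued case. First I choose a good embedding system $(Y_\bullet, M_{Y_\bullet}) \hookrightarrow (\Yf_\bullet, M_{\Yf_\bullet})$ with each level satisfying Hypothesis \ref{hypo:ss-pair}, and a hypercovering $\Pf_\bullet$ of a smooth projective $\Pf \supseteq Y$ inducing $Y_\bullet \to Y$. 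Then $H^*_{\rig,c}(X/\Ed_K; j^\dag E)$ is computed by $\mathbb{R}\underline{\Gamma}_{]X_\bullet[_{\Pf_\bullet}} j^\dag \E'_\bullet \otimes \Omega^*$ (Lazda--P\'al's definition, which is independent of the frame by their version of the Poincar\'e lemma). Since $\Yf_\bullet$ and $\Pf_\bullet$ are both smooth around $X_\bullet$, the second isomorphism of the log Poincar\'e lemma (applied to the product embedding) identifies this with the analogous complex on $\Yf_\bullet$. Comparing $E_1$-terms of the simplicial spectral sequences, I reduce to the following claim: for a single affine $(Y,Z,\Yf)$ as in Hypothesis \ref{hypo:ss-pair}, with $W=\{|t_1|,\dots,|t_r|\ge\epsilon\}$ and $\E_W$ locally free with nilpotent residues, the natural map
$$H^*(W, \mathbb{R}\underline{\Gamma}_{]X[} j^\dag_{Y_\eta}\E_W \otimes \Omega^{*,\log}_{W/\V[[t]]_K}) \to H^*(W, \mathbb{R}\underline{\Gamma}_{]X[} j^\dag_{Y_\eta}\E_W \otimes \Omega^{*}_{W/\V[[t]]_K}|_{\text{non-log}})$$
is an isomorphism. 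Here the right-hand side is the Lazda--P\'al compactly supported complex of the frame $(X,Y,\Yf)$.

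Using the distinguished triangle $a_*\mathbb{R}a^! \to \id \to \mathbb{R}i_*i^{-1}$, the claim splits into two comparisons. The first is the full-space comparison, which is exactly Theorem \ref{thm:an-rig-comparison}. The second is a comparison on the tube $]Z[$ of the boundary, that is, of $\mathbb{R}i_*i^{-1}$ applied to the log and non-log complexes. For the second, I cover $W\cap ]Z[$ by the opens $W^\eta_J$ with $J\neq\emptyset$ from the proof of Theorem \ref{thm:an-rig-comparison}, and use the Čech spectral sequence to reduce to the pieces $U\times A^s[0,1)$ and their intersections. The Katz--Oda spectral sequence, together with the vanishing of Proposition \ref{prop:vanish-jdag}, then reduces further to the relative cohomology over $U$. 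By Proposition \ref{prop:unip-ext}, $\E$ is unipotent on $U\times A^s[0,1)$, so dévissage reduces to $\E=\O$. It is more convenient to compare the local cohomology pieces directly: on $U\times A^s[0,1)$ with boundary $\{z_1\cdots z_s$ small$\}$, the compactly supported relative cohomology of the polydisc with log poles is computed by series with $\mathbb{R}\underline{\Gamma}$ along $|z_i|\le\eta$. By the homotopy of Lemma \ref{lem:ui-cat-eq2} it matches the compactly supported relative cohomology of the annulus $A^s(\eta,1)$, namely $H^s$ concentrated in the negative-power part.

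The main obstacle I expect is this last local computation. $\mathbb{R}\underline{\Gamma}_{]X[}$ on a partial dagger polyannulus involves both the overconvergence in $t$ and the support condition in $z$, and the Robba-type duals (negative-power series with the required growth) must be shown to be identical for the log and non-log complexes. I would first try to compute $H^*_c$ of $U\times A^s(\eta,1)\subset U\times A^s[0,1)$ explicitly, as $\Gamma(U,\O)$ tensored with the quotient of series on the annulus by those extending to the disc. I would then check that the log $\dlog z_i$-complex and the ordinary $dz_i$-complex give the same answer, because in the nilpotent-residue case they differ only in the constant term, and the constant term dies in the compactly supported quotient. Once the $\Ed_K$ case is settled, the $\E_K$ statement follows verbatim with affinoids over $\E_K$.
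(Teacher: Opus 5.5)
Your reduction to a single local frame $(X,Y,\Yf)$ matches the paper's. At that point, however, you miss the idea that finishes the proof at once, which is excision.

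As written, your right-hand complex does not make sense on $W$, because $\nabla$ has logarithmic poles along $t_{r+1},\dots,t_m$ there. The non-log complex only lives on a neighborhood such as $V=V^\eta=\{x\in W : |t_{r+j}|_{[x]}>\eta\}$. Three facts then settle the claim:
\begin{itemize}
\item $V$ already contains $]X[$, since $|t_{r+j}|_{[x]}=1$ on $]X[$.
\item The functions $t_{r+1},\dots,t_m$ are units on $V$, so $\Omega^{*,\log}_{V}=\Omega^{*}_{V}$.
\item $\mathbb{R}a^!$ for the closed immersion of $]X[$ depends only on an open neighborhood of $]X[$.
\end{itemize}
Hence $\mathbb{R}a_W^!(j^\dag_{Y_\eta}\E_W\otimes\Omega^{*,\log}_{W/\V[[t]]_K})\cong \mathbb{R}a_V^!(j^\dag_{Y_\eta}\E_V\otimes\Omega^{*}_{V/\V[[t]]_K})$ as complexes. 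The right-hand side is the Lazda--P\'al compactly supported complex of the frame. This is the paper's entire argument; it needs no unipotence, no Katz--Oda, and no Theorem~\ref{thm:an-rig-comparison}.

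Your triangle and five-lemma route could be made to work, but not the way you describe its last step.
\begin{itemize}
\item The middle comparison $R\Gamma(W,\log)\to R\Gamma(V^\eta,\text{non-log})$ is exactly Theorem~\ref{thm:an-rig-comparison}.
\item The boundary comparison $R\Gamma(W\cap]Z[,\log)\to R\Gamma(V^\eta\cap]Z[,\text{non-log})$ follows from the \v{C}ech spectral sequences for $\{W^\eta_J\}_{J\neq\emptyset}$ and $\{V^\eta_J\}_{J\neq\emptyset}$.
\item It uses precisely the local isomorphisms $H^*(U\times A^s[0,1),\log)\cong H^*(U\times A^s(\eta,1),\text{non-log})$ for $s>0$, which that proof already establishes.
\end{itemize}

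Your final paragraph instead sets up a support computation: ``$\mathbb{R}\underline{\Gamma}$ along $|z_i|\le\eta$'', and compactly supported cohomology of the annulus as annulus series modulo disc series. That is the wrong object, for three reasons:
\begin{itemize}
\item The pieces $W^\eta_J$ with $J\neq\emptyset$ do not meet $]X[$ at all, since $|t_{r+j}|_{[x]}<1$ there for $j\in J$.
\item The $]Z[$-term of the triangle is ordinary cohomology, with no support condition.
\item The two relevant relative complexes agree because both have cohomology $\O(U)\otimes\wedge^*(\dlog z_1,\dots,\dlog z_s)$ relative to $U$. It is not because a constant term ``dies in a compactly supported quotient''.
\end{itemize}

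So the step you flag as the main obstacle is aimed at the wrong target. Once corrected, it just repeats Theorem~\ref{thm:an-rig-comparison}. In any case, the excision argument makes the whole detour unnecessary.
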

	\begin{proof}
		We consider the first one. By the same argument of Theorem \ref{thm:an-rig-comparison}, we are reduced to show that, using the same symbols, there is a canonical isomorphism $H^*(]X[_\Yf, \mathbb{R}a_W^! j^\dag_{Y_\eta} \E_W \otimes \Omega^{*, \log}_{W/\V[[t]]_K}) \simeq H^*(]X[_\Yf, \mathbb{R}a_V^! j^\dag_{Y_\eta} \E_V \otimes \Omega^{*, \log}_{V/\V[[t]]_K})$
		where $a_W : ]X[_\Yf \hookrightarrow W$ and $a_V : ]X[_\Yf \hookrightarrow V$ are closed immersions. This is obvious since the complexes are isomorphic.
	\end{proof}
	
	\begin{theorem} \label{thm:anc-comparison}
		There is a canonical isomorphism $H^*_{\an, c}(Y_\eta, Y/\Ed_K; E) \otimes_{\Ed_K} \E_K \simeq H^*_{\an, c}(Y_\eta/\E_K; \wh{E}).$ In particular, $H^*_{\an, c}(Y_\eta, Y/\Ed_K; E)$ is of finite dimension over $\Ed_K$.
	\end{theorem}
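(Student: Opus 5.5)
Our plan is to prove the base change isomorphism locally on a good embedding system, glue via the hypercovering spectral sequence, and then read off finiteness from the classical case.

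\emph{Reduction to a local statement.} Choose a good embedding system $(Y_\bullet, M_{Y_\bullet}) \hookrightarrow (\Yf_\bullet, M_{\Yf_\bullet})$ in which every $Y_n$ and $\Yf_n$ satisfies Hypothesis \ref{hypo:ss-pair}. The completion $\Yf_{\bullet,\eta}$ is then a good embedding system for $Y_\eta$ over $\O_{\E_K}$. On both sides, $H^*_{\an,c}$ is the abutment of a spectral sequence whose $E_1$-terms are the corresponding groups on the $Y_n$. Since $\Ed_K \to \E_K$ is flat (a field extension), it suffices to produce a canonical isomorphism
$$H^*_{\an,c}(Y_\eta, Y, \Yf/\Ed_K; E)\otimes_{\Ed_K}\E_K \simeq H^*_{\an,c}(Y_\eta, \Yf_\eta/\E_K;\wh E)$$
for a single affine $(Y,\Yf)$ satisfying Hypothesis \ref{hypo:ss-pair}, compatible with the simplicial maps. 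The canonical map is induced by restriction from $W$ to its completion $\wh W$, together with the compatibility of $\mathbb{R}a^!$ and $\mathbb{R}\underline{\Gamma}$ with this restriction.

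\emph{Local computation.} Use the triangle $a_{W*}\mathbb{R}a_W^!\to \id\to \mathbb{R}i_{W*}i_W^{-1}$, so that $H^*_{\an,c}$ is the cone of $H^*_{\an}(W)\to H^*(W\cap\,]Z[^{\log}, \cdot)$. Cover $]Z[$ by the tubes of the strata of $Z_\eta$ as in the proof of Theorem \ref{thm:an-rig-comparison}; each is of the form $U\times A^s[0,1)$ with $U$ an affinoid partial dagger space. By Proposition \ref{prop:unip-ext}, $\E$ is unipotent on each such piece, so we reduce by dévissage to the constant case and then to $\E=\O$. Next apply the Katz–Oda spectral sequence in the fibre directions. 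For the polydisc and annulus factors, the relative log de Rham cohomology is computed explicitly (Lemma \ref{lem:ui-cat-eq2}): it is a finite free module over $j^\dag\O(U)$ generated by $\dlog$-forms, and this description is visibly compatible with completion. We are thus reduced to comparing $H^*$ of $j^\dag\Omega^*_{U}$ and of the local pieces of $W$ with those of their completions. Here we use the vanishing of higher coherent cohomology (Proposition \ref{prop:vanish-jdag} and Theorem B, Proposition \ref{prop:theoremAB}) together with the faithful flatness $A\to\wh A$ (Corollary \ref{cor:pda}).

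\emph{Main obstacle.} The hard step is to show that de Rham cohomology of a smooth affinoid partial dagger space over $\Ed_K$ commutes with $\otimes_{\Ed_K}\E_K$. Faithful flatness only gives $A\otimes_{\Ed_K}\E_K\to\wh A$ on the level of rings, not a quasi-isomorphism of de Rham complexes. To get around this, I would avoid the affinoid-by-affinoid comparison and use projectivity of $Y$ globally. First, the finiteness of $H^*_{\an}$ and $H^*_{\an,c}$ over $\E_K$ for $\wh E$ is known classically: by Theorem \ref{thm:an-rig-comparison}'s $\E_K$-analogue, these agree with rigid cohomology of $j^\dag\wh E$, which is finite by Kedlaya \cite{kedlaya2006finiteness}. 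Second, Poincaré duality / Serre duality on the proper log smooth $Y_\eta$ identifies $H^*_{\an,c}(\wh E)$ with the dual of $H^{2d-*}_{\an}(\wh E^\vee)$; the same duality pairing can be built on the $\Ed_K$ side. Third, Lazda–Pál's base change result for $H^*_{\rig}$ (non-compact support, \cite{lazda2016rigid}), combined with Theorem \ref{thm:an-rig-comparison} for $E^\vee$, gives $H^*_{\an}(E^\vee)\otimes\E_K\simeq H^*_{\an}(\wh E^\vee)$ with finite-dimensional source. Transporting this through the duality pairing, which is perfect after $\otimes\E_K$, yields the claimed isomorphism for $H_{\an,c}$. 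Finite dimensionality then follows because $\Ed_K\to\E_K$ is a field extension.
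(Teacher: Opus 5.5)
There is a genuine gap in the step you ultimately rely on, the duality argument. Write $\alpha\colon H^i_{\an,c}(Y_\eta,Y/\Ed_K;E)\otimes_{\Ed_K}\E_K\to H^i_{\an,c}(Y_\eta/\E_K;\wh E)$ for the base-change map. Write $\beta\colon H^i_{\an,c}(Y_\eta,Y/\Ed_K;E)\to \mathrm{Hom}_{\Ed_K}(H^{2d-i}_{\an}(Y_\eta,Y/\Ed_K;E^\vee),\Ed_K)$ for the map induced by the pairing you would build over $\Ed_K$.

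Grant that this pairing exists and is compatible with completion; it needs a trace on $H^{2d}_{\an,c}$, which already presupposes knowing that group. Even so, perfectness over $\E_K$ and base change for $H^*_{\an}(E^\vee)$ only give a commutative square: $\alpha$ followed by the $\E_K$-duality isomorphism equals $\beta\otimes_{\Ed_K}\E_K$ followed by an isomorphism. Hence $\alpha$ is an isomorphism if and only if $\beta$ is. Your claim that the pairing ``is perfect after $\otimes\E_K$'' is exactly the statement to be proved.

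So you have reduced the theorem to Poincar\'e duality for $H_{\an,c}$ over $\Ed_K$, and that is not available. It forces finite-dimensionality of $H_{\an,c}$. Proving it directly is precisely the Hahn--Banach-type problem for LF-spaces over the non-complete field $\Ed_K$ that the paper names as the obstruction to Kedlaya's classical route. In the paper, duality over $\Ed_K$ is a corollary of finiteness, not an input. Your first paragraph's reduction to a single affine $(Y,\Yf)$ should also be dropped, for the reason you yourself give: the affine-piece groups are neither finite nor known to commute with $\otimes_{\Ed_K}\E_K$.

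The idea you are missing is that the pieces from your second paragraph never have to be compared on $U$. The paper keeps your distinguished triangle and treats the middle term exactly as in your third point: Theorem \ref{thm:an-rig-comparison}, Shiho's comparison, and \cite[Theorem 3.74]{lazda2016rigid}. For the term on $W\cap\,]Z[$, it proceeds as follows.
\begin{itemize}
\item It takes the \v{C}ech spectral sequence for the cover by tubes of the horizontal components $Z_i$, which are global, not affine.
\item It writes $W\cap\,]Z_{i_0\cdots i_p}[\,\simeq U\times A^{p+1}[0,1)$ and applies Katz--Oda.
\item By Proposition \ref{prop:unip-ext} and d\'evissage, the fibrewise log de Rham cohomology sheaves are locally free with overconvergent log connections with nilpotent residues.
\item These sheaves glue to locally free overconvergent log isocrystals $F_q$ on the projective strictly semistable strata $Z_{i_0\cdots i_p}$, with log structure from the remaining $Z_j$.
\end{itemize}
The $E_2$-terms are therefore $H^p_{\an}(Z_{i_0\cdots i_p,\eta},Z_{i_0\cdots i_p}/\Ed_K;F_q)$ and their $\E_K$-analogues. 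These are global $H_{\an}$ on projective pairs, so the same comparison as for the middle term applies, and no duality over $\Ed_K$ is needed.
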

	\begin{proof}
		Choose a good embedding system $(Y_{\bullet}, M_{Y_{\bullet}}) \hookrightarrow (\Yf_{\bullet}, M_{\Yf_\bullet})$ and open neighborhoods $W_{\bullet}$ of $]Y_{\bullet}[_{\Yf_{\bullet}}$ over which $E$ is defined such that $W_{\bullet}$ form a simplicial rigid analytic variety. Then we have
		$$R\Gamma_{\an,c}(Y_{\bullet, \eta}, Y/\Ed_K, E) = R\Gamma(W_{\bullet}, R\Gamma_{]X_\bullet[_{\Yf_\bullet}} j^\dag_{Y_{\bullet, \eta}}\E_{W_\bullet} \otimes \Omega^{*, \log}_{W_\bullet/\V[[t]]_K}),$$
		$$R\Gamma_{\an,c}(Y_{\eta}/\E_K, \wh{E}) = R\Gamma(V_{\bullet}, R\Gamma_{]X_\bullet[_{\Yf_{\bullet, \eta}}} \E_{V_\bullet} \otimes \Omega^{*, \log}_{V_\bullet/\E_K}),$$
		where $V_\bullet = ]Y_{\bullet, \eta}[_{\Yf_{\bullet, \eta}}$.
		
		There is a canonical commutative diagram with columns distinguished triangles:
		\[
		\xymatrix{
			R\Gamma_{\an, c}(W_{\bullet}, R\underline{\Gamma}_{]X_\bullet[_{\Yf_\bullet}} j_{Y_{\bullet, \eta}}^{\dag}\E_{W_\bullet} \otimes \Omega_{W/\V[[t]]_K}^{*, \log}) \otimes_{\Ed_K} \E_K \ar[r] \ar[d] & R\Gamma_{\an, c}(V_{\bullet}, R\underline{\Gamma}_{]X_\bullet[_{\Yf_{\bullet, \eta}}} \E_{V_{\bullet}} \otimes \Omega_{V_\bullet/\E_K}^{\bullet, \log}) \ar[d] \\
			R\Gamma(W_{\bullet}, j_{Y_{\bullet, \eta}}^{\dag}\E_{W_\bullet} \otimes \Omega_{W/\V[[t]]_K}^{*, \log}) \otimes_{\Ed_K} \E_K  \ar[r] \ar[d] & R\Gamma(V_{\bullet}, \E_{V_{\bullet}} \otimes \Omega_{V_\bullet/\E_K}^{\bullet, \log}) \ar[d] \\
			R\Gamma(W_{\bullet} \cap ]Z_\bullet[_{\Yf_\bullet}, j_{Y_{\bullet, \eta}}^{\dag}\E_{W_\bullet} \otimes \Omega_{W/\V[[t]]_K}^{*, \log}) \otimes_{\Ed_K} \E_K \ar[r] \ar[d] & R\Gamma(]Z_{\bullet, \eta}[_{\Yf_{\bullet, \eta}}, \E_{V_{\bullet}} \otimes \Omega_{V_\bullet/\E_K}^{\bullet, \log}) \ar[d] \\
			& 
		}
		\]
		
		The middle complexes are quasi-isomorphic to $R\Gamma_{\an}(Y_\eta, Y/\Ed_K; E) \otimes_{\Ed_K} \E_K$ and $R\Gamma_{\an}(Y_\eta/\E_K; \wh{E})$, and hence the middle horizontal morphism is a quasi-isomorphism by the comparison theorem of analytic cohomologies and rigid cohomologies, namely Theorem \ref{thm:an-rig-comparison} and \cite[Theorem 2.4.4]{shiho2007relativeI}, and the comparison of $\Ed_K$ and $\E_K$-valued rigid cohomologies \cite[Theorem 3.74]{lazda2016rigid}.
		
		Thus we are reduced to show that the bottom horizontal morphism is a quasi-isomorphism. Let $Z_1, \dots, Z_k$ be the irreducible components of $Z_f$, that is, the irreducible components of $Z$ which dominates $\Spec k[[t]]$. Then we have an open cover $]Y_{\bullet, \eta}[_{\Yf_\bullet} \cap ]Z_\bullet[_{\Yf_{\bullet}} = (]Y_{\bullet, \eta}[_{\Yf_\bullet} \cap ]Z_{1, \bullet}[_{\Yf_{\bullet}}) \cup \dots \cup (]Y_{\bullet, \eta}[_{\Yf_\bullet} \cap ]Z_{m, \bullet}[_{\Yf_{\bullet}})$ and $V_{\bullet} \cap ]Z[_{\Yf_{\bullet}} = (V_{\bullet} \cap ]Z_{1, \bullet, \eta}[_{\Yf_{\bullet, \eta}}) \cup \dots \cup (V_{\bullet} \cap ]Z_{m, \bullet, \eta}[_{\Yf_{\bullet, \eta}})$, therefore the associated spectral sequences 
		\begin{align*}
			E_1^{p,q} & = \bigoplus_{i_0 < \dots < i_p} H^q(W_{\bullet} \cap ]Z_{i_0 \dots i_p, \bullet}[_{\Yf_{\bullet}}, j_{Y_{\bullet, \eta}}^{\dag}\E_{W_{\bullet}} \otimes \Omega_{W_{\bullet}/\V[[t]]_K}^{*, \log}) \\
			& \Rightarrow H^{p+q}(W_{\bullet} \cap ]Z_\bullet[_{\Yf_{\bullet}}, j_{Y_{\bullet, \eta}}^{\dag}\E_{W_{\bullet}} \otimes \Omega_{W_{\bullet}/\V[[t]]_K}^{*, \log})
		\end{align*}
		\begin{align*}
			\wh{E}_1^{p,q} & = \bigoplus_{i_0 < \dots < i_p} H^q(V_{\bullet} \cap ]Z_{i_0 \dots i_p, \bullet, \eta}[_{\Yf_{\bullet, \eta}}, \E_{V_{\bullet}} \otimes \Omega_{V_{\bullet}/\V[[t]]_K}^{*, \log}) \\
			& \Rightarrow H^{p+q}(]Z_{\bullet, \eta}[_{\Yf_{\bullet, \eta}}, \E_{V_{\bullet}} \otimes \Omega_{V_{\bullet}/\V[[t]]_K}^{*, \log})
		\end{align*}
		with $Z_{i_0 \dots i_p} = Z_{i_0} \cap \cdots \cap Z_{i_p}$. Thus we are reduced to prove that $E_1^{p, q} \otimes_{\Ed_K} \E_K \to \wh{E}_1^{p, q}.$
		
		Fix $1 \leq i_0 < \cdots < i_p \leq m.$ Then $]Y_{\bullet, \eta}[_{\Yf_\bullet} \cap ]Z_{i_0 \dots i_p, \bullet}[_{\Yf_{\bullet}} \, \simeq  \, U_\bullet \times_{\Ed_K} A^{p+1}_{\Ed_K}[0, 1)$ and $V_\bullet \cap ]Z_{i_0 \dots i_p, \bullet}[_{\Yf_{\bullet}} \, \simeq  \, \wh{U}_{\bullet} \times_{\E_K} A^{p+1}_{\E_K}[0, 1)$ for $U_\bullet$ a partial dagger space. Consider Katz-Oda spectral sequences
		\begin{align*}
			'E_2^{p,q} & = H^p(U_{\bullet}, \Omega_{U_{\bullet}/\Ed_K}^{*, \log} \otimes_{\O(U_{\bullet})} H^q(U_{\bullet} \times A_{\Ed_K}^{p+1}[0, 1), \E_\bullet \otimes \Omega_{U_{\bullet}\times A^{p+1}_{\Ed_K}[0, 1)/U_\bullet}^{*, \log})) \\
			& \Rightarrow H^{p+q}(]Y_{\bullet, \eta}[_{\Yf_\bullet} \cap ]Z_{i_0 \dots i_p, \bullet}[_{\Yf_{\bullet}}, \E_\bullet \otimes \Omega^{\bullet, \log}_{]Y_{\bullet, \eta}[_{\Yf_\bullet}/\Ed_K})
		\end{align*}
		and
		\begin{align*}
			'\wh{E}_2^{p,q} & = H^p(\wh{U}_{\bullet}, \Omega_{\wh{U}_{\bullet}/\E_K}^{*, \log} \otimes_{\O(\wh{U}_{\bullet})} H^q(\wh{U}_{\bullet} \times A_{\E_K}^{p+1}[0, 1), \E_\bullet \otimes \Omega_{\wh{U}_{\bullet}\times A^{p+1}_{\E_K}[0, 1)/\wh{U}_\bullet}^{*, \log})) \\
			& \Rightarrow H^{p+q}(]Y_{\bullet, \eta}[_{\Yf_{\bullet, \eta}} \cap ]Z_{i_0 \dots i_p, \bullet}[_{\Yf_{\bullet, \eta}}, \E_\bullet \otimes \Omega^{\bullet, \log}_{]Y_{\bullet, \eta}[_{\Yf_{\bullet, \eta}}/\Ed_K})
		\end{align*}
		Therefore it suffices to prove that $'E_2^{p, q} \otimes_{\Ed_K} \E_K \simeq {'\wh{E}}_2^{p, q}$ is an isomorphism.
		
		Since $\E$ is locally free with nilpotent residues, by Proposition \ref{prop:unip-ext}, $\E_\bullet$ over $U_\bullet \times A^s[0, 1)$ is unipotent. Again by devissage, we get that $H^q(U_{\bullet} \times A_{\Ed_K}^{p+1}[0, 1), \E_\bullet \otimes \Omega_{U_{\bullet}\times A^{p+1}_{\Ed_K}[0, 1)/U_\bullet}^{*, \log})$ is a locally free  $\O(U_\bullet)$-module of finite rank, equipped with an overconvergent log connection with nilpotent residues and similarly for the $\E_K$-coefficient. These log connections glue to give a locally free overconvergent log isocrystals $F_q$ over $(Z_{i_0 \dots i_p, \eta}, Z_{i_0 \dots i_p})/\Bf$ and $\wh{F}_q$ over $Z_{i_0 \dots i_p, \eta}/\E_K$, whose log structures are induced by $Z_j$'s and $Z_{j, \eta}$'s for $j \not\in \{ i_0, \dots, i_p \}$. Since $Z_{i_0, \dots, i_p}$ is projective, we get $'E_2^{p, q} = H^p_{\an}(Z_{i_0 \dots i_p, \eta}, Z_{i_0 \dots i_p}/\Ed_K; F_q)$ and $'\wh{E}_2^{p, q} = H^p_{\an}(Z_{i_0 \dots i_p, \eta}/\E_K; \wh{F}_q)$. This is already discussed above.
	\end{proof}
	
	\subsection{Finiteness of compactly supported rigid cohomology}
	Now we can prove finiteness of compactly supported rigid cohomology of arbitrary overconvergent $F$-isocrystal over a separated $k((t))$-scheme of finite type.
	
	\begin{theorem}
		Suppose that $k$ is perfect. Let $X$ be a separated $k((t))$-scheme of finite type, $E$ an $\Ed_K$-valued overconvergent $F$-isocrystal. Then there is a canonical isomorphism $$H^*_{\rig, c}(X/\Ed_K; E) \otimes_{\Ed_K} \E_K \simeq H^*_{\rig, c}(X/\E_K; \wh{E}).$$
		In particular $H^*_{\rig, c}(X/\Ed_K; E)$ is of finite dimension over $\Ed_K$.
	\end{theorem}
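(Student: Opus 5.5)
The plan is to reduce, via excision and cohomological descent for proper hypercoverings, to the situation of Theorem \ref{thm:anc-comparison}, where the isocrystal has a log extension on a projective strictly semistable pair. Concretely, I would proceed by induction on $\dim X$, together with noetherian induction on $X$. First I reduce to $X$ smooth and irreducible. The excision triangle for a closed subscheme $X_0 \subseteq X$ with complement $U$ is compatible with the completion functor, since completion is just the base change $\Box \otimes_{\Ed_K} \E_K$, which is exact. By the five lemma, it therefore suffices to treat a dense open smooth $U$. Because $k((t))$ is imperfect, I would first pass to $X^{(n)}$ via Proposition \ref{prop:purely-insep-equiv} (which is harmless for cohomology, being a finite free base change of $\Ed_K$), so that a dense open subset of $X_\red$ becomes smooth.

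Next, for smooth $X$ I choose a compactification $X \subseteq Y$ into a projective $k[[t]]$-variety. Applying Theorem \ref{thm:main-edk} to $E$ gives a finite extension $k'[[t']]$, a projective alteration $f: Y' \to Y$ with $(Y', Z')$ strictly semistable, and a locally free overconvergent log isocrystal extending $f^*E$ over $(Y'^{\sharp}_{\eta'}, Y'^{\sharp})$ with nilpotent residues. Shrinking $X$, I may assume that $f$ is finite étale over $X$ after a purely inseparable base change, possibly composed with a finite flat part. The trace argument already used in Proposition \ref{prop:purely-insep-equiv} and in the proof of Theorem \ref{thm:regular-h} then exhibits $H^*_{\rig,c}(X/\Ed_K;E)$ as a direct summand of $H^*_{\rig,c}(X'/\Ed_{K'};f^*E)$, compatibly with completion. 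Here $X' = f^{-1}(X)$, and the base change $\Ed_K \to \Ed_{K'}$ is finite free. Taking the open $X'' = Y' - Z' \subseteq X'$ and using excision once more, with the complement of lower dimension handled by induction, I am left with $X'' = Y' - Z'$ for a projective strictly semistable pair.

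In this final case, the lemma preceding Theorem \ref{thm:anc-comparison} identifies $H^*_{\rig,c}$ with $H^*_{\an,c}$, both for $E$ and for $\wh E$. Theorem \ref{thm:anc-comparison} then gives $H^*_{\an,c}(Y'_{\eta'},Y'/\Ed_{K'};E)\otimes\E_{K'} \simeq H^*_{\an,c}(Y'_{\eta'}/\E_{K'};\wh E)$. Finiteness over $\Ed_K$ follows from finiteness on the $\E_K$ side, which is Kedlaya's theorem \cite{kedlaya2006finiteness}, together with faithful flatness of $\Ed_K \to \E_K$: a module $M$ over the field $\Ed_K$ with $M\otimes \E_K$ finite dimensional is itself finite dimensional.

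The main obstacle I expect is the canonicity of the comparison map and its compatibility with all the reductions. The base change map $H^*_{\rig,c}(X/\Ed_K;E)\otimes\E_K \to H^*_{\rig,c}(X/\E_K;\wh E)$ must be defined globally, independently of frames, and shown to commute with excision triangles and with trace/pullback for the finite étale alteration. If I cannot make $f$ generically finite étale after shrinking, I would instead use a proper hypercovering by such alterations and cohomological descent for $H_{\rig,c}$, which requires a version of descent for $\Ed_K$-valued compactly supported cohomology. I would try to avoid this by the trace argument first.
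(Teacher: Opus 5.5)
Your outline is essentially the paper's proof: induction on dimension plus noetherian induction, then semistable reduction (Theorem \ref{thm:main-edk}) on a dense open that becomes smooth after a purely inseparable base change. From there it shrinks to a locus where the alteration is finite \'etale, applies Theorem \ref{thm:anc-comparison} on the semistable complement, handles the lower-dimensional remainder by excision and the induction hypothesis, and concludes with a trace/direct-summand argument. Two small corrections: after shrinking to $V$ the inclusion runs the other way, $f^{-1}(V) \subseteq Y'-Z'$ (the five-lemma argument is unaffected); and the change of base from $\Ed_K$ to $\E'^{\dag}_{K'}$ is not automatic from finite freeness --- the paper applies the trace to the finite \'etale map $g^{-1}(V) \to V' = V^{(n_1)} \otimes k'((t'))$ over a common base, and then uses the base-change isomorphism $H^*_{\rig,c}(V/\Ed_K;E)\otimes \E'^{\dag}_{K'} \simeq H^*_{\rig,c}(V'/\E'^{\dag}_{K'};E')$ proved in its appendix.
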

	\begin{proof}
		Suppose by induction that the statement holds for $k((t))$-schemes of dimension $< \dim X$. We can find an nonempty open subset $U$ of $X$ and some $n \geq 1$ such that $U^{(n)} = (U \otimes_{k((t))} k((t^{1/p^n})))_\red$ is smooth over $k((t^{1/p^n}))$ (when $X$ is nonempty). Apply semistable reduction theorem to the pullback $E^{(n)}$ of $E$ to $U^{(n)}$, after replacing $n$ by a larger number if needed, we have a finite extension $k[[t^{1/p^n}]] \hookrightarrow k'[[t']],$ an alteration $g : U' \to U^{(n)}$ over $k((t^{1/p^n}))$ and an open immersion $U' \hookrightarrow Y'$ into a proper $k'[[t']]$-variety such that $(Y', Y'-U')$ is a strictly semistable pair and $g^*E^{(n)}$ extends to a locally free overconvergent log isocrystal over $(Y'_{\eta'}, Y')/\Bf'$ with nilpotent residues. By further applying de Jong's alteration theorem, we may assume that $Y'$ is projective over $k'[[t']]$. Choose a number $n_1 \geq n$ such that $k'((t'))$ is finite separable over $k((t^{1/p^{n_1}})).$ Then we have a nonempty open subset $V \subseteq U$ such that the induced map $g: g^{-1}(V) \to V^{(n_1)}$ is finite etale. Let $Z = U' - g^{-1}(V).$ By the long exact sequences
		$$ \to H^q_{\rig, c}(g^{-1}(V)/\E'^{\dag}_{K'}; g^*E^{(n)}) \otimes \E'_{K'} \to H^q_{\rig, c}(U'/\E'^{\dag}_{K'}; g^*E^{(n)}) \otimes \E'_{K'} \to H^q_{\rig, c}(Z'/\E'^{\dag}_{K'}; g^*E^{(n)}) \otimes \E'_{K'} \to $$
		and
		$$ \to H^q_{\rig, c}(g^{-1}(V)/\E'_{K'}; g_{\eta'}^*\wh{E}^{(n)}) \to H^q_{\rig, c}(U'/\E'_{K'}; g_{\eta'}^*\wh{E}^{(n)}) \to H^q_{\rig, c}(Z'/\E'_{K'}; g_{\eta'}^*\wh{E}^{(n)}) \to $$
		by Theorem \ref{thm:anc-comparison} and induction hypothesis, the middle and right terms are isomorphic. By five lemma, the left term is also isomorphic after base extension. Since $g: g^{-1}(V) \to V^{(n_1)}$ is finite etale, if we let $V' = V^{(n_1)} \otimes_{k((t^{1/p^{n_1}}))} k'((t')),$ then the induced morphism $g': g^{-1}(V) \to V'$ is also finite etale. Using finite etale pushforward, $H^q_{\rig, c}(g^{-1}(V)/\E'^{\dag}_{K'}; g^*E^{(n)}) \simeq H^q_{\rig, c}(V' /\E'^{\dag}_{K'}; g'_*g'^*E')$ where $E'$ is the pullback of $E$ to $V'$. Since the adjoint map $E' \rightarrow g'_*g'^*E'$ is split injective and is compatible with $\wh{E}' \rightarrow g'_*g'^* \wh{E}'$, the canonical morphism
		$H^q_{\rig, c}(V'/\E'^{\dag}_{K'}; E') \otimes_{\E'^{\dag}_{K'}} \E'_{K'} \to H^q_{\rig, c}(V'/\E_{K'}; \wh{E}')$ is an isomorphism. Now we have a commutative diagram
		\[
		\xymatrix{
			H^q_{\rig, c}(V/\Ed_K; E) \otimes_{\Ed_K} \E'_{K'} \ar[r]^(0.475){\sim} \ar[d] & H^q_{\rig, c}(V'/\E'^{\dag}_{K'}; E') \otimes_{\E'^{\dag}_{K'}} \E'_{K'} \ar[d]^{\wr} \\
			H^q_{\rig, c}(V/\E_K; \wh{E}) \otimes_{\E_K} \E'_{K'} \ar[r]^(0.55){\sim} & H^q_{\rig, c}(V'/\E'_{K'}; \wh{E}').
		}
		\]
		The horizontal maps are isomorphisms by the similar proof of \cite[Proposition 1.8]{berthelot1997finitude} (see also the appendix). Therefore the left vertical map is an isomorphism; hence $H^q_{\rig, c}(V/\Ed_K; E) \otimes_{\Ed_K} \E_K \to H^q_{\rig, c}(V/\E_K; \wh{E})$ is an isomorphism.
		
		We have shown that there is an nonempty open subset $V$ such that $H^q_{\rig, c}(V/\Ed_K; E) \otimes_{\Ed_K} \E_K \to H^q_{\rig, c}(V/\E_K; \wh{E})$ is an isomorphism. We can now appeal to noetherian induction on $X$ and using the long exact sequence to finish the proof.
	\end{proof}
	
	\begin{corollary}
		For a smooth $k((t))$-variety $X$ and an $\Ed_K$-valued overconvergent $F$-isocrystal over $X$, the Poincar\'{e} pairing
		$$H^{2d-i}_{\rig, c}(X/\Ed_K; E) \otimes_{\Ed_K} H^{i}_{\rig}(X/\Ed_K; E^\vee) \to \Ed_K$$
		is perfect.
	\end{corollary}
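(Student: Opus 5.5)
The plan is to deduce perfectness of the $\Ed_K$-valued Poincaré pairing from the classical $\E_K$-valued Poincaré duality by faithfully flat base change along $\Ed_K \to \E_K$. We use the finiteness theorem just proved, the comparison of $\Ed_K$- and $\E_K$-valued rigid cohomology without supports \cite[Theorem 3.74]{lazda2016rigid}, and Kedlaya's Poincaré duality for $\E_K$-valued overconvergent $F$-isocrystals on smooth $k((t))$-varieties \cite{kedlaya2006finiteness}.

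First, fix the pairing. It comes from the cup product $H^{2d-i}_{\rig,c}(X/\Ed_K;E)\otimes H^i_{\rig}(X/\Ed_K;E^\vee)\to H^{2d}_{\rig,c}(X/\Ed_K;\O)$, composed with a trace map $H^{2d}_{\rig,c}(X/\Ed_K;\O)\to\Ed_K$. The trace is constructed as in Lazda–Pál, or by reducing to $\mathbb{A}^d$ via a finite étale map (\cite[Theorem 2]{kedlaya2003more}) and Noether normalization, exactly as in the classical case. The key compatibility to check is that, after $\otimes_{\Ed_K}\E_K$, both the cup product and the trace agree with the $\E_K$-valued ones under the comparison isomorphisms $H^*_{\rig,c}(X/\Ed_K;E)\otimes\E_K\simeq H^*_{\rig,c}(X/\E_K;\wh E)$ (the preceding theorem) and $H^*_{\rig}(X/\Ed_K;E^\vee)\otimes\E_K\simeq H^*_{\rig}(X/\E_K;\wh E^\vee)$. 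This holds because all constructions are made on the same frames, and completion is just extension of scalars on the de Rham complexes.

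Second, finiteness. The compactly supported groups are finite dimensional over $\Ed_K$ by the preceding theorem. The groups without supports are finite dimensional by Lazda–Pál in the $F$-isocrystal case; alternatively, one can argue from the fact that $H^i_{\rig}\otimes\E_K$ is finite and $\Ed_K\to\E_K$ is a field extension, provided we know $H^i_{\rig}(X/\Ed_K)$ is a vector space whose base change is the completed one, which is exactly Theorem 3.74 of loc. cit.

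Third, deduce perfectness. A bilinear pairing $V\otimes W\to F$ of finite dimensional vector spaces over a field $F$ is perfect if and only if the induced map $V\to W^\vee$ is an isomorphism. That property can be checked after the field extension $F=\Ed_K\subseteq\E_K$, since $-\otimes_{\Ed_K}\E_K$ is faithfully exact and commutes with duals of finite dimensional spaces. After base change, the map becomes the classical duality map for $\wh E$, which is an isomorphism by Kedlaya's Poincaré duality \cite[Theorem 1.2.3]{kedlaya2006finiteness}, applicable since $\wh E$ is an $\E_K$-valued overconvergent $F$-isocrystal on the smooth $\E_K$-variety $X$.

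The main obstacle I expect is the first step: defining the $\Ed_K$-valued trace map and verifying its compatibility with the $\E_K$-valued one, especially in the compact-support setting, where the Lazda–Pál definition uses $\mathbb{R}\underline{\Gamma}$ on partial dagger tubes. I would first reduce to $X$ affine with a finite étale map to $\mathbb{A}^d$ and use the pushforward/trace formalism of Theorem \ref{thm:regular-h}. This reduces to $X=\mathbb{A}^d$ with $E=\O$, where $H^{2d}_{\rig,c}$ can be computed explicitly and the trace normalized compatibly.
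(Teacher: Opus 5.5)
Your argument is correct and is essentially the one the paper intends. The corollary is stated without proof, as an immediate consequence of three ingredients: the preceding base-change isomorphism $H^*_{\rig,c}(X/\Ed_K;E)\otimes_{\Ed_K}\E_K\simeq H^*_{\rig,c}(X/\E_K;\wh{E})$, the Lazda--P\'al comparison for $H^*_{\rig}$, and Kedlaya's Poincar\'e duality over $\E_K$, descended along the field extension $\Ed_K\subseteq\E_K$. Your treatment of the compatibility of the cup product and trace map with completion fills in a point the paper leaves implicit. Note also that the hypothesis that $k$ is perfect is inherited from the finiteness theorem.
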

	
	\appendix
	\section{Vanishing of higher cohomology of coherent sheaves over quasi-Stein rigid analytic spaces}
	\setcounter{equation}{0}
	In this section we prove the following theorem, which was used in Theorem \ref{thm:an-rig-comparison}, stating that higher cohomology of coherent $j^\dag \O$-modules vanishes for adic spaces of quasi-Stein type in some suitable sense. The proof is an analogue of that of \cite[\S2.5]{kedlaya2016relative}.
	
	\begin{appdefinition}[{\cite[Definition 2.6.2]{kedlaya2016relative}}]
		An adic space $X$ is quasi-Stein if it is the union of ascending chain $U_0 \subseteq U_1 \subseteq \cdots$ of at most countable length such that the restriction map $\O(U_{i+1}) \to \O(U_i)$ has dense image.
	\end{appdefinition}
	
	Recall that by our assumption, adic spaces are adic and locally of finite type over $\V[[t]]_K$, in particular $p$ is always a topologically nilpotent unit and rings are strongly noetherian (hence sheafy) Banach rings over $K$. The consequence is that coherence implies pesudocoherence and the statements are slightly easier.
	
	\begin{apptheorem}[{\cite[Theorem 2.6.5]{kedlaya2016relative}}]
		Let $X$ be a quasi-Stein space and $\F$ a coherent $\O_X$-module. Then $H^j(X, \F) = 0$ for all $j > 0$.
	\end{apptheorem}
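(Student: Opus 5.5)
The plan follows the classical argument for Theorem B on quasi-Stein spaces (as in \cite[\S2.5--2.6]{kedlaya2016relative}): reduce to vanishing on each $U_i$, then pass to the limit with a Mittag-Leffler argument that uses the density hypothesis.

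First, replace the chain $U_0 \subseteq U_1 \subseteq \cdots$ by one where each $U_i$ is an affinoid. The definition only asks for an ascending union with dense restriction maps. In every application (for example the spaces $U \times A^s[0,1)$ in Theorem \ref{thm:an-rig-comparison}) we can take the $U_i$ to be affinoid, or at least of the form for which Theorem B is already known: Proposition \ref{prop:theoremAB}(iii) for affinoid partial dagger spaces, or the classical Tate acyclicity for affinoid adic spaces locally of finite type over $\V[[t]]_K$. Since our rings are strongly noetherian, coherent sheaves on such affinoids are given by finitely generated modules, and $H^j(U_i, \F) = 0$ for $j > 0$.

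Next, we have $X = \bigcup_i U_i$ with the $U_i$ open and increasing. Standard homological algebra for a countable increasing open cover gives the exact sequence
$$0 \to \varprojlim\nolimits^1_i H^{j-1}(U_i, \F) \to H^j(X, \F) \to \varprojlim_i H^j(U_i, \F) \to 0,$$
obtained from $R\Gamma(X, \F) \simeq R\varprojlim_i R\Gamma(U_i, \F)$. By the first step, $H^j(X, \F) = 0$ for $j \geq 2$, and $H^1(X, \F) \simeq \varprojlim^1_i \F(U_i)$. So it remains to show that the inverse system $\{\F(U_i)\}_i$ has vanishing $\varprojlim^1$.

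This last step is the main obstacle. Each $\F(U_i)$ is a finitely generated module over the Banach algebra $\O(U_i)$, so it carries a canonical Banach module topology. Its module structure makes the restriction $\F(U_{i+1}) \to \F(U_i)$ continuous, and the image is dense: $\F(U_{i+1}) \otimes_{\O(U_{i+1})} \O(U_i) \simeq \F(U_i)$ by coherence on affinoids, and $\O(U_{i+1}) \to \O(U_i)$ has dense image. I then invoke the topological Mittag-Leffler lemma: an inverse system of complete metrizable abelian groups with continuous, dense-image transition maps has vanishing $\varprojlim^1$. Concretely, given $(x_i) \in \prod_i \F(U_i)$, one inductively builds $y_i \in \F(U_i)$ with $y_i - y_{i+1}|_{U_i} \equiv x_i$ up to corrections whose norms decrease geometrically, and these corrections sum to convergent series in each $\F(U_i)$.

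The delicate points are that the Banach norm on $\F(U_i)$ depends on a choice of presentation (fine, since all such topologies coincide by the open mapping theorem over $K$), and that density passes from $\O$ to $\F$; the second follows by tensoring a surjection $\O(U_{i+1})^n \to \F(U_{i+1})$. Once $\varprojlim^1 = 0$, we get $H^1(X, \F) = 0$, which completes the proof.
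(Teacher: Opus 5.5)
Your argument is correct and is essentially the proof the paper defers to in \cite{kedlaya2016relative}: acyclicity of coherent sheaves on each affinoid $U_i$, the Milnor sequence for the increasing union, and vanishing of $\varprojlim^1_i \F(U_i)$ from the density hypothesis (this last step is the content of \cite[Lemma 2.6.3(b)]{kedlaya2016relative}, which the paper singles out as the key point). One correction: you cannot obtain affinoid $U_i$ by ``replacing the chain''; the affinoid condition has to be part of the definition of quasi-Stein, as it is in Kedlaya--Liu. The paper's restatement omits it, and without it the constant chain $U_i = X$ would make every adic space quasi-Stein, so the statement would be false.
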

	
	The point of this theorem is \cite[Lemma 2.6.3.(b)]{kedlaya2016relative}, the vanishing of $R^1 \varprojlim_i M_i$, using the symbol of {\it loc. cit.} Thus by the similar argument, we are reduced to prove the following lemma:
	
	\begin{applemma}
		Let $X = \cup_i U_i$ be a quasi-Stein space, $f \in \O(X)$, $U_i = \Spa A_i$ and $M_i$ a finitely generated $A_i\langle f^{-1} \rangle^\dag$-module. Suppose that maps $\varphi_i : M_{i+1} \otimes_{A_{i+1}\langle f^{-1} \rangle^\dag} A_i\langle f^{-1} \rangle^\dag \to M_i$ are surjective for all $i$. Then $R^1 \varinjlim_i M_i = 0$.
	\end{applemma}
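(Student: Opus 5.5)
We compute $R^1\varprojlim_i M_i$ (the limit in the statement is an inverse limit along the maps $\varphi_i$) as the cokernel of
$$\delta : \prod_i M_i \to \prod_i M_i, \qquad (x_i)_i \mapsto \bigl(x_i - \varphi_i(x_{i+1})\bigr)_i .$$
So we must show: for every $(y_i)_i$ there are $x_i \in M_i$ with $x_i - \varphi_i(x_{i+1}) = y_i$ for all $i$. The modules $M_i$ are not Banach; they are filtered colimits of Banach modules over the fringe algebras $A_i\langle f^{-1}\rangle_{|f|\geq \rho}$. The plan is to reduce, for the given $(y_i)$, to a countable inverse system of \emph{Banach} modules with dense transition maps, and then apply the argument of \cite[Lemma 2.6.3.(b)]{kedlaya2016relative} verbatim.

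\textbf{Step 1 (presentations at finite level).} Write $B_{i,\rho} = A_i\langle f^{-1}\rangle_{|f|\geq\rho}$ for $\rho\in(0,1)\cap\Gamma^*$, so that $A_i\langle f^{-1}\rangle^\dag = \varinjlim_{\rho\to 1} B_{i,\rho}$. Each $M_i$ is finitely presented because the ring is noetherian (Corollary \ref{cor:pda}). Hence $M_i \simeq \varinjlim_\rho M_{i,\rho}$ for finite $B_{i,\rho}$-modules $M_{i,\rho}$ defined for all $\rho$ below some threshold $\rho_i^0$. For $\rho$ below a further threshold, the map $\varphi_i$ is induced by a map
$$M_{i+1,\rho}\otimes_{B_{i+1,\rho}} B_{i,\rho} \to M_{i,\rho}.$$
This map is still surjective after shrinking $\rho$, since surjectivity of a map of finite modules descends to some finite level. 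Each $M_{i,\rho}$ carries a canonical Banach topology as a finite module over a strongly noetherian Banach algebra.

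\textbf{Step 2 (diagonal choice of levels).} Each $y_i$ comes from $M_{i,\rho}$ for all $\rho$ at most some $a_i$. Set $\rho_i = \min(a_1,\dots,a_i,\rho_1^0,\dots,\rho_i^0, \text{thresholds})$. Then $(\rho_i)$ is non-increasing, so $|f|\geq \rho_{i+1}$ is a condition at least as weak as $|f|\geq\rho_i$. Therefore restriction gives maps
$$M_{i+1,\rho_{i+1}} \to M_{i,\rho_{i+1}} \to M_{i,\rho_i},$$
and $y_i \in M_{i,\rho_i}$ for every $i$. The system $(N_i := M_{i,\rho_i})_i$ is then an inverse system of Banach modules. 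Its transition maps have dense image: the map $B_{i+1,\rho_{i+1}} \to B_{i,\rho_i}$ has dense image, because $\O(U_{i+1}) \to \O(U_i)$ is dense by the quasi-Stein hypothesis and polynomials in $f^{-1}$ are dense in both algebras. Combined with the surjectivity from Step 1, the image of $N_{i+1}$ in $N_i$ is dense.

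\textbf{Step 3 (Banach Mittag-Leffler).} For such a countable Banach system we run the usual approximation argument. Successively choose $w_{j+1}\in N_{j+1}$ so that $y_j + (\text{accumulated correction}) - \varphi_j(w_{j+1})$ has norm $\leq 2^{-j}$ in every $N_i$ with $i\leq j$; this is possible because the maps $N_j\to N_i$ are continuous. Then define $x_i$ as a convergent series in $N_i$, exactly as in \cite[Lemma 2.6.3]{kedlaya2016relative}. This yields $x_i\in N_i$ with $x_i - \varphi_i(x_{i+1}) = y_i$. Mapping $N_i \to M_i$ gives the required solution in $\prod_i M_i$, so $\delta$ is surjective and $R^1\varprojlim_i M_i=0$.

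The main obstacle is Step 2. A naive approach fixes one level per module independently of $(y_i)$; the errors produced at later indices then live at levels that drift toward $1$, and the series defining $x_i$ need not converge in any single Banach piece. The diagonal, data-dependent choice of a decreasing sequence $\rho_i$ avoids this. The delicate points are to check that density of the transition maps survives at these mixed levels, and that the surjectivity of $\varphi_i$ descends to level $\rho_{i+1}$ uniformly enough to be used.
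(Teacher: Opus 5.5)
\textbf{The gap is in Steps 1--2, where your inequalities run the wrong way.} In your notation $A_i\langle f^{-1}\rangle^\dag=\varinjlim_{\rho\to 1^-}B_{i,\rho}$. The transition maps $B_{i,\rho}\to B_{i,\rho'}$ for $\rho\le\rho'$ are restrictions to the \emph{smaller} sets $\{|f|\ge\rho'\}$. Consequences:
\begin{itemize}
\item $y_i$ comes from $M_{i,\rho}$ for all $\rho\ge a_i$, not for $\rho\le a_i$.
\item Likewise, $M_{i,\rho}$ and the lifts of $\varphi_i$ in Step 1 exist only for $\rho$ \emph{above} a threshold.
\item So you need $\rho_i\ge a_i$ (a max, not a min). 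But the maps $M_{i+1,\rho_{i+1}}\to M_{i,\rho_i}$ exist only if $\rho_{i+1}\le\rho_i$.
\end{itemize}
Together these force $\rho_0\ge\sup_i a_i$: every $y_i$ would have to overconverge to one common level. For a general element of $\prod_i M_i$ the $a_i$ tend to $1$. Then no Banach inverse system $(M_{i,\rho_i})_i$ of your kind contains $(y_i)$, and Step 3 never starts. Your last paragraph correctly notices that the levels drift towards $1$; the proposed cure only looks available because of the sign error.

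\textbf{The paper's proof has the same defect, and the statement itself fails.} The paper fixes $1<\eta_0<\eta_1<\cdots$ and reduces to $N^\lambda=\prod_i M_i^{\lambda_i}$ with $\lambda$ increasing. This presupposes that every element of $\prod_i M_i$ lies in some $N^\lambda$, which fails for the same reason. No choice of levels can rescue the argument, because of the following counterexample.

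Take $X=\{|z|<1,\ |w|\le 1\}$, which is quasi-Stein, with $U_i=X\cap\{|z|\le p^{-1/i}\}$ for $i\ge 1$. Let $f=w$, $M_i=A_i\langle f^{-1}\rangle^\dag$, and $\varphi_i$ the restriction. All restriction maps are inclusions of Laurent series in $z,w$. The same computation works with $w=t$ over $\V[[t]]_K$.
\begin{enumerate}
\item For $j\ge1$ put $y_j=\sum_{N\in 2j\mathbb{Z}_{>0}}p^{\lfloor\sqrt N\rfloor-N/(2j)}z^Nw^{-jN}$. Distinct $j$ use disjoint sets of monomials. On $\{|z|\le p^{-1/j},\ p^{-1/(2j^2)}\le|w|\le 1\}$ each term has sup norm $p^{-\lfloor\sqrt N\rfloor}$, so $y_j\in M_j$.
\item Any solution of $x_i-x_{i+1}=y_i$ must be $x_i=x_1-\sum_{l<i}y_l$, read coefficientwise.
\item Suppose $x_1$ converges on $\{|z|\le p^{-1},\ p^{-\beta}\le|w|\le1\}$, and pick $j\ge 1/\beta$. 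There the $z^Nw^{-jN}$-term of $y_j$ has norm $p^{N/(2j)-\lfloor\sqrt N\rfloor-N+j\beta N}\to\infty$. Hence, for $N\gg0$, the corresponding coefficient of $x_1$ is strictly smaller in absolute value than that of $y_j$.
\item By the ultrametric inequality, for $i>2j$ this coefficient of $x_i$ has absolute value $p^{N/(2j)-\lfloor\sqrt N\rfloor}$. Since $p^{N/(2j)-\lfloor\sqrt N\rfloor}p^{-N/i}\to\infty$, $x_i$ does not even converge on $\{|z|\le p^{-1/i},\ |w|=1\}$.
\end{enumerate}
Hence $R^1\varprojlim_i M_i\neq0$. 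Here $M_i=\Gamma(U_i,j^\dag\O)$, so the example also bears on Proposition \ref{prop:vanish-jdag}. The reduction to Kedlaya--Liu's Banach lemma breaks down precisely because the overconvergence levels of the data cannot be made uniform.
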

	\begin{proof}
		For $\eta \in (1, \infty) \cap \Gamma^*$, put $A_i^\eta = A_i \langle \eta_i / f \rangle$. For each $i = 0, 1, \dots$, successively choose $\cdots > \eta_1 > \eta_0 > 1$ and a finitely generated $A_i^{\eta_i} $-module $M_i^{\eta_i}$ such that $M_i = M_i^{\eta_i} \otimes_{A_i^{\eta_i}} A_i\langle f^{-1} \rangle^\dag$ and that $\varphi_i$ lifts to a map $M_{i+1}^{\eta_{i+1}} \otimes_{A_{i+1}^{\eta_{i+1}}} A_i^{\eta_i} \to M_i^{\eta_i}$, which by abuse of notation we also denote by $\varphi_i$.
		For $\cdots > \lambda_1 > \lambda_0 > 1$ with $\lambda_i < \eta_i$, define $N^\lambda = M_0^{\lambda_0} \times M_1^{\lambda_1} \times \cdots$. Also put $N = M_0 \times M_1 \times \cdots.$
		The maps $\varphi_i : M^{\lambda_{i+1}}_{i+1} \to M^{\lambda_{i}}_i$ define $\varphi^\lambda : N^\lambda \to N^\lambda$ and $\varphi_i : M_{i+1} \to M_i$ define $\phi : N \to N$. Then $R^1 \varinjlim M_i$ is identified with the cokernel of $1 - \varphi$. In order to prove that this vanishes, it suffices to prove that the cokernel of $1 - \varphi^\lambda$ vanishes for any $\lambda$. Now we are in the situation of \cite[Lemma 2.6.3]{kedlaya2016relative} and the proof is done.
	\end{proof}
	
	\begin{appproposition} \label{prop:vanish-jdag}
		Let $\Xf = \Spf A$ be an affine formal $\V[[t]]$-scheme, $f_1, \dots, f_r, g \in A$, $U = D(\bar{f}) \subseteq X$, $\Vs = \{ x \in \Xf_K : |f_i|_{[x]} > \eta_i \}$ for some $\eta_i \in (0, 1) \cap \Gamma^*.$ Then for a coherent $j^\dag_U \O$-module $\E$ over $\Ws = \Vs \times A^s (\eta, 1)$ for some $\eta \in (0, 1) \cap \Gamma^*$ or over $\Ws = \Vs \times A^s[0, 1)$, we have $H^j(\Ws, \, \E) = 0$ for $j > 0.$
	\end{appproposition}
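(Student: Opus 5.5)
The plan is to show that $\Ws$ is quasi-Stein in the sense of the appendix definition, and that $\E$ is realised on each piece of an exhausting chain by modules of the shape treated in the preceding appendix lemma. Once that is done, the vanishing follows from the argument of \cite[Theorem 2.6.5]{kedlaya2016relative}, with that lemma replacing \cite[Lemma 2.6.3(b)]{kedlaya2016relative}.

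\textbf{Step 1: the exhaustion.} Write $\Vs$ as the increasing union of the affinoids
$$\Vs_m = \{ x \in \Xf_K : |f_i|_x \geq \eta_i + \delta_m, \ |h_j|_x \leq \lambda_m \},$$
where:
\begin{itemize}
\item the $h_j$ generate the ideal of definition used to define $\Xf_K$;
\item $\lambda_m \uparrow 1$ and $\delta_m \downarrow 0$.
\end{itemize}
Exhaust $A^s(\eta,1)$, respectively $A^s[0,1)$, by the closed polyannuli $[\eta+\delta_m, \lambda_m]^s$, respectively the closed polydiscs $[0,\lambda_m]^s$. Let $U_m$ be the product of the $m$-th pieces; it is affinoid, say $U_m = \Spa A_m$.

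To check density of $\O(U_{m+1}) \to \O(U_m)$, note that both rings are rational localizations, in Laurent-type variables, of a common affinoid. Hence the Laurent polynomials in the coordinates $f_i^{-1}$, $h_j$, $z_k$, $z_k^{-1}$ over the base are dense in each. This is standard, as in \cite[\S2.6]{kedlaya2016relative}.

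\textbf{Step 2: realising $j^\dag_U$ on each piece.} The overconvergent structure along $D(\bar g)$ is captured by a single function $g$. On $U_m$, the sections of $j^\dag_U \O$ are $A_m\langle g^{-1}\rangle^\dag$, the colimit of the rings $A_m\langle \eta'/g\rangle$ as $\eta' \downarrow 1$. This uses Proposition \ref{prop:tube-sheaf}(i) and Proposition \ref{prop:germ-section}.

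By Proposition \ref{prop:tube-sheaf}(iii) and Theorem A (Proposition \ref{prop:tube-sheaf}(v) in partial dagger form, via Proposition \ref{prop:theoremAB}), $\E|_{U_m}$ corresponds to a finitely generated $A_m\langle g^{-1}\rangle^\dag$-module $M_m$. Moreover $H^j(U_m, \E) = 0$ for $j > 0$ by Theorem B, Proposition \ref{prop:theoremAB}(iii).

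The transition maps $M_{m+1} \otimes A_m\langle g^{-1}\rangle^\dag \to M_m$ are isomorphisms, in particular surjective, by coherence. This places us exactly in the setting of the preceding lemma with $f = g$.

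\textbf{Step 3: assembling the argument.} Use the Čech/Mittag-Leffler formalism for the countable increasing cover. By Step 2 there is no higher cohomology on each $U_m$, so
$$H^j(\Ws, \E) = R^j\varprojlim_m \Gamma(U_m, \E).$$
This vanishes for $j \geq 2$ automatically, since the inverse system is countable. For $j = 1$, it equals $R^1\varprojlim_m M_m$, which vanishes by the preceding lemma. That lemma is phrased with $\varinjlim$, but it is the inverse limit that is meant, since the cokernel of $1-\varphi$ on $\prod M_i$ is exactly $R^1\varprojlim$.

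\textbf{Main obstacle.} I expect the delicate point to be Step 2's compatibility of the overconvergence radii across $m$. One must choose the lifts $M_m^{\eta_m}$ with $\eta_m$ decreasing but bounded below uniformly enough that the transition maps are defined. This is handled exactly as in the proof of the preceding lemma by successive choice, so the real content reduces to that lemma together with the density in Step 1. A secondary worry is that the germ $]U[$ may not be cut out by $g$ alone when $U = D(\bar f)$ also involves $t$. In that case I would add $t$ as a second overconvergence function and iterate the lemma, since the constructions commute.
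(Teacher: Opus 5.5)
Your outline follows the paper's route, made explicit: a quasi-Stein exhaustion, Theorem A/B on each $U_m$, the Milnor sequence giving $H^j(\Ws,\E)=R^j\varprojlim_m M_m$, and the appendix lemma in place of Kedlaya--Liu 2.6.3(b). You are right that its $\varinjlim$ should read $\varprojlim$. Steps 1, 2 and that reduction are fine. The gap is exactly the point you flag, and successive choice does not close it; the appendix lemma's proof has the same defect.

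Write $\E$ as coming from a coherent $\E_V$ on an open neighbourhood $V$. Then $M_m=\varinjlim_{\lambda\to1}M_m^\lambda$ with $M_m^\lambda=\Gamma(U_m\cap\{|g|\ge\lambda\},\E_V)$. Restriction gives a map $M_{m+1}^{\lambda'}\to M_m^{\lambda}$ only when $\lambda'\le\lambda$. So any system of Banach lifts on which $\varphi$ acts needs neighbourhoods that do not shrink as $m$ grows. But $\prod_m M_m$ is the colimit of $\prod_m M_m^{\lambda_m}$ over arbitrary sequences $(\lambda_m)$, since $\prod$ does not commute with $\varinjlim_\lambda$. A given cocycle $(y_m)$ may require $\lambda_m\to1$, and then it lies in no $\varphi$-stable product. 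Hence surjectivity of $1-\varphi$ on $\prod_m M_m$ does not reduce to the fixed-radius Banach lemma. This already happens for $\E=j^\dag\O$, which lives on a uniform neighbourhood: the trouble is the cocycles, not the module.

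The needed vanishing in fact seems to fail in this generality. Here is a sketch. Take $\Vs$ the closed unit disc in $y$, $U=D(\bar y)$, and $\Ws=\Vs\times A^1[0,1)$ with coordinate $z$. Let $U_m=\{|z|\le r_m\}$ and $\E=j^\dag\O$. Define $b_m=\sum_{j<0}c_{m,j}\psi_m(z)y^j$, where:
\begin{itemize}
\item $|c_{m,j}|\approx\lambda_m^{|j|}$;
\item $\psi_m\in K\langle z/r_m\rangle$ has Taylor coefficients (in $z/r_m$) tending to $0$ subexponentially;
\item $\lambda_m\uparrow1$ so fast that $\log(1/\lambda_N)=o\bigl(\log(r_{N+1}/r_N)\log(1/\lambda_{N-1})\bigr)$.
\end{itemize}
Suppose $x_m-x_{m+1}|_{U_m}=b_m$ with each $x_m$ convergent for $|y|\ge\mu_m$. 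Comparing $y^j$-coefficients on $|z|\le r_0$ shows the following for large $N$. The function $\psi_N$ is approximated there to within about $(\lambda_{N-1}/\lambda_N)^{|j|}$ by a function on $|z|\le r_{N+1}$ of norm at most about $(\mu_{N+1}/\lambda_N)^{|j|}$. Because the coefficients of $\psi_N$ decay slowly, letting $|j|\to\infty$ this forces
$$\tfrac{\log(r_{N+1}/r_N)}{\log(r_N/r_0)}\,\log\tfrac{\lambda_N}{\lambda_{N-1}}<\log\tfrac{1}{\lambda_N},$$
which fails for large $N$. So $H^1(\Ws,j^\dag\O)\neq0$ in this example, and no choice of radii can rescue Step 3.

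What does survive is vanishing for coherent modules on a fixed uniform neighbourhood $\Ws\cap\{|g|\ge\lambda\}$, which is genuinely quasi-Stein. For the de Rham applications, one should take $R\varprojlim_m$ of the complexes and apply the Milnor sequence to their cohomology, rather than rely on termwise vanishing.
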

	\begin{proof}
		$\Ws$ is quasi-Stein in both cases. The vanishing can be proved in the similar way as in \cite[Theorem 2.6.5]{kedlaya2016relative}.
	\end{proof}
	
	\section{Extension of base fields in rigid cohomology}
	Suppose $k$ is perfect. Let $K'$ be a finite extension of $K$ with residue field $k'$. For a finite extension $k'((t'))$ over $k((t))$, let $n \leq 0$ be the integer such that $k'((t'))$ is separable over $k((t^{1 / p^n}))$. Let $\Ed_K$ be the bounded Robba ring with parameter in $t$  over $K$, $\E^{(n)\dag}_K$ in $t^{1/p^n}$ over $K$ and $\E'^{\dag}_{K'}$ in $t'$ over $K'$. The separable extension $k'((t'))/k((t^{1 / p^n}))$ lifts to an extension $\E'^{\dag}_{K'} / \E^{(n)\dag}_K.$
	
	\begin{apptheorem}
		Let $X$ be a separated $k((t))$-scheme, $E$ and $\Ed_K$-valued overconvergent isocrystal over $X$, $X' = X \otimes_{k((t))} k'((t'))$, $f : X' \to X$ the projection and $E' = f^*E$ the $\E'^{\dag}_{K'}$-valued overconvergent isocrystal over $X'$. Then we have canonical isomorphisms
		$$ H^*_{\rig}(X/\Ed_K;\, E) \otimes \E'^{\dag}_{K'} \simeq H^*_{\rig}(X'/\E'^{\dag}_{K'};\, E')$$
		and
		$$ H^*_{\rig, c}(X/\Ed_K;\, E) \otimes \E'^{\dag}_{K'} \simeq H^*_{\rig, c}(X'/\E'^{\dag}_{K'};\, E')$$
	\end{apptheorem}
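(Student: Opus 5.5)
The plan is to factor the base change $\Ed_K \to \E'^{\dag}_{K'}$ into two steps and treat each separately. The first step is the purely inseparable extension $\Ed_K \to \E^{(n)\dag}_K$. The second is the finite separable extension $\E^{(n)\dag}_K \to \E'^{\dag}_{K'}$, which lifts $k'((t'))/k((t^{1/p^n}))$ and also includes the unramified extension $K'/K$. Both the rigid and the compactly supported versions are reduced to local statements on a frame, or on a good embedding system obtained from a Zariski hypercovering, by the usual Čech spectral sequence argument, so it suffices to produce compatible quasi-isomorphisms of de Rham complexes locally.

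For the inseparable step, I take $X^{(n)} = (X \otimes k((t^{1/p^n})))_{\red}$ and use the same trace construction as in Proposition \ref{prop:purely-insep-equiv}. The space $]X^{(n)}[$ has structure sheaf $\O_{]X[} \otimes_{\Ed_K} \E^{(n)\dag}_K$, and $\E^{(n)\dag}_K$ is finite free over $\Ed_K$ with basis $1, u, \dots, u^{p^n-1}$. Hence the de Rham complex of $\pi^*E$ relative to $\E^{(n)\dag}_K$ is literally the de Rham complex of $E$ relative to $\Ed_K$, tensored over $\Ed_K$ with $\E^{(n)\dag}_K$. Note that derivations by $t$ are absent in the relative setting, so the connection is just base-changed. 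Cohomology commutes with this finite free tensor product, and the same holds for $\mathbb{R}\underline{\Gamma}_{]X[}$, since the tubes and their closed complements are just base changed. This gives both isomorphisms at this step.

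For the separable step, I set $\Pf' = \Pf \times_{\V[[t^{1/p^n}]]} \mathcal{W}[[t']]$ as in the base extension functors defined earlier. The map $\E^{(n)\dag}_K \to \E'^{\dag}_{K'}$ is finite étale because it lifts the separable residue extension over a henselian field. So locally $\Pf' \to \Pf$ is finite étale around the tube, and $\O_{]X'[} = \O_{]X[} \otimes \E'^{\dag}_{K'}$ holds as sheaves on the germ. The key check is that the cofinal system of strict neighborhoods of $]X'[$ is the pullback of that of $]X[$. The argument should mirror \cite[Proposition 1.8]{berthelot1997finitude}: apply Theorem \ref{thm:strong-fibration} to the proper étale morphism of frames $(X', Y', \Pf') \to (X', Y', \Pf \times \mathcal{W}[[t']])$. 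Once this holds, the complex $j^\dag E \otimes \Omega^\bullet$ on $]X'[$ equals the pullback complex tensored with the finite flat $\E'^{\dag}_{K'}$, and cohomology, including $\mathbb{R}\underline{\Gamma}$, commutes with this tensor by flatness and finiteness.

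The main obstacle is the compactly supported case. There, $H^*_{\rig,c}$ is computed through $\mathbb{R}a^!$ on a neighborhood $W$, and I must check that the triangle $a_*\mathbb{R}a^! \to \id \to \mathbb{R}i_*i^{-1}$ is compatible with pullback along the finite map of germs. This requires that $]Z[$ and its preimage correspond, which should follow because tubes are defined by $[\sp]^{-1}$ and specialization commutes with finite base change. If direct compatibility fails to be clear, my fallback is to prove the rigid cohomology isomorphism first and then deduce the compactly supported one by dévissage along a stratification of $X$ into smooth pieces, using the excision long exact sequences and the five lemma.
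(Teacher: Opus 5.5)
Your overall strategy is the paper's: base-change the frame along the finite free map $\V[[t]]\to\V'[[t']]$, note that tubes and the relative de Rham complex simply pull back, and use that cohomology commutes with tensoring by a finite free module. However, the justification you give for your ``key check'' does not work. Theorem~\ref{thm:strong-fibration} compares two frames with the \emph{same} $X$ along a morphism that is \'etale near $X$. Your map $(X',Y',\Pf')\to(X',Y',\Pf\times\mathcal{W}[[t']])$ is not such a morphism under any reasonable reading of the product: over $\V[[t^{1/p^n}]]$ it is the identity, and over $\V$ it is a closed immersion. In any case, that theorem says nothing about the finite map $u_K\colon\Pf'^{\ad}\to\Pf^{\ad}$, which is what you actually need. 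The correct argument is elementary. A morphism of analytic adic spaces sends rank-one points to rank-one points, so $[\sp]\circ u_K=u\circ[\sp]$, which gives $u_K^{-1}(]X[_\Pf)=\,]X'[_{\Pf'}$ and $u_K^{-1}(]Z[_\Pf)=\,]Z'[_{\Pf'}$. Since $u_K$ is finite, hence closed, the preimages $u_K^{-1}(V)$ of neighbourhoods $V$ of $]X[_\Pf$ are cofinal among neighbourhoods of $]X'[_{\Pf'}$. This is exactly what the paper uses when it asserts $u_K^{-1}([X]_{\Pf,\lambda})=[X']_{\Pf',\lambda'}$ with $\lambda'\to1$.

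With that repaired, your proof differs from the paper's in two respects. First, your split into a purely inseparable step and a separable step is harmless but unnecessary. The paper base-changes in one step and only uses $\V'[[t']]_{K'}\otimes_{\V[[t]]_K}\Ed_K=\E'^{\dag}_{K'}$, so separability and \'etaleness never enter. Second, for compact supports the paper works on an affinoid $\Us$ and writes both sides as cones. It then kills the higher cohomology of the four terms with the quasi-Stein vanishing of Proposition~\ref{prop:vanish-jdag}, and uses a \v{C}ech spectral sequence for the cover of $\Us\cap\,]Z[_\Pf$ by the sets $\{|f|\le\lambda\}$. Your sheaf-level route is correct and avoids the quasi-Stein input, so it is the cleaner argument. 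It rests on two facts: $Ru_{K*}$ commutes with $\mathbb{R}\underline{\Gamma}_{]X[}$ because $u_K^{-1}(]Z[_\Pf)=\,]Z'[_{\Pf'}$, and $Ru_{K*}\,j^\dag_{Y'_{\eta'}}u_K^*\F\simeq j^\dag_{Y_\eta}\F\otimes_{\V[[t]]_K}\V'[[t']]_{K'}$ is a finite direct sum of copies of $j^\dag_{Y_\eta}\F$. Your fallback, by contrast, would not work. Excision sequences alone cannot turn a statement about $H^*_{\rig}$ into one about $H^*_{\rig,c}$ without Poincar\'e duality, and in this paper duality is only available after the finiteness theorem that this appendix is used to prove.
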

	\begin{proof}
		Using long exact sequence and induction on dimension, we may assume that $X$ is affine. Choose an open immersion $X \hookrightarrow Y$ into a proper $k[[t]]$-scheme and a closed immersion $Y \hookrightarrow \Pf$ into a smooth formal  $\V[[t]]$-scheme. By base change, we get an open immersion $X' \hookrightarrow Y'$ into a proper $k'[[t']]$-scheme and a closed immersion $Y' \hookrightarrow \Pf'$ into a smooth formal $\V'[[t']]$-scheme.
		
		Let $u : \Pf' \to \Pf$ be the projection. Then $u_K^{-1}(]Y[_\Pf) = \ ]Y'[_{\Pf'}$ and $u_K^{-1}(]X[_\Pf) = \  ]X'[_{\Pf'}.$ We first prove the isomorphism for $H^*_\rig.$ It suffices to prove that the canonical map
		$$\E'^{\dag}_{K'} \otimes_{\Ed_K} E \otimes \Omega^\cdot_{]X[_\Pf/\Ed_K} \to Ru_{K*} (E' \otimes \Omega^\cdot_{]X'[_{\Pf'}/\E'^{\dag}_{K'}})$$
		is an isomorphism. This is local on $\Pf$ and we may assume that $\Pf$ is affine. Again using the long exact sequence, we may furthermore assume that there is some $f \in \Gamma(\Pf, \O)$ and $X = Y \cap D(t\bar{f}).$
		Put $[X]_{\Pf, \lambda} = \{ x \in ]Y[_\Pf : |t|_{x}, |f|_{x} \geq \lambda \}$ and similarly define $[X']_{\Pf', \lambda}$ for $\lambda \in (0, 1) \cap \Gamma^*.$ Choose some $\nabla$-module $\E$ over $[X]_{\Pf, \lambda_0}$ which gives rise to $E$. Then for an affinoid open subset $\Us \subset \Pf_K$, we have
		\begin{align*}
			R\Gamma(\Us, \E'^{\dag}_{K'} \otimes_{\Ed_K} E \otimes \Omega^\cdot_{]X[_\Pf/\Ed_K}) & = \E'^{\dag}_{K'} \otimes_{\Ed_K} \varinjlim_{\lambda\to 1} R\Gamma(\Us \cap [X]_{\Pf, \lambda}, \E \otimes \Omega^\cdot_{[X]_{\Pf, \lambda_0}/\V[[t]]_K})
		\end{align*}
		Since $u_K^{-1}([X]_{\Pf, \lambda}) = [X']_{\Pf', \lambda'}$ for some $\lambda' \in (0, 1) \cap \Gamma^*$, and when $\lambda$ tends to $1$, $\lambda'$ also tends to $1$. Therefore
		\begin{align*}
			R\Gamma(\Us, Ru_{K*} (E' \otimes \Omega^\cdot_{]X'[_{\Pf'}/\E'^{\dag}_{K'}})) & = \varinjlim_{\lambda\to 1} R\Gamma(u_K^{-1}(\Us \cap [X]_{\Pf, \lambda}), u_K^* \E \otimes \Omega^\cdot_{[X']_{\Pf', \lambda'}/\V'[[t']]_{K'}})\\
			& = \varinjlim_{\lambda\to 1} \V'[[t']_{K'} \otimes_{\V[[t]]_K} R\Gamma(\Us \cap [X]_{\Pf, \lambda}, \E \otimes \Omega^\cdot_{[X]_{\Pf, \lambda_0}/\V[[t]]_{K}}) \\
			& = \V'[[t']]_{K'} \otimes_{\V[[t]]_K} \varinjlim_{\lambda\to 1} R\Gamma(\Us \cap [X]_{\Pf, \lambda}, \E \otimes \Omega^\cdot_{[X]_{\Pf, \lambda_0}/\V[[t]]_{K}}).
		\end{align*}
		Since $\V'[[t']]_{K'} \otimes_{\V[[t]]_K} \Ed_K = \E'^{\dag}_{K'},$ two complexes are isomorphic.
		
		We now prove the isomorphism for $H^*_{\rig, c}.$ For the definition of compactly supported rigid cohomology over $\Ed_K$, see \cite[4.3.1]{lazda2016rigid}. Let $Z = Y - X$, $i : \, ]Z[_\Pf \cap [X]_{\Pf, \lambda_0} \hookrightarrow [X]_{\Pf, \lambda_0}$ the open immersion. For a complex $K$ of sheaves over $[X]_{\Pf, \lambda_0}$, $R\underline{\Gamma}_{]X[_\Pf} K$ is defined to be the mapping fiber of $K \to Ri_*i^{-1} K.$ Define $Z'$, $i'$ and $R\underline{\Gamma}_{]X'[_{\Pf'}}$ similarly. It suffices to prove that the canonical map
		$$\E'^{\dag}_{K'} \otimes_{\Ed_K} R\underline{\Gamma}_{]X[_\Pf} j^\dag_{Y_\eta}(\E \otimes \Omega^\cdot_{[X]_{\Pf, \lambda_0}/\V[[t]]_K}) \to Ru_{K*} R\underline{\Gamma}_{]X'[_{\Pf'}} j^\dag_{Y'_{\eta'}} (u_K^*\E \otimes \Omega^\cdot_{]Y'[_{\Pf'}/\V'[[t']]_{K'}})$$
		is an isomorphism. By considering the spectral sequences, it suffices to prove that for a coherent $\O_{[X]_{\Pf, \lambda_0}}$-module $\F$, the canonical map
		$$\E'^{\dag}_{K'} \otimes_{\Ed_K} R\underline{\Gamma}_{]X[_\Pf} j^\dag_{Y_\eta}\F \to Ru_{K*} R\underline{\Gamma}_{]X'[_{\Pf'}} j^\dag_{Y'_{\eta'}} u_K^*\F$$
		is an isomorphism.
		Let $[Y_\eta]_{\Pf, \lambda} = {x \in \Pf_K : |t|_x \geq \lambda}$ for $\lambda \in (0, 1) \cap \Gamma^*$ and define $[Y'_{\eta'}]_{\Pf', \lambda'}$ similarly. Note that $u_K^{-1}(]Z[_{\Pf}) = ]Z'[_{\Pf'}$ and that $u_K^{-1}([Y_\eta]_{\Pf, \lambda}) = [Y'_{\eta'}]_{\Pf', \lambda'}$ for some $\lambda' \in (0, 1) \cap \Gamma^*$ and $\lambda' \to 1$ as $\lambda \to 1$. For an affinoid open subset $\Us \subseteq [X]_{\Pf, \lambda_0},$
		$$
			R\Gamma(\Us, \E'^{\dag}_{K'} \otimes_{\Ed_K} R\underline{\Gamma}_{]X[_\Pf} \F) 
			= \E'^{\dag}_{K'} \otimes_{\Ed_K} \mathrm{Cone} [ R\Gamma(\Us, j^\dag_{Y_\eta}\F) \to R\Gamma(\Us \cap ]Z[_{\Pf}, j^\dag_{Y_\eta}\F)][-1]
		$$
		and
		$$
			R\Gamma(\Us, Ru_{K*} R\underline{\Gamma}_{]X'[_{\Pf'}} j^\dag_{Y'_{\eta'}} u_K^*\F)
			= \mathrm{Cone} [ R\Gamma(u_K^{-1}(\Us), j^\dag_{Y'_{\eta'}}u_K^*\F) \to R\Gamma(u_K^{-1}(\Us \cap ]Z[_{\Pf}), j^\dag_{Y'_{\eta'}}u_K^*\F)][-1].
		$$
		By Proposition \ref{prop:vanish-jdag}, the higher cohomology of the four complexes inside the mapping cone all vanish.
		$\E'^{\dag}_{K'} \otimes_{\Ed_K} \Gamma(\Us, j^\dag_{Y_{\eta}}\F) = \Gamma(u_K^{-1}(\Us), j^\dag_{Y'_{\eta'}}u_K^*\F).$
		$\Us \cap ]Z[_\Pf$ is covered by $[Z]_{\Pf, \lambda} = \{ x \in \Us : |f|_x \leq \lambda \}$ and since $\E'^{\dag}_{K'} \otimes_{\Ed_K} \Gamma(\Us \cap [Z]_{\Pf, \lambda}, j^\dag_{Y_{\eta}}\F) = \Gamma(u_K^{-1}(\Us \cap [Z]_{\Pf, \lambda}), j^\dag_{Y'_{\eta'}}u_K^*\F)$, using the spectral sequence associated to this open cover we conclude that it is an isomorphism.
	\end{proof}
	
	\bibliographystyle{plain}
	\bibliography{main}
\end{document}